\newcommand{\e}{\mathrm{e}}
\renewcommand{\i}{\mathrm{i}}
\newcommand{\st}{\ |\ }
\newcommand{\bfx}{\mathbf{x}}
\newcommand{\bfxi}{\boldsymbol{\xi}}
\DeclareMathOperator*{\maxs}{\max\vphantom{p}}
\begin{document}

\allowdisplaybreaks


\title{On function spaces for radial functions}
\author{Mark D. Groves}
\author{Dan J. Hill}

\affil{\small Fachrichtung Mathematik, Universit\"at des Saarlandes, Postfach 151150, 66041 Saarbr\"ucken, Germany}

\date{}
\maketitle


\begin{abstract}
\noindent
This paper is concerned with complex Banach-space valued functions of the form
$$
\hat{f}_k(r\cos\theta,r\sin\theta,z)=\e^{\i k \theta}f_k(r,z), \qquad r \in [0,\infty),\ \theta \in {\mathbb T}^1,\ z \in {\mathbb R}, 
$$
for some $k \in {\mathbb Z}$. It is demonstrated how classical and Sobolev spaces for the radial function $f_k$ can be
constructed in a natural fashion from the corresponding standard function spaces for $\hat{f}_k$. A
theory of radial distributions is derived in the same spirit. Finally, a new class of \emph{Hankel spaces} for the case $f_k=f_k(r)$
is introduced. These spaces are the radial counterparts of the familiar Bessel-potential spaces for functions defined on ${\mathbb R}^d$. 
The paper concludes with an application of the theory to the Dirichlet boundary-value problem for Poisson's equation in a cylindrical domain.

\end{abstract}

\section{Introduction}

\subsection*{Background}

The selection of suitable function spaces is an integral part of the analysis of partial differential equations.
Concepts such as weak and strong solutions and variational principles, as well as techniques such as contraction-mapping principles, the Fourier transform and Green's functions all rely heavily on these choices. In many physical settings---such as heat conduction, acoustics and planetary motion---it may be convenient to formulate the mathematical problem in polar coordinates, replacing the planar Cartesian coordinates $(x,y)$ by the planar radius $r\geq0$ and angle $\theta\in {\mathbb T}^1$. 
Following the development of the radial spatial dynamics theory of Scheel \cite{Scheel03}, there has been a renewed interest in utilising polar coordinates to study partial differential equations arising in pattern formation,
either through an angular Fourier expansion
\begin{equation*}
    \hat{u}(r\cos \theta,r \sin \theta) = \sum_{k\in\mathbb{Z}} \mathrm{e}^{\mathrm{i}k\,\theta} u_k(r),
\end{equation*}
or by restricting to the special case of axisymmetric functions $\hat{u} = u_0(r)$. A notable area of progress in recent years comes from the study of spatial localisation in higher dimensions; e.g.\ see
Lloyd \& Sandstede \cite{LloydSandstede09}, McCalla \& Sandstede \cite{McCallaSandstede13}, and Byrnes \emph{et al.} \cite{ByrnesCarterDoelmanLiu23} for localised axisymmetric patterns, and Hill, Bramburger \& Lloyd \cite{HillBramburgerLloyd23,HillBramburgerLloyd24} for approximate localised dihedral patterns. The partial differential equations in many of these examples are fairly simple, and do not require some of the more sophisticated tools of functional analysis. However, if one wishes to study more complex partial differential equations, such as those arising in the study of axisymmetric fluid jets (e.g.\ see Alt, Caffarelli \& Friedman \cite{AltCaffarelliFriedman83}, Erhardt, Wahl\'{e}n \& Weber \cite{ErhardtWahlenWeber22}, and Groves \& Nilsson \cite{GrovesNilsson18}) or spikes on the surface of a ferrofluid (Hill, Lloyd \& Turner \cite{HillLloydTurner21}), then one must first establish a convenient framework of radial function spaces in order to utilise the full range of available analytic tools. \pagebreak

Of course, this problem is not novel. Many previous rigorous studies have considered axisymmetric solutions to
partial differential equations posed in polar coordinates, and by necessity deal with the question of radial function spaces. However, the authors typically either define their own \emph{ad hoc} function spaces (e.g.\ see
Ericsson \cite{Ericsson21} and Liu \& Wang \cite{LiuWang09} for the Stokes and Navier--Stokes equations) or first massage their problem to fit in a standard function-space framework (e.g.\ see Varvaruca \& Weiss \cite{VarvarucaWeiss14}
for water waves and Wang \& Yang \cite{WangYang19} for ferrofluid jets).
In fact it appears many of the radial analogues of standard function spaces are not well established, even in the case of the classical spaces of continuously differentiable functions. The aim of the present work is to present a clear and comprehensive framework for radial function spaces that encompasses many of the standard spaces connected with the analysis of partial differential equations. 

The usual practice for symmetric functions of the form $\hat{u}_0(\bfx) := u_0(|\bfx|)$ in ${\mathbb R}^d$ is to find a function space for the radial function $u_0$ which characterises the membership of
the symmetric function $\hat{u}_0$ in a given standard space; for example, de Figueiredo, dos Santos \& Miyagaki \cite{deFigueiredodosSantosMiyagaki11}   introduce
weighted Sobolev spaces for $u_0(r)$ which guarantee that $\hat{u}_0$ belongs to the subspace of real, symmetric functions in the Sobolev space $H^m(B)$, where $B$
is the open unit ball in ${\mathbb R}^d$ (see also Ostermann \cite{Osterman22} for a complete characterisation). Similarly, Bernardi, Dauge \& Maday \cite{BernardiDaugeMaday} and
Costabel, Dauge \& Hu \cite{CostabelDaugeHu23} consider \emph{mode $k$ functions} $\hat{u}_k=\hat{u}_k(\bfx)$ in ${\mathbb R}^2$ which have the property that
$$
\hat{u}_k(r\cos\theta,r\sin\theta)=\e^{\i k \theta}u_k(r), \qquad r \geq 0,\ \theta \in {\mathbb T}^1,
$$
for some \emph{radial coefficient} $u_k=u_k(r)$, where $u_k(0)=0$ if $k \neq 0$; note in particular the distinction between
\emph{axisymmetric functions} (mode $0$ functions, i.e.\ rotation invariant functions defined on ${\mathbb R}^2$) and \emph{radial functions}
(functions defined on the half-line $\{r\in[0,\infty)\}$).
The above authors show that $\hat{u}_k$ belongs to $H^m(B_R(\mathbf{0});{\mathbb C})$, where $B_R(\mathbf{0})$ is the open origin-centred ball of radius $R$ in ${\mathbb R}^2$,
if and only if $u_k$ belongs to a weighted Sobolev space
$H^m_{(k)}$ (which is characterised in different, but equivalent, ways in the two references). 
The characterisations of $H^m_{(k)}$ are however
not particularly intuitive, and do not address in particular the following question directly. Suppose that
$\hat{u}_0(\bfx)=u_0(|\bfx|)$, $\bfx \in {\mathbb R}^2$ is an axisymmetric function in $H^m(B_R(\mathbf{0});{\mathbb C})$ (so that $u_0 \in H^m_{(0)}$); in what space does $u_0^\prime(|\bfx|)$ lie? Consider for example the function $u_0(r):=\mathrm{sech}(r)$, and note that
    \begin{itemize}
        \item $\hat{u}_0(\bfx):=u_0(|\bfx|)$ is axisymmetric and smooth in $\mathbb{R}^{2}$ (see Figure~\ref{fig:rad_smooth}(a));
        \item $\hat{u}_1(\bfx):= u_0^\prime(|\bfx|)$ is axisymmetric but not smooth in $\mathbb{R}^{2}$ (see Figure~\ref{fig:rad_smooth}(b));
        \item $u_2(\bfx):= \Delta u_{0}(|\bfx|) =u_{0}''(|\bfx|) + \frac{1}{|\bfx|}u_{0}'(|\bfx|)$ is axisymmetric and smooth in $\mathbb{R}^{2}$ (see Figure~\ref{fig:rad_smooth}(c));
        \item $\hat{u}_{3}(\bfx) := \cos(\arg(\bfx))u_{0}'(|\bfx|)$ is not axisymmetric but is smooth in $\mathbb{R}^{2}$ (see Figure~\ref{fig:rad_smooth}(d)).
    \end{itemize}
This example illustrates that \textit{the radial derivative of a smooth axisymmetric function is not in general a smooth axisymmetric function}.
It also illustrates the facts (see below) that the Laplacian of a smooth axisymmetric function is, however, a smooth axisymmetric function, and, perhaps more surprisingly,
that the radial derivative of a smooth axisymmetric function is the radial coefficient of a smooth mode $\pm 1$ function.

\begin{figure}[ht!]
    \centering
    \includegraphics[scale=0.45]{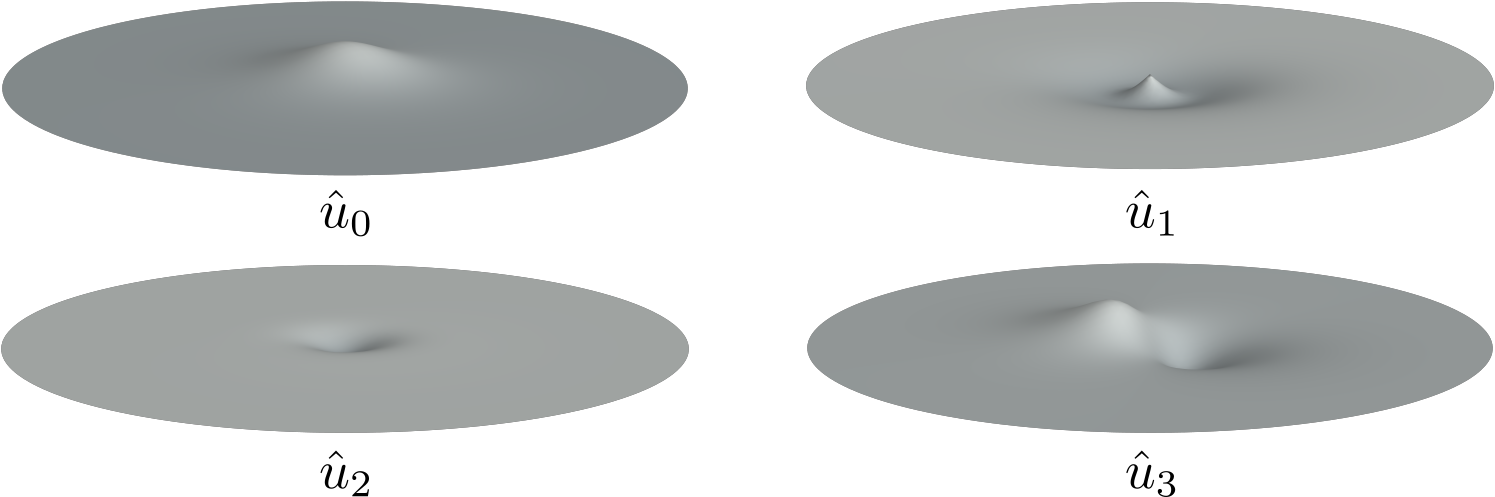}
    \caption{Planar plots of the functions $\hat{u}_0(\bfx),\hat{u}_1(\bfx),\hat{u}_2(\bfx),\hat{u}_3(\bfx)$ defined above. Each function is smooth except for $\hat{u}_1$, which has a cusp at the origin $\bfx=\mathbf{0}$.}
    \label{fig:rad_smooth}
\end{figure}

The above example shows that \emph{it is not sufficient to only consider axisymmetric functions when introducing an analytical framework for radial functions},
and more generally it is not immediately obvious whether derivatives of a radial function in a given space also lie in that space.
This remark is particularly pertinent when utilising radial spatial dynamics, where radial derivatives play a central role. In this paper we introduce a framework of radial function spaces with two aims: the radial spaces should correspond in a natural fashion to (subspaces of) standard
spaces of mode $k$ functions defined on ${\mathbb R}^2$, such that
standard results for the latter spaces are readily transferred to the former; and (appropriately defined) derivatives of a radial function should map one space into
another according to clear rules and without leaving the framework.

\subsection*{Overview}

We begin our task in Section \ref{s:DiffOp} by exploiting the structure inherent to polar coordinates, rather than attempting to employ 
a more Cartesian-minded framework. Following Costabel, Dauge \& Hu \cite{CostabelDaugeHu23}, we consider the effect of applying Wirtinger-type complex differential operators $\partial_{\overline{\zeta}}^{n-i}\partial_{\zeta}^{i}$ with
$$
\partial_{\zeta} := \tfrac{1}{\sqrt{2}}(\partial_{x} - \mathrm{i}\partial_{y}), \qquad \partial_{\overline{\zeta}} := \tfrac{1}{\sqrt{2}}(\partial_{x} + \mathrm{i}\partial_{y})
$$
to mode $k$ functions; explicit calculations indicate
that one should replace the 
differential operator $\partial_r$ with the \emph{$k$-index Bessel operator}
$$
    \mathcal{D}_{k} := r^{-k} \frac{\mathrm{d}}{\mathrm{d}r} r^{k} = \frac{\mathrm{d}}{\mathrm{d}r} + \frac{k}{r}
$$
and the higher-order differential operator $\partial_r^i$ with
$$
    \mathcal{D}^{i}_{k} := r^{-k+i} \left(\frac{1}{r}\frac{\mathrm{d}}{\mathrm{d}r}\right)^{i} r^{k} = \mathcal{D}_{k-(i-1)}\mathcal{D}_{k-(i-2)}\dots\mathcal{D}_{k-1}\mathcal{D}_{k}.
$$

\begin{proposition}
Let $\hat{f}_{k}:\mathbb{R}^{2} \to {\mathbb C}$ be a mode $k$ function with radial coefficient
$f_k:[0,\infty) \rightarrow {\mathbb C}$.
It follows that
$$
       \partial_{\zeta}\hat{f}_{k}= \mathrm{e}^{\mathrm{i}(k-1)\theta}\tfrac{1}{\sqrt{2}}\mathcal{D}_{k}f_k, \qquad \partial_{\overline{\zeta}}\hat{f}_{k} = \mathrm{e}^{\mathrm{i}(k+1)\theta}\tfrac{1}{\sqrt{2}}\mathcal{D}_{-k}f_k,
$$
 and more generally
        \begin{equation}
        \partial_{\overline{\zeta}}^{n-i}\partial^{i}_{\zeta}\hat{f}_{k} = \mathrm{e}^{\mathrm{i}(k+n-2i)\theta}\,2^{-\frac{n}{2}}\mathcal{D}_{-k+i}^{n-i}\mathcal{D}^{i}_{k}f_k
        \label{eq:WirtingervsBessel}
    \end{equation}
for $n \in {\mathbb N}_0$ and $0 \leq i \leq n$.
\end{proposition}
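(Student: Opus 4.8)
The plan is to reduce the general formula \eqref{eq:WirtingervsBessel} to its two first-order instances and then iterate. First I would rewrite the Wirtinger operators in polar coordinates. Using $\partial_{x} = \cos\theta\,\partial_{r} - r^{-1}\sin\theta\,\partial_{\theta}$ and $\partial_{y} = \sin\theta\,\partial_{r} + r^{-1}\cos\theta\,\partial_{\theta}$ together with $\cos\theta \mp \i\sin\theta = \e^{\mp\i\theta}$, a direct computation (valid for $r>0$, where the polar chart is smooth) gives
$$
\partial_{\zeta} = \tfrac{1}{\sqrt{2}}\,\e^{-\i\theta}\bigl(\partial_{r} - \tfrac{\i}{r}\partial_{\theta}\bigr), \qquad \partial_{\overline{\zeta}} = \tfrac{1}{\sqrt{2}}\,\e^{\i\theta}\bigl(\partial_{r} + \tfrac{\i}{r}\partial_{\theta}\bigr).
$$
Applying these to $\hat{f}_{k} = \e^{\i k\theta}f_k(r)$ and using $\partial_{\theta}\e^{\i k\theta} = \i k\,\e^{\i k\theta}$ immediately yields the two stated first-order formulas, since the resulting radial parts are $\partial_{r}f_k + \tfrac{k}{r}f_k = \mathcal{D}_{k}f_k$ and $\partial_{r}f_k - \tfrac{k}{r}f_k = \mathcal{D}_{-k}f_k$ respectively.

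The essential structural observation, which I would isolate as the engine of the induction, is that $\partial_{\zeta}$ sends a mode $k$ function to a mode $k-1$ function whose radial coefficient is $\tfrac{1}{\sqrt{2}}\mathcal{D}_{k}f_k$, while $\partial_{\overline{\zeta}}$ sends a mode $k$ function to a mode $k+1$ function with radial coefficient $\tfrac{1}{\sqrt{2}}\mathcal{D}_{-k}f_k$. Reading \eqref{eq:WirtingervsBessel} from the inside out, I would first apply $\partial_{\zeta}$ exactly $i$ times. An induction in which each step lowers the mode by one and contributes a factor $\tfrac{1}{\sqrt{2}}\mathcal{D}_{k-j}$ produces a mode $k-i$ function with radial coefficient
$$
2^{-i/2}\,\mathcal{D}_{k-(i-1)}\cdots\mathcal{D}_{k-1}\mathcal{D}_{k}f_k = 2^{-i/2}\,\mathcal{D}_{k}^{i}f_k,
$$
the telescoping product being precisely the definition of $\mathcal{D}_{k}^{i}$.

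Starting now from this mode $k-i$ function with coefficient $2^{-i/2}\mathcal{D}_{k}^{i}f_k$, I would apply $\partial_{\overline{\zeta}}$ a further $n-i$ times. A second induction, in which each step raises the mode by one (from $k-i$ up to $k+n-2i$) and contributes the factor $\tfrac{1}{\sqrt{2}}\mathcal{D}_{-m}$ appropriate to the current mode $m$, yields the accumulated operator $2^{-(n-i)/2}\mathcal{D}_{-k+i}^{n-i}$; combining the two constants as $2^{-i/2}\cdot 2^{-(n-i)/2} = 2^{-n/2}$ and reading off the final mode $k+n-2i$ gives the claimed identity. Since $\partial_{\zeta}$ and $\partial_{\overline{\zeta}}$ are constant-coefficient operators in the Cartesian variables they commute, so the grouping in \eqref{eq:WirtingervsBessel} is unambiguous and this order of application is legitimate.

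I expect the only real difficulty to be bookkeeping rather than analysis: one must verify that the index of the Bessel operator introduced at each step matches the pattern in the definition of $\mathcal{D}^{i}_{k}$. Concretely, the downward sweep of subscripts $k, k-1, \dots, k-i+1$ from the $\partial_{\zeta}$ steps must assemble into $\mathcal{D}_{k}^{i}$, and the sweep $-k+i, -k+i-1, \dots, -k+2i-n+1$ from the $\partial_{\overline{\zeta}}$ steps into $\mathcal{D}_{-k+i}^{n-i}$, with the correct ordering. The sign flip in the subscript (from $\mathcal{D}_{k}$ for $\partial_{\zeta}$ to $\mathcal{D}_{-k}$ for $\partial_{\overline{\zeta}}$) originates in the opposite signs of the $\tfrac{\i}{r}\partial_{\theta}$ terms, and I would take care to track it consistently through both inductions.
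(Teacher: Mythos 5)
Your proposal is correct and matches the paper's proof essentially step for step: both pass to the polar-coordinate form of $\partial_{\zeta}$ and $\partial_{\overline{\zeta}}$, establish the first-order identities, iterate $\partial_{\zeta}$ $i$ times to reach mode $k-i$ with coefficient $2^{-i/2}\mathcal{D}_{k}^{i}f_k$, and then apply $\partial_{\overline{\zeta}}$ a further $n-i$ times (the paper packages this as the pure-power formula $\partial_{\overline{\zeta}}^{m}(\e^{\i k\theta}f_k)=\e^{\i(k+m)\theta}2^{-m/2}\mathcal{D}_{-k}^{m}f_k$ applied with $k$ replaced by $k-i$, which is exactly your second induction). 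Your index bookkeeping, including the sweep $-k+i,\dots,-k+2i-n+1$ assembling into $\mathcal{D}_{-k+i}^{n-i}$, is consistent with the paper's definition of the iterated Bessel operator.
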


According to the above proposition the operator
$\partial_\zeta$ maps a mode $k$ function with radial coefficient $f_k$
to a mode $k-1$ function with radial coefficient ${\mathcal D}_k f_k$ while $\partial_{\bar\zeta}$ maps a mode $k+1$ function with radial coefficient ${\mathcal D}_{-k} f_k$.
Correspondingly, one finds that ${\mathcal D}_k$ and ${\mathcal D}_{-k}$ map a mode $k$ radial coefficient to a mode $k-1$ and a mode $k+1$ radial coefficient respectively,
as illustrated diagrammatically in Figure \ref{fig:Fkspace}; it follows from the calculation
$$
    \mathcal{D}_{\ell}^{n}\mathcal{D}_{k}^{m} = \mathcal{D}_{k+n}^{m}\mathcal{D}_{\ell-m}^{n}
$$
that the diagram commutes.
Note that it is actually not necessary to distinguish between mode $k$ and mode $-k$
radial coefficients since the radial coefficient of the mode $k$ function $\hat{f}_k(x,y)$ is also the radial coefficient of the mode $-k$ function $\hat{f}_k(x,-y)$
(which explains the apparent ambiguity in this interpretation of ${\mathcal D}_0$.)\pagebreak

\begin{figure}[ht]
    \centering
    \includegraphics[width=0.55\textwidth]{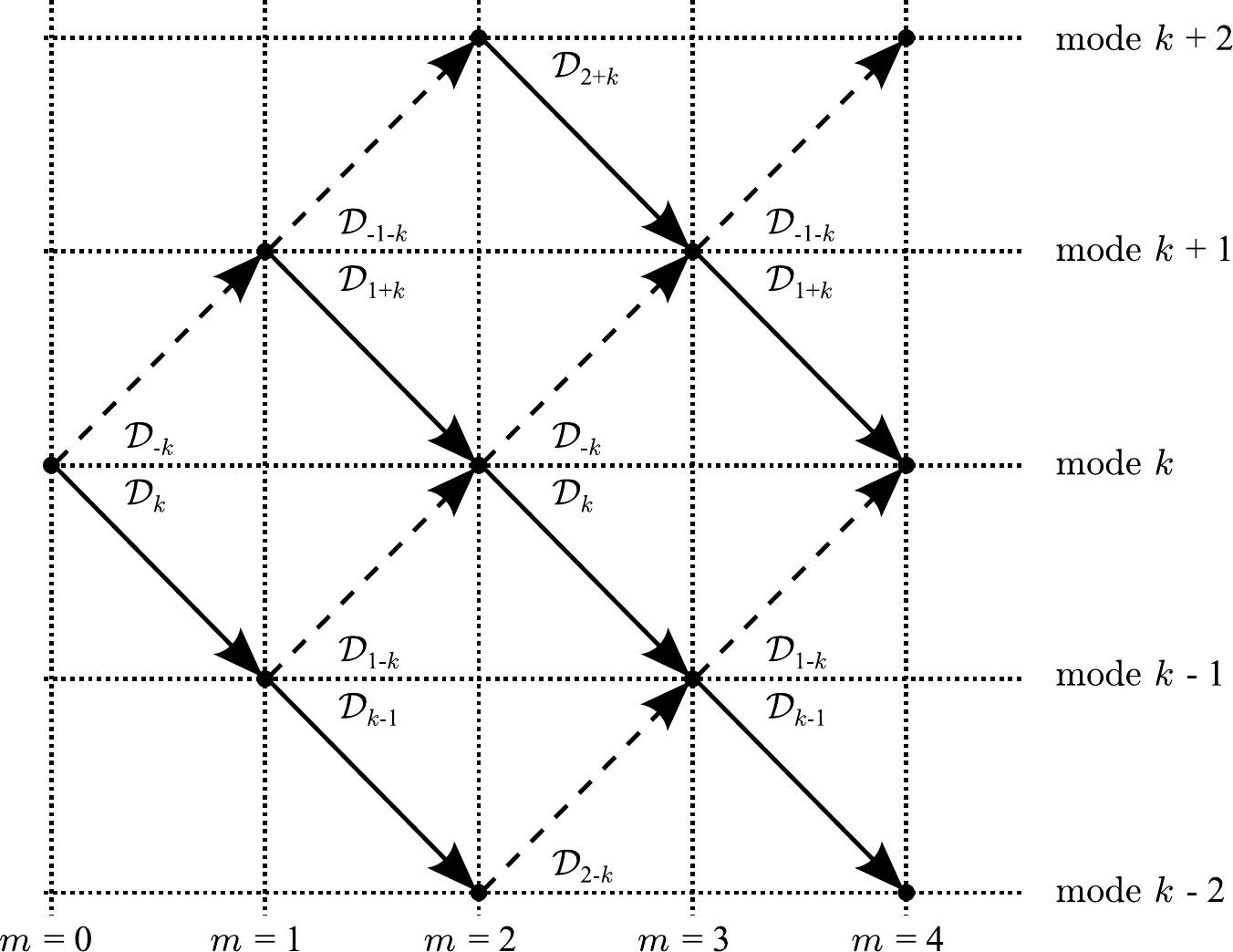}
    \caption{Actions of the Bessel operators. The mode $k+j$ function in column $m$ is $\e^{\i(k+j)\theta}{\mathcal D}_{k_m} {\mathcal D}_{k_{m-1}} \ldots {\mathcal D}_{k_1} f_k$,
    where the indices $\{k_i\}_{i=1}^{m}$ satisfy $k_i=\pm (k_{i-1} - 1)$, with $k_1=\pm k$, and consist of $\frac{1}{2}(m-j)$ positive and $\frac{1}{2}(m+j)$ non-positive terms.}
    \label{fig:Fkspace}
\end{figure}

In Section \ref{Classical FS} we examine spaces of classical functions using the above observations. Fix $R \in (0,\infty]$.
It is readily established that $\hat{f}_k$ belongs to the subset $\hat{C}_{(k)}^0(B_R(\mathbf{0});{\mathbb C})$ of mode $k$ functions in
$C^0(B_R(\mathbf{0});{\mathbb C})$ if and only if its radial coefficient $f_k$ belongs to
$$C_{(k)}^0([0,R);{\mathbb C}) := \{f_k \in C^0([0,R);{\mathbb C}) \st kf_k|_{r=0}=0\}$$
(see Lemma \ref{lem:C0}). Observing that $\hat{f}_k \in C^m(B_R(\mathbf{0});{\mathbb C})$ if and only if $\partial_{\bar{\zeta}}^{n-i}\partial_\zeta^i \hat{f}_k$ belongs to
$C^0(B_R(\mathbf{0});{\mathbb C})$ for $0 \leq i \leq n\leq m$, we thus find from equation \eqref{eq:WirtingervsBessel} that
$\hat{f}_k$ belongs to the subset $\hat{C}_{(k)}^m(B_R(\mathbf{0});{\mathbb C})$ of mode $k$ functions in
$C^m(B_R(\mathbf{0});{\mathbb C})$ if and only if its radial coefficient $f_k$ belongs to
\begin{align*}
C_{(k)}^m&([0,R);{\mathbb C}) \\
&:= \{f_k: [0,R) \rightarrow {\mathbb C} \st \mathcal{D}_{-k+i}^{n-i}\mathcal{D}_k^i f_k \in C^0_{(k+n-2i)}([0,R) ;{\mathbb C}),\ 0\leq i\leq n \leq m\} \\
&\,= \{f_k: [0,R)  \rightarrow {\mathbb C} \st \mathcal{D}_{-k+i}^{n-i}\mathcal{D}_k^i  f_k \in C^0([0,R);{\mathbb C}),
(k+n-2i)\mathcal{D}_{-k+i}^{n-i}\mathcal{D}_k^if_k|_{r=0}=0,\ 0\leq i\leq n \leq m\}.
\end{align*}

We establish analogous results for other classes of continuous functions, in particular for the mode $k$ bounded continuously differentiable functions
$\hat{C}_{\mathrm{b}(k)}^m(B_R(\mathbf{0});{\mathbb C})$, test functions $\hat{\mathscr D}_{(k)}(B_R(\mathbf{0});{\mathbb C})$
and Schwartz-class functions $\hat{\mathscr S}_{(k)}({\mathbb R}^2;{\mathbb C})$. In all cases the mapping $f_k \mapsto \hat{f}_k$ defines a bijection between the
respective spaces and is furthermore an isometry $C_{(k)\mathrm{b}}^m([0,R);{\mathbb C}) \rightarrow \hat{C}_{(k)\mathrm{b}}^m(B_R(\mathbf{0});{\mathbb C})$ and ${\mathscr S}_{(k)}([0,\infty);{\mathbb C}) \rightarrow \hat{\mathscr S}_{(k)}({\mathbb R}^2;{\mathbb C})$
with respect to the natural (semi-)norms (see Section \ref{Classical FS} for details).

After discussing radial distributions in Section \ref{Distributions}, we present a similar approach to Sobolev spaces in Section
\ref{Sobolev FS}. A mode $k$ function $\hat{f}_k$ evidently belongs to $L^2(B_R(\mathbf{0});{\mathbb C})$ if and only if its radial coefficient $f_k$ belongs to
$$L_1^2((0,R);{\mathbb C}) = \left\{f: [0,R) \rightarrow {\mathbb C} \st 2\pi \int_0^R |f(r)|^2 r \dr< \infty\right\},$$
and this observation leads us to define
$\hat{H}_{(k)}^m(B_R(\mathbf{0});{\mathbb C})$ for $m \in {\mathbb N}_0$ as the completion of
$\hat{\mathscr D}_{(k)}(\overline{B}_R(\mathbf{0});{\mathbb C})$ with respect to the norm
$$\| \hat{f}_k\|_{H^m}^2 := \sum_{n=0}^m \sum_{i=0}^n \begin{pmatrix} n \\ i \end{pmatrix}\
\| \partial_{\bar{\zeta}}^{n-i} \partial_\zeta^i \hat{f}_k\|_{L^2}^2$$
and $H_{(k)}^m((0,R);{\mathbb C})$ for $m \in {\mathbb N}_0$
as the completion of ${\mathscr D}_{(k)}([0,R];{\mathbb C})$ with respect to the norm
$$\|f_k\|_{H^{m}_{(k)}}^2:= \sum_{n=0}^m 2^{-n} \sum_{i=0}^n \begin{pmatrix} n \\ i \end{pmatrix} 
\|{\mathcal D}_{-k+i}^{n-i}{\mathcal D}_k^i f_k\|_{L_1^2}^2$$
(\emph{cf.} equation \eqref{eq:WirtingervsBessel}); the spaces $\hat{H}_{(k)}^s(B_R(\mathbf{0});{\mathbb C})$
and $H_{(k)}^s((0,R);{\mathbb C})$ for $s \geq 0$ are defined by (complex) interpolation. Embedding and trace
theorems for these spaces are given in Section \ref{Sobolev FS}, where we exploit the fact that
the mapping $f_k \mapsto \hat{f}_k$ is an isometric isomorphism $H_{(k)}^s((0,R);{\mathbb C}) \to \hat{H}_{(k)}^s(B_R(\mathbf{0});{\mathbb C})$
by construction.

We also give an equivalent construction of $\hat{H}_{(k)}^m(B_R(\mathbf{0});{\mathbb C})$ and $H_{(k)}^m((0,R);{\mathbb C})$ for $m \in {\mathbb N}_0$
using weak derivatives (equations \eqref{eq:wkderiv 1 - intro} and \eqref{eq:wkderiv 2 - intro} below of course reduce
to integration-by-parts formulae in the special case
$\hat{f}_k \in \hat{\mathscr D}_{(k)}(\overline{B}_R(\mathbf{0});{\mathbb C})$, $f_k \in {\mathscr D}_{(k)}([0,R];{\mathbb C})$, such that
$\hat{g}_{k+n-2i} = \partial_{\bar{\zeta}}^{n-i}\partial_\zeta^i\hat{f}_k$, $g_{k+n-2i} = 2^{-\frac{n}{2}}{\mathcal D}_{-k+i}^{n-i}{\mathcal D}_k^i f_k$).

\begin{lemma}
Suppose that $\hat{f}_k \in L^2(B_R(\mathbf{0});{\mathbb C})$. The function $\hat{g}_{k+n-2i}\in L^2(B_R(\mathbf{0});{\mathbb C})$ is the weak derivative
$\partial_{\bar{\zeta}}^{n-i}\partial_\zeta^i\hat{f}_k$ of $\hat{f}_k$ if and only if
\begin{equation}
(-1)^n
\hspace{-2mm}\int\limits_{B_R(\mathbf{0})} \hspace{-2mm}\hat{f}_k \partial_{\zeta}^i \partial_{\bar{\zeta}}^{n-i} \hat{\phi}_{-k-n+2i}
=
\hspace{-2mm} \int\limits_{B_R(\mathbf{0})} \hspace{-2mm} \hat{g}_{k+n-2i}\hat{\phi}_{-k-n+2i} \label{eq:wkderiv 1 - intro}
\end{equation}
for all $\hat{\phi}_{-k-n+2i} \in \hat{\mathscr D}_{(-k-n+2i)}(B_R(\mathbf{0});{\mathbb C})$, and this condition holds if and only if
\begin{equation}
(-1)^n
 \int_0^R f_k(r)2^{-\frac{n}{2}}{\mathcal D}_{k+n-i}^{n-i} {\mathcal D}_{-k-n+2i}^i \phi_{-k-n+2i}(r)r\dr=  \int_0^R g_{k+n-2i}(r)\phi_{-k-n+2i}(r) r\dr
\label{eq:wkderiv 2 - intro}
\end{equation}
for all $\phi_{-k-n+2i} \in {\mathscr D}_{(-k-n+2i)}([0,R);{\mathbb C})$.
\end{lemma}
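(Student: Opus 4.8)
The plan is to treat the two stated equivalences separately: the first reduces the admissible class of test functions by angular symmetry, and the second is a direct computation in polar coordinates using the preceding proposition. As a preliminary I would record the correct formal adjoint. Under the bilinear pairing $\int_{B_R(\mathbf 0)}\hat u\,\hat v$, integration by parts gives $\int_{B_R(\mathbf 0)}(\partial_\zeta\hat u)\hat v=-\int_{B_R(\mathbf 0)}\hat u\,\partial_\zeta\hat v$, and similarly for $\partial_{\overline\zeta}$, so each first-order Wirtinger operator is formally anti-self-adjoint; consequently the formal adjoint of $\partial_{\overline\zeta}^{n-i}\partial_\zeta^i$ is $(-1)^n\partial_\zeta^i\partial_{\overline\zeta}^{n-i}$ (the sign being $(-1)^n$ because $n$ first-order operators are transposed, and the ordering being immaterial since $\partial_\zeta$ and $\partial_{\overline\zeta}$ commute). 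Thus $\hat g_{k+n-2i}$ is by definition the weak derivative $\partial_{\overline\zeta}^{n-i}\partial_\zeta^i\hat f_k$ precisely when \eqref{eq:wkderiv 1 - intro} holds with every $\hat\psi\in\mathscr D(B_R(\mathbf 0);{\mathbb C})$ in place of $\hat\phi_{-k-n+2i}$; the content of the first equivalence is therefore that it suffices to test against mode $-k-n+2i$ functions alone.

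For that first equivalence the forward implication is immediate, since $\hat{\mathscr D}_{(-k-n+2i)}(B_R(\mathbf 0);{\mathbb C})\subset\mathscr D(B_R(\mathbf 0);{\mathbb C})$. For the converse I would expand an arbitrary $\hat\psi$ in its angular Fourier series $\hat\psi=\sum_{m}\hat\psi_m$ with $\hat\psi_m=\e^{\i m\theta}\psi_m(r)\in\hat{\mathscr D}_{(m)}(B_R(\mathbf 0);{\mathbb C})$, the series converging in $\mathscr D(B_R(\mathbf 0);{\mathbb C})$. Both sides of the weak-derivative identity are continuous linear functionals of $\hat\psi$, and the orthogonality relation $\int_0^{2\pi}\e^{\i p\theta}\e^{\i q\theta}\,\mathrm d\theta=2\pi\delta_{p+q,0}$ annihilates all but one term: since $\hat g_{k+n-2i}$ has mode $k+n-2i$, the right-hand side retains only $\hat\psi_{-k-n+2i}$, while since $\hat f_k$ has mode $k$ and $\partial_\zeta^i\partial_{\overline\zeta}^{n-i}$ shifts the mode by $n-2i$, the left-hand side likewise retains only $\hat\psi_{-k-n+2i}$. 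Hence the general identity collapses to \eqref{eq:wkderiv 1 - intro} with $\hat\phi_{-k-n+2i}=\hat\psi_{-k-n+2i}$, which gives the converse.

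For the second equivalence I would evaluate the planar integrals in polar coordinates. Using that $\partial_\zeta$ and $\partial_{\overline\zeta}$ commute and applying \eqref{eq:WirtingervsBessel} with input mode $-k-n+2i$ yields
\[
\partial_\zeta^i\partial_{\overline\zeta}^{n-i}\hat\phi_{-k-n+2i}=\partial_{\overline\zeta}^{n-i}\partial_\zeta^i\hat\phi_{-k-n+2i}=\e^{-\i k\theta}\,2^{-\frac n2}\,\mathcal D_{k+n-i}^{n-i}\mathcal D_{-k-n+2i}^i\phi_{-k-n+2i},
\]
so that the angular factor of the integrand $\hat f_k\,\partial_\zeta^i\partial_{\overline\zeta}^{n-i}\hat\phi_{-k-n+2i}$ is $\e^{\i k\theta}\e^{-\i k\theta}=1$, and that of $\hat g_{k+n-2i}\hat\phi_{-k-n+2i}$ is $\e^{\i(k+n-2i)\theta}\e^{\i(-k-n+2i)\theta}=1$. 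Writing $\int_{B_R(\mathbf 0)}=\int_0^{2\pi}\!\int_0^R(\,\cdot\,)\,r\dr\,\mathrm d\theta$ and performing the trivial $\theta$-integration (Fubini being justified as the integrands lie in $L^1$) produces a common factor $2\pi$ on both sides of \eqref{eq:wkderiv 1 - intro}, and cancelling it gives exactly \eqref{eq:wkderiv 2 - intro}. Finally, the established bijection $f\mapsto\hat f$ between ${\mathscr D}_{(-k-n+2i)}([0,R);{\mathbb C})$ and $\hat{\mathscr D}_{(-k-n+2i)}(B_R(\mathbf 0);{\mathbb C})$ matches the quantifiers, so that \eqref{eq:wkderiv 1 - intro} for all $\hat\phi_{-k-n+2i}$ is equivalent to \eqref{eq:wkderiv 2 - intro} for all $\phi_{-k-n+2i}$.

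The index bookkeeping in the second equivalence is handled cleanly by \eqref{eq:WirtingervsBessel}, so I expect the only genuine subtlety to lie in the converse half of the first equivalence. There one must confirm that each angular projection really belongs to $\hat{\mathscr D}_{(m)}(B_R(\mathbf 0);{\mathbb C})$---in particular that projecting a smooth function onto a single angular mode preserves smoothness at the origin---and that the Fourier series converges in the topology of $\mathscr D(B_R(\mathbf 0);{\mathbb C})$, so that the term-by-term evaluation of the two functionals is legitimate.
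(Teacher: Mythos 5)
Your proposal is correct and follows essentially the same route as the paper: the paper proves the distributional version (Lemma \ref{lem:distderiv construction}) by expanding an arbitrary test function as a sum of radial test functions via Lemma \ref{lem:FS superconvergence} and Remark \ref{rem:FS superconvergence - diff}, so that angular orthogonality leaves only the mode $-k-n+2i$ term, and then obtains the present lemma by identifying the distributions with $L^2$ functions, with the polar-coordinate/Bessel-operator bookkeeping supplied by Proposition \ref{Prop:Dk} exactly as in your second equivalence. The subtleties you flag at the end (smoothness of the angular projections at the origin and convergence of the differentiated series) are precisely what Proposition \ref{prop:proj}, Remark \ref{propprojrig} and Remark \ref{rem:FS superconvergence - diff} settle in the paper.
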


\begin{definition}
The function $2^\frac{n}{2}g_{k+n-2i} \in L_1^2((0,R);\mathbb{C})$
is the \underline{weak radial derivative} ${\mathcal D}_{-k+i}^{n-i}{\mathcal D}_k^i f_k$ of\linebreak
$f_k \in L_1^2((0,R);\mathbb{C})$
if it satisfies \eqref{eq:wkderiv 2 - intro} for all
$\phi_{-k-n+2i} \in {\mathscr D}_{(-k-n+2i)}([0,R);{\mathbb C})$.
\end{definition}

This construction leads us to the alternative definitions
\begin{align*}
\hat{H}_{(k)}^m(B_R(\mathbf{0});{\mathbb C}) &:=\{\hat{f}_k \in L^2(B_R(\mathbf{0});{\mathbb C})\st\partial_{\bar{\zeta}}^{n-i}\partial_\zeta^i \hat{f}_k \in L^2(B_R(\mathbf{0});{\mathbb C}),\ 0\leq i\leq n\leq m\}, \\
H_{(k)}^m((0,R);{\mathbb C})
&:=\{f_k \in L_1^2((0,R);{\mathbb C})\st {\mathcal D}_{-k+i}^{n-i}{\mathcal D}_k^i f_k \in L_1^2((0,R);{\mathbb C}),\ 0 \leq i \leq n \leq m\}
\end{align*}
for mode $k$ functions and their radial coefficients.
These definitions can also be understood to mean that appropriately defined distributional derivatives can be identified with functions in $L^2(B_R(\mathbf{0});{\mathbb C})$
and $L_1^2((0,R);{\mathbb C})$; this point is discussed in Sections \ref{Distributions} and \ref{Sobolev FS}.

\begin{remark}
Suppose that $\hat{\psi}:B_R(\mathbf{0}) \to {\mathbb C}$ and define a mode $k$ function ${\mathcal Q}_k[\hat{\psi}]$ by
$${\mathcal Q}_k[\hat{\psi}](\mathbf{x}):=\frac{1}{2\pi}\int_0^{2\pi} \hat{\psi}({\mathcal R}_\theta \mathbf{x})\e^{-\i k \theta}\dtheta,$$
where ${\mathcal R}_\theta$ is a rotation through the angle $\theta$.
It was shown by Costabel, Dauge \& Hu \cite[Proposition 2.7(ii)]{CostabelDaugeHu23} that ${\mathcal Q}_k$ is an orthogonal projection on $H^m(B_R(\mathbf{0});{\mathbb C})$ and that the family $\{{\mathcal Q}_k\}_{k \in {\mathbb Z}}$ defines an isometric isomorphism
$H^m(B_R(\mathbf{0});{\mathbb C}) \rightarrow \bigoplus\limits_{k \in {\mathbb Z}} \hat{H}_{(k)}^m(B_R(\mathbf{0});{\mathbb C})$, such that
$$\hat{f} (r\cos\theta, r \sin\theta) = \sum_{k \in {\mathbb Z}} f_k(r) \e^{\i k \theta}, \qquad \|\hat{f}\|_{ H^m}^2 = \sum_{k \in {\mathbb Z}} \|f_k\|_{H^{m}_{(k)}}^2$$
for each $\hat{f} \in H^m(B_R(\mathbf{0});{\mathbb C})$, where $f_k$ is the radial coefficient of ${\mathcal Q}_k[\hat{f}]$.
\end{remark}

In Section \ref{Hankel FS} we introduce a scale of Hilbert spaces $B_{(k)}^s((0,\infty);{\mathbb C})$, $s \geq 0$
for the radial coefficients of mode $k$ functions. These \emph{Hankel spaces} are the radial counterparts of the
Bessel-potential spaces constructed as follows.
The \emph{Fourier transform} is a bijection $\mathscr{S}({\mathbb R}^d;{\mathbb C}) \rightarrow \mathscr{S}({\mathbb R}^d;{\mathbb C})$
defined by
$${\mathcal F}[f](\bfxi) = \frac{1}{(2\pi)^\frac{d}{2}} \int_{\mathbb{R}^d} f(\bfx) \e^{-\i \bfxi\cdot\bfx}$$
with
${\mathcal F}^{-1}[g](\bfx) = {\mathcal F}[g](-\bfx)$; it is isometric with respect to the $L^2({\mathbb R}^d;{\mathbb C})$ norm, and therefore
extends by density to an isometric isomorphism $L^2({\mathbb R}^d;{\mathbb C}) \rightarrow L^2({\mathbb R}^d;{\mathbb C})$ (see Craig \cite[\S5.2]{Craig}). The \emph{Bessel-potential space} $B^s({\mathbb R}^d;{\mathbb C})$ is
defined by
$$B^s({\mathbb R}^d;{\mathbb C}) = \{f \in L^2({\mathbb R}^d;{\mathbb C}): (1+|\bfxi|^2)^\frac{s}{2}{\mathcal F}[f] \in L^2({\mathbb R}^d;{\mathbb C})\}, \qquad s \geq 0;$$
it coincides with the standard Sobolev space $H^s({\mathbb R}^d;{\mathbb C})$ defined by weak derivatives for $s \in {\mathbb N}_0$ and interpolation for $s \geq 0$
(see Adams \cite[Theorem 7.65]{Adams}). Analogously, we define the \emph{$k$-index Hankel transform} of a function $f \in {\mathscr S}_{(k)}([0,\infty);{\mathbb C})$ by
$$\mathcal{H}_{k}[f](\rho) := \int_{0}^{\infty} f(r) J_{k}(\rho \,r) \,r\dr,$$
where $J_k$ is the $k$-index Bessel function of the first kind,
and show that it is a bijection\linebreak ${\mathscr S}_{(k)}([0,\infty);{\mathbb C}) \rightarrow {\mathscr S}_{(k)}([0,\infty);{\mathbb C})$ with
${\mathcal H}_k^{-1}[g]={\mathcal H}_k[g]$ which is isometric
with respect to the $L_1^2((0,\infty);{\mathbb C})$ norm; it therefore
extends by density to an isometric isomorphism
$L_1^2((0,\infty);{\mathbb C}) \rightarrow L_1^2((0,\infty);{\mathbb C})$.
The \emph{Hankel space} $B_{(k)}^s((0,\infty);{\mathbb C})$ is defined by the formula
\begin{equation}
B_{(k)}^s((0,\infty);{\mathbb C}) := \{f \in L_1^2((0,\infty);{\mathbb C})\st(1+\rho^2)^{\frac{s}{2}}{\mathcal H}_k[f] \in L_1^2((0,\infty);{\mathbb C})\}, \qquad s \geq 0; \label{HS definition - intro}
\end{equation}
it coincides with the Sobolev space $H_{(k)}^s((0,\infty);{\mathbb C})$ constructed in Section \ref{Sobolev FS}.

\begin{remarks} $ $
\begin{itemize}
\item[(i)] Just as the Fourier transform works well with standard differential operators, in that
$$
\partial_{\bfxi}^\alpha{\mathcal F}[f](\bfxi)=(-\i)^n{\mathcal F}[\mathbf{x}^\alpha f](\bfxi), \qquad
{\mathcal F}[\partial_{\bfx}^\alpha f](\bfxi) = \i^n \bfxi^\alpha {\mathcal F}[f](\bfxi), 
$$
where $\alpha \in {\mathbb N}_0^d$ for $|\alpha|=n$, $n \in {\mathbb N}_0$, the Hankel transforms work well with the Bessel operators, in that
$$
\tilde{\mathcal D}_{-k+i}^{n-i}\tilde{\mathcal D}_k^i {\mathcal H}_k[f](\rho)  = (-1)^{n-i} {\mathcal H}_{k+n-2i}[r^nf](\rho), \qquad
{\mathcal H}_{k+n-2i} [{\mathcal D}_{-k+i}^{n-i}{\mathcal D}_k^i f](\rho) = (-1)^{n-i}\rho^n {\mathcal H}_k[f](\rho)
$$
for $0 \leq i \leq n$, $n \in {\mathbb N}_0$, where the tilde denotes differentiation with respect to $\rho$ (see Lemma \ref{HT properties}).
\item[(ii)] Just as one can define the Fourier transform of a tempered distribution $T$ in the dual of $\mathscr{S}({\mathbb R}^2;{\mathbb C})$ by the formula $(\mathcal{F}[T],\phi)=(T,\mathcal{F}[\phi])$,
one can define the Hankel transform of a tempered radial distribution $T_k$ in the dual of $\mathscr{S}_{(k)}((0,\infty);{\mathbb C})$ by the formula $(\mathcal{H}_k[T_k],\phi_k)=(T_k,\mathcal{H}_k[\phi_k])$ (see Definitions
\ref{tempered definition} and \ref{Hankel space definition}).
\end{itemize}
\end{remarks}

The $k$-index Hankel transform of $f_k \in {\mathscr S}_{(k)}((0,\infty);{\mathbb C})$ is actually related to the Fourier transform of\linebreak $\hat{f}_k \in \hat{\mathscr S}_{(k)}({\mathbb R}^2;{\mathbb C})$ by the formula
\begin{equation*}
    \mathcal{F}[\hat{f}_k](\rho\cos \omega,\rho\sin\omega) = \mathrm{e}^{\mathrm{i}k(\omega-\frac{\pi}{2})}\mathcal{H}_k[f_k](\rho).
\end{equation*}
Using this observation one can deduce properties of Hankel spaces (embeddings and algebra properties) from the corresponding, known properties of Bessel-potential spaces.
However we find it instructive to work directly with the definition \eqref{HS definition - intro} and give a self-contained theory in Section \ref{Hankel FS}.

\begin{remark}
The more general version of the Hankel transform
$${\mathcal H}_\nu[f](\rho) = \int_0^\infty f(r) J_\nu(\rho r)\sqrt{\rho r}\dr,\qquad \nu \geq - \tfrac{1}{2}$$
was studied by Zemanian \cite{Zemanian66a,Zemanian66b}, who constructed three families ${\mathscr H}_\nu$, ${\mathscr B}_\nu$, ${\mathscr Y}_\nu$ of Frech\'{e}t spaces consisting of smooth `testing' functions $[0,\infty) \rightarrow {\mathbb R}$
which play a similar role to our Schwartz-class spaces ${\mathscr S}_{(k)}((0,\infty); {\mathbb C})$. The Hankel transform is an automorphism on the space ${\mathscr H}_\nu$ of `slow growing' functions and maps the space
${\mathscr B}_\nu$ of `rapidly growing' functions isometrically onto ${\mathscr Y}_\nu$; distributions and distributional Hankel transforms can be constructed in a natural manner, and there are `shift' operators to move between different
spaces. While our work on the Hankel transform has indeed been influenced by the Zemanian spaces, they appear to be often overlooked by the wider analysis community. 

\end{remark}

Our theory is actually more general than indicated in the above summary. We treat functions whose values lie in a
complex Banach space $X$ and---in Sections \ref{Classical FS}--\ref{Sobolev FS} (classical and Sobolev spaces, distributions)---which may depend upon an additional axial coordinate $z$.
In other words, we consider
functions $\hat{f}_k: B_R({\mathbf 0}) \times {\mathbb R}\rightarrow X$ with the property that
$$
\hat{f}_k(r\cos\theta,r\sin\theta,z)=\e^{\i k \theta}f_k(r,z), \qquad r \in [0,R),\ \theta \in {\mathbb T}^1,\ z \in {\mathbb R},
$$
for some $k \in {\mathbb Z}$ and some $f_k:[0,R) \times {\mathbb R}\rightarrow X$ with $f_k(0,z)=0$ for $k \neq 0$; the definitions of the various function spaces are generalised in the obvious fashion.
Furthermore, just as one can define a partial Fourier transform ${\mathcal F}: {\mathscr S}_{(k)}(B_R(\mathbf{0}) \times {\mathbb R};X) \rightarrow  {\mathscr S}_{(k)}(B_R(\mathbf{0}) \times {\mathbb R};X)$ by the formula
$${\mathcal F}[f_k](r,\xi) = \frac{1}{\sqrt{2\pi}} \int_{\mathbb{R}} f_k(r,z) \e^{-\i \xi z}\dz,$$
one can define a partial Hankel transform ${\mathcal H}_k: {\mathscr S}_{(k)}((0,\infty) \times {\mathbb R};X)\rightarrow {\mathscr S}_{(k)}((0,\infty) \times {\mathbb R};X)$ by the formula
$$\mathcal{H}_{k}[f_k](\rho,z) := \int_{0}^{\infty} f_k(r,z) J_{k}(\rho \,r) \,r\dr.$$

In Section \ref{DVP} we present a typical application of our theory by considering the linear boundary-value problem
\begin{align*}
       \Delta \hat{u}_k ={}& \hat{f}_k , \qquad 0<|(x,y)|<1,\\
       \hat{u}_k ={}& \hat{g}_k , \qquad |(x,y)|=1,
\end{align*}
for the mode $k$ function $\hat{u}_k=\e^{\i k \theta}u_k(r,z)$ in the cylindrical domain $B_1({\mathbf 0}) \times {\mathbb R}$,
where $\hat{f}_k=\e^{\i k \theta}f_k(r,z)$ and $\hat{g}_k=\e^{\i k \theta}g_k(z)$ are given mode $k$ functions.
More precisely, we consider the equivalent boundary-value problem
\begin{align}
        (\mathcal{D}_{1-k}\mathcal{D}_{k} + \partial_z^2 )u_k ={}& f_k , \qquad 0<r<1, \label{bvp:rzn1 - intro} \\
        u_k ={}& g_k , \qquad r=1, \label{bvp:rzn2 - intro}
\end{align}
for $u_k$ in the domain $(0,1) \times {\mathbb R}$.

First suppose that $f_k \in {\mathscr S}_{(k)}((0,1) \times {\mathbb R};\mathbb{C})$ and $g_k \in {\mathscr S}({\mathbb R};\mathbb{C})$.
Applying the partial Fourier transform  to \eqref{bvp:rzn1 - intro}, \eqref{bvp:rzn2 - intro} yields the two-point boundary-value problem
\begin{align*}
        (\mathcal{D}_{1-k}\mathcal{D}_k - |\xi|^2 )\check{u}_k ={}& \check{f}_k, \qquad 0<r<1, \\
        \check{u}_k ={}& \check{g}_k , \qquad r=1,
\end{align*}
where $\check{u}_k$, $\check{f}_k$ and $\check{g}_k$ denote the partial Fourier transforms of $u_k$, $f_k$ and $g_k$ respectively. The solution to
\eqref{bvp:rzn1 - intro}, \eqref{bvp:rzn2 - intro} is therefore
\begin{equation}
u_k=\mathcal{G}_k(f_k)  + \mathcal{B}_k(g_k), \label{eq:integral solution - intro}
\end{equation}
in which
$$
    \mathcal{G}_k(f_k) := \mathcal{F}^{-1}\left[\int_0^1 G_k(r,\rho)\,\check{f}_k(\rho)\,\rho\,\mathrm{d}\rho\right],\qquad \mathcal{B}_k(g_k) := \mathcal{F}^{-1}\left[\tilde{\mathcal D}_{k}G_k(r,1)\,\check{g}_k\right]
$$
(and we again denote Bessel operators with respect to $r$ and $\rho$ by respectively ${\mathcal D}_k^i$ and $\tilde{\mathcal D}_k^i$);
the Green's function $G_k(r,\rho)$ is given by
\begin{equation*}
    G_k(r,\rho) = \begin{cases}
        \displaystyle-I_{k}(|\xi|r) \left( K_{k}(|\xi|\rho) - \frac{K_{k}(|\xi|)}{I_{k}(|\xi|)}I_{k}(|\xi|\rho)\right), & 0<r<\rho,\\[2mm]
        \displaystyle-I_{k}(|\xi|\rho)\left(K_{k}(|\xi|r) - \frac{K_{k}(|\xi|)}{I_{k}(|\xi|)}I_{k}(|\xi|r)\right), & \rho<r<1,
    \end{cases}
\end{equation*}
where $I_k$, $K_k$ are modified Bessel functions of the first and second kind respectively. In Section \ref{DVP} we establish the following result for the operators ${\mathcal G}_k$
and ${\mathcal B}_k$; the main ingredient in the proof is a set of integral identities for modified Bessel functions (see equations \eqref{Block1}--\eqref{Block4K}).

\begin{lemma}  \label{props of Gk, Bk}
Suppose that $m \in {\mathbb N}_0$.
\begin{itemize}
\item[(i)]
The function ${\mathcal G}_k(f_k)$ belongs to $H_{(k)}^{m+2}((0,1) \times {\mathbb R};{\mathbb C})$ with
$$\|{\mathcal G}_k(f_k)\|_{H^{m+2}_{(k)}}\lesssim\|f_{k}\|_{H^{m}_{(k)}},$$
such that ${\mathcal G}_k$ extends by density to a bounded linear operator $H_{(k)}^m((0,1) \times {\mathbb R};{\mathbb C}) \rightarrow H_{(k)}^{m+2}((0,1) \times {\mathbb R};{\mathbb C})$.
\item[(ii)]
The function ${\mathcal B}_k(g_k)$ belongs to $H_{(k)}^{m+2}((0,1) \times {\mathbb R};{\mathbb C})$ with
$$\|{\mathcal B}_k(g_k)\|_{H^{m+1}_{(k)}} \lesssim \|g_k\|_{H^{m+\frac{1}{2}}}, \qquad \|{\mathcal B}_k(g_k)\|_{H^{m+2}_{(k)}} \lesssim \|g_k\|_{H^{m+\frac{3}{2}}}$$
such that ${\mathcal B}_k$ extends by density to a bounded linear operator $H^{m+\frac{1}{2}}({\mathbb R};{\mathbb C}) \rightarrow H_{(k)}^{m+1}((0,1) \times {\mathbb R};{\mathbb C})$ and
$H^{m+\frac{3}{2}}({\mathbb R};{\mathbb C}) \rightarrow H_{(k)}^{m+2}((0,1) \times {\mathbb R};{\mathbb C})$.
\end{itemize}
\end{lemma}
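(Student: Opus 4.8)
The plan is to pass to the Fourier side in the axial variable $z$, where the cylindrical problem decouples into a one-parameter family of two-point boundary-value problems indexed by $\xi$, and to prove the stated inequalities uniformly in $\xi$ before integrating back. Since the partial Fourier transform is an isometry on $L_1^2((0,1)\times{\mathbb R};{\mathbb C})$ and the Bessel operators ${\mathcal D}_k$ act only in $r$, Plancherel shows that $\|\cdot\|_{H^M_{(k)}}$ on the cylinder is equivalent to $\bigl(\int_{\mathbb R}N_M(\check v,\xi)^2\,\mathrm{d}\xi\bigr)^{1/2}$, where $N_M(\check v,\xi)^2:=\sum_{p=0}^M(1+|\xi|^2)^{M-p}\sum_{i=0}^p\|{\mathcal D}_{-k+i}^{\,p-i}{\mathcal D}_k^i\check v\|_{L_1^2((0,1))}^2$ collects the order-$p$ $r$-Bessel derivatives with a weight $(1+|\xi|^2)^{M-p}$ that accounts, via $\sum_{c=0}^{M-p}|\xi|^{2c}\asymp(1+|\xi|^2)^{M-p}$, for the missing $z$-derivatives. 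It therefore suffices to establish, for each fixed $\xi$, the bound $N_{m+2}(\check u_k,\xi)\lesssim N_m(\check f_k,\xi)$ in part (i), and the bounds $N_{m+1}(\check u_k,\xi)\lesssim(1+|\xi|^2)^{(2m+1)/4}|\check g_k(\xi)|$ and $N_{m+2}(\check u_k,\xi)\lesssim(1+|\xi|^2)^{(2m+3)/4}|\check g_k(\xi)|$ in part (ii), after which the assertions follow by integrating in $\xi$.

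For part (i) the source of the two-derivative gain is the equation itself: since $({\mathcal D}_{1-k}{\mathcal D}_k-|\xi|^2)\check u_k=\check f_k$, the top two $r$-Bessel derivatives of $\check u_k$ are recovered from $\check f_k+|\xi|^2\check u_k$, and the commutation relation ${\mathcal D}_\ell^n{\mathcal D}_k^m={\mathcal D}_{k+n}^m{\mathcal D}_{\ell-m}^n$ lets one convert the estimate of any order-$p$ derivative of $\check u_k$ into an estimate of $\check f_k$ together with lower-order derivatives, thereby reducing $N_{m+2}(\check u_k,\xi)$ to the base bound $N_2(\check u_k,\xi)\lesssim\|\check f_k\|_{L_1^2}$ applied to $\check u_k$ and to the mode-shifted problems solved by its derivatives. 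The base bound is proved by estimating the kernel operator $\check f_k\mapsto\int_0^1 G_k(r,\rho)\check f_k(\rho)\rho\,\mathrm{d}\rho$: I would compute the $r$-Bessel derivatives of $G_k$ by repeated use of ${\mathcal D}_k I_k(|\xi|r)=|\xi|I_{k-1}(|\xi|r)$, ${\mathcal D}_{-k}I_k(|\xi|r)=|\xi|I_{k+1}(|\xi|r)$ and their $K_k$ analogues (with the opposite sign), and then feed the products and quotients of $I_k,K_k$ so produced into a Schur test, using the integral identities \eqref{Block1}--\eqref{Block4K} to obtain their exact $L_1^2((0,1))$ sizes and hence operator norms of the correct order in $(1+|\xi|^2)^{-1}$ uniformly in $\xi$.

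For part (ii) I would first collapse the boundary kernel: applying the $\rho$-recurrences to the explicit $G_k$ and invoking the Wronskian identity $I_{k-1}(x)K_k(x)+I_k(x)K_{k-1}(x)=x^{-1}$ shows that $\tilde{\mathcal D}_k G_k(r,1)=I_k(|\xi|r)/I_k(|\xi|)$, so that ${\mathcal B}_k(g_k)={\mathcal F}^{-1}[\,I_k(|\xi|r)/I_k(|\xi|)\,\check g_k\,]$ is simply the harmonic mode $k$ extension of the datum. Its order-$p$ $r$-Bessel derivative is $|\xi|^p I_{k'}(|\xi|r)/I_k(|\xi|)$ for a shifted index $k'$, and the integral identities \eqref{Block1}--\eqref{Block4K} give $\|{\mathcal D}^{(p)}[I_k(|\xi|\cdot)/I_k(|\xi|)]\|_{L_1^2((0,1))}^2=|\xi|^{2p}\,\|I_{k'}(|\xi|\cdot)\|_{L_1^2}^2/I_k(|\xi|)^2$, the last ratio being of order $|\xi|^{-1}$ for large $|\xi|$ and bounded near $\xi=0$, hence of overall order $|\xi|^{2p-1}$. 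Summing against the weights $(1+|\xi|^2)^{M-p}$ in $N_M$, the top value $p=M$ dominates and yields $N_{m+1}^2\lesssim(1+|\xi|^2)^{m+1/2}$ and $N_{m+2}^2\lesssim(1+|\xi|^2)^{m+3/2}$; the loss of exactly half a power — the trace gain — is produced precisely by the factor $|\xi|^{-1}$ in that ratio of norms.

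The decisive and most delicate step throughout is the derivation of the identities \eqref{Block1}--\eqref{Block4K} and their use to obtain bounds uniform in $\xi$ on all of ${\mathbb R}$. For large $|\xi|$ the functions $I_k,K_k$ grow or decay exponentially, so the required polynomial-in-$|\xi|$ behaviour emerges only after the exponential factors in products and quotients such as $I_{k'}(|\xi|\cdot)/I_k(|\xi|)$ and $K_k(|\xi|)/I_k(|\xi|)$ cancel, and the half-integer powers in part (ii) must be tracked exactly; for small $|\xi|$ one must instead verify that the quotients appearing in $G_k$ remain bounded and introduce no spurious singularity at $\xi=0$. Once these uniform bounds are secured, integration in $\xi$ delivers the claimed inequalities on ${\mathscr S}_{(k)}$ (respectively ${\mathscr S}({\mathbb R})$), and density then furnishes the asserted bounded extensions.
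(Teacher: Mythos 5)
Your overall architecture --- partial Fourier transform in $z$, reduction via Plancherel to bounds uniform in $\xi$ for the family of two-point problems, Schur-type kernel estimates for the Green's operator resting on the integral identities \eqref{Block1}--\eqref{Block4K}, and the collapse of the boundary kernel to $I_k(|\xi|r)/I_k(|\xi|)$ via the Wronskian followed by exact evaluation of $\int_0^1 I_{k'}(|\xi|r)^2\,r\,\mathrm{d}r=\tfrac12\bigl(I_{k'}^2-I_{k'-1}I_{k'+1}\bigr)(|\xi|)$ --- is exactly the paper's (Lemmata \ref{lem:G(f_k)-est}, \ref{lem:DG(f_k)-est}, \ref{lem:D2G(f_k)-est}, \ref{lem:Hthm1}, \ref{lem:Hthm2}). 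Your part (ii) is in fact a legitimate streamlining: the paper proves only the $m=0$ case directly and reaches higher $m$ through the induction of Theorem \ref{thm:un-est;m}, whereas your direct computation of the order-$p$ Bessel derivatives $|\xi|^p I_{k+p-2i}(|\xi|r)/I_k(|\xi|)$ of the Poisson-type kernel, with the uniform bound $\lesssim(1+\xi^2)^{p-\frac12}$, closes part (ii) for all $m$ in one stroke. (One imprecision: when $|k+p-2i|<k$ the ratio of norms alone blows up like a negative power of $|\xi|$ as $\xi\to0$; it is only the product with $|\xi|^{2p}$ that stays bounded, though your final exponent is correct.)

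The genuine gap is in part (i) at the pure top-order derivatives. Your bootstrap via the equation handles only the mixed derivatives: ${\mathcal D}^{n-i}_{-k+i}{\mathcal D}^i_k$ with $1\le i\le n-1$ factors through ${\mathcal D}_{1-k}{\mathcal D}_k$ (in the Wirtinger picture $\partial_{\bar\zeta}^{n-i}\partial_\zeta^i=\partial_{\bar\zeta}^{n-i-1}\partial_\zeta^{i-1}(\partial_{\bar\zeta}\partial_\zeta)$), but the two pure derivatives ${\mathcal D}^n_k$ and ${\mathcal D}^n_{-k}$ at each order $3\le n\le m+2$ do not. For these you appeal to ``the mode-shifted problems solved by its derivatives'', i.e.\ ${\mathcal D}_{\pm k}\check u_k$ solves the mode-$(k\mp1)$ problem with right-hand side ${\mathcal D}_{\pm k}\check f_k$ --- but that problem has \emph{inhomogeneous} Dirichlet data, since ${\mathcal D}_{\pm k}{\mathcal G}_k(f_k)|_{r=1}\neq 0$ in general, so the base bound $N_2(\cdot,\xi)\lesssim\|\check f_k\|_{L_1^2}$, proved for the homogeneous Green's solution, cannot simply be applied to it. A complete argument must estimate this trace: in the paper it is the datum $g_k^{\pm}$ with Fourier transform $\int_0^1 I_k(|\xi|\rho)I_k(|\xi|)^{-1}\check f_k(\rho)\,\rho\,\mathrm{d}\rho+\cdots$, and the bound $\|g_k^{\pm}\|_{H^{m+\frac12}}\lesssim\|f_k\|_{H^m_{(k)}}+\|g_k\|_{H^{m+\frac32}}$ is established in \eqref{eq:oldnew3} by Cauchy--Schwarz against the same exact integral of $I_k^2$; this couples parts (i) and (ii) in the induction, which your plan treats as independent. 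The omission is repairable with tools already on your table --- Cauchy--Schwarz plus the exact integral gives $\bigl|{\mathcal D}_{\pm k}\check u_k(1,\xi)\bigr|\lesssim(1+\xi^2)^{-\frac14}\|\check f_k(\cdot,\xi)\|_{L_1^2}$, after which your part (ii) machinery absorbs the boundary contribution --- but as written the inductive step for part (i) does not close; the alternative of estimating ${\mathcal D}^n_{\pm k}G_k$ directly for all $n$ would require Block-type identities beyond \eqref{Block1}--\eqref{Block4K}, which cover only the combinations needed through second order.
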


Altogether we have established the following result.
\begin{theorem}
Suppose that $m \in {\mathbb N}_0$.
For each $f_k \in H^{m}_{(k)}((0,1) \times {\mathbb R};{\mathbb C})$ and $g_k \in H^{m+\frac{3}{2}}({\mathbb R};{\mathbb C})$ the boundary-value problem \eqref{bvp:rzn1 - intro}, \eqref{bvp:rzn2 - intro} has a unique solution
$u_k \in H_{(k)}^{m+2}((0,1) \times {\mathbb R};{\mathbb C})$ which depends continuously upon $f_k$ and $g_k$.
\end{theorem}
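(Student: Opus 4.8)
The plan is to read the solution off the representation \eqref{eq:integral solution - intro} together with Lemma \ref{props of Gk, Bk}, and then to establish uniqueness separately. I would set
$$u_k := {\mathcal G}_k(f_k) + {\mathcal B}_k(g_k).$$
By Lemma \ref{props of Gk, Bk}(i) the first term lies in $H_{(k)}^{m+2}((0,1) \times {\mathbb R};{\mathbb C})$ with $\|{\mathcal G}_k(f_k)\|_{H^{m+2}_{(k)}} \lesssim \|f_k\|_{H^m_{(k)}}$, and by part (ii) the second term lies in the same space with $\|{\mathcal B}_k(g_k)\|_{H^{m+2}_{(k)}} \lesssim \|g_k\|_{H^{m+\frac{3}{2}}}$. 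Hence $u_k \in H_{(k)}^{m+2}((0,1) \times {\mathbb R};{\mathbb C})$ and
$$\|u_k\|_{H^{m+2}_{(k)}} \lesssim \|f_k\|_{H^m_{(k)}} + \|g_k\|_{H^{m+\frac{3}{2}}}.$$
Since $(f_k,g_k) \mapsto u_k$ is linear, this estimate is precisely the asserted continuous dependence on the data.

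It then remains to check that this $u_k$ genuinely solves \eqref{bvp:rzn1 - intro}, \eqref{bvp:rzn2 - intro}. For Schwartz-class data $f_k \in {\mathscr S}_{(k)}((0,1) \times {\mathbb R};{\mathbb C})$, $g_k \in {\mathscr S}({\mathbb R};{\mathbb C})$ this holds by construction, since \eqref{eq:integral solution - intro} was derived from the partial Fourier transform and the explicit Green's function precisely so as to solve the boundary-value problem. For general data I would argue by density: choosing Schwartz-class sequences $f_k^{(j)} \to f_k$ in $H_{(k)}^m$ and $g_k^{(j)} \to g_k$ in $H^{m+\frac{3}{2}}$, the corresponding solutions converge to $u_k$ in $H_{(k)}^{m+2}$ by the estimate above. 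The identity \eqref{bvp:rzn1 - intro} then passes to the limit because ${\mathcal D}_{1-k}{\mathcal D}_k + \partial_z^2$ is bounded from $H_{(k)}^{m+2}$ into $H_{(k)}^m$ (immediate from the definition of these spaces via the Bessel operators), while \eqref{bvp:rzn2 - intro} passes to the limit because the trace map $u_k \mapsto u_k|_{r=1}$ is bounded from $H_{(k)}^{m+2}$ into $H^{m+\frac{3}{2}}$ (the trace theorem of Section \ref{Sobolev FS}). Thus $u_k$ solves the equation in $H_{(k)}^m$ and attains its boundary value in $H^{m+\frac{3}{2}}$.

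For uniqueness I would consider the difference $w_k \in H_{(k)}^{m+2}$ of two solutions, which satisfies the homogeneous problem $({\mathcal D}_{1-k}{\mathcal D}_k + \partial_z^2)w_k = 0$ in $(0,1) \times {\mathbb R}$ with $w_k|_{r=1}=0$. Transferring to the associated mode $k$ function through the isometric isomorphism $w_k \mapsto \hat{w}_k$, this is equivalent to $\hat{w}_k \in \hat{H}_{(k)}^{m+2}(B_1(\mathbf{0}) \times {\mathbb R};{\mathbb C})$ being harmonic with vanishing trace on $\partial B_1(\mathbf{0}) \times {\mathbb R}$, whence the classical energy identity $\int_{B_1(\mathbf{0}) \times {\mathbb R}} |\nabla \hat{w}_k|^2 = 0$ forces $\hat{w}_k = 0$ and hence $w_k = 0$. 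Alternatively one may apply the partial Fourier transform in $z$ and treat the resulting ODE $({\mathcal D}_{1-k}{\mathcal D}_k - |\xi|^2)\check{w}_k = 0$ for a.e.\ $\xi$: its solutions are spanned by $I_k(|\xi|r)$ and $K_k(|\xi|r)$ (by the powers $r^{\pm k}$, or by $1$ and $\log r$ when $k=0$, in the degenerate case $\xi = 0$), and the origin-regularity encoded in membership of $H_{(k)}^{m+2}$ eliminates the singular solution while the boundary condition at $r=1$ eliminates the remaining one. I expect this uniqueness step to be the main obstacle, since existence and the estimate are handed to us directly by Lemma \ref{props of Gk, Bk}: the care lies in justifying the vanishing of the boundary contributions at $r=0$ and as $z \to \pm\infty$ (equivalently, that the origin-regularity built into $H_{(k)}^{m+2}$ rules out the singular Bessel mode) and that Green's identity is legitimate on the unbounded cylinder for $H^1$-functions.
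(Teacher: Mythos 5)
Your proposal is correct, and its existence half is exactly the paper's: there too the solution is defined by \eqref{eq:integral solution - intro}, with membership in $H^{m+2}_{(k)}$ and the bound $\|u_k\|_{H^{m+2}_{(k)}} \lesssim \|f_k\|_{H^{m}_{(k)}} + \|g_k\|_{H^{m+\frac{3}{2}}}$ read off Lemma \ref{props of Gk, Bk}, which the paper establishes by induction on $m$ from the case $m=0$ (Theorem \ref{lo estimates}). Where you genuinely diverge is in showing that the formula solves the problem and in proving uniqueness. The paper builds a weak-solution framework: the Lax--Milgram lemma, with coercivity supplied by the radial Poincar\'{e} inequality
$$\int_0^1 w^2\, r\,\mathrm{d}r \leq \tfrac{1}{8}\int_0^1 \left(({\mathcal D}_k w)^2 + ({\mathcal D}_{-k}w)^2\right) r\,\mathrm{d}r$$
for functions vanishing at $r=1$, yields a unique weak solution for every $f_k \in L_1^2$, $g_k \in H^{\frac{1}{2}}$; a corollary identifies this weak solution with \eqref{eq:integral solution - intro} by density; and since any solution in $H^{m+2}_{(k)}$ is in particular a weak solution (Remark \ref{rem:strong soln}), uniqueness is inherited. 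You instead verify the strong equation and boundary condition directly by density --- sound, since ${\mathcal D}_{1-k}{\mathcal D}_k+\partial_z^2$ is indeed bounded $H^{m+2}_{(k)} \to H^{m}_{(k)}$ (compose Proposition \ref{Prop:Comm} with the definitions, or note $\Delta = 2\partial_\zeta\partial_{\bar{\zeta}}+\partial_z^2$ in the hat picture) and the trace map is bounded by Lemma \ref{lem:trace} --- and you prove uniqueness by an energy identity for the harmonic difference, or alternatively by the $I_k$/$K_k$ analysis of the transformed ordinary differential equation. Your energy identity is in substance the paper's coercivity computation tested against the difference itself, so the two mechanisms are equivalent; the practical trade-off is that the paper's route buys a genuine well-posedness theory at the lower regularity $f_k \in L_1^2$, $g_k \in H^{\frac{1}{2}}$ (a result it states independently) and sidesteps boundary-term justifications because everything happens in $H^1_{(k)0}$, where test functions are dense by definition, whereas your route is shorter and self-contained for this one theorem but must justify Green's identity on the unbounded cylinder --- precisely the points you flag, and they do go through: the lateral term vanishes by the zero trace, and since $\hat{w}_k\,\overline{\partial_z \hat{w}_k} \in L^1(B_1(\mathbf{0})\times{\mathbb R};{\mathbb C})$ the contributions of the caps $\{z=\pm Z_j\}$ vanish along a sequence $Z_j \to \infty$, whence $\nabla \hat{w}_k = 0$ and the zero trace forces $\hat{w}_k = 0$. (If you pursue the ODE alternative instead, you additionally need that $\check{w}_k(\cdot,\xi)$ is a classical solution for a.e.\ $\xi$, e.g.\ via one-dimensional elliptic regularity, before the span argument applies; note also that $K_0(|\xi|r)$ itself lies in $L_1^2$ near $r=0$, so for $k=0$ it is the control of the first derivative encoded in $H^{m+2}_{(k)}$, not mere square-integrability, that eliminates the singular solution --- your phrase ``origin-regularity'' is correct but deserves this precision.)
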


We also develop a theory of weak solutions for \eqref{bvp:rzn1 - intro}, \eqref{bvp:rzn2 - intro}.

\begin{definition}
Suppose that $f_k \in L_1^2((0,1) \times {\mathbb R};{\mathbb C})$ and
$g_k \in H^\frac{1}{2}({\mathbb R}; {\mathbb C})$.
A \underline{weak solution} of \eqref{bvp:rzn1 - intro}, \eqref{bvp:rzn2 - intro}
is a function $u_k \in H^1_{(k)}((0,1) \times {\mathbb R};{\mathbb C})$ which satisfies
\eqref{bvp:rzn2 - intro} in $H^\frac{1}{2}({\mathbb R}; {\mathbb C})$ and
$$
\tfrac{1}{2}\langle {\mathcal D}_k{u}_k, {\mathcal D}_k{\phi}_k \rangle_{L_1^2} + \tfrac{1}{2}\langle{\mathcal D}_{-k} {u}_k, {\mathcal D}_{-k}  {\phi}_k \rangle_{L_1^2}
+ \langle \partial_z{u}_k, \partial_z {\phi}_k \rangle_{L_1^2} + \langle {f}_k, {\phi}_k \rangle_{L_1^2}=0
$$
for all $\phi_k \in H^1_{(k)}((0,1) \times {\mathbb R};{\mathbb C})$ with $\phi_k|_{r=1}=0$ in $H^\frac{1}{2}({\mathbb R}; {\mathbb C})$.
\end{definition}

We show that \eqref{bvp:rzn1 - intro}, \eqref{bvp:rzn2 - intro} has a unique weak solution which is given by \eqref{eq:integral solution - intro}; in view of Lemma \ref{props of Gk, Bk} (with $m=0$) it depends
continuously on $f_k \in L_1^2((0,1) \times {\mathbb R};{\mathbb C})$ and $g_k \in H^\frac{1}{2}({\mathbb R}; {\mathbb C})$.

We note that natural continuations of our work---such as treatment of vector-valued functions, convergence of Fourier series of mode $k$ functions and a theory for Nemytskii operators in radial Sobolev and Hankel spaces---are deferred to later studies in order to keep the present article concise.

\subsection*{Acknowledgements}

This material is based upon work supported by the Swedish Research Council under grant no.\ 2021-06594 while the authors were in residence at Institut Mittag-Leffler in Djursholm, Sweden during October 2023.
DJH gratefully acknowledges support from the Alexander von Humboldt Foundation.

\section{Radial differential operators}\label{s:DiffOp}

In this section we introduce, and discuss the basic properties of, our radial differential operators. Let $X$ be a complex Banach space and
$\hat{f}_k: {\mathbb R}^2 \times {\mathbb R}\rightarrow X$ be a function with the property that
\begin{equation}
\hat{f}_k(r\cos\theta,r\sin\theta,z)=\e^{\i k \theta}f_k(r,z), \qquad r \in [0,\infty),\ \theta \in {\mathbb T}^1,\ z \in {\mathbb R}, \label{eq:mode k defn}
\end{equation}
for some $k \in {\mathbb Z}$ and some $f_k:[0,\infty) \times {\mathbb R}\rightarrow X$ with $f_k(0,z)=0$ for $k \neq 0$.
We refer to such functions as \emph{mode $k$ functions}.

\begin{remarks} \label{rem:radialcoeffs}
$ $
\begin{itemize}
\item[(i)]
The \underline{radial coefficient} $f_0: [0,\infty) \times {\mathbb R} \rightarrow X$ of a mode $0$ function $\hat{f}_0$ obviously satisfies
$f_0(0,z)=\hat{f}_0(\mathbf{0},z)$.
The same is true for $k \neq 0$ since $f_k(0,z)=0$ implies that $\hat{f}_k(\mathbf{0},z)=0$.
\item[(ii)]
The radial coefficient $f_k: [0,\infty) \times {\mathbb R} \rightarrow X$ of the mode $k$ function $\hat{f}_k(x,y,z)$ is the radial coefficient of the
mode $-k$ function $\hat{f}_k(x,-y,z)$.
\end{itemize}
\end{remarks}

Assuming sufficient differentiability, let us consider the effect of applying Wirtinger-type complex differential operators $\partial_{\overline{\zeta}}^{n-i}\partial_{\zeta}^{i}$ with
$$
\partial_{\zeta} := \tfrac{1}{\sqrt{2}}(\partial_{x} - \mathrm{i}\partial_{y}), \qquad \partial_{\overline{\zeta}} := \tfrac{1}{\sqrt{2}}(\partial_{x} + \mathrm{i}\partial_{y})
$$
to mode $k$ functions. For this purpose we introduce the \emph{$k$-index Bessel operator}
$$
    \mathcal{D}_{k} := r^{-k} \frac{\mathrm{d}}{\mathrm{d}r} r^{k} = \frac{\mathrm{d}}{\mathrm{d}r} + \frac{k}{r},
$$
and
$$
    \mathcal{D}^{n}_{k} := r^{-k+n} \left(\frac{1}{r}\frac{\mathrm{d}}{\mathrm{d}r}\right)^{n} r^{k} = \mathcal{D}_{k-(n-1)}\mathcal{D}_{k-(n-2)}\dots\mathcal{D}_{k-1}\mathcal{D}_{k},
$$
for $k\in\mathbb{Z}$ and $n\in\mathbb{N}_0$. The following proposition shows in particular that
the generic radial differential operator is the generalised Bessel operator $\mathcal{D}_{-k+i}^{n-i}\mathcal{D}_{k}^{i}$.

\begin{proposition}\label{Prop:Dk}
Let $\hat{f}_{k}:\mathbb{R}^{2} \times {\mathbb R} \to X$ be a mode $k$ function with radial coefficient
$f_k:[0,\infty) \times {\mathbb R} \rightarrow X$.
It follows that
$$
       \partial_{\zeta}\hat{f}_{k}= \mathrm{e}^{\mathrm{i}(k-1)\theta}\tfrac{1}{\sqrt{2}}\mathcal{D}_{k}f_k, \qquad \partial_{\overline{\zeta}}\hat{f}_{k} = \mathrm{e}^{\mathrm{i}(k+1)\theta}\tfrac{1}{\sqrt{2}}\mathcal{D}_{-k}f_k,
$$
 and more generally
$$
        \partial_{\overline{\zeta}}^{n-i}\partial^{i}_{\zeta}\hat{f}_{k} = \mathrm{e}^{\mathrm{i}(k+n-2i)\theta}\,2^{-\frac{n}{2}}\mathcal{D}_{-k+i}^{n-i}\mathcal{D}^{i}_{k}f_k
$$
for $n \in {\mathbb N}_0$ and $0 \leq i \leq n$.
\end{proposition}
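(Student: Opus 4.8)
The plan is to establish the two first-order formulas by a direct change of variables and then to obtain the general formula by induction on $n$, the induction being driven by the fact that $\partial_\zeta$ and $\partial_{\overline{\zeta}}$ each shift the mode by one unit.

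First I would express the Wirtinger operators in polar coordinates. Writing $x=r\cos\theta$, $y=r\sin\theta$ and applying the chain rule (valid pointwise for $r>0$, where $(r,\theta)\mapsto(x,y)$ is a local diffeomorphism) gives $\partial_x=\cos\theta\,\partial_r-\tfrac{\sin\theta}{r}\partial_\theta$ and $\partial_y=\sin\theta\,\partial_r+\tfrac{\cos\theta}{r}\partial_\theta$. Forming the combinations $\partial_x\mp\i\partial_y$ and collecting the resulting factors of $\e^{\mp\i\theta}$ yields
$$\partial_\zeta=\tfrac{1}{\sqrt 2}\e^{-\i\theta}\Bigl(\partial_r-\tfrac{\i}{r}\partial_\theta\Bigr),\qquad \partial_{\overline{\zeta}}=\tfrac{1}{\sqrt 2}\e^{\i\theta}\Bigl(\partial_r+\tfrac{\i}{r}\partial_\theta\Bigr).$$
Applying these to $\hat f_k=\e^{\i k\theta}f_k$ and using $\partial_\theta\e^{\i k\theta}=\i k\e^{\i k\theta}$, the angular derivative produces exactly the $\pm k/r$ term needed to convert $\partial_r$ into $\mathcal D_{\pm k}=\partial_r\pm\tfrac{k}{r}$, giving the two claimed first-order identities.

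The key structural point, which makes the induction work, is that these first-order formulas hold for \emph{every} index: $\partial_\zeta$ maps a mode $\ell$ function $\e^{\i\ell\theta}g_\ell$ to the mode $\ell-1$ function $\e^{\i(\ell-1)\theta}\tfrac{1}{\sqrt2}\mathcal D_\ell g_\ell$, while $\partial_{\overline{\zeta}}$ maps it to the mode $\ell+1$ function $\e^{\i(\ell+1)\theta}\tfrac{1}{\sqrt2}\mathcal D_{-\ell}g_\ell$ (each step contributing a further factor $2^{-1/2}$, so that the constant $2^{-n/2}$ accumulates). I would then induct on $n$: assuming the formula for $\partial_{\overline{\zeta}}^{n-i}\partial_\zeta^i\hat f_k$ for all $0\le i\le n$, I reach order $n+1$ in two ways. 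For $0\le j\le n$ I apply $\partial_{\overline{\zeta}}$ to the order-$n$, index-$j$ identity; the intermediate function has mode $k+n-2j$, so the new operator is $\mathcal D_{-(k+n-2j)}$, and the definition $\mathcal D_\ell^{m+1}=\mathcal D_{\ell-m}\mathcal D_\ell^m$ with $\ell=-k+j$, $m=n-j$ gives $\mathcal D_{-(k+n-2j)}\mathcal D_{-k+j}^{n-j}=\mathcal D_{-k+j}^{\,n+1-j}$, which is precisely the claimed operator at order $n+1$, index $j$. For the remaining case $j=n+1$ I apply $\partial_\zeta$ to $\partial_\zeta^n\hat f_k$; the intermediate mode is $k-n$, and the identity $\mathcal D_{k-n}\mathcal D_k^n=\mathcal D_k^{n+1}$ (again the definition of the iterated operator) closes the induction. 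The exponents match automatically, since attaching one $\partial_{\overline{\zeta}}$ raises $k+n-2j$ to $k+(n+1)-2j$ and attaching one $\partial_\zeta$ lowers $k-n$ to $k-(n+1)$.

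I expect the main obstacle to be purely computational rather than conceptual: getting the base-case change of variables exactly right, including the signs of the $\e^{\mp\i\theta}$ prefactors and the cancellation that turns $-\tfrac{\i}{r}\partial_\theta$ (respectively $+\tfrac{\i}{r}\partial_\theta$) into $+\tfrac{k}{r}$ (respectively $-\tfrac{k}{r}$). Once the two first-order formulas are in place the induction is bookkeeping, and---because I build each order-$(n+1)$ operator by appending a single factor on the outside in the canonical order---the composite Bessel operators collapse to the stated iterated form using only the definition of $\mathcal D_k^n$, so neither commutativity of $\partial_\zeta,\partial_{\overline{\zeta}}$ nor the general commutation relation $\mathcal D_\ell^n\mathcal D_k^m=\mathcal D_{k+n}^m\mathcal D_{\ell-m}^n$ is needed (although the latter provides an alternative route that reconciles the two orderings at a general index $0\le j\le n+1$). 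Throughout, the identities are understood pointwise for $r>0$ and extend to $r=0$ by the continuity built into the mode $k$ structure.
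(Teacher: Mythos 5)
Your proof is correct and takes essentially the same route as the paper: the polar-coordinate expressions $\partial_{\zeta} = \mathrm{e}^{-\mathrm{i}\theta}\tfrac{1}{\sqrt{2}}(\partial_{r} - \tfrac{\mathrm{i}}{r}\partial_{\theta})$, $\partial_{\overline{\zeta}} = \mathrm{e}^{\mathrm{i}\theta}\tfrac{1}{\sqrt{2}}(\partial_{r} + \tfrac{\mathrm{i}}{r}\partial_{\theta})$ yield the first-order identities at every mode index, and the general case follows by iteration, with the composite Bessel operators collapsing through the defining recursion $\mathcal{D}_{\ell}^{m+1}=\mathcal{D}_{\ell-m}\mathcal{D}_{\ell}^{m}$. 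The only cosmetic difference is organizational: the paper first iterates the pure powers $\partial_{\zeta}^{n}$ and $\partial_{\overline{\zeta}}^{n}$ separately and then applies $\partial_{\overline{\zeta}}^{n-i}$ to the mode $k-i$ function $\partial_{\zeta}^{i}\hat{f}_{k}$, whereas you run a single induction on the total order, appending one outer factor at a time; the underlying algebra is identical.
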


\begin{proof}
Observe that in polar coordinates
$$
  \partial_{\zeta} = \mathrm{e}^{-\mathrm{i}\theta}\tfrac{1}{\sqrt{2}}(\partial_{r} - \tfrac{\mathrm{i}}{r}\partial_{\theta}), \qquad \partial_{\overline{\zeta}} = \mathrm{e}^{\mathrm{i}\theta}\tfrac{1}{\sqrt{2}}(\partial_{r} + \tfrac{\mathrm{i}}{r}\partial_{\theta}),
$$
such that
\begin{equation*}
    \begin{split}
        \partial_{\zeta}\hat{f}_{k} ={}& \mathrm{e}^{-\mathrm{i}\theta}\tfrac{1}{\sqrt{2}}(\partial_{r} - \tfrac{\mathrm{i}}{r}\partial_{\theta})\big(\mathrm{e}^{\mathrm{i}k\theta} f_k\big) = \mathrm{e}^{\mathrm{i}(k-1)\theta}\tfrac{1}{\sqrt{2}}(\tfrac{\mathrm{d}}{\mathrm{d}r} + \tfrac{k}{r}) f_k = \mathrm{e}^{\mathrm{i}(k-1)\theta}\tfrac{1}{\sqrt{2}}\mathcal{D}_{k}f_k,\\
        \partial_{\overline{\zeta}}\hat{f}_{k} ={}& \mathrm{e}^{\mathrm{i}\theta}\tfrac{1}{\sqrt{2}}(\partial_{r} + \tfrac{\mathrm{i}}{r}\partial_{\theta})\big(\mathrm{e}^{\mathrm{i}k\theta} f_k\big) = \mathrm{e}^{\mathrm{i}(k+1)\theta}\tfrac{1}{\sqrt{2}}(\tfrac{\mathrm{d}}{\mathrm{d}r} - \tfrac{k}{r}) f_k = \mathrm{e}^{\mathrm{i}(k+1)\theta}\tfrac{1}{\sqrt{2}}\mathcal{D}_{-k}f_k.\\
    \end{split}
\end{equation*}
Furthermore
\begin{equation*}
    \begin{split}
        \partial_{\zeta}^{n}\big(\mathrm{e}^{\mathrm{i}k\theta}f_k\big) ={}& \partial_{\zeta}^{n-1}\big(\mathrm{e}^{\mathrm{i}(k-1)\theta}\tfrac{1}{\sqrt{2}}\mathcal{D}_{k}f_k\big) = \partial_{\zeta}^{n-2}\big(\mathrm{e}^{\mathrm{i}(k-2)\theta}\tfrac{1}{2}\mathcal{D}^{2}_{k}f_k\big) = \dots = \mathrm{e}^{\mathrm{i}(k-n)\theta}2^{-\frac{n}{2}}\mathcal{D}^{n}_{k}f_k,\\
        \partial^{n}_{\overline{\zeta}}\big(\mathrm{e}^{\mathrm{i}k\theta}f_k\big) ={}& \partial^{n-1}_{\overline{\zeta}}\big(\mathrm{e}^{\mathrm{i}(k+1)\theta}\tfrac{1}{\sqrt{2}}\mathcal{D}_{-k}f_k\big) = \partial^{n-2}_{\overline{\zeta}}\big(\mathrm{e}^{\mathrm{i}(k+2)\theta}\tfrac{1}{2}\mathcal{D}^{2}_{-k}f_k\big) = \dots = \mathrm{e}^{\mathrm{i}(k+n)\theta}2^{-\frac{n}{2}}\mathcal{D}^{n}_{-k}f_k
    \end{split}
\end{equation*}
for $n \in {\mathbb N}_0$,
from which it follows that
$$
        \partial_{\overline{\zeta}}^{n-i}\partial^{i}_{\zeta}\big(\mathrm{e}^{\mathrm{i}k\theta}f\big) = \partial_{\overline{\zeta}}^{n-i}\big(\mathrm{e}^{\mathrm{i}(k-i)\theta}2^{-\frac{i}{2}}\mathcal{D}^{i}_{k}f\big) = \mathrm{e}^{\mathrm{i}(k+n-2i)\theta}2^{-\frac{n}{2}}\mathcal{D}_{-k+i}^{n-i}\mathcal{D}^{i}_{k}f.
$$
for $n \in {\mathbb N}_0$ and $0 \leq i \leq n$.
\end{proof}

\begin{remarks} \label{rem:Lap}$ $
\begin{itemize}
\item[(i)]
Notice that
$$\Delta \hat{f}_k = (2\,\partial_\zeta\partial_{\bar{\zeta}} + \partial_z^2)\hat{f}_k = \e^{\i k \theta}\Delta_k f_k,$$
where the \underline{$k$-index radial Laplacian} is given by
$$\Delta_k:=\mathcal{D}_{1-k}\mathcal{D}_k + \partial_z^2= \mathcal{D}_{1+k}\mathcal{D}_{-k}+\partial_z^2.$$
\item[(ii)]
Let us also record the useful formula
$$\mathcal{D}_{-k+i}^{n-i}\mathcal{D}^{i}_{k}J_{k}(r) = r^{k+n-2i}\left(\frac{1}{r}\frac{\mathrm{d}}{\mathrm{d}r}\right)^{n-i}r^{-2(k-i)}\left(\frac{1}{r}\frac{\mathrm{d}}{\mathrm{d}r}\right)^{i}r^{k} J_{k}(r) = (-1)^{n-i}J_{k+n-2i}(r),$$
where $J_k$ is the $k$-index Bessel function of the first kind (see Watson \cite{Watson} and Abramowitz \& Stegun \cite{AbramowitzStegun} for a comprehensive treatment of Bessel functions).
\end{itemize}
\end{remarks}

Although the Wirtinger operators $\partial_{\zeta},\partial_{\overline{\zeta}}$ commute, the
Bessel operators (whose definition naturally extends to real-valued indices) instead satisfy the following commutation relation.

\begin{proposition}\label{Prop:Comm}
    The commutation relation
$$
    \mathcal{D}_{\nu}^{n}\mathcal{D}_{\mu}^{m} = \mathcal{D}_{\mu+n}^{m}\mathcal{D}_{\nu-m}^{n}
$$
 holds for all $\nu,\mu\in\mathbb{R}$ and $m,n\in\mathbb{N}_{0}$.
\end{proposition}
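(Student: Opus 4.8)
The plan is to reduce the general commutation relation to the single elementary case $m=n=1$,
\begin{equation*}
\mathcal{D}_{\nu}\mathcal{D}_{\mu} = \mathcal{D}_{\mu+1}\mathcal{D}_{\nu-1},\qquad \nu,\mu\in\mathbb{R},
\end{equation*}
which I shall call $(\ast)$, and then to bootstrap up to arbitrary $m,n$ by induction. I would prove $(\ast)$ by direct computation: writing $\mathcal{D}_{k}=\partial_{r}+k/r$ and expanding, both sides are seen to equal $\partial_{r}^{2}+\tfrac{\mu+\nu}{r}\partial_{r}+\tfrac{\mu(\nu-1)}{r^{2}}$. The decisive point is that the first-order coefficient depends only on the sum $\mu+\nu$, which is invariant under the substitution $(\nu,\mu)\mapsto(\mu+1,\nu-1)$, while the zeroth-order coefficient matches because $\nu\mu-\mu=(\nu-1)\mu$.

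Before the induction I would record the two elementary ``peeling'' identities that are immediate from the definition $\mathcal{D}_{k}^{n}=\mathcal{D}_{k-n+1}\cdots\mathcal{D}_{k-1}\mathcal{D}_{k}$, namely
\begin{equation*}
\mathcal{D}_{k}^{n}=\mathcal{D}_{k-n+1}\mathcal{D}_{k}^{n-1}=\mathcal{D}_{k-1}^{n-1}\mathcal{D}_{k},
\end{equation*}
which strip a single factor off the left, respectively right, end of a power of a Bessel operator with the correct index shift. The induction then runs in two stages. First I would settle the case $m=1$, that is $\mathcal{D}_{\nu}^{n}\mathcal{D}_{\mu}=\mathcal{D}_{\mu+n}\mathcal{D}_{\nu-1}^{n}$, by induction on $n$: peel the leftmost factor via $\mathcal{D}_{\nu}^{n}=\mathcal{D}_{\nu-n+1}\mathcal{D}_{\nu}^{n-1}$, apply the inductive hypothesis to $\mathcal{D}_{\nu}^{n-1}\mathcal{D}_{\mu}$, use $(\ast)$ to swap the resulting adjacent pair $\mathcal{D}_{\nu-n+1}\mathcal{D}_{\mu+n-1}$, and reassemble $\mathcal{D}_{\nu-n}\mathcal{D}_{\nu-1}^{n-1}=\mathcal{D}_{\nu-1}^{n}$ with a peeling identity.

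Second I would treat general $m$ by induction on $m$: peel the rightmost factor via $\mathcal{D}_{\mu}^{m}=\mathcal{D}_{\mu-1}^{m-1}\mathcal{D}_{\mu}$, apply the inductive hypothesis (for $m-1$) to $\mathcal{D}_{\nu}^{n}\mathcal{D}_{\mu-1}^{m-1}$, apply the already-proved case $m=1$ to the middle block $\mathcal{D}_{\nu-m+1}^{n}\mathcal{D}_{\mu}$, and reassemble $\mathcal{D}_{\mu+n-1}^{m-1}\mathcal{D}_{\mu+n}=\mathcal{D}_{\mu+n}^{m}$. All the mathematical content lives in $(\ast)$; everything afterwards is bookkeeping, and the one place to be careful---the main obstacle---is keeping every index shift consistent through each peel--swap--reassemble cycle, since an error of $\pm1$ in any index invalidates the reassembly step.

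A fully self-contained alternative avoids the combinatorics entirely: under the substitution $s=\tfrac12 r^{2}$ one has $\tfrac{1}{r}\partial_{r}=\partial_{s}$ and $\mathcal{D}_{k}^{n}=2^{n/2}s^{(n-k)/2}\partial_{s}^{n}s^{k/2}$, so the identity collapses to a statement about $\partial_{s}$ and powers of $s$ that follows from the Leibniz expansion $\partial_{s}^{n}\!\left(s^{\lambda}\,\cdot\,\right)=\sum_{j}\binom{n}{j}\big(\partial_{s}^{j}s^{\lambda}\big)\partial_{s}^{n-j}$; I prefer the inductive route, since it sidesteps comparing binomial sums on the two sides.
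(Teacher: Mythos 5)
Your proof is correct and takes essentially the same approach as the paper: the identical direct computation establishes the elementary swap $\mathcal{D}_{\nu}\mathcal{D}_{\mu}=\mathcal{D}_{\mu+1}\mathcal{D}_{\nu-1}$ (via matching the coefficients $\mu+\nu$ and $\mu(\nu-1)$), and the general case follows by repeated adjacent swaps, which the paper organises as an explicit double iteration pushing single factors of $\mathcal{D}_{\nu}^{n}$ rightward through the block $\mathcal{D}_{\mu}^{m}$, while you organise the same bookkeeping as a two-stage induction with the roles of the two blocks mirrored (first $m=1$ by induction on $n$, then induction on $m$). I checked your peeling identities and every index shift in the peel--swap--reassemble cycles, and they are consistent, so the argument is sound.
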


\begin{proof}
We begin by considering the case when $n=m=1$, such that
$$
    \mathcal{D}_{\nu}\mathcal{D}_{\mu} = \left(\frac{\mathrm{d}}{\mathrm{d}r} + \frac{\nu}{r}\right)\!\!\left(\frac{\mathrm{d}}{\mathrm{d}r} + \frac{\mu}{r}\right) = \frac{\mathrm{d}^2}{\mathrm{d}r^2} + \frac{\nu+\mu}{r}\frac{\mathrm{d}}{\mathrm{d}r} + \frac{\mu(\nu-1)}{r^2}.
$$
It follows that
$\mathcal{D}_{\nu}\mathcal{D}_{\mu} = \mathcal{D}_{\alpha}\mathcal{D}_{\beta}$ for any $\alpha,\beta\in\mathbb{R}$ that satisfy
$$
    \alpha + \beta = \nu + \mu, \qquad \qquad \beta(\alpha-1) = \mu(\nu-1),
$$
and we conclude that
$$
    \mathcal{D}_{\nu}\mathcal{D}_{\mu} = \mathcal{D}_{\mu+1}\mathcal{D}_{\nu-1}
$$
for all $\nu,\mu\in\mathbb{R}$.

We now return to the general case
$$
    \mathcal{D}_{\nu}^{n}\mathcal{D}_{\mu}^{m} = \left(\mathcal{D}_{\nu-(n-1)}\mathcal{D}_{\nu-(n-2)}\dots\mathcal{D}_{\nu-1}\mathcal{D}_{\nu}\right)\left(\mathcal{D}_{\mu-(m-1)}\mathcal{D}_{\mu-(m-2)}\dots\mathcal{D}_{\mu-1}\mathcal{D}_{\mu}\right).
$$
Observe that
\begin{equation*}\begin{split}
    \mathcal{D}_{\nu}^{n}\mathcal{D}_{\mu}^{m} ={}& \mathcal{D}^{n-1}_{\nu-1}\left(\mathcal{D}_{\nu}\mathcal{D}_{\mu-(m-1)}\mathcal{D}_{\mu-(m-2)}\dots\mathcal{D}_{\mu-1}\mathcal{D}_{\mu}\right),\\
    ={}& \mathcal{D}^{n-1}_{\nu-1}\left(\mathcal{D}_{\mu+1-(m-1)}\mathcal{D}_{\nu-1}\mathcal{D}_{\mu-(m-2)}\dots\mathcal{D}_{\mu-1}\mathcal{D}_{\mu}\right),\\
    ={}& \mathcal{D}^{n-1}_{\nu-1}\left(\mathcal{D}_{\mu+1-(m-1)}\mathcal{D}_{\mu+1-(m-2)}\mathcal{D}_{\nu-2}\dots\mathcal{D}_{\mu-1}\mathcal{D}_{\mu}\right),\\
    \vdots{}& \\
    ={}& \mathcal{D}^{n-1}_{\nu-1}\left(\mathcal{D}_{\mu+1-(m-1)}\mathcal{D}_{\mu+1-(m-2)}\dots\mathcal{D}_{\mu+1-1}\mathcal{D}_{\mu+1}\right)\mathcal{D}_{\nu-m},\\
    ={}& \mathcal{D}^{n-1}_{\nu-1}\mathcal{D}_{\mu+1}^{m}\mathcal{D}_{\nu-m}.\\
\end{split}\end{equation*}
Continuing this iteration, we obtain
$$
    \mathcal{D}_{\nu}^{n}\mathcal{D}_{\mu}^{m} = \mathcal{D}^{n-1}_{\nu-1}\mathcal{D}_{\mu+1}^{m}\mathcal{D}_{\nu-m} = \mathcal{D}^{n-2}_{\nu-2}\mathcal{D}_{\mu+2}^{m}\mathcal{D}^{2}_{\nu-m}=\dots = \mathcal{D}_{\mu+n}^{m}\mathcal{D}^{n}_{\nu-m}.
\eqno{\qedhere}
$$
\end{proof}

According to Proposition \ref{Prop:Dk} the operator
$\partial_\zeta$ maps a mode $k$ function with radial coefficient $f_k$
to a mode $k-1$ function with radial coefficient ${\mathcal D}_k f_k$ while $\partial_{\bar\zeta}$ maps a mode $k+1$ function with radial coefficient ${\mathcal D}_{-k} f_k$.
Correspondingly, one finds that ${\mathcal D}_k$ and ${\mathcal D}_{-k}$ map a mode $k$ radial coefficient to a mode $k-1$ and a mode $k+1$ radial coefficient respectively,
as illustrated diagrammatically in Figure \ref{fig:Fkspace}. (The apparent ambiguity in this interpretation of ${\mathcal D}_0$ is resolved by Remark \ref{rem:radialcoeffs}(ii).)
It follows from Proposition~\ref{Prop:Comm} that the diagram commutes,
and we note the particularly helpful case
$$
    \mathcal{D}_{-k+n}^{m}\mathcal{D}_{k}^{n} = \mathcal{D}_{k+m}^{n}\mathcal{D}_{-k}^{m}
$$
for $k \in {\mathbb Z}$ and $m,n \in {\mathbb N}_0$ for later use.

The following proposition lists some elementary properties of the Bessel operators (with real-valued indices), under the assumption that all products, compositions and integrals exist.

\begin{proposition}\label{Prop:DkProperties}
  The Bessel operator $\mathcal{D}_{\mu}$ has the following properties for each $\mu\in\mathbb{R}$.
\begin{enumerate}[label=(\alph*)]
    \item Constant rule:
    \begin{equation*}
        \mathcal{D}_{-\mu}(r^{\mu}) = 0.
    \end{equation*}
    \item Shift rule:
    \begin{equation*}
        \mathcal{D}_{\mu}(r^{\nu}f(r)) = r^{\nu}\mathcal{D}_{\mu+\nu}f(r)
    \end{equation*}
     for any $\nu\in\mathbb{R}$.
    \item Product rule:    \begin{equation*}
        \mathcal{D}_{\mu+\nu}(f g) = \mathcal{D}_{\mu}f g + f\mathcal{D}_{\nu}g
    \end{equation*}
 for any $\nu\in\mathbb{R}$.
  \item Leibniz integral rule:
    \begin{equation*}
        \mathcal{D}_{\mu}\left(\int_{a(r)}^{b(r)} f(r,t)\,\mathrm{d}t\right) = f(r,b(r)) b^\prime(r)- f(r,a(r)) a^\prime(r) + \int_{a(r)}^{b(r)} \mathcal{D}_{\mu}f(r,t)\,\mathrm{d}t.
    \end{equation*}
\end{enumerate}
\end{proposition}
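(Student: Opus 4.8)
The plan is to establish all four identities by direct computation from the explicit expression $\mathcal{D}_{\mu} = \frac{\mathrm{d}}{\mathrm{d}r} + \frac{\mu}{r}$, supplemented in the final part by the classical rule for differentiation under the integral sign. Since the Bessel operator splits into a differentiation piece and a multiplication piece, each property reduces to the ordinary product rule together with elementary regrouping.

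For the constant rule (a), I would simply substitute $r^{\mu}$ into the definition and observe that $\frac{\mathrm{d}}{\mathrm{d}r}(r^{\mu}) = \mu r^{\mu-1}$ cancels against $\frac{-\mu}{r}r^{\mu} = -\mu r^{\mu-1}$. For the shift rule (b), applying the ordinary product rule to $\frac{\mathrm{d}}{\mathrm{d}r}(r^{\nu}f)$ produces $\nu r^{\nu-1}f + r^{\nu}f'$; adding the term $\frac{\mu}{r}r^{\nu}f = \mu r^{\nu-1} f$ and factoring out $r^{\nu}$ leaves $r^{\nu}\bigl(f' + \frac{\mu+\nu}{r}f\bigr) = r^{\nu}\mathcal{D}_{\mu+\nu}f$. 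For the product rule (c), I would expand $\frac{\mathrm{d}}{\mathrm{d}r}(fg) = f'g + fg'$ and split the coefficient $\frac{\mu+\nu}{r}$ into $\frac{\mu}{r} + \frac{\nu}{r}$; regrouping the four terms as $\bigl(f' + \frac{\mu}{r}f\bigr)g + f\bigl(g' + \frac{\nu}{r}g\bigr)$ then yields $\mathcal{D}_{\mu}f\,g + f\mathcal{D}_{\nu}g$.

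The only part requiring an external ingredient is the Leibniz integral rule (d). Here I would decompose $\mathcal{D}_{\mu}$ into its two summands. The derivative $\frac{\mathrm{d}}{\mathrm{d}r}$ acting on the integral with variable limits is handled by the classical differentiation-under-the-integral-sign formula, which supplies the two boundary terms $f(r,b(r))b'(r) - f(r,a(r))a'(r)$ together with $\int_{a(r)}^{b(r)} \partial_r f(r,t)\,\mathrm{d}t$. Since the factor $\frac{\mu}{r}$ is independent of the integration variable $t$, the remaining summand $\frac{\mu}{r}\int_{a(r)}^{b(r)} f(r,t)\,\mathrm{d}t$ can be moved inside the integral and combined with $\partial_r f$ to form $\int_{a(r)}^{b(r)} \mathcal{D}_{\mu}f(r,t)\,\mathrm{d}t$, giving the stated identity. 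As each computation is elementary, no genuine obstacle arises; the main point to keep in mind is that the standing hypothesis that all products, compositions and integrals exist guarantees precisely the regularity needed to apply the classical Leibniz rule in part (d).
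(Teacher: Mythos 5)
Your proposal is correct, and it is exactly the elementary verification the authors have in mind: the paper states Proposition \ref{Prop:DkProperties} without proof, treating all four identities as immediate consequences of the splitting $\mathcal{D}_{\mu}=\frac{\mathrm{d}}{\mathrm{d}r}+\frac{\mu}{r}$ together with the ordinary product rule and the classical differentiation-under-the-integral-sign formula, which is precisely your argument. Your closing remark correctly identifies the role of the standing hypothesis that all products, compositions and integrals exist, so nothing is missing.
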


\begin{remark} \label{rem:product}
The Leibniz product rule
\begin{equation*}
    \begin{split}
        \mathcal{D}_{-k-q+i}^{n-i}\mathcal{D}_{k+q}^{i}\partial_z^{p-n}(f_kg_q) &=  \sum_{j=0}^{i}\sum_{\ell=j}^{n-(i-j)}\sum_{s=0}^{p-n}\begin{pmatrix}
              n-i \\ \ell-j
          \end{pmatrix}\!\begin{pmatrix}
              i \\ j
          \end{pmatrix}\! \begin{pmatrix} p-n \\ s \end{pmatrix} \left(\mathcal{D}_{-k+i-j}^{n-(i-j)-\ell}\mathcal{D}_{k}^{i-j}\partial_z^{p-n-s} f_k\right) \!\!\left(\mathcal{D}_{-q+j}^{\ell-j}\mathcal{D}_{q}^{j}\partial_z^sg_q\right)
    \end{split}
\end{equation*}
for mode $k$ and $q$ radial coefficients $f_k$ and $q_k$ is readily obtained by induction over $n$ and $p$.
\end{remark}

Finally, we give a useful radial version of Fa\'{a} di Bruno's formula (Lemma \ref{Radial FDB} below), again assuming that all functions are sufficiently well behaved to justify any formal calculations. The following proposition is used in the proof of this lemma.

\begin{proposition} \label{prop:proj}
Suppose that $\hat{\psi}:\mathbb{R}^{2} \times {\mathbb R} \to X$ and define a mode $k$ radial coefficient ${\mathcal P}_k[\hat{\psi}]$ by
$${\mathcal P}_k[\hat{\psi}](r,z):=\frac{1}{2\pi}\int_0^{2\pi} \hat{\psi}(r\cos\theta,r\sin\theta,z)\e^{-\i k \theta}\dtheta.$$
The formula
\begin{equation*}
    \begin{split}
        \mathcal{D}^{n-i}_{-k+i}\mathcal{D}^i_{k}\partial_z^{p-n}\mathcal{P}_{k}[\hat{\psi}] ={}& \mathcal{P}_{k+n-2i}[2^{\frac{n}{2}}\partial^{n-i}_{\overline{\zeta}}\partial^i_\zeta \partial_z^{p-n}\hat{\psi}]
    \end{split}
\end{equation*} 
holds for $0 \leq i \leq n \leq p$, $p \in {\mathbb N}_0$.
\end{proposition}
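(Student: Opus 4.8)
The plan is to reduce the statement to two single-derivative mode-shift identities and then iterate, mirroring the inductive structure of the proof of Proposition~\ref{Prop:Dk}. First I would record the polar forms $\partial_{\zeta} = \e^{-\i\theta}\tfrac{1}{\sqrt{2}}(\partial_{r} - \tfrac{\i}{r}\partial_{\theta})$ and $\partial_{\overline{\zeta}} = \e^{\i\theta}\tfrac{1}{\sqrt{2}}(\partial_{r} + \tfrac{\i}{r}\partial_{\theta})$ and insert the first into the definition of $\mathcal{P}_{k-1}$. The prefactor $\e^{-\i\theta}$ combines with $\e^{-\i(k-1)\theta}$ to give $\e^{-\i k\theta}$; differentiating under the integral handles the $\partial_{r}$ term, producing $\partial_{r}\mathcal{P}_{k}[\hat{\psi}]$, while the $-\tfrac{\i}{r}\partial_{\theta}$ term is integrated by parts in $\theta$ over $[0,2\pi]$, whereupon the boundary contributions cancel by $2\pi$-periodicity and the surviving term yields $\tfrac{k}{r}\mathcal{P}_{k}[\hat{\psi}]$. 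This gives the base identities
$$\mathcal{P}_{k-1}[\partial_{\zeta}\hat{\psi}] = \tfrac{1}{\sqrt{2}}\mathcal{D}_{k}\mathcal{P}_{k}[\hat{\psi}], \qquad \mathcal{P}_{k+1}[\partial_{\overline{\zeta}}\hat{\psi}] = \tfrac{1}{\sqrt{2}}\mathcal{D}_{-k}\mathcal{P}_{k}[\hat{\psi}],$$
the second being obtained in the same way, the opposite sign in the $\partial_{\theta}$ term now producing $-\tfrac{k}{r}$ and hence $\mathcal{D}_{-k}$. These are the projection-level counterparts of the first display in Proposition~\ref{Prop:Dk}.

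Next I would iterate. Applying the first identity with $k$ replaced successively by $k, k-1, \dots, k-(i-1)$ and composing gives $\mathcal{P}_{k-i}[\partial_{\zeta}^{i}\hat{\psi}] = 2^{-\frac{i}{2}}\mathcal{D}_{k}^{i}\mathcal{P}_{k}[\hat{\psi}]$, since the composite $\mathcal{D}_{k-(i-1)}\cdots\mathcal{D}_{k-1}\mathcal{D}_{k}$ is exactly $\mathcal{D}_{k}^{i}$ by definition. Applying the second identity $(n-i)$ times to this mode $k-i$ coefficient, with $k$ there taken as $-(k-i), -(k-i)-1, \dots$, raises the index from $k-i$ to $k+n-2i$ and assembles $\mathcal{D}_{-k+i}^{n-i}$ on the left, yielding
$$\mathcal{P}_{k+n-2i}[\partial_{\overline{\zeta}}^{n-i}\partial_{\zeta}^{i}\hat{\psi}] = 2^{-\frac{n}{2}}\mathcal{D}_{-k+i}^{n-i}\mathcal{D}_{k}^{i}\mathcal{P}_{k}[\hat{\psi}].$$
Since $\partial_{z}$ is a derivative in the axial variable it commutes with $\mathcal{P}_{k}$, with $\partial_{\zeta}$ and $\partial_{\overline{\zeta}}$, and with the Bessel operators, so the factor $\partial_{z}^{p-n}$ may be inserted at any stage; multiplying through by $2^{\frac{n}{2}}$ inside the linear operator $\mathcal{P}_{k+n-2i}$ then produces exactly the claimed formula.

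The index bookkeeping in the iteration, together with the justification of differentiating under the integral and integrating by parts, is routine under the blanket smoothness assumption. The one step deserving genuine care is the base computation: one must track the sign of the $-\tfrac{\i}{r}\partial_{\theta}$ contribution, and the vanishing of the boundary term, so that it produces $+\tfrac{k}{r}$ and thereby converts $\partial_{r}$ into $\mathcal{D}_{k}$ rather than $\mathcal{D}_{-k}$, with the mirror-image sign fixing $\mathcal{D}_{-k}$ in the $\partial_{\overline{\zeta}}$ identity. I expect this sign-and-index matching---ensuring the composed single operators coincide with $\mathcal{D}_{-k+i}^{n-i}\mathcal{D}_{k}^{i}$ as defined---to be the only real obstacle.
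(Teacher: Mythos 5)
Your proposal is correct and follows essentially the same route as the paper: both establish the single-step identities $\mathcal{D}_k\mathcal{P}_k[\hat\psi]=\mathcal{P}_{k-1}[\sqrt{2}\,\partial_\zeta\hat\psi]$ and $\mathcal{D}_{-k}\mathcal{P}_k[\hat\psi]=\mathcal{P}_{k+1}[\sqrt{2}\,\partial_{\overline{\zeta}}\hat\psi]$ via the polar forms of the Wirtinger operators together with an integration by parts in $\theta$ (you merely run that integration by parts in the opposite direction, trading $-\tfrac{\i}{r}\partial_\theta$ for $+\tfrac{k}{r}$ rather than vice versa), and then iterate, commuting $\partial_z$ through trivially. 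The sign-and-index bookkeeping you flag as the delicate point is handled exactly as in the paper, so there is nothing to repair.
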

\begin{proof}
Observe that
\begin{equation*}
    \begin{split}
        \mathcal{D}_{k}\mathcal{P}_{k}[\hat{\psi}](r,z) ={}& \frac{1}{2\pi}\int_0^{2\pi} \left(\partial_r + \frac{k}{r}\right)\hat{\psi}(r\cos\theta, r\sin\theta,z)\,\e^{-\i k\theta}\dtheta\\
        ={}& \frac{1}{2\pi}\int_0^{2\pi} \partial_r(\hat{\psi}(r\cos\theta, r\sin\theta,z))\,\e^{-\i k\theta}\dtheta + \frac{1}{2\pi}\int_0^{2\pi} \frac{k}{r}\hat{\psi}(r\cos\theta, r\sin\theta)\,\e^{-\i k\theta}\dtheta \\
={}& \frac{1}{2\pi}\int_0^{2\pi} \partial_r(\hat{\psi}(r\cos\theta, r\sin\theta,z))\,\e^{-\i k\theta}\dtheta - \frac{1}{2\pi}\int_0^{2\pi} \frac{\i}{r}\partial_\theta(\hat{\psi}(r\cos\theta, r\sin\theta,z))\,\e^{-\i k\theta}\dtheta,\\
        ={}& \frac{1}{2\pi}\int_0^{2\pi} \e^{-\i\theta}\left(\partial_r - \frac{\i}{r}\partial_{\theta}\right)\hat{\psi}(r\cos\theta, r\sin\theta,z)\,\e^{-\i(k-1)\theta}\dtheta\\
  ={}& \frac{1}{2\pi}\int_0^{2\pi}\sqrt{2}\,\partial_\zeta \hat{\psi}(r\cos\theta,r\sin\theta,z)\,\e^{-\i(k-1)\theta}\dtheta \\
  ={}& \mathcal{P}_{k-1}[\sqrt{2}\,\partial_\zeta \hat{\psi}](r,z),
    \end{split}
    \end{equation*}
where we have used the relation $\partial_{\zeta} = \mathrm{e}^{-\mathrm{i}\theta}\tfrac{1}{\sqrt{2}}(\partial_{r} - \tfrac{\mathrm{i}}{r}\partial_{\theta})$, and similarly
$$\mathcal{D}_{-k}\mathcal{P}_{k}[\hat{\psi}] =\mathcal{P}_{k+1}[\sqrt{2}\,\partial_{\overline{\zeta}} \hat{\psi}], \qquad \partial_z \mathcal{P}_{k}[\hat{\psi}]= \mathcal{P}_{k}[\partial_z\hat{\psi}].$$
Iterating these results yields
$$
      \mathcal{D}^n_{k}\mathcal{P}_{k}[\hat{\psi}] = \mathcal{P}_{k-n}[2^{\frac{n}{2}}\partial^n_\zeta \hat{\psi}], \qquad  \mathcal{D}^n_{-k}\mathcal{P}_{k}[\hat{\psi}] = \mathcal{P}_{k+n}[2^{\frac{n}{2}}\partial^n_{\overline{\zeta}} \hat{\psi}],
        \qquad \partial_z^p \mathcal{P}_{k}[\hat{\psi}]= \mathcal{P}_{k}[\partial_z^p\hat{\psi}]
$$
and hence
\begin{equation*}
    \begin{split}
        \mathcal{D}^{n-i}_{-k+i}\mathcal{D}^i_{k}\partial_z^{p-n}\mathcal{P}_{k}[\hat{\psi}] ={}& \mathcal{P}_{k+n-2i}[2^{\frac{n}{2}}\partial^{n-i}_{\overline{\zeta}}\partial^i_\zeta \partial_z^{p-n}\hat{\psi}].\qedhere
    \end{split}
\end{equation*} 
\end{proof}

\begin{remark} \label{propprojrig}
Suppose that $\hat{\phi} \in C^m({\mathbb R}^2 \times {\mathbb R};X)$. It follows from the uniform continuity of the defining integrand and its derivatives over compact subsets of $[0,\infty) \times {\mathbb T}^1 \times {\mathbb R}$ that
$P_k[\hat{\phi}] \in C^0([0,\infty) \times {\mathbb R};X)$, and furthermore that differentiation under the integral sign is justified, so that $P_k[\hat{\phi}] \in C^m([0,\infty) \times {\mathbb R};X)$.
\end{remark}

\begin{lemma} \label{Radial FDB}
Suppose that $u: X \rightarrow X$, let $\hat{f}_{k}:\mathbb{R}^{2} \times {\mathbb R} \to X$ be a mode $k$ function with radial coefficient
$f_k:[0,\infty) \times {\mathbb R} \rightarrow X$ and define a mode $\ell$ radial coefficient $u_\ell(f_k): [0,\infty) \times {\mathbb R} \rightarrow X$ by
$$
    u_{\ell}(f_{k}) := \mathcal{P}_{\ell}[u \circ \hat{f}_{k}].
$$
The formula
$$
    \mathcal{D}_{-\ell+i}^{n-i}\mathcal{D}_{\ell}^{i}\partial_z^{p-n}u_{\ell}(f_{k})=\sum_{\pi=\{B_j\}\in\Pi_{p}} \hspace{-2mm}{\mathcal P}_{\ell-|\pi|k}\big[\mathrm{d}^{|\pi|}f[\hat{u}_{k}]\big({\mathcal D}^{B_1}_{k}f_k, \cdots, {\mathcal D}^{B_{|\pi|}}_{k}f_k\big)\big]
    $$
holds for $0 \leq i \leq n \leq p$, $p \in {\mathbb N}_0$. Here $\Pi_p$ is the set of partitions of $\{1,\ldots,p\}$, each partition $\pi  \in \Pi_p$ consists of $|\pi|$ blocks $B_1,\ldots,B_{|\pi|}$, and
$${\mathcal D}^{B}_{k} f_k={\mathcal D}_{-k+m_{B}^i}^{m_{B}^n-m_{B}^i}\mathcal{D}_{k}^{m_{B}^i} \partial_z^{|B|-m_B^n} f_{k},$$
in which $m_B^j$ is the number of elements in $B$ which are less than or equal to $j$.
\end{lemma}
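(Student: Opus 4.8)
The plan is to reduce the whole statement to Proposition~\ref{prop:proj} together with the classical Fa\`a di Bruno formula. First I would set $\hat\psi := u\circ\hat{f}_k$ and apply Proposition~\ref{prop:proj} directly, which gives
$$\mathcal{D}_{-\ell+i}^{n-i}\mathcal{D}_\ell^i\partial_z^{p-n}u_\ell(f_k)=\mathcal{D}_{-\ell+i}^{n-i}\mathcal{D}_\ell^i\partial_z^{p-n}\mathcal{P}_\ell[u\circ\hat{f}_k]=\mathcal{P}_{\ell+n-2i}\big[2^{\frac{n}{2}}\partial_{\bar\zeta}^{n-i}\partial_\zeta^i\partial_z^{p-n}(u\circ\hat{f}_k)\big].$$
It therefore suffices to compute the Cartesian derivative $\partial_{\bar\zeta}^{n-i}\partial_\zeta^i\partial_z^{p-n}(u\circ\hat{f}_k)$ of the composition and then carry out the angular projection.

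The heart of the argument is the multivariate, Banach-space-valued Fa\`a di Bruno formula. Labelling the $p$ differentiations so that the slots $\{1,\dots,i\}$ carry $\partial_\zeta$, the slots $\{i+1,\dots,n\}$ carry $\partial_{\bar\zeta}$ and the slots $\{n+1,\dots,p\}$ carry $\partial_z$, I would prove by induction on $p$---applying the chain and product rules exactly as in the scalar case---that
$$\partial_{\bar\zeta}^{n-i}\partial_\zeta^i\partial_z^{p-n}(u\circ\hat{f}_k)=\sum_{\pi=\{B_j\}\in\Pi_p}\mathrm{d}^{|\pi|}u[\hat{f}_k]\big(\partial^{B_1}\hat{f}_k,\dots,\partial^{B_{|\pi|}}\hat{f}_k\big),$$
where $\partial^B\hat{f}_k:=\partial_{\bar\zeta}^{m_B^n-m_B^i}\partial_\zeta^{m_B^i}\partial_z^{|B|-m_B^n}\hat{f}_k$ collects the derivatives indexed by the slots lying in the block $B$. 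The inductive step matches the partition recursion precisely: applying one further derivative either opens a new singleton block or augments an existing block via the product rule. The point of the slot-labelling is that $m_B^i$, $m_B^n-m_B^i$ and $|B|-m_B^n$ then count the $\partial_\zeta$-, $\partial_{\bar\zeta}$- and $\partial_z$-derivatives in $B$, which are exactly the indices appearing in the definition of $\mathcal{D}^B_k$.

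Next I would convert each Cartesian block-derivative into radial form using Proposition~\ref{Prop:Dk}, namely $\partial^B\hat{f}_k=\mathrm{e}^{\mathrm{i}(k+m_B^n-2m_B^i)\theta}2^{-\frac{m_B^n}{2}}\mathcal{D}^B_k f_k$, and pull these scalar prefactors out of each argument by multilinearity of $\mathrm{d}^{|\pi|}u[\hat{f}_k]$. Since the blocks partition $\{1,\dots,p\}$ we have $\sum_j m_{B_j}^i=i$ and $\sum_j m_{B_j}^n=n$ with $|\pi|$ blocks in all, so the powers of $2$ combine to $2^{-\frac{n}{2}}$ (cancelling the $2^{\frac{n}{2}}$ from the first step) and the exponentials combine to $\mathrm{e}^{\mathrm{i}(|\pi|k+n-2i)\theta}$. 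Finally I would absorb this surviving exponential into the angular average through the elementary identity $\mathcal{P}_m[\mathrm{e}^{\mathrm{i}c\theta}\Phi]=\mathcal{P}_{m-c}[\Phi]$ (immediate from the definition of $\mathcal{P}_m$, and valid even when $\Phi$ depends on $\theta$), which converts $\mathcal{P}_{\ell+n-2i}$ into $\mathcal{P}_{\ell-|\pi|k}$ term by term and yields the asserted formula.

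The main obstacle is the second step: establishing the multivariate Fa\`a di Bruno formula for a composition of maps between Banach spaces with the correct set-partition bookkeeping. Once the slot-labelling is fixed so that the block counts $m_B^i,m_B^n$ read off the right operator indices, everything else is routine multilinear algebra together with the counting identities $\sum_j m_{B_j}^i=i$ and $\sum_j m_{B_j}^n=n$. Throughout one assumes $u\in C^p$ and enough regularity of $\hat{f}_k$ to legitimise differentiation under the angular integral in Proposition~\ref{prop:proj}, as in Remark~\ref{propprojrig}.
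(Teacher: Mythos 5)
Your proposal is correct and follows essentially the same route as the paper: apply Proposition~\ref{prop:proj} to exchange the Bessel operators for Wirtinger derivatives under the projection, expand $\partial_{\bar{\zeta}}^{n-i}\partial_\zeta^i\partial_z^{p-n}(u\circ\hat{f}_k)$ by the multivariate Fa\'{a} di Bruno formula with exactly this slot-labelling, convert each block derivative via Proposition~\ref{Prop:Dk}, and use the counting identities $\sum_j m_{B_j}^i=i$, $\sum_j m_{B_j}^n=n$ to collapse the phases and powers of $2$ and reindex $\mathcal{P}_{\ell+n-2i}$ to $\mathcal{P}_{\ell-|\pi|k}$. The only cosmetic difference is that the paper cites Hardy's combinatorial formula \cite[Proposition 1]{Hardy06} where you propose reproving it by induction on $p$, which is harmless.
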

\begin{proof}
Using the multivariate version of Fa\'{a} di Bruno's formula given by Hardy \cite[Proposition 1]{Hardy06} in the present context, we find that
\begin{equation*}
    \partial_{\overline{\zeta}}^{n-i}\partial_{\zeta}^{i} \partial_z^{p-n}(u \circ \hat{f}_{k}) = \hspace{-2mm}\sum_{\pi=\{B_j\}\in\Pi_{p}} \hspace{-2mm}\mathrm{d}^{|\pi|}u[\hat{f}_{k}]\left(\partial^{B_1}\hat{f}_k, \cdots, \partial^{B_{|\pi|}}\hat{f}_k\right),
\end{equation*}
where
$$\partial^{B}\hat{f}_k = \partial_{\overline{\zeta}}^{m_B^n-m_B^i}\partial_{\zeta}^{m_B^i} \partial_z^{|B|-m_B^n}\hat{f}_{k}.$$
It follows from Proposition \ref{prop:proj} that
\begin{equation*}
\begin{split}
    \mathcal{D}_{-\ell+i}^{n-i}\mathcal{D}_{\ell}^{i} &\partial_z^{p-n} P_{\ell}[u \circ \hat{f}_k] \\
    ={}& \frac{1}{2\pi}\int_{0}^{2\pi} 2^\frac{n}{2}\partial_{\overline{\zeta}}^{n-i}\partial_{\zeta}^{i}\partial_z^{p-n}(u \circ \hat{f}_k)\mathrm{e}^{-\mathrm{i}(\ell+n-2i)\theta}\,\mathrm{d}\theta\\
    ={}& \frac{1}{2\pi}\int_{0}^{2\pi} 2^\frac{n}{2}\hspace{-3mm}\sum_{\pi=\{B_j\}\in\Pi_{p}} \hspace{-2mm}\mathrm{d}^{|\pi|}u[\hat{f}_k]\left(\partial^{B_1}\hat{f}_k, \cdots, \partial^{B_{|\pi|}}\hat{f}_k\right)\mathrm{e}^{-\mathrm{i}(\ell+n-2i)\theta}\,\mathrm{d}\theta\\
    ={}& \frac{1}{2\pi}\int_{0}^{2\pi} \hspace{-2mm}\sum_{\pi=\{B_j\}\in\Pi_{p}} \hspace{-2mm}\mathrm{e}^{\mathrm{i}(|\pi|k + n - 2i)\theta}\mathrm{d}^{|\pi|}u[\hat{f}_k]\big({\mathcal D}^{B_1}_{k}f_k, \cdots, {\mathcal D}^{B_{|\pi|}}_{k}f_k\big)\mathrm{e}^{-\mathrm{i}(\ell+n-2i)\theta}\,\mathrm{d}\theta, \\
    ={}& \frac{1}{2\pi}\int_{0}^{2\pi} \hspace{-2mm}\sum_{\pi=\{B_j\}\in\Pi_{p}} \hspace{-2mm}\mathrm{e}^{-\mathrm{i}(\ell - |\pi|k)\theta}\mathrm{d}^{|\pi|}u[\hat{f}_k]\big({\mathcal D}^{B_1}_{k}f_k, \cdots, {\mathcal D}^{B_{|\pi|}}_{k}f_k\big)\, \mathrm{d}\theta, \\
    ={}&\sum_{\pi=\{B_j\}\in\Pi_{p}} \hspace{-2mm}{\mathcal P}_{\ell-|\pi|k}[\mathrm{d}^{|\pi|}u\big[\hat{f}_k]\big({\mathcal D}^{B_1}_{k}f_k, \cdots, {\mathcal D}^{B_{|\pi|}}_{k}f_k\big)\big],
\end{split}
\end{equation*}
where the third line follows from the second because
$$\partial^B\hat{f}_k = \mathrm{e}^{\mathrm{i}(k+m_B^n - 2m_{B}^i)\theta}2^{-\frac{1}{2}m_B^n}\mathcal{D}^B_{k} f_{k}$$
(see Proposition \ref{Prop:Dk}) and
\begin{equation*}
    \begin{split}
        \sum_{B_j\in\pi} k =|\pi| k, \qquad\ \sum_{B_j\in\pi} m_{B_j}^n = n,\qquad\sum_{B_j\in\pi} m_{B_j}^i = i.\qedhere
    \end{split}
\end{equation*}

\end{proof}

\section{Function spaces}
\subsection{Classical function spaces}  \label{Classical FS}

Let $R>0$ and denote the open and closed origin-centred balls of radius $R$ in ${\mathbb R}^2$ by $B_R(\mathbf{0})$ and $\overline{B}_R(\mathbf{0})$ respectively.
In this section we consider classes of mode $k$ continuous functions $B_R(\mathbf{0}) \times {\mathbb R} \rightarrow X$, in particular characterising
the mode $k$ subspaces
\begin{itemize}
\item[(i)]
 $\hat{C}_{(k)}^m(B_R(\mathbf{0}) \times {\mathbb R};X)$ of the space $C^m(B_R(\mathbf{0}) \times {\mathbb R};X)$ of $m$-times continuously differentiable functions;
 \item[(ii)]
$\hat{C}_{(k)\mathrm{b}}^m(B_R(\mathbf{0}) \times {\mathbb R};X)$ of the space $C_\mathrm{b}^m(B_R(\mathbf{0}) \times {\mathbb R};X)$ of $m$-times boundedly continuously differentiable functions;
\item[(iii)]
$\hat{\mathscr D}_{(k)}(B_R(\mathbf{0}) \times {\mathbb R};X)$ of the space ${\mathscr D}(B_R(\mathbf{0}) \times {\mathbb R};X)$ of test functions with compact support in $B_R(\mathbf{0}) \times {\mathbb R}$;
\item[(iv)]
$\hat{\mathscr S}_{(k)}({\mathbb R}^2 \times {\mathbb R};X)$ of the space ${\mathscr S}({\mathbb R}^2 \times {\mathbb R};X)$ of Schwartz-class functions.
\end{itemize}
Note that $B_R(\mathbf{0})$ can be replaced by ${\mathbb R}^2$ in (i)--(iii) and by $\overline{B}_R(\mathbf{0})$ in (i), (ii) (with the obvious modifications to definitions
and results), and we also consider the subspace
$$
\hat{\mathscr D}_{(k)}(\overline{B}_R(\mathbf{0})\times\mathbb{R};X)=\{\hat{f}_k|_{\overline{B}_R(\mathbf{0})\times{\mathbb R}} \st \hat{f}_k \in \hat{\mathscr{D}}_{(k)}({\mathbb R}^2 \times \mathbb{R};X)\}$$
of
$$
{\mathscr D}(\overline{B}_R(\mathbf{0})\times\mathbb{R};X)=\{f|_{\overline{B}_R(\mathbf{0})\times{\mathbb R}} \st f \in {\mathscr D}({\mathbb R}^2 \times \mathbb{R};X)\}.$$
Functions which do not depend upon the axial coordinate are included as special cases.

\begin{lemma} \label{lem:C0}
A mode $k$ function $\hat{f}_k$ belongs to $C^0(B_R(\mathbf{0}) \times {\mathbb R};X)$ if and only if its radial coefficient $f_k$ belongs to
$$C_{(k)}^0([0,R) \times {\mathbb R};X) := \{f_k \in C^0([0,R)\times{\mathbb R};X) \st kf_k|_{r=0}=0\}.$$
\end{lemma}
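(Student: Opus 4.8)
The plan is to separate the region $r>0$, where the polar parametrisation $(r,\theta,z)\mapsto(r\cos\theta,r\sin\theta,z)$ is a homeomorphism of $(0,\infty)\times\mathbb{T}^1\times\mathbb{R}$ onto $(\mathbb{R}^2\setminus\{\mathbf 0\})\times\mathbb{R}$, from the axis $r=0$, where this parametrisation degenerates. Off the axis the equivalence is a routine composition of continuous maps in either direction: given continuity of $\hat f_k$ one reads off $f_k(r,z)=\e^{-\i k\theta}\hat f_k(r\cos\theta,r\sin\theta,z)$, while given continuity of $f_k$ one has $\hat f_k=\e^{\i k\theta}f_k(r,z)$ with $r,\theta$ continuous functions of $(x,y)\neq\mathbf 0$. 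Thus everything of substance happens on the axis.

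For the necessity direction I would simply evaluate the defining relation \eqref{eq:mode k defn} at $\theta=0$ to obtain the identity $f_k(r,z)=\hat f_k(r,0,z)$ for all $(r,z)\in[0,R)\times\mathbb{R}$ (the value at $r=0$ being consistent, since $f_k(0,z)=0$ and $\hat f_k(\mathbf 0,z)=0$ for $k\neq 0$ by Remark~\ref{rem:radialcoeffs}(i)). This exhibits $f_k$ as the composition of the continuous embedding $(r,z)\mapsto(r,0,z)$ with the continuous function $\hat f_k$, so that $f_k\in C^0([0,R)\times\mathbb{R};X)$; the boundary condition $kf_k|_{r=0}=0$ is immediate from the definition of a mode $k$ function, whence $f_k\in C_{(k)}^0([0,R)\times\mathbb{R};X)$.

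For sufficiency I would treat $k=0$ and $k\neq0$ separately on the axis. When $k=0$ the function $\hat f_0(x,y,z)=f_0(|(x,y)|,z)$ is the composition of $f_0$ with the globally continuous map $(x,y,z)\mapsto(|(x,y)|,z)$ and is therefore continuous on all of $B_R(\mathbf 0)\times\mathbb{R}$. When $k\neq0$ continuity off the axis is already covered above, and at an axis point $(\mathbf 0,z_0)$ the boundary condition $f_k(0,z_0)=0$ renders the ambiguous phase harmless: since $\hat f_k(\mathbf 0,z_0)=0$, I would estimate $\|\hat f_k(x,y,z)-\hat f_k(\mathbf 0,z_0)\|_X=\|\e^{\i k\theta}f_k(r,z)\|_X=\|f_k(r,z)\|_X\to\|f_k(0,z_0)\|_X=0$ as $(x,y,z)\to(\mathbf 0,z_0)$, using the joint continuity of $f_k$ at $(0,z_0)$.

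The only delicate point is this on-axis continuity for $k\neq0$ in the sufficiency direction; the remainder is bookkeeping for the coordinate change. Conceptually, the oscillatory factor $\e^{\i k\theta}$ has no limit as one approaches the origin, so a mode $k$ function can be continuous there only if its radial coefficient vanishes on the axis---which is exactly the content of the condition $kf_k|_{r=0}=0$ and the reason it appears in the definition of $C_{(k)}^0$.
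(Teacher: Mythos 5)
Your proof is correct and takes essentially the same route as the paper's: off the axis both arguments reduce to the polar-coordinate homeomorphism $B_R^\prime(\mathbf{0})\rightarrow(0,R)\times{\mathbb T}^1$, and on the axis both rest on the identity $\|\e^{\i k\theta}f_k(r,z)\|=\|f_k(r,z)\|$ for $k\neq0$ (together with the vanishing condition built into the mode $k$ definition) and the harmless phase for $k=0$. Your use of the ray $\theta=0$ for necessity and of the composition with $(x,y,z)\mapsto(|(x,y)|,z)$ for the $k=0$ sufficiency are only cosmetic streamlinings of the paper's symmetric limit characterisation at the axis.
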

\begin{proof}
Note that $\hat{f}_k \in C^0(B_R(\mathbf{0}) \times \mathbb{R};X)$ if and only if
$$\hat{f}_k \in C^0(B_R^\prime(\mathbf{0}) \times \mathbb{R};X), \qquad \lim\limits_{(\bfx,z) \rightarrow (\mathbf{0},z_0)} \hat{f}_k(\bfx,z)=\hat{f}_k(\mathbf{0},z_0)$$
for each $z_0 \in {\mathbb R}$, where $B_R^\prime(\mathbf{0}) = B_R(\mathbf{0}) \setminus \{\mathbf{0}\}$,
while
$f_k \in C^0([0,R)\times \mathbb{R};X)$ if and only if
$$f_k \in C^0((0,R) \times \mathbb{R};X), \qquad \lim\limits_{(r,z) \rightarrow (0,z_0)} f_k(r,z)=f_k(0,z_0)$$
for each $z_0 \in {\mathbb R}.$
However $\hat{f}_k \in C^0(B_R^\prime(\mathbf{0}) \times \mathbb{R};X)$ if and only if $f_k \in C^0((0,R) \times \mathbb{R};X)$ (because the polar-coordinate transformation
is a homeomorphism $B_R^\prime(\mathbf{0}) \rightarrow (0,R) \times {\mathbb T}^1$), while
$$\lim_{(\bfx,z) \rightarrow (\mathbf{0},z_0)} \|\hat{f}_0(\bfx,z)-\!\!\!\underbrace{\hat{f}_0(\mathbf{0},z_0)}_{\displaystyle = f_0(0,z_0)}\!\!\!\|=0$$
if and only if
$$\lim_{(r,z) \rightarrow (0,z_0)} \|f_0(r,z)-f_0(0,z_0)\|=0,$$
and for $k \neq 0$
$$\lim_{(\bfx,z) \rightarrow (\mathbf{0},z_0)} \|\hat{f}_k(\bfx,z_0)\|=0$$
if and only if
$$\lim_{(r,z) \rightarrow (0,z_0)}\underbrace{\|\e^{\i k \theta}f_k(r,z)\|}_{\displaystyle =  \|f_k(r,z)\|} =0.\eqno{\qedhere}$$
\end{proof}

\begin{corollary} \label{cor:Cm}
A mode $k$ function $\hat{f}_k$ belongs to $C^m(B_R(\mathbf{0}) \times \mathbb{R};X)$ if and only if its radial coefficient $f_k$ belongs to
$$C_{(k)}^m([0,R) \times \mathbb{R};X) := \{f_k: [0,R) \times {\mathbb R}\rightarrow X \st \mathcal{D}_{-k+i}^{n-i}\mathcal{D}_k^i \partial_z^{p-n} f_k \in C^0_{(k+n-2i)}([0,R) \times \mathbb{R};X),\ 0\leq i\leq n \leq p \leq m\}.$$
\end{corollary}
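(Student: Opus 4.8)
The plan is to reduce the claim to Lemma~\ref{lem:C0} by re-expressing $C^m$-regularity through the Wirtinger and axial operators. First I would record the equivalence
$$\hat{f}_k\in C^m(B_R(\mathbf{0})\times{\mathbb R};X)\iff\partial_{\overline{\zeta}}^{n-i}\partial_\zeta^i\partial_z^{p-n}\hat{f}_k\in C^0(B_R(\mathbf{0})\times{\mathbb R};X)\ \text{for all}\ 0\le i\le n\le p\le m.$$
This rests on the fact that $\partial_\zeta,\partial_{\overline{\zeta}}$ are constant-coefficient $\mathbb{C}$-linear combinations of $\partial_x,\partial_y$, with inverses $\partial_x=\tfrac{1}{\sqrt2}(\partial_\zeta+\partial_{\overline{\zeta}})$ and $\partial_y=\tfrac{\i}{\sqrt2}(\partial_\zeta-\partial_{\overline{\zeta}})$; hence for each order $n$ the family $\{\partial_\zeta^i\partial_{\overline{\zeta}}^{n-i}\}_{i=0}^n$ spans the same space of homogeneous constant-coefficient planar operators as $\{\partial_x^a\partial_y^b\}_{a+b=n}$, while $\partial_z$ commutes with $\partial_\zeta$ and $\partial_{\overline{\zeta}}$ and may be carried along unchanged. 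I would make this precise by identifying $\partial_{\overline{\zeta}}^{n-i}\partial_\zeta^i\partial_z^{p-n}\hat{f}_k$ with the evaluation of the symmetric $p$-linear Fr\'echet derivative $D^p\hat{f}_k$ on the complexified basis vectors associated with $\partial_\zeta,\partial_{\overline{\zeta}},\partial_z$, the continuity of $D^p\hat{f}_k$ for $p\le m$ being exactly the statement $\hat{f}_k\in C^m$; alternatively one argues by induction on $m$ using the recursive characterisation of $C^m$.

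Next I would invoke Proposition~\ref{Prop:Dk}. Since $\partial_z$ commutes with multiplication by $\e^{\i k\theta}$ as well as with $\partial_\zeta$ and $\partial_{\overline{\zeta}}$, that proposition extends immediately to
$$\partial_{\overline{\zeta}}^{n-i}\partial_\zeta^i\partial_z^{p-n}\hat{f}_k=\e^{\i(k+n-2i)\theta}\,2^{-\frac n2}\,\mathcal{D}_{-k+i}^{n-i}\mathcal{D}_k^i\partial_z^{p-n}f_k,$$
which exhibits $\partial_{\overline{\zeta}}^{n-i}\partial_\zeta^i\partial_z^{p-n}\hat{f}_k$ as a mode $k+n-2i$ function whose radial coefficient is the nonzero scalar multiple $2^{-\frac n2}\mathcal{D}_{-k+i}^{n-i}\mathcal{D}_k^i\partial_z^{p-n}f_k$ of the generalised Bessel derivative.

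Finally I would apply Lemma~\ref{lem:C0} to each of these mode $k+n-2i$ functions: it lies in $C^0(B_R(\mathbf{0})\times{\mathbb R};X)$ if and only if its radial coefficient lies in $C^0_{(k+n-2i)}([0,R)\times{\mathbb R};X)$, the factor $2^{-\frac n2}$ being irrelevant because $C^0_{(k+n-2i)}$ is a linear subspace. Assembling these equivalences over all admissible triples $(i,n,p)$ converts the condition $\hat{f}_k\in C^m$ into $\mathcal{D}_{-k+i}^{n-i}\mathcal{D}_k^i\partial_z^{p-n}f_k\in C^0_{(k+n-2i)}$ for $0\le i\le n\le p\le m$, which is precisely the definition of $C_{(k)}^m$. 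The index bookkeeping matches automatically, since Proposition~\ref{Prop:Dk} already pairs each Wirtinger multi-index with the corresponding Bessel multi-index; the one genuinely delicate point is the opening equivalence, where the interplay between the \emph{existence} and the \emph{continuity} of the mixed derivatives---$\partial_\zeta\hat{f}_k$ being defined only once $\partial_x\hat{f}_k,\partial_y\hat{f}_k$ exist---must be handled with care, and this is where the induction (or the Fr\'echet-derivative argument) does its work.
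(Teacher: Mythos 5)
Your proposal is correct and follows essentially the same route as the paper: the equivalence $\hat{f}_k\in C^m\iff\partial_{\bar{\zeta}}^{n-i}\partial_\zeta^i\partial_z^{p-n}\hat{f}_k\in C^0$ for $0\le i\le n\le p\le m$, the identity $\partial_{\bar{\zeta}}^{n-i}\partial_\zeta^i\partial_z^{p-n}\hat{f}_k=\e^{\i(k+n-2i)\theta}2^{-\frac{n}{2}}\mathcal{D}_{-k+i}^{n-i}\mathcal{D}_k^i\partial_z^{p-n}f_k$ from Proposition~\ref{Prop:Dk}, and then Lemma~\ref{lem:C0} applied mode by mode. Your extra care in justifying the opening equivalence (invertibility of the Wirtinger change of basis and the existence-versus-continuity issue) only fills in a step the paper states without proof.
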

\begin{proof}
Observe that $\hat{f}_k \in C^m(B_R(\mathbf{0}) \times \mathbb{R};X)$ if and only if $\partial_{\bar{\zeta}}^{n-i}\partial_\zeta^i \partial_z^{p-n} \hat{f}_k$ belongs to
$C^0(B_R(\mathbf{0}) \times \mathbb{R};X)$ for $0 \leq i \leq n \leq p \leq m$. However
$$\partial_{\bar{\zeta}}^{n-i}\partial_\zeta^i \partial_z^{p-n}\hat{f}_k = \e^{\i(k+n-2i)\theta} 2^{-\frac{n}{2}}{\mathcal D}_{-k+i}^{n-i}{\mathcal D}_k^i\partial_z^{p-n} f_k,$$
such that the result follows from Lemma \ref{lem:C0}.
\end{proof}

\begin{remark}
Note that
\begin{align*}
C_{(k)}^m([0,R) \times \mathbb{R};X) = \{f_k: [0,R) \times &{\mathbb R} \rightarrow X \st \mathcal{D}_{-k+i}^{n-i}\mathcal{D}_k^i \partial_z^{p-n} f_k \in C^0([0,R)\times\mathbb{R};X), \\
&(k+n-2i)\mathcal{D}_{-k+i}^{n-i}\mathcal{D}_k^i\partial_z^pf_k|_{r=0}=0,\ 0\leq i\leq n\leq p \leq m\}.
\end{align*}
\end{remark}

The above arguments also show that a mode $k$ function $\hat{f}_k$ belongs to $C_\mathrm{b}^m(B_R(\mathbf{0}) \times \mathbb{R};X)$
if and only if its radial coefficient belongs to
$$C^m_{(k)\mathrm{b}}([0,R) \times \mathbb{R};X):=\{f_k \in C^m_{(k)}([0,R) \times \mathbb{R};X)\st \sup_{r < R \atop z \in {\mathbb R}} \|\mathcal{D}_{-k+i}^{n-i}\mathcal{D}_k^i\partial_z^{p-n}f_k(r,z)\|< \infty,
\ 0\leq i\leq n\leq p \leq m\},$$
to ${\mathscr D}(B_R(\mathbf{0}) \times \mathbb{R};X)$ if and only if
its radial coefficient belongs to
$$
{\mathscr D}_{(k)}([0,R) \times \mathbb{R};X) := \{f_k \in C^\infty_{(k)}([0,R) \times \mathbb{R};X)\st \mathrm{supp}\,f_k \Subset [0,R)\times{ \mathbb R}\},
$$
and to ${\mathscr S}({\mathbb R}^2 \times \mathbb{R};X)$ if and only if its radial coefficient belongs to
\begin{align*}
{\mathscr S}_{(k)}&([0,\infty) \times \mathbb{R};X) \\
&:=\! \{f_k \in C^\infty_{(k)}([0,\infty) \times \mathbb{R};X)\st \sup_{r \geq 0 \atop z \in {\mathbb R}} |(r,z)|^{m_1} \|\mathcal{D}_{-k+i}^{n-i}\mathcal{D}_k^i\partial_z^{p-n}f_k(r,z)\|^{m_2}\! <\! \infty,
\ 0\leq i\leq n \leq p,\ m_1,m_2, p \in {\mathbb N}_0\}.
\end{align*}
We refer to functions in ${\mathscr D}_{(k)}([0,R) \times \mathbb{R};X)$ and ${\mathscr S}_{(k)}([0,\infty) \times \mathbb{R};X)$ as \emph{radial test functions} and \emph{radial Schwartz-class functions}. In this context we also note that a mode $k$ function belongs to $\hat{\mathscr D}_{(k)}(\overline{B}_R(\mathbf{0})\times\mathbb{R};X)$ if and only if its radial coefficient belongs to
$$
{\mathscr D}_{(k)}([0,R] \times \mathbb{R};X) := \{f_k|_{[0,R] \times {\mathbb R}} \st f_k \in {\mathscr D}_{(k)}([0,\infty) \times \mathbb{R};X)\}.$$

\begin{lemma} $ $ \label{lem:Topologies of Cb and S}
\begin{itemize}
\item[(i)]
The space $C^m_{(k)\mathrm{b}}([0,R) \times \mathbb{R};X)$ is a Banach space with respect to the norm
$$\|f_k\|_{C_{(k)\mathrm{b}}^m} :=\hspace{-2mm} \maxs_{0 \leq i \leq n \leq p \leq m \atop } \sup_{r < R \atop z \in {\mathbb R}} \|\mathcal{D}_{-k+i}^{n-i}\mathcal{D}_k^i\partial_z^{p-n}f_k(r,z)\|.$$
\item[(ii)]
The space ${\mathscr S}_{(k)}([0,\infty) \times \mathbb{R};X)$ is a Fr\'{e}chet space with respect to the family 
$$\|f_k\|_{m_1,m_2,p}:=\hspace{-2mm} \maxs_{0 \leq i \leq n \leq p \atop } \sup_{r \geq 0 \atop z \in {\mathbb R}}
|(r,z)|^{m_1} \|\mathcal{D}_{-k+i}^{n-i}\mathcal{D}_k^i \partial_z^{p-n} f_k(r,z)\|^{m_2} , \qquad m_1,m_2, p \in {\mathbb N}_0\
$$
of seminorms.
\end{itemize}
\end{lemma}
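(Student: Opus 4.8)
The plan is to obtain both completeness statements by transferring the corresponding standard facts for functions on $\mathbb{R}^2\times\mathbb{R}$ through the bijection $f_k\mapsto\hat{f}_k$ established via Corollary \ref{cor:Cm} and the discussion following it. The structural input is Proposition \ref{Prop:Dk}, which gives
$$\partial_{\bar{\zeta}}^{n-i}\partial_\zeta^i \partial_z^{p-n}\hat{f}_k = \e^{\i(k+n-2i)\theta}\, 2^{-\frac{n}{2}}\,{\mathcal D}_{-k+i}^{n-i}{\mathcal D}_k^i\partial_z^{p-n} f_k,$$
so that, since $|\e^{\i(k+n-2i)\theta}|=1$ and $|(r,z)|=|(\bfx,z)|$ with $r=|\bfx|$,
$$|(\bfx,z)|^{m_1}\big\|\partial_{\bar{\zeta}}^{n-i}\partial_\zeta^i \partial_z^{p-n}\hat{f}_k(\bfx,z)\big\|^{m_2} = 2^{-\frac{nm_2}{2}}\,|(r,z)|^{m_1}\big\|{\mathcal D}_{-k+i}^{n-i}{\mathcal D}_k^i\partial_z^{p-n} f_k(r,z)\big\|^{m_2}.$$
Thus the radial (semi-)norms in the statement coincide, up to the fixed constants $2^{-nm_2/2}$, with the Wirtinger-type (semi-)norms of $\hat{f}_k$. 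A secondary, purely finite-dimensional observation is that at each fixed order $n$ the homogeneous Wirtinger operators $\{\partial_{\bar{\zeta}}^{n-i}\partial_\zeta^i\}_{0\le i\le n}$ and the homogeneous Cartesian operators $\{\partial_x^a\partial_y^b\}_{a+b=n}$ are related by an invertible linear change of basis (with $\partial_z$ commuting through); hence the Wirtinger-type families are equivalent to the standard Cartesian families, order by order, and this equivalence is preserved under raising to the power $m_2$ and weighting by $|(\bfx,z)|^{m_1}$.

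For (i) I would first note that the norm axioms are routine, definiteness coming from the $n=i=p=0$ term $\sup\|f_k\|$. For completeness, the equivalence above shows that $\{f_k^{(j)}\}$ is Cauchy in $\|\cdot\|_{C^m_{(k)\mathrm{b}}}$ if and only if $\{\hat{f}_k^{(j)}\}$ is Cauchy in $C_\mathrm{b}^m(B_R(\mathbf{0})\times\mathbb{R};X)$. Since $X$ is Banach this ambient space is complete, and the mode $k$ condition is a family of pointwise linear identities preserved under uniform convergence, so $\hat{C}_{(k)\mathrm{b}}^m(B_R(\mathbf{0})\times\mathbb{R};X)$ is a closed subspace and hence itself complete. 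The limit $\hat{f}_k$ is therefore mode $k$, and its radial coefficient $f_k$ is the $\|\cdot\|_{C^m_{(k)\mathrm{b}}}$-limit of the $f_k^{(j)}$, establishing that $C^m_{(k)\mathrm{b}}([0,R)\times\mathbb{R};X)$ is Banach.

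For (ii) the same dictionary identifies the given family $\{\|\cdot\|_{m_1,m_2,p}\}$, up to the equivalence of Wirtinger and Cartesian homogeneous derivatives, with the standard Schwartz seminorm family on the mode $k$ subspace $\hat{\mathscr S}_{(k)}(\mathbb{R}^2\times\mathbb{R};X)$ of ${\mathscr S}(\mathbb{R}^2\times\mathbb{R};X)$. This family is countable and separates points (the members with $m_1=0$, $m_2=1$ already control $\sup\|f_k\|$), so the induced topology is metrizable and locally convex. As ${\mathscr S}(\mathbb{R}^2\times\mathbb{R};X)$ is Fr\'echet and the mode $k$ constraint is again closed—convergence in the Schwartz topology implies pointwise convergence—the subspace $\hat{\mathscr S}_{(k)}(\mathbb{R}^2\times\mathbb{R};X)$ is Fr\'echet, and transferring through the bijection shows that ${\mathscr S}_{(k)}([0,\infty)\times\mathbb{R};X)$ is Fr\'echet with respect to the stated seminorms.

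The main obstacle—and the reason this transfer strategy is worthwhile—is precisely the equivalence between the singular Bessel-operator expressions and the Cartesian derivatives. A direct argument on the radial side would require showing that uniform (or Schwartz) convergence of the quantities ${\mathcal D}_{-k+i}^{n-i}{\mathcal D}_k^i\partial_z^{p-n}f_k^{(j)}$ identifies the limits with the corresponding operators applied to the limit $f_k$, which is delicate near $r=0$ owing to the factor $k/r$ in ${\mathcal D}_k$. Passing to $\hat{f}_k$, which is genuinely $C^m$ (respectively Schwartz) across the origin, sidesteps this difficulty: the behaviour at $r=0$ is automatically encoded in the mode $k$ condition, and completeness is inherited from the well-known completeness of the ambient Cartesian spaces.
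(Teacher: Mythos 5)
Your proposal is correct and follows essentially the same route as the paper: the paper likewise proves completeness by observing that $\hat{C}_{(k)\mathrm{b}}^m(B_R(\mathbf{0})\times\mathbb{R};X)$ and $\hat{\mathscr S}_{(k)}(\mathbb{R}^2\times\mathbb{R};X)$ are closed subspaces of the ambient Banach space $C_\mathrm{b}^m(B_R(\mathbf{0})\times\mathbb{R};X)$ and Fr\'echet space ${\mathscr S}(\mathbb{R}^2\times\mathbb{R};X)$ (with (semi-)norms written in terms of the Wirtinger operators $\partial_{\bar\zeta}^{n-i}\partial_\zeta^i\partial_z^{p-n}$) and that $f_k\mapsto\hat{f}_k$ transfers the stated (semi-)norms via Proposition \ref{Prop:Dk}. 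Your additional remarks---the explicit constants $2^{-nm_2/2}$, the basis-change equivalence between Wirtinger and Cartesian homogeneous derivatives, and the closedness of the mode $k$ condition under uniform convergence---merely spell out details the paper leaves implicit in calling the map an isometric isomorphism.
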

\begin{proof}
This result follows from the facts that
$\hat{C}_{(k)\mathrm{b}}^m(B_R(\mathbf{0}) \times {\mathbb R};X)$
and $\hat{\mathscr S}_{(k)}(\mathbb{R}^{2}\times \mathbb{R};X)$ are closed subspaces of the Banach space
$$C_\mathrm{b}^m(B_R(\mathbf{0}) \times \mathbb{R};X)=\{\hat{f} \in C^m(B_R(\mathbf{0}) \times \mathbb{R};X)\st \maxs_{0 \leq i \leq n \leq p \leq m \atop } \sup_{|\bfx| \leq R \atop z \in {\mathbb R}}
\|\partial_{\bar{\zeta}}^{n-i}\partial_\zeta^i \partial_z^{p-n} \hat{f}(\bfx,z)\|<\infty\}$$
and the Fr\'{e}chet space
$${\mathscr S}({\mathbb R}^2 \times \mathbb{R};X)=\{\hat{f} \in C^\infty({\mathbb R}^2 \times \mathbb{R};X)\st \maxs_{0 \leq i \leq n \leq p \atop }   \sup_{\bfx \in {\mathbb R}^2 \atop z \in {\mathbb R}}|(\bfx,z)|^{m_1}\|\partial_{\bar{\zeta}}^{n-i}\partial_\zeta^i \partial_z^{p-n}\hat{f}({\mathbf x},z)\|^{m_2}<\infty,\ m_1,m_2, p \in {\mathbb N}_0\}$$
respectively,
and that the mapping $f_k \mapsto \hat{f}_k$ defines an isometric isomorphism\linebreak $C_{(k)\mathrm{b}}^m([0,R) \times \mathbb{R};X) \rightarrow \hat{C}_{(k)\mathrm{b}}^m(B_R(\mathbf{0}) \times \mathbb{R};X)$ and ${\mathscr S}_{(k)}([0,\infty) \times \mathbb{R};X) \rightarrow \hat{\mathscr S}_{(k)}({\mathbb R}^2 \times \mathbb{R};X)$.
\end{proof}

The following technical lemma shows how a test function can be written as a sum of radial test functions.

\begin{lemma} \label{lem:FS superconvergence}
Suppose that $\hat{\phi} \in C^\infty(B_R(\mathbf{0}) \times \mathbb{R};X)$, such that $(r,\theta,z) \mapsto \hat{\phi}(r\cos\theta,r\sin\theta,z)$ lies in\linebreak
$C^\infty([0,R) \times {\mathbb T}^1\times\mathbb{R};X)$. The respective functions
$$\phi_k(r,z):=\frac{1}{2\pi}\int_0^{2\pi} \hat{\phi}(r\cos\theta,r\sin\theta,z)\e^{-\i k \theta} \dtheta, \qquad k \in {\mathbb Z},$$
belong to $C^\infty_{(k)}([0,R) \times \mathbb{R};X)$ (with $\phi_k \in {\mathscr D}_{(k)}([0,R) \times \mathbb{R};X)$ if $\hat{\phi} \in {\mathscr D}(B_R(\mathbf{0}) \times \mathbb{R};X)$)
and the series $\sum\limits_{k \in {\mathbb Z}} \hat{\phi}_k$ converges compactly to $\hat{\phi}$
(and uniformly if $\hat{\phi} \in {\mathscr D}(B_R(\mathbf{0}) \times \mathbb{R};X)$).
\end{lemma}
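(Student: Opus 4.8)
The plan is to treat the two assertions—regularity of the radial coefficients $\phi_k$ and convergence of the series $\sum_k\hat{\phi}_k$—separately, noting first that $\phi_k=\mathcal{P}_k[\hat{\phi}]$ in the notation of Proposition \ref{prop:proj}, so that $\hat{\phi}_k$ is precisely the mode $k$ function with radial coefficient $\phi_k$.

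For the regularity, I would avoid working directly with the formally singular expression $\e^{\i k\theta}\phi_k(r,z)$ near $r=0$ and instead rewrite $\hat{\phi}_k$ as a rotation average. Substituting $\alpha=\theta-\gamma$ in the defining integral and using $(r\cos(\theta-\gamma),r\sin(\theta-\gamma))=(x\cos\gamma+y\sin\gamma,-x\sin\gamma+y\cos\gamma)$ yields
$$\hat{\phi}_k(x,y,z)=\frac{1}{2\pi}\int_0^{2\pi}\hat{\phi}(x\cos\gamma+y\sin\gamma,-x\sin\gamma+y\cos\gamma,z)\,\e^{\i k\gamma}\,\mathrm{d}\gamma.$$
Because rotations map $B_R(\mathbf{0})$ to itself, each integrand is a genuinely smooth function of $(x,y,z)\in B_R(\mathbf{0})\times{\mathbb R}$, and since the parameter $\gamma$ ranges over the compact interval $[0,2\pi]$, differentiation under the integral sign (justified exactly as in Remark \ref{propprojrig}) shows that $\hat{\phi}_k\in C^\infty(B_R(\mathbf{0})\times{\mathbb R};X)$; in particular $\hat{\phi}_k$ is single-valued and honestly defined at the origin. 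Corollary \ref{cor:Cm} then gives $\phi_k\in C^\infty_{(k)}([0,R)\times{\mathbb R};X)$. For the compactly supported refinement, the same formula shows that $\hat{\phi}_k$ vanishes wherever every rotate of $(x,y)$ avoids the projection of $\mathrm{supp}\,\hat{\phi}$, so $\mathrm{supp}\,\hat{\phi}_k$ lies inside the (compact) rotation-invariant hull of $\mathrm{supp}\,\hat{\phi}$; hence $\phi_k\in{\mathscr D}_{(k)}([0,R)\times{\mathbb R};X)$.

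For the convergence, I would fix $(r,z)$ and observe that $\theta\mapsto\hat{\phi}(r\cos\theta,r\sin\theta,z)$ is a smooth $2\pi$-periodic function whose $k$-th Fourier coefficient is exactly $\phi_k(r,z)$, so that $\sum_k\hat{\phi}_k(r\cos\theta,r\sin\theta,z)=\sum_k\e^{\i k\theta}\phi_k(r,z)$ is its Fourier series and converges to $\hat{\phi}(r\cos\theta,r\sin\theta,z)$ pointwise. To upgrade this to compact (resp.\ uniform) convergence I would establish rapid decay of the coefficients by integrating the defining integral by parts $N$ times in $\theta$, using periodicity to discard boundary terms; this gives, for $k\neq0$,
$$\|\phi_k(r,z)\|=\frac{1}{2\pi|k|^N}\left\|\int_0^{2\pi}\partial_\theta^N\big[\hat{\phi}(r\cos\theta,r\sin\theta,z)\big]\e^{-\i k\theta}\,\mathrm{d}\theta\right\|\leq\frac{C_N(r,z)}{|k|^N}.$$
By the assumed $C^\infty$-regularity of the polar composition on $[0,R)\times{\mathbb T}^1\times{\mathbb R}$, the constant $C_N$ is bounded uniformly on any compact subset of $[0,R)\times{\mathbb R}$. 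Since $\|\hat{\phi}_k\|=\|\phi_k\|$ depends only on $(r,z)$, taking $N=2$ produces a summable majorant of the form $C_K/k^2$, whence $\sum_k\hat{\phi}_k$ converges absolutely and uniformly on every compact subset of $B_R(\mathbf{0})\times{\mathbb R}$; its limit must be $\hat{\phi}$ by the pointwise identification. In the compactly supported case the support argument above confines all $\hat{\phi}_k$ to a single fixed compact set, so the compact convergence becomes uniform on the whole of $B_R(\mathbf{0})\times{\mathbb R}$.

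The main obstacle throughout is the coordinate degeneracy at $r=0$: both the smoothness and single-valuedness of $\hat{\phi}_k$ at the origin and the uniformity of the decay estimates near $r=0$ fail to follow from the $r>0$ theory alone. The rotation-average representation (for regularity) and the hypothesis that $(r,\theta,z)\mapsto\hat{\phi}(r\cos\theta,r\sin\theta,z)$ is $C^\infty$ up to and including $r=0$ (for the uniform control of $C_N$) are precisely the ingredients that resolve this boundary behaviour.
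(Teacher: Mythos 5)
Your proposal is correct, and its regularity half takes a genuinely different route from the paper. The paper works directly on the radial coefficient: it writes $\phi_k=\mathcal{P}_k[\hat{\phi}]$ and invokes Proposition \ref{prop:proj} together with Remark \ref{propprojrig} to get
$$\mathcal{D}^{n-i}_{-k+i}\mathcal{D}^i_{k}\partial_z^{p-n}\mathcal{P}_{k}[\hat{\phi}] = \mathcal{P}_{k+n-2i}[2^{\frac{n}{2}}\partial^{n-i}_{\overline{\zeta}}\partial^i_\zeta \partial_z^{p-n}\hat{\phi}],$$
from which continuity of all radial derivatives up to $r=0$ and the vanishing conditions $(k+n-2i)\mathcal{D}^{n-i}_{-k+i}\mathcal{D}^i_k\partial_z^{p-n}\phi_k|_{r=0}=0$ are read off at once (the projection of a $\theta$-independent value against $\e^{-\i\ell\theta}$ vanishes for $\ell\neq 0$). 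You instead prove smoothness of the planar function $\hat{\phi}_k$ itself via the rotation-average representation---which is exactly the substitution $\alpha=\theta-\gamma$ in the defining integral of $\mathcal{P}_k$---and then pass to $\phi_k\in C^\infty_{(k)}$ through Corollary \ref{cor:Cm}; this is sound and non-circular, since that corollary precedes the present lemma. Your version avoids the Bessel-operator calculus and makes smoothness and single-valuedness at the origin transparent, whereas the paper's version yields the explicit derivative formula that is reused later (e.g.\ in Lemma \ref{Radial FDB} and Lemma \ref{lem:density}). For the convergence half the paper merely appeals to ``familiar methods'' from Fourier theory; your integration-by-parts decay estimate with constants uniform on compacta, the Weierstrass majorant, and the support argument upgrading compact to uniform convergence in the test-function case are precisely that argument written out---the only mild gloss being that pointwise convergence for $X$-valued smooth periodic functions should be justified (Dini's criterion, or absolute convergence plus testing against functionals in $X^*$) rather than imported wholesale from the scalar theory.
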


\begin{proof}

The compact convergence of the series follows by familiar methods from the classical theory of Fourier series.
To show that $\phi_k \in C^\infty_{(k)}([0,R) \times \mathbb{R};X)$, observe that $\phi_k = {\mathcal P}_k[\hat{\phi}]$, where
$${\mathcal P}_k[\hat{\psi}](r,z)=\frac{1}{2\pi}\int_0^{2\pi} \hat{\psi}(r\cos\theta,r\sin\theta,z)\e^{-\i k \theta}\dtheta$$
(interchanging the sum and integral is justified due to the uniform convergence of the series for each fixed $(r,z)$).
It follows from Proposition \ref{prop:proj} and Remark \ref{propprojrig} that ${\mathcal P}_k[\hat{\phi}] \in C^\infty([0,R) \times {\mathbb R}; X)$
with
\begin{equation*}
    \begin{split}
        \mathcal{D}^{n-i}_{-k+i}\mathcal{D}^i_{k}\partial_z^{p-n}\mathcal{P}_{k}[\hat{\phi}] ={}& \mathcal{P}_{k+n-2i}[2^{\frac{n}{2}}\partial^{n-i}_{\overline{\zeta}}\partial^i_\zeta \partial_z^{p-n}\hat{\phi}].
    \end{split}
\end{equation*} 
Evidently
$$(k+n-2i)\mathcal{P}_{k+n-2i}[2^{\frac{n}{2}}\partial^{n-i}_{\overline{\zeta}}\partial^i_\zeta \partial_z^{p-n}\hat{\phi}]|_{r=0}=0,$$
such that $\phi_k \in C^\infty_{(k)}([0,R) \times \mathbb{R};X)$.
\end{proof}

\begin{remark}\label{rem:FS superconvergence - diff}
In the notation of the previous lemma, the series $\sum\limits_{k \in {\mathbb Z}} \partial_{\bar{\zeta}}^{n-i}\partial_\zeta^i \partial_z^{p-n}\hat{\phi}_k$
converges compactly to $\partial_{\bar{\zeta}}^{n-i}\partial_\zeta^i\partial_z^{p-n}\hat{\phi}$ for $0 \leq i \leq n \leq p$, $p \in {\mathbb N}_0$
(and uniformly if $\hat{\phi} \in {\mathscr D}(B_R(\mathbf{0}) \times \mathbb{R};X)$).
\end{remark}

\begin{corollary} \label{cor:FS superconvergence}
Suppose that $\hat{\phi} \in {\mathscr D}(\overline{B}_R(\mathbf{0}) \times \mathbb{R};X)$, such that $(r,\theta,z) \mapsto \hat{\phi}(r\cos\theta,r\sin\theta,z)$ lies in\linebreak
${\mathscr D}([0,R] \times {\mathbb T}^1\times\mathbb{R};X)$. The functions
$$\phi_k(r,z):=\frac{1}{2\pi}\int_0^{2\pi} \hat{\phi}(r\cos\theta,r\sin\theta,z)\e^{-\i k \theta} \dtheta, \qquad k \in {\mathbb Z},$$
belong to ${\mathscr D}_{(k)}([0,R] \times \mathbb{R};X)$
and the series $\sum\limits_{k \in {\mathbb Z}} \hat{\phi}_k$ converges uniformly to $\phi$.
\end{corollary}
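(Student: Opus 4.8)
The plan is to derive this corollary from Lemma \ref{lem:FS superconvergence} applied on the whole of ${\mathbb R}^2$ rather than on $B_R(\mathbf{0})$. By definition of the space ${\mathscr D}(\overline{B}_R(\mathbf{0}) \times \mathbb{R};X)$, the function $\hat{\phi}$ is the restriction to $\overline{B}_R(\mathbf{0}) \times \mathbb{R}$ of some $\hat{\Phi} \in {\mathscr D}({\mathbb R}^2 \times \mathbb{R};X)$; in particular $\hat{\Phi}$ is smooth on all of ${\mathbb R}^2 \times \mathbb{R}$ and has compact support, say $\mathrm{supp}\,\hat{\Phi} \subseteq \overline{B}_N(\mathbf{0}) \times [-M,M]$ for some $N,M>0$. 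Since $\hat{\Phi}$ satisfies the hypotheses of Lemma \ref{lem:FS superconvergence} with $B_R(\mathbf{0})$ replaced by ${\mathbb R}^2$, the radial coefficients $\Phi_k := {\mathcal P}_k[\hat{\Phi}]$ belong to $C^\infty_{(k)}([0,\infty) \times \mathbb{R};X)$ and the series $\sum_{k \in {\mathbb Z}} \hat{\Phi}_k$ converges uniformly to $\hat{\Phi}$ on ${\mathbb R}^2 \times \mathbb{R}$, the uniform convergence being available precisely because $\hat{\Phi}$ has compact support.

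Next I would show that each $\Phi_k$ has compact support and hence lies in ${\mathscr D}_{(k)}([0,\infty) \times \mathbb{R};X)$. This is immediate from the defining integral: if $r>N$ then $|(r\cos\theta,r\sin\theta)|=r>N$ for every $\theta$, while if $|z|>M$ then $\hat{\Phi}(\cdot,\cdot,z)\equiv 0$; in either case the integrand vanishes identically and so $\Phi_k(r,z)=0$. Thus $\mathrm{supp}\,\Phi_k \subseteq [0,N]\times[-M,M] \Subset [0,\infty) \times \mathbb{R}$, and combining this with the smoothness from the first step gives $\Phi_k \in {\mathscr D}_{(k)}([0,\infty) \times \mathbb{R};X)$.

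I would then identify $\phi_k$ with the restriction $\Phi_k|_{[0,R] \times \mathbb{R}}$. For $0 \le r \le R$ the point $(r\cos\theta,r\sin\theta,z)$ lies in $\overline{B}_R(\mathbf{0}) \times \mathbb{R}$, where $\hat{\Phi}=\hat{\phi}$, so the integrals defining $\Phi_k(r,z)$ and $\phi_k(r,z)$ have identical integrands and therefore agree. Hence $\phi_k = \Phi_k|_{[0,R] \times \mathbb{R}}$ with $\Phi_k \in {\mathscr D}_{(k)}([0,\infty) \times \mathbb{R};X)$, which is exactly the assertion that $\phi_k \in {\mathscr D}_{(k)}([0,R] \times \mathbb{R};X)$. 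Restricting the uniform convergence $\sum_{k} \hat{\Phi}_k \to \hat{\Phi}$ from the first step to $\overline{B}_R(\mathbf{0}) \times \mathbb{R}$ then yields that $\sum_{k \in {\mathbb Z}} \hat{\phi}_k$ converges uniformly to $\hat{\phi}$, as required.

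The content here is essentially bookkeeping rather than a genuine difficulty, since the lemma already supplies both the smoothness of the coefficients and the uniform convergence. The only real points to verify are that passing to the global extension $\hat{\Phi}$ preserves the values of the coefficients on $[0,R]$ and furnishes the compact support needed to place $\phi_k$ in ${\mathscr D}_{(k)}([0,R] \times \mathbb{R};X)$. The mildest subtlety worth flagging is the smoothness of $\phi_k$ up to and including the boundary $r=R$: a direct application of Lemma \ref{lem:FS superconvergence} to $\hat{\phi}$ on $B_R(\mathbf{0})$ would only deliver smoothness on $[0,R)$, whereas working with $\hat{\Phi}$ on ${\mathbb R}^2$ resolves this, as ${\mathcal P}_k[\hat{\Phi}]$ is then smooth on the entire half-line $[0,\infty)$.
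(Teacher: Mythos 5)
Your proposal is correct and is precisely the argument the paper intends: since ${\mathscr D}(\overline{B}_R(\mathbf{0}) \times \mathbb{R};X)$ and ${\mathscr D}_{(k)}([0,R] \times \mathbb{R};X)$ are both defined as restriction spaces, the corollary follows by extending $\hat{\phi}$ to some $\hat{\Phi} \in {\mathscr D}({\mathbb R}^2 \times \mathbb{R};X)$, applying Lemma \ref{lem:FS superconvergence} with $B_R(\mathbf{0})$ replaced by ${\mathbb R}^2$ (which already yields $\Phi_k \in {\mathscr D}_{(k)}([0,\infty) \times \mathbb{R};X)$ and uniform convergence), and restricting back, noting that the coefficient integrals for $\hat{\Phi}$ and $\hat{\phi}$ agree for $r \leq R$. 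Your explicit verification of the compact support of $\Phi_k$ and your remark about smoothness up to $r=R$ are sound, if slightly redundant given the parenthetical conclusions of the lemma.
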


The corresponding result for Schwartz-class functions is proved in the same fashion.

\begin{lemma} \label{lem:FS Schwartz superconvergence}
Suppose that $\hat{\phi} \in {\mathscr S}(\mathbb{R}^{2}\times \mathbb{R};X)$, such that $(r,\theta,z) \mapsto \hat{\phi}(r\cos\theta,r\sin\theta,z)$ lies in\linebreak
${\mathscr S}([0,\infty) \times {\mathbb T}^1\times\mathbb{R};X)$. The functions
$$\phi_k(r,z):=\frac{1}{2\pi}\int_0^{2\pi} \hat{\phi}(r\cos\theta,r\sin\theta,z)\e^{-\i k \theta} \dtheta, \qquad k \in {\mathbb Z},$$
belong to ${\mathscr S}_{(k)}([0,\infty) \times \mathbb{R};X)$
and the series $\sum\limits_{k \in {\mathbb Z}} \hat{\phi}_k$ converges uniformly to $\phi$.
\end{lemma}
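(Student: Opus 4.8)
The plan is to follow the template of Lemma~\ref{lem:FS superconvergence} and Corollary~\ref{cor:FS superconvergence}, adapting the two ingredients---membership of each $\phi_k$ in the relevant radial space, and convergence of the Fourier series---to the unbounded domain and the algebraic (rather than compact-support) decay at infinity. First I would show that each $\phi_k=\mathcal{P}_k[\hat{\phi}]$ lies in $C^\infty_{(k)}([0,\infty)\times\mathbb{R};X)$. Since $\hat{\phi}\in C^\infty(\mathbb{R}^2\times\mathbb{R};X)$, Remark~\ref{propprojrig} guarantees that $\phi_k\in C^\infty([0,\infty)\times\mathbb{R};X)$ and justifies differentiation under the integral sign, so that Proposition~\ref{prop:proj} gives
$$\mathcal{D}^{n-i}_{-k+i}\mathcal{D}^i_{k}\partial_z^{p-n}\phi_k = \mathcal{P}_{k+n-2i}\big[2^{\frac{n}{2}}\partial^{n-i}_{\overline{\zeta}}\partial^i_\zeta \partial_z^{p-n}\hat{\phi}\big].$$
The vanishing condition $(k+n-2i)\mathcal{D}^{n-i}_{-k+i}\mathcal{D}^i_{k}\partial_z^{p}\phi_k|_{r=0}=0$ then holds by exactly the argument used in Lemma~\ref{lem:FS superconvergence}, placing $\phi_k$ in $C^\infty_{(k)}$.

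Second, I would verify the Schwartz seminorm bounds. Because $\mathcal{P}_{k+n-2i}$ is an average over $\theta$, the displayed formula yields the pointwise estimate
$$\big\|\mathcal{D}^{n-i}_{-k+i}\mathcal{D}^i_{k}\partial_z^{p-n}\phi_k(r,z)\big\| \le 2^{\frac{n}{2}}\sup_{\theta}\big\|\partial^{n-i}_{\overline{\zeta}}\partial^i_\zeta \partial_z^{p-n}\hat{\phi}(r\cos\theta,r\sin\theta,z)\big\|.$$
Since $|(r\cos\theta,r\sin\theta,z)|=|(r,z)|$ and each $\partial^{n-i}_{\overline{\zeta}}\partial^i_\zeta \partial_z^{p-n}\hat{\phi}$ is again a Schwartz-class function, multiplying by $|(r,z)|^{m_1}$, raising to the power $m_2$ and taking the supremum shows directly that every seminorm $\|\phi_k\|_{m_1,m_2,p}$ is dominated by the corresponding seminorm of $\hat{\phi}$ (up to the harmless constant $2^{p m_2/2}$). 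Hence $\phi_k\in{\mathscr S}_{(k)}([0,\infty)\times\mathbb{R};X)$, in fact with bounds uniform in $k$.

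Third, for the uniform convergence of $\sum_{k}\hat{\phi}_k$, I would extract rapid decay of the angular Fourier coefficients by integrating by parts $N$ times in $\theta$, writing for $k\neq 0$
$$\phi_k(r,z) = \frac{1}{2\pi(\i k)^{N}}\int_0^{2\pi} \partial_\theta^N\big[\hat{\phi}(r\cos\theta,r\sin\theta,z)\big]\e^{-\i k\theta}\dtheta,$$
so that $\sup_{r\geq 0,\,z\in\mathbb{R}}\|\phi_k(r,z)\|\le C_N|k|^{-N}$, where $C_N$ is a finite $\theta$-derivative seminorm of $\hat{\phi}$. Choosing for instance $N=2$ and applying the Weierstrass $M$-test gives absolute and uniform convergence of $\sum_k\hat{\phi}_k$ over the whole domain, while the limit is identified with $\hat{\phi}$ by the pointwise convergence of the $\theta$-Fourier series of the smooth periodic map $\theta\mapsto\hat{\phi}(r\cos\theta,r\sin\theta,z)$.

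The main obstacle, relative to the test-function case, is that uniform convergence must now be established over an \emph{unbounded} set rather than a compact one. This is overcome because the estimate $\|\phi_k\|_\infty\lesssim|k|^{-N}$ is uniform in $(r,z)$: the uniformity is precisely what the Schwartz structure supplies, since the governing constant $C_N$ is a globally finite (unweighted) seminorm of $\hat{\phi}$. The remaining computations are routine transfers of the Fourier-series arguments already used for the compactly supported case.
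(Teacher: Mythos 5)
Your proposal is correct and follows essentially the same route as the paper, which proves this lemma ``in the same fashion'' as Lemma~\ref{lem:FS superconvergence}: membership of $\phi_k={\mathcal P}_k[\hat{\phi}]$ via Proposition~\ref{prop:proj} and Remark~\ref{propprojrig}, transfer of seminorm bounds through the averaging operator ${\mathcal P}_{k+n-2i}$ (the isometry of Lemma~\ref{lem:Topologies of Cb and S}(ii) in disguise), and uniform convergence by the classical integration-by-parts decay of angular Fourier coefficients. Your write-up merely makes explicit the details the paper leaves implicit, the only cosmetic slip being the word ``unweighted'': since $\partial_\theta=-y\partial_x+x\partial_y$, the constant $C_N$ is a polynomially weighted Cartesian seminorm of $\hat{\phi}$ (equivalently an unweighted seminorm of the polar representative in ${\mathscr S}([0,\infty)\times{\mathbb T}^1\times{\mathbb R};X)$), which is finite in either reading.
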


The next proposition, which follows from
Corollary \ref{cor:Cm} and the observation that
$\hat{f}_k \in \hat{C}_k^m(B_R(\mathbf{0}) \times \mathbb{R};X)$, $\hat{g}_q \in \hat{C}_q^m(B_R(\mathbf{0}) \times \mathbb{R};X)$ implies
$\hat{f}_k\hat{g}_q \in \hat{C}_{k+q}^m(B_R(\mathbf{0}) \times \mathbb{R};X)$ (and similarly for the other
function spaces), gives information on the product of two continuous functions. A formula for $\mathcal{D}_{-k-q+i}^{n-i}\mathcal{D}_{k+q}^{i}\partial_z^{p-n}(f_kg_q)$
is given in Remark \ref{rem:product}.

\begin{proposition} \label{prop:products}
Let $X$ be a Banach algebra.
\begin{itemize}
\item[(i)]
Suppose that $f_k \in C_{(k)}^m([0,R) \times \mathbb{R};X)$ (or $C_{(k)\mathrm{b}}^m([0,R) \times \mathbb{R};X)$) and $g_q \in C_{(q)}^m([0,R) \times \mathbb{R};X)$ (or $C_{(q)\mathrm{b}}^m([0,R) \times \mathbb{R};X)$). It follows that $f_k  g_q \in C_{(k+q)}^m([0,R) \times \mathbb{R};X)$ (or $C_{(k+q)\mathrm{b}}^m([0,R)\times \mathbb{R};X)$).
\item[(ii)]
Suppose that $f_k \in C^\infty_{(k)}([0,\infty)\times \mathbb{R};X)$ and $g_q \in {\mathscr D}_{(q)}([0,\infty) \times \mathbb{R};X)$. It follows that\linebreak
$f_k  g_q \in \mathscr{D}_{(k+q)}([0,\infty) \times \mathbb{R};X)$.
\item[(iii)]
Suppose that $f_k \in C^\infty_{(k)}([0,\infty)\times \mathbb{R};X)$, $g_q \in \mathscr{S}_{(q)}([0,\infty) \times \mathbb{R};X)$ and
that $\|\mathcal{D}_{-k+i}^{n-i}\mathcal{D}_k^i\partial_z^{p-n}f_k(r,z)\|$ grows at most polynomially as $|(r,z)| \rightarrow \infty$
for all $0\leq i\leq n \leq p$ and $p \in {\mathbb N}_0$.
It follows that
$f_k  g_q \in \mathscr{S}_{(k+q)}([0,\infty) \times \mathbb{R};X)$.
\end{itemize}
\end{proposition}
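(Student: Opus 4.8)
The plan is to transfer each assertion to the corresponding statement about the associated mode $k$ functions, where it reduces to a standard fact of Banach-algebra-valued calculus, and then to transfer back. The key observation is that the bijection $f \mapsto \hat{f}$ respects products: since $\hat{f}_k = \e^{\i k \theta}f_k$ and $\hat{g}_q = \e^{\i q\theta}g_q$, we have $\hat{f}_k\hat{g}_q = \e^{\i(k+q)\theta}(f_k g_q)$, so $\hat{f}_k\hat{g}_q$ is precisely the mode $k+q$ function whose radial coefficient is $f_k g_q$. (The boundary condition $(f_kg_q)|_{r=0}=0$ required when $k+q\neq 0$ is automatic: if $k\neq 0$ then $f_k|_{r=0}=0$ and if $q\neq 0$ then $g_q|_{r=0}=0$, so in either case the product vanishes at $r=0$; alternatively it follows from Remark \ref{rem:radialcoeffs}(i) once continuity is established.)

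For part (i), Corollary \ref{cor:Cm}, together with its analogue for the bounded spaces, shows that $f_k \in C_{(k)}^m$ and $g_q \in C_{(q)}^m$ correspond to $\hat{f}_k, \hat{g}_q \in C^m(B_R(\mathbf{0}) \times {\mathbb R};X)$, and similarly for the $C^m_{\mathrm{b}}$ spaces. Since $X$ is a Banach algebra, the ordinary Leibniz rule shows that the pointwise product of two $C^m$ (respectively $C^m_{\mathrm{b}}$) functions is again $C^m$ (respectively $C^m_{\mathrm{b}}$), so that $\hat{f}_k\hat{g}_q$ lies in the relevant space. Applying Corollary \ref{cor:Cm} in the reverse direction to the mode $k+q$ function $\hat{f}_k\hat{g}_q$ then yields $f_kg_q \in C^m_{(k+q)}$ (respectively $C^m_{(k+q)\mathrm{b}}$). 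Part (ii) is handled in exactly the same spirit: $f_k \in C^\infty_{(k)}$ and $g_q \in {\mathscr D}_{(q)}$ correspond to $\hat{f}_k \in C^\infty({\mathbb R}^2 \times {\mathbb R};X)$ and $\hat{g}_q \in {\mathscr D}({\mathbb R}^2 \times {\mathbb R};X)$; the product $\hat{f}_k\hat{g}_q$ is smooth and inherits the compact support of $\hat{g}_q$, hence lies in ${\mathscr D}$, and transferring back gives $f_kg_q \in {\mathscr D}_{(k+q)}$.

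Part (iii) is the only case requiring genuine care. I would again pass to $\hat{f}_k \in C^\infty({\mathbb R}^2 \times {\mathbb R};X)$ and $\hat{g}_q \in {\mathscr S}({\mathbb R}^2 \times {\mathbb R};X)$ and then invoke the standard fact that the product of a Schwartz-class function with a smooth function all of whose derivatives grow at most polynomially is again Schwartz (by Leibniz's rule each derivative of the product is a finite sum of terms in which the rapid decay of $\hat{g}_q$ defeats the polynomial growth of $\hat{f}_k$). The main obstacle is to verify the polynomial-growth hypothesis on $\hat{f}_k$ in its Cartesian form, since the hypothesis on $f_k$ is phrased in terms of the Bessel operators $\mathcal{D}_{-k+i}^{n-i}\mathcal{D}_k^i\partial_z^{p-n}$ rather than ordinary derivatives. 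This is precisely where Proposition \ref{Prop:Dk} is used: it identifies $\partial_{\bar{\zeta}}^{n-i}\partial_\zeta^i\partial_z^{p-n}\hat{f}_k$ with $\e^{\i(k+n-2i)\theta}2^{-n/2}\mathcal{D}_{-k+i}^{n-i}\mathcal{D}_k^i\partial_z^{p-n}f_k$, and since the exponential factor is unimodular and $|(\mathbf{x},z)| = |(r,z)|$, the assumed polynomial growth of the Bessel derivatives of $f_k$ transfers verbatim to polynomial growth of the Wirtinger (hence Cartesian) derivatives of $\hat{f}_k$. With this in hand $\hat{f}_k\hat{g}_q$ is Schwartz, and a final application of the bijection gives $f_kg_q \in {\mathscr S}_{(k+q)}$.
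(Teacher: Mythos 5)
Your proposal is correct and takes essentially the same route as the paper, whose entire proof is the observation that $\hat{f}_k\hat{g}_q$ is a mode $k+q$ function together with Corollary \ref{cor:Cm} (and the analogous characterisations of the bounded, test-function and Schwartz classes) and the standard product facts for Banach-algebra-valued functions on $B_R(\mathbf{0})\times\mathbb{R}$. Your extra verifications---the vanishing of $f_kg_q$ at $r=0$ when $k+q\neq 0$, and the transfer of the polynomial-growth hypothesis to the Wirtinger derivatives of $\hat{f}_k$ via Proposition \ref{Prop:Dk}---simply make explicit details the paper leaves implicit.
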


Similar results are available for compositions of functions (see Remark \ref{propprojrig}, noting that\linebreak
$\ell{\mathcal P}_\ell[\hat{\psi}](r,z)|_{r=0}=0$); a formula for 
$\mathcal{D}_{-k-q+i}^{n-i}\mathcal{D}_{k+q}^{i}\partial_z^{p-n} u_\ell(f_k)$ is given in Remark \ref{prop:proj}.

\begin{proposition} Let $u \in C^0(X;X)$, $f_k \in C^0_{(k)}([0,R) \times {\mathbb R};X)$ and define
$u_\ell(f_k) \in C^0_{(\ell)}([0,R) \times {\mathbb R};X)$
by $u_\ell(f_k) = {\mathcal P}_\ell[u \circ \hat{f}_k]$, where
$${\mathcal P}_\ell[\hat{\psi}](r,z):=\frac{1}{2\pi}\int_0^{2\pi} \hat{\psi}(r\cos\theta,r\sin\theta,z)\e^{-\i \ell \theta}\dtheta$$
for $\hat{\psi} \in C^0[(0,R) \times \mathbb{R};X)$.
\begin{itemize}
\item[(i)]
Suppose that $f_k \in C_{(k)}^m([0,R) \times \mathbb{R};X)$ (or $C_{(k)\mathrm{b}}^m([0,R)\times \mathbb{R};X)$) and $u \in C^m(X;X)$ (or $u \in C_\mathrm{b}^m(X;X)$). It follows that $u_\ell(f_k) \in C_{(\ell)}^m([0,R) \times \mathbb{R};X)$
(or $C_{(\ell)\mathrm{b}}^m([0,R) \times \mathbb{R};X)$).
\item[(ii)]
Suppose that $f_k \in {\mathscr D}_{(k)}([0,R) \times \mathbb{R};X)$ and $u \in C^\infty(X;X)$. It follows that $u_\ell(f_k) \in {\mathscr D}_{(\ell)}([0,R) \times \mathbb{R};X)$.
\item[(iii)]
Suppose that $f_k \in {\mathscr S}_{(k)}([0,\infty) \times \mathbb{R};X)$,  $u \in C^\infty(X;X)$ and that $\|\mathrm{d}^iu[x]\|$ grows at most polynomially as $\|x\| \rightarrow \infty$ for all $i \in {\mathbb N}_0$.
 It follows that $u_\ell(f_k)\in {\mathscr S}_{(\ell)}([0,\infty) \times \mathbb{R};X)$.
\end{itemize}
\end{proposition}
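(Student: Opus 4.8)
The plan is to deduce all three parts from the radial Fa\'{a} di Bruno formula of Lemma \ref{Radial FDB}, which already expresses each generalised Bessel derivative of $u_\ell(f_k)$ as the finite sum
\[
\mathcal{D}_{-\ell+i}^{n-i}\mathcal{D}_{\ell}^{i}\partial_z^{p-n}u_{\ell}(f_{k})=\sum_{\pi=\{B_j\}\in\Pi_{p}}{\mathcal P}_{\ell-|\pi|k}\big[\mathrm{d}^{|\pi|}u[\hat{f}_{k}]\big({\mathcal D}^{B_1}_{k}f_k,\ldots,{\mathcal D}^{B_{|\pi|}}_{k}f_k\big)\big].
\]
The essential observation is that the projection ${\mathcal P}_m$ sends a continuous (respectively bounded-continuous, Schwartz-class) function on $B_R(\mathbf 0)\times{\mathbb R}$ to a mode $m$ radial coefficient lying in $C^0_{(m)}$ (respectively its bounded and Schwartz analogues), together with the boundary relation $m{\mathcal P}_m[\hat\psi]|_{r=0}=0$ recorded before the statement. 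First I would note the pointwise bound $\|{\mathcal P}_m[\hat\psi](r,z)\|\le\sup_{\theta}\|\hat\psi(r\cos\theta,r\sin\theta,z)\|$ and the multilinear estimate $\|\mathrm{d}^{|\pi|}u[\hat f_k](v_1,\ldots,v_{|\pi|})\|\le\|\mathrm{d}^{|\pi|}u[\hat f_k]\|\prod_j\|v_j\|$, which reduce each summand above to a factor controlled by $u$ times factors ${\mathcal D}^{B_j}_kf_k$ controlled by $f_k$.

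For part (i) in the $C^m$ case I would bypass the formula and argue directly: by Corollary \ref{cor:Cm} the hypothesis $f_k\in C^m_{(k)}$ is equivalent to $\hat f_k\in C^m(B_R(\mathbf 0)\times{\mathbb R};X)$, the chain rule for $C^m$ maps between Banach spaces gives $u\circ\hat f_k\in C^m(B_R(\mathbf 0)\times{\mathbb R};X)$, and the $C^m$-version of the projection argument in Lemma \ref{lem:FS superconvergence} (Proposition \ref{prop:proj} together with Remark \ref{propprojrig}, which already supply $C^m$ regularity and the boundary conditions) yields $u_\ell(f_k)={\mathcal P}_\ell[u\circ\hat f_k]\in C^m_{(\ell)}$. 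For the bounded case I would instead invoke the displayed formula: each $\|{\mathcal D}^{B_j}_kf_k\|$ is bounded because $f_k\in C^m_{(k)\mathrm b}$, each $\|\mathrm{d}^{|\pi|}u\|$ is bounded because $u\in C^m_\mathrm b$, and since $\Pi_p$ is finite the $C^m_{(\ell)\mathrm b}$-norm of $u_\ell(f_k)$ is dominated by a finite sum of such products.

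Parts (ii) and (iii) follow the same template, with regularity replaced by support or decay control. For (iii), since $f_k$ is Schwartz the base point $\hat f_k$ is bounded, so the polynomial-growth hypothesis makes $\|\mathrm{d}^{|\pi|}u[\hat f_k]\|$ bounded, while every factor ${\mathcal D}^{B_j}_kf_k$ decays faster than any polynomial; hence each summand with $|\pi|\ge1$ is rapidly decaying, and the bound persists after multiplication by $|(r,z)|^{m_1}$ and raising to any power $m_2$, giving $u_\ell(f_k)\in{\mathscr S}_{(\ell)}$. For (ii) the set $\{u\circ\hat f_k-u(\mathbf 0)\ne0\}$ is contained in $\operatorname{supp}\hat f_k$, which is compact in $[0,R)\times{\mathbb R}$, so its projection is again compactly supported and smooth.

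The step needing the most care is the term with $|\pi|=0$ (equivalently $p=0$), namely $u_\ell(f_k)={\mathcal P}_\ell[u\circ\hat f_k]$ itself, since $u\circ\hat f_k\to u(\mathbf 0)$ rather than $0$ at infinity. For $\ell\ne0$ this is harmless because ${\mathcal P}_\ell$ annihilates constants, and writing $u\circ\hat f_k-u(\mathbf 0)=\int_0^1\mathrm{d}u[t\hat f_k](\hat f_k)\,\mathrm{d}t$ shows that the relevant decay and support are inherited from $\hat f_k$; for $\ell=0$ one must take $u(\mathbf 0)=0$ (or subtract off this constant) for the compact-support and rapid-decay conclusions to hold. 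I expect this normalisation to be the only genuine obstacle, the remaining verifications being routine bookkeeping over the finite partition sum.
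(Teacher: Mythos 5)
Your proposal is correct and follows essentially the route the paper intends: the proposition is stated there without a written proof, with the surrounding text pointing precisely to the projection machinery (Proposition \ref{prop:proj} and Remark \ref{propprojrig}, as in the proof of Lemma \ref{lem:FS superconvergence}) for the regularity and boundary conditions $(\ell+n-2i)\mathcal{P}_{\ell+n-2i}[\,\cdot\,]|_{r=0}=0$, and to the radial Fa\'{a} di Bruno formula of Lemma \ref{Radial FDB} for the quantitative bounds, which is exactly your chain-rule-plus-projection argument with multilinear estimates over the finite partition sum. Your closing observation is moreover a genuine refinement rather than a gap in your own argument: for $\ell=0$ the $|\pi|=0$ term ${\mathcal P}_0[u\circ\hat f_k]$ equals the constant $u(0_X)$ outside $\mathrm{supp}\,\hat f_k$ (respectively tends to it at infinity), so parts (ii) and (iii) as stated require the normalisation $u(0_X)=0$, a point the paper passes over silently; for $\ell\neq 0$ your subtraction $u\circ\hat f_k-u(0_X)=\int_0^1 \mathrm{d}u[t\hat f_k](\hat f_k)\,\mathrm{d}t$ handles it correctly. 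One small point worth making explicit in part (iii): boundedness of $\|\mathrm{d}^{|\pi|}u[\hat f_k]\|$ on the (bounded, but in general non-compact) range of $\hat f_k$ is exactly what the polynomial-growth hypothesis supplies, since continuity alone does not give boundedness on bounded subsets of a Banach space.
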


Finally, we record some useful results concerning standard radial functions.
\begin{lemma} \label{lem:power}
Suppose that $\ell\in\mathbb{N}_{0}$.
\begin{itemize}
\item[(i)]
The function $f(r)=r^\ell$ belongs to $C_{(k)}^m([0,R);\mathbb{R})$ for all $k\in\mathbb{Z}$ if $\ell>m$;
\item[(ii)]
The function $f(r)=r^\ell$ belongs to $C^\infty_{(k)}([0,R); \mathbb{R})$ for $k=\ell,\ell-2,\ldots,-\ell$.
\end{itemize}
\end{lemma}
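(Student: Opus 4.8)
The plan is to reduce both statements to the elementary observation that the first-order Bessel operator acts on powers by
$$\mathcal{D}_{\mu}(r^{a}) = \left(\tfrac{\mathrm{d}}{\mathrm{d}r} + \tfrac{\mu}{r}\right)r^{a} = (a+\mu)\,r^{a-1}, \qquad a,\mu\in\mathbb{R},$$
which is immediate from the definition of $\mathcal{D}_\mu$. Since the generalised Bessel operator $\mathcal{D}_{-k+i}^{n-i}\mathcal{D}_{k}^{i}$ is a composition of exactly $n$ such first-order operators, each lowering the exponent by one, iterating this identity gives
$$\mathcal{D}_{-k+i}^{n-i}\mathcal{D}_{k}^{i}(r^{\ell}) = C_{n,i}\,r^{\ell-n}, \qquad C_{n,i} := \prod_{j=0}^{i-1}(\ell+k-2j)\prod_{j=0}^{n-i-1}(\ell-k-2j),$$
for all $0\le i\le n$. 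With this formula in hand I would invoke the characterisation of $C_{(k)}^{m}([0,R);\mathbb{R})$ from Corollary \ref{cor:Cm} (in its axial-coordinate-free special case), namely that membership is equivalent to $\mathcal{D}_{-k+i}^{n-i}\mathcal{D}_{k}^{i}(r^{\ell}) \in C^{0}_{(k+n-2i)}([0,R);\mathbb{R})$ for all $0\le i\le n\le m$.

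For part (i) the argument is then immediate. If $\ell>m$, then for every $n\le m$ we have $\ell-n\ge 1$, so $C_{n,i}r^{\ell-n}$ is continuous on $[0,R)$ and \emph{vanishes} at $r=0$. Hence the boundary condition $(k+n-2i)\,C_{n,i}r^{\ell-n}\big|_{r=0}=0$ defining $C^{0}_{(k+n-2i)}([0,R);\mathbb{R})$ holds automatically for \emph{every} $k\in\mathbb{Z}$, and $r^{\ell}\in C_{(k)}^{m}([0,R);\mathbb{R})$ follows.

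For part (ii) I would use a geometric shortcut that bypasses the constants entirely. Writing $k=\ell-2t$ with $t\in\{0,\dots,\ell\}$ (so that $k$ ranges over $\ell,\ell-2,\dots,-\ell$), the associated mode $k$ function is the homogeneous polynomial
$$\hat{f}_{k}(x,y) = \mathrm{e}^{\mathrm{i}k\theta}r^{\ell} = \begin{cases}(x^{2}+y^{2})^{t}(x+\mathrm{i}y)^{\ell-2t}, & k\ge 0,\\[1mm] (x^{2}+y^{2})^{\ell-t}(x-\mathrm{i}y)^{2t-\ell}, & k<0,\end{cases}$$
which is manifestly $C^{\infty}(\mathbb{R}^{2};\mathbb{C})$ (all exponents are non-negative integers). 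Applying Corollary \ref{cor:Cm} for every $m$ then shows $r^{\ell}\in C^{\infty}_{(k)}([0,R);\mathbb{C})$, and since $r^{\ell}$ is real-valued this gives $r^{\ell}\in C^{\infty}_{(k)}([0,R);\mathbb{R})$.

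The only genuine subtlety — and the step I would flag as the main obstacle, should one insist on a self-contained computational proof of (ii) — is verifying directly that $C_{n,i}r^{\ell-n}$ is admissible for \emph{all} $n$, including $n>\ell$ where $r^{\ell-n}$ is singular. Here one checks, with $k=\ell-2t$, that $\ell+k-2j=2(\ell-t-j)$ and $\ell-k-2j=2(t-j)$, so that $C_{n,i}\ne 0$ forces $n-t\le i\le \ell-t$. In particular $C_{n,i}=0$ identically once $n>\ell$ (the operator annihilates $r^{\ell}$, giving the trivially admissible zero function), while for $n=\ell$ the sole non-vanishing coefficient occurs at $i=\ell-t$ — precisely where the prefactor $k+n-2i=2(\ell-t-i)$ vanishes, so the boundary condition again holds. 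This combinatorial bookkeeping is exactly what the polynomial argument renders unnecessary, which is why I would favour the geometric route.
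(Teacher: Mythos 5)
Your proposal is correct, and while part (i) coincides with the paper's argument --- the same iterated identity $\mathcal{D}_{-k+i}^{n-i}\mathcal{D}_{k}^{i}(r^{\ell})=C_{n,i}\,r^{\ell-n}$ with the same constants, and the same observation that for $n\leq m<\ell$ every derivative is continuous and vanishes at $r=0$ --- your treatment of part (ii) is a genuinely different route. The paper stays computational: it notes that the order-$\ell$ derivatives are constants, evaluates $(k+\ell-2i)\mathcal{D}_{-k+i}^{\ell-i}\mathcal{D}_{k}^{i}(r^{\ell})|_{r=0}=(-1)^{\ell-i}\prod_{j=0}^{\ell}(k+\ell-2j)$, concludes that the boundary conditions at order $\ell$ hold for all $0\leq i\leq\ell$ exactly when $k\in\{\ell,\ell-2,\dots,-\ell\}$, and then disposes of the orders $n>\ell$. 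You instead recognise $\mathrm{e}^{\mathrm{i}k\theta}r^{\ell}$ as a polynomial in $(x,y)$ for precisely these $k$ and pull smoothness back through Corollary \ref{cor:Cm}; this is legitimate (Corollary \ref{cor:Cm} precedes the lemma and does not depend on it, and the passage from $X=\mathbb{C}$ to $X=\mathbb{R}$ is harmless since the Bessel operators have real coefficients), and it is shorter and more conceptual, explaining \emph{why} exactly those $k$ are admissible. What the paper's computation buys in exchange is self-containedness and, implicitly, the converse statement: for $k\notin\{\ell,\ell-2,\dots,-\ell\}$ the boundary condition fails at order $n=\ell$, a fact your polynomial argument does not see (though the lemma does not claim it). Finally, your closing bookkeeping remark deserves emphasis, because it is in fact more careful than the paper's last sentence: the paper asserts that in the admissible cases $\mathcal{D}_{-k+i}^{\ell-i}\mathcal{D}_{k}^{i}(r^{\ell})$ is identically zero, but for the single index $i=\ell-t$ it is a \emph{nonzero} constant whose mode prefactor $k+\ell-2i=2(\ell-t-i)$ vanishes; what actually renders the orders $n>\ell$ harmless is your observation that $C_{n,i}=0$ for every $i$ once $n>\ell$ --- equivalently, the next operator in the chain at $i=\ell-t$ is $\mathcal{D}_{0}$, which annihilates constants. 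So your combinatorial aside quietly repairs a small imprecision in the published proof.
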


      \begin{proof}
      Since
$$
            \mathcal{D}_{k}(r^\ell) = (k + \ell)r^{\ell -1}
$$
we find that
$$
\mathcal{D}_{-k+i}^{n-i}\mathcal{D}_{k}^{i}(r^\ell)= \prod_{j=0}^{i-1}(k+\ell -2j)\mathcal{D}_{-k+i}^{n-i}(r^{\ell-i})
 = \prod_{j=0}^{n-i-1}(-k+\ell -2j)\prod_{j=0}^{i-1}(k+\ell -2j) r^{\ell-n}.
$$
Evidently
$\mathcal{D}_{-k+i}^{n-i}\mathcal{D}_{k}^{i}(r^\ell)$ belongs to $C^0([0,R);{\mathbb R})$ and
  vanishes at the origin for all $0 \leq i \leq n < \ell$ ; in particular, this observation establishes part (i).
  
To establish part (ii) we examine $\mathcal{D}_{-k+i}^{\ell-i}\mathcal{D}_{k}^{i}(r^\ell)$. This derivative is constant
and hence belongs to $C^0([0,\infty);{\mathbb R})$ for all $0 \leq i \leq \ell$; furthermore
$$
            (k+\ell-2i)\mathcal{D}_{-k+i}^{\ell-i}\mathcal{D}_{k}^{i}(r^\ell)|_{r=0}=(-1)^{\ell-i}\prod_{j=0}^{\ell}(k+\ell - 2j)
$$
vanishes for all $0 \leq i \leq \ell$ if and only if $k+\ell-2j=0$ for some $j$ with $0 \leq j \leq \ell$. In this case $\mathcal{D}_{-k+i}^{\ell-i}\mathcal{D}_{k}^{i}(r^\ell)$ is identically
zero, such that all higher-order radial derivatives exist and are also identically zero.
\end{proof}

\begin{lemma} \label{lem:Sobfactor}
The function $f(r)=(1+r^2)^{\frac{s}{2}}$ belongs to $C_{(0)}^\infty([0,R);{\mathbb R})$ for every $s \in {\mathbb R}$.
\end{lemma}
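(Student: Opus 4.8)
The plan is to sidestep any direct manipulation of the generalised Bessel operators by exploiting the correspondence between radial coefficients and axisymmetric functions furnished by Corollary~\ref{cor:Cm}. Concretely, $f$ is the radial coefficient of the ($z$-independent) mode $0$ function
$$\hat{f}_0(\bfx) := (1+|\bfx|^2)^{\frac{s}{2}}, \qquad \bfx \in {\mathbb R}^2,$$
and Corollary~\ref{cor:Cm} (specialised to $k=0$ with no axial dependence) states that $f \in C_{(0)}^m([0,R);{\mathbb R})$ if and only if $\hat{f}_0 \in C^m(B_R(\mathbf{0});{\mathbb R})$. Intersecting over all $m \in {\mathbb N}_0$, it therefore suffices to show that $\hat{f}_0 \in C^\infty(B_R(\mathbf{0});{\mathbb R})$.

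This reduced statement is essentially immediate. The map $\bfx \mapsto 1+|\bfx|^2$ is a polynomial, hence $C^\infty$ on ${\mathbb R}^2$, and takes values in $[1,\infty) \subseteq (0,\infty)$, while $t \mapsto t^{\frac{s}{2}}$ is $C^\infty$ on $(0,\infty)$ for every $s \in {\mathbb R}$; by the chain rule the composition $\hat{f}_0$ is $C^\infty$ on ${\mathbb R}^2$, and in particular on $B_R(\mathbf{0})$. The decisive point is that the singularity of $t^{\frac{s}{2}}$ at $t=0$ is never encountered, since $1+|\bfx|^2$ is bounded away from zero; this is precisely what makes $(1+r^2)^{\frac{s}{2}}$ better behaved at the origin than a bare power such as $r^\ell$ (\emph{cf.}\ Lemma~\ref{lem:power}).

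I expect no genuine obstacle along this route, the only point needing (minor) care being the correct invocation of Corollary~\ref{cor:Cm} in the $z$-independent, $k=0$ case. For completeness I note the alternative, direct verification: writing $f(r)=g(r^2)$ with $g(t)=(1+t)^{\frac{s}{2}}$ smooth on $[0,\infty)$, one observes that $\tfrac{1}{r}\tfrac{\mathrm{d}}{\mathrm{d}r}$ acts on functions of $r^2$ as $2\tfrac{\mathrm{d}}{\mathrm{d}t}$, so repeated application preserves the ``smooth function of $r^2$'' structure. Using the shift rule (Proposition~\ref{Prop:DkProperties}(b)) together with Proposition~\ref{Prop:Comm}, one then finds that $\mathcal{D}_{i}^{n-i}\mathcal{D}_{0}^{i}f$ equals $r^{|n-2i|}$ times a smooth function of $r^2$; this is continuous on $[0,R)$ and vanishes at $r=0$ precisely when $n-2i\neq 0$, which is exactly the condition $\mathcal{D}_{i}^{n-i}\mathcal{D}_{0}^{i}f \in C^0_{(n-2i)}([0,R);{\mathbb R})$ required for membership in $C_{(0)}^\infty$. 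In this direct route the main (purely bookkeeping) difficulty is tracking the powers of $r$ and matching the vanishing behaviour to the $C^0_{(n-2i)}$ condition of Lemma~\ref{lem:C0} --- a difficulty that the abstract argument via Corollary~\ref{cor:Cm} removes entirely.
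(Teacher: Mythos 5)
Your proof is correct, and it takes a genuinely different route from the paper's. The paper argues by direct computation: setting $f_i(r):=(1+r^2)^{\frac{s}{2}-i}$, it establishes by induction the identities $\mathcal{D}_0^m f_i = \prod_{j=i}^{i+m-1}(s-2j)\,r^m f_{i+m}$ and $\mathcal{D}_i^m(r^i)=\prod_{j=0}^{m-1}2(i-j)\,r^{i-m}$, combines them via the product rule (Proposition \ref{Prop:DkProperties}(c)) into an explicit expansion of $\mathcal{D}_i^{n-i}\mathcal{D}_0^i f_0$ in which each term is a constant multiple of $r^{2\ell-n}f_\ell(r)$ for $i\le\ell\le n$, and then checks continuity and the vanishing condition $(n-2i)\,\mathcal{D}_i^{n-i}\mathcal{D}_0^i f_0|_{r=0}=0$ term by term, the only delicate case being $n=2i=2\ell$, where the prefactor $n-2i$ itself vanishes. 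You instead lift $f$ to the axisymmetric function $\hat{f}_0(\bfx)=(1+|\bfx|^2)^{\frac{s}{2}}$, observe it is smooth on ${\mathbb R}^2$ because $1+|\bfx|^2$ takes values in $[1,\infty)$ where $t\mapsto t^{\frac{s}{2}}$ is smooth, and pull membership in $C^m_{(0)}$ back through Corollary \ref{cor:Cm} for every $m$. This is legitimate: the $z$-independent, $k=0$ specialisation you need is explicitly licensed at the start of Section \ref{Classical FS}, and Corollary \ref{cor:Cm} precedes this lemma and does not depend on it, so there is no circularity. Your route buys a computation-free argument that also explains conceptually why $(1+r^2)^{\frac{s}{2}}$ is better behaved at the origin than the bare powers of Lemma \ref{lem:power} (the singularity of $t^{\frac{s}{2}}$ at $t=0$ is never encountered); the paper's route buys the explicit formula for the radial derivatives, whose qualitative content --- each derivative is $r^{|n-2i|}$ times a smooth function of $r^2$ --- your appended direct sketch correctly recovers. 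One cosmetic remark on that sketch: the derivative vanishes at $r=0$ \emph{whenever} $n-2i\neq 0$, rather than ``precisely when'', since the smooth factor may also vanish; this is harmless, as only that implication is needed for membership in $C^0_{(n-2i)}([0,R);{\mathbb R})$.
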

\begin{proof}
Let $f_i(r):= (1 + r^2)^{\frac{s}{2}-i}$ for $i \in {\mathbb N}_0$. By induction over $m$ we find that
\begin{equation*}
    \mathcal{D}^{m}_{0}f_i(r) = \prod_{j=i}^{i+m-1}(s-2j) \, r^{m}f_{i+m}(r),
    \qquad
        \mathcal{D}^{m}_{i}(r^{i})=\prod_{j=0}^{m-1}2(i-j) r^{i-m}
\end{equation*}
for all $m \in {\mathbb N}_0$, such that
\begin{equation*}\begin{split}
    \mathcal{D}^{n-i}_{i}\mathcal{D}_{0}^{i}f_{0}(r) ={}& \prod_{j=0}^{i-1}(s-2j)\, \mathcal{D}^{n-i}_{i}(r^{i}f_{i}(r)) \\
    ={}& \prod_{j=0}^{i-1}(s-2j)\, \sum_{\ell = 0}^{n-i} \begin{pmatrix} n-i \\ \ell \end{pmatrix} \mathcal{D}^{n-i-\ell}_{i}(r^{i})\mathcal{D}^{\ell}_{0}(f_{i}(r))\\
    ={}& \prod_{j=0}^{i-1}(s-2j)\, \sum_{\ell = i}^{n} \begin{pmatrix} n-i \\ \ell-i \end{pmatrix} \mathcal{D}^{n-\ell}_{i}(r^{i})\mathcal{D}^{\ell-i}_{0}(f_{i}(r))\\
    ={}& \prod_{j=0}^{i-1}(s-2j)\, \sum_{\ell = i}^{n} \begin{pmatrix} n-i \\ \ell-i \end{pmatrix} \prod_{j=0}^{n-\ell-1}2(i-j) r^{i-n+\ell}\prod_{j=i}^{\ell-1}(s-2j) r^{\ell-i}f_{\ell}(r)\\
    ={}& \sum_{\ell = i}^{n} \begin{pmatrix} n-i \\ \ell-i \end{pmatrix} \prod_{j=0}^{n-\ell-1}2(i-j)\, \prod_{j=0}^{\ell-1}(s-2j) \, r^{2\ell-n} \,f_{\ell}(r)
\end{split}
\end{equation*}
belongs to $C^0([0,\infty);\mathbb{R})$ for all $n \in {\mathbb N}_0$.

To verify that
\begin{equation*}
    (n-2i)\mathcal{D}^{n-i}_{i}\mathcal{D}_{0}^{i}f_{0}(r)|_{r=0} = 0
\end{equation*}
for $0\leq i\leq n$, $n\in\mathbb{N}_{0}$ we examine each term in the sum over $\ell$ separately. Obviously $r^{2\ell-n}|_{r=0}=0$ for $2\ell>n$, while $\prod\limits_{j=0}^{n-\ell-1}2(i-j) = 0$
for $i +\ell \leq n-1$, such that $\ell \leq i\leq n-\ell-1$; the remaining case $i+\ell > n-1$ and $2\ell\leq n$ arises only when
$n=2i = 2\ell$, such that $n-2i=0$.
\end{proof}

\subsection{Spaces of distributions} \label{Distributions}

In this section we study distributions acting on the spaces $\hat{\mathscr D}_{(k)}(B_R(\mathbf{0}) \times {\mathbb R};{\mathbb C})$
and $\hat{\mathscr S}_{(k)}(\mathbb{R}^{2} \times {\mathbb R};{\mathbb C})$ of
mode $k$ test functions and Schwartz-class functions;
again $B_R(\mathbf{0})$ can be replaced by ${\mathbb R}^2$ and functions which do not depend upon the axial coordinate
are included as special cases.

Recall that a \emph{distribution} $T: {\mathscr D}(B_R(\mathbf{0}) \times {\mathbb R};\mathbb{C}) \rightarrow X$ is a sequentially continuous linear functional, where
convergence of the sequence $\{\hat{\phi}^j\} \subseteq {\mathscr D}(B_R(\mathbf{0}) \times {\mathbb R};{\mathbb C})$ to $\hat{\phi} \in {\mathscr D}(B_R(\mathbf{0}) \times {\mathbb R};{\mathbb C})$
means that there is a compact subset $K$ of $B_R(\mathbf{0}) \times {\mathbb R}$ such that
\begin{itemize}\vspace{-0.5\baselineskip}
\item[(i)]
$\mathrm{supp}\, \hat{\phi}^j \subseteq K$ for all $j \in {\mathbb N}$,
\item[(ii)]
$\partial_{\bar{\zeta}}^{n-i}\partial_\zeta^i \partial_z^{p-n} \hat{\phi}^j \rightarrow \partial_{\bar{\zeta}}^{n-i}\partial_\zeta^i \partial_z^{p-n} \hat{\phi}$
uniformly over $K$ for $0 \leq i \leq n \leq p$ and $p \in {\mathbb N}_0$.
\end{itemize}\vspace{-0.5\baselineskip}
We denote the set of all such distributions by ${\mathscr D}^\prime(B_R(\mathbf{0})  \times {\mathbb R};X)$.
Without loss of generality we can take $K=\overline{B}_{r_0}(\mathbf{0}) \times [-z_0,z_0]$ for some $0<r_0<R$ and $z_0>0$ in the definition of convergence,
and hence define the class $\hat{\mathscr D}_{(k)}^\prime(B_R(\mathbf{0}) \times {\mathbb R};X)$ of \emph{mode $k$ distributions} by replacing
${\mathscr D}(B_R(\mathbf{0}) \times {\mathbb R};{\mathbb C})$ by $\hat{\mathscr D}_{(k)}(B_R(\mathbf{0}) \times {\mathbb R};{\mathbb C})$ in the above construction.
Furthermore, a mode $k$ distribution $\hat{T}_k \in \hat{\mathscr D}_{(k)}^\prime(B_R(\mathbf{0}) \times {\mathbb R};X)$
can be extended to a distribution in ${\mathscr D}^\prime(B_R(\mathbf{0}) \times {\mathbb R};X)$ by
writing a test function $\hat{\phi} \in {\mathscr D}(B_R(\mathbf{0}) \times \mathbb{R};{\mathbb C})$ as a sum $\hat{\phi} = \sum\limits_{\ell \in {\mathbb Z}} \hat{\phi}_\ell$ of
radial test functions $\hat{\phi}_\ell \in \hat{\mathscr D}_{(\ell)}(B_R(\mathbf{0}) \times \mathbb{R};\mathbb{C})$ (see Lemma \ref{lem:FS superconvergence}) and defining
$$( \hat{T}_k, \hat{\phi} ) := ( \hat{T}_k, \hat{\phi}_k ).$$

\begin{definition}
A \underline{$k$-index radial distribution} is a sequentially continuous linear functional\linebreak
$T_k: {\mathscr D}_{(k)}([0,R) \times {\mathbb R};{\mathbb C}) \rightarrow X$, where
convergence of the sequence $\{\phi_k^j\} \subseteq {\mathscr D}_{(k)}([0,R) \times {\mathbb R};{\mathbb C})$ to\linebreak $\phi_k \in {\mathscr D}_{(k)}([0,R) \times {\mathbb R};{\mathbb C})$
means that there exists $0<r_0<R$ and $z_0>0$ such that
\begin{itemize}\vspace{-0.5\baselineskip}
\item[(i)]
$\mathrm{supp}\, \phi^j_k \subseteq [0,r_0] \times [-z_0,z_0]$ for all $j \in {\mathbb N}$,
\item[(ii)]
$\mathcal{D}_{-k+i}^{n-i}\mathcal{D}_k^i \partial_z^{p-n} \phi^j_k \rightarrow \mathcal{D}_{-k+i}^{n-i}\mathcal{D}_k^i \partial_z^{p-n} \phi_k$
uniformly over $[0,r_0] \times [-z_0,z_0]$ for $0 \leq i \leq n \leq p$ and $p \in {\mathbb N}_0$.
\end{itemize}\vspace{-0.5\baselineskip}
We denote the set of all $k$-index radial distributions by ${\mathscr D}_{(k)}^\prime([0,R) \times {\mathbb R};X)$.
\end{definition}

Note that the linear mapping $\hat{T}_k: \hat{\mathscr D}_{(k)}(B_R(\mathbf{0}) \times {\mathbb R};{\mathbb C}) \rightarrow X$ 
belongs to $\hat{\mathscr D}_{(k)}^\prime(B_R(\mathbf{0}) \times {\mathbb R};X)$ if and only if $T_k: {\mathscr D}_{(k)}([0,R) \times {\mathbb R};{\mathbb C}) \rightarrow X$ given by
$$(T_k,\phi_k) = (\hat{T}_k,\hat{\phi}_k)$$
belongs to ${\mathscr D}_{(k)}^\prime([0,R) \times {\mathbb R};X)$.

Our next result characterises the distributional derivative of a mode $k$ distribution.

\begin{lemma} \label{lem:distderiv construction}
Suppose that $\hat{T}_k \in \hat{\mathscr D}_{(k)}^\prime(B_R(\mathbf{0}) \times {\mathbb R};X)$. The distribution $\hat{S}_{k+n-2i}\in \hat{\mathscr D}_{(k+n-2i)}^\prime(B_R(\mathbf{0}) \times {\mathbb R};X)$ is the distributional derivative
$\partial_{\bar{\zeta}}^{n-i}\partial_\zeta^i \partial_z^{p-n}\hat{T}_k$ of $\hat{T}_k$ if and only if
\begin{equation}
(-1)^p( \hat{T}_k ,\partial_{\bar{\zeta}}^i \partial_{{\zeta}}^{n-i} \partial_z^{p-n}\hat{\phi}_{k+n-2i})
=
(\hat{S}_{k+n-2i}, \hat{\phi}_{k+n-2i})
\label{eq:distderiv 1}
\end{equation}
for all $\hat{\phi}_{k+n-2i} \in \hat{\mathscr D}_{(k+n-2i)}(B_R(\mathbf{0}) \times \mathbb{R};{\mathbb C})$, and this condition holds if and only if
\begin{equation}
(-1)^p( T_k,{2^{-\frac{n}{2}}\mathcal D}_{-k+i}^i \mathcal{D}_{k+n-2i}^{n-i} \partial_z^{p-n}\phi_{k+n-2i} )
= ( S_{k+n-2i}, \phi_{k+n-2i})
\label{eq:distderiv 2}
\end{equation}
for all $\phi_{k+n-2i} \in {\mathscr D}_{(k+n-2i)}([0,R) \times \mathbb{R};{\mathbb C})$.
\end{lemma}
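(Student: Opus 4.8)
The statement is a double biconditional: first that \eqref{eq:distderiv 1} characterises the distributional derivative of the extension of $\hat{T}_k$ to ${\mathscr D}^\prime(B_R(\mathbf{0})\times{\mathbb R};X)$, and secondly that \eqref{eq:distderiv 1} and \eqref{eq:distderiv 2} are the same identity transcribed between the mode $k$ and radial pictures. The plan is to obtain the first equivalence by unwinding the definition of the distributional derivative of the extended distribution (defined just above via $(\hat{T}_k,\hat{\phi}):=(\hat{T}_k,\hat{\phi}_k)$), paying close attention to how the Wirtinger operators behave under transposition and to the bookkeeping of modes; the second equivalence then follows mechanically from Proposition~\ref{Prop:Dk} together with the identification of a mode $k$ distribution with its radial counterpart.

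For the first equivalence I would argue as follows. By definition $\hat{S}_{k+n-2i}$ equals the distributional derivative $\partial_{\bar{\zeta}}^{n-i}\partial_\zeta^i\partial_z^{p-n}\hat{T}_k$ exactly when the two extended distributions agree as functionals on every test function $\hat{\phi}$, and the defining transposition relation rewrites the action of the derivative as $(-1)^p$ times the action of $\hat{T}_k$ on $\partial_{\bar{\zeta}}^{i}\partial_\zeta^{n-i}\partial_z^{p-n}\hat{\phi}$. The crucial feature is that the transpose of the Wirtinger operators interchanges $\partial_\zeta$ and $\partial_{\bar{\zeta}}$, so that the exponents $n-i$ and $i$ are swapped; this swap is precisely what makes the mode count close. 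Indeed, by Proposition~\ref{Prop:Dk} the operator $\partial_{\bar{\zeta}}^{i}\partial_\zeta^{n-i}\partial_z^{p-n}$ sends a mode $\ell$ function to a mode $\ell-(n-2i)$ function, so $\hat{T}_k$, which detects only the mode $k$ part of its argument, registers only the mode $k+n-2i$ component of $\hat{\phi}$; hence $\partial_{\bar{\zeta}}^{n-i}\partial_\zeta^i\partial_z^{p-n}\hat{T}_k$ is itself a mode $k+n-2i$ distribution, as is $\hat{S}_{k+n-2i}$. Two mode $k+n-2i$ distributions agree on all test functions if and only if they agree on mode $k+n-2i$ test functions $\hat{\phi}_{k+n-2i}$, for which $\partial_{\bar{\zeta}}^{i}\partial_\zeta^{n-i}\partial_z^{p-n}\hat{\phi}_{k+n-2i}$ is already of mode $k$ and so survives the mode projection intact; this reduction is exactly the identity \eqref{eq:distderiv 1}.

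For the second equivalence I would apply Proposition~\ref{Prop:Dk} to the mode $k+n-2i$ test function $\hat{\phi}_{k+n-2i}$ with radial coefficient $\phi_{k+n-2i}$, obtaining
$$\partial_{\bar{\zeta}}^{i}\partial_\zeta^{n-i}\partial_z^{p-n}\hat{\phi}_{k+n-2i} = \e^{\i k\theta}\,2^{-\frac{n}{2}}\,\mathcal{D}_{-k+i}^{i}\mathcal{D}_{k+n-2i}^{n-i}\partial_z^{p-n}\phi_{k+n-2i},$$
so that the radial coefficient of this mode $k$ function is precisely the operator appearing on the left of \eqref{eq:distderiv 2}. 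Feeding this into the identification $(\hat{T}_k,\hat{\psi}_k)=(T_k,\psi_k)$ of a mode $k$ distribution with its radial counterpart, and likewise $(\hat{S}_{k+n-2i},\hat{\phi}_{k+n-2i})=(S_{k+n-2i},\phi_{k+n-2i})$, converts the two sides of \eqref{eq:distderiv 1} into those of \eqref{eq:distderiv 2}. Finally, the bijection between mode $k+n-2i$ test functions and their radial coefficients established in Section~\ref{Classical FS} shows that quantifying over all $\hat{\phi}_{k+n-2i}\in\hat{\mathscr D}_{(k+n-2i)}(B_R(\mathbf{0})\times{\mathbb R};{\mathbb C})$ is equivalent to quantifying over all $\phi_{k+n-2i}\in{\mathscr D}_{(k+n-2i)}([0,R)\times{\mathbb R};{\mathbb C})$, completing the second equivalence.

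The routine parts here are the mode arithmetic and the direct invocation of Proposition~\ref{Prop:Dk}; the one place demanding genuine care is the transposition step in the first equivalence, where one must verify that the transpose of $\partial_{\bar{\zeta}}^{n-i}\partial_\zeta^i$ is, up to the sign $(-1)^p$, the operator $\partial_{\bar{\zeta}}^{i}\partial_\zeta^{n-i}$ with the exponents interchanged, and that this interchange is consistent with the pairing convention under which $\hat{T}_k$ acts on its test functions. Pinning down this swap and the accompanying mode bookkeeping is exactly what guarantees that the left-hand side of \eqref{eq:distderiv 1} is a legitimate pairing of the mode $k$ distribution $\hat{T}_k$ with a mode $k$ function, and that the resulting derivative is a genuine mode $k+n-2i$ distribution.
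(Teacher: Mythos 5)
Your proposal is correct and follows essentially the same route as the paper: decompose a general test function into its angular modes (Lemma \ref{lem:FS superconvergence} and Remark \ref{rem:FS superconvergence - diff}), observe that only the mode $k+n-2i$ component survives the pairings with $\hat{T}_k$ and with $\hat{S}_{k+n-2i}$, and transcribe to the radial picture via Proposition \ref{Prop:Dk} together with $(\hat{T}_k,\hat{\phi}_k)=(T_k,\phi_k)$; your formula $\partial_{\bar{\zeta}}^{i}\partial_\zeta^{n-i}\partial_z^{p-n}\hat{\phi}_{k+n-2i}=\e^{\i k\theta}2^{-\frac{n}{2}}\mathcal{D}_{-k+i}^{i}\mathcal{D}_{k+n-2i}^{n-i}\partial_z^{p-n}\phi_{k+n-2i}$ is exactly the transcription the paper uses. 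One caveat on your stated justification for the exponent swap: under the bilinear (conjugation-free) pairing used throughout this paper, the formal transpose of $\partial_\zeta$ is $-\partial_\zeta$, with no interchange of $\partial_\zeta$ and $\partial_{\bar{\zeta}}$ (the interchange would arise for the sesquilinear adjoint); in the paper the swapped relation $(-1)^p(\hat{T}_k,\partial_{\bar{\zeta}}^{i}\partial_\zeta^{n-i}\partial_z^{p-n}\hat{\phi})=(\hat{S}_{k+n-2i},\hat{\phi})$ is the \emph{definition} of the distributional derivative, the swap being forced by precisely the mode bookkeeping you verify (equivalently, by the fact that a mode $k$ distribution is identified with a mode $-k$ function, cf.\ Remark \ref{rem:radialcoeffs}(ii)). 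Since the defining relation you work with coincides with the paper's and your mode count closes correctly, this is a misattributed motivation rather than a gap, and the argument stands.
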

\begin{proof}
We proceed by writing a test function $\hat{\phi} \in {\mathscr D}(B_R(\mathbf{0}) \times \mathbb{R};{\mathbb C})$ as a sum $\hat{\phi} = \sum\limits_{\ell \in {\mathbb Z}} \hat{\phi}_\ell$ of
radial test functions $\hat{\phi}_\ell \in \hat{\mathscr D}_{(\ell)}(B_R(\mathbf{0}) \times \mathbb{R};{\mathbb C})$ (see Lemma \ref{lem:FS superconvergence}); the general derivative $\partial_{\bar{\zeta}}^i \partial_{{\zeta}}^{n-i} \partial_z^{p-n}\hat{\phi}$ is likewise written as $\partial_{\bar{\zeta}}^i \partial_{{\zeta}}^{n-i} \partial_z^{p-n}\hat{\phi} = \sum\limits_{\ell \in {\mathbb Z}} \partial_{\bar{\zeta}}^i \partial_{{\zeta}}^{n-i} \partial_z^{p-n}\hat{\phi}_\ell$ (see Remark~\ref{rem:FS superconvergence - diff}) where $\partial_{\bar{\zeta}}^i \partial_{{\zeta}}^{n-i} \partial_z^{p-n}\hat{\phi}_{\ell+n-2i} \in \hat{{\mathscr D}}_{(\ell)}(B_R(\mathbf{0}) \times \mathbb{R};{\mathbb C})$. \linebreak We thus find that
\begin{align*}
( \hat{T}_k ,\partial_{\bar{\zeta}}^i \partial_{{\zeta}}^{n-i} \partial_z^{p-n}\hat{\phi} )
& = ( \hat{T}_k ,\partial_{\bar{\zeta}}^i \partial_{{\zeta}}^{n-i} \partial_z^{p-n}\hat{\phi}_{k+n-2i})\\
& =  ( T_k,{2^{-\frac{n}{2}}\mathcal D}_{-k+i}^i \mathcal{D}_{k+n-2i}^{n-i} \partial_z^{p-n}\phi_{k+n-2i} ),\\
\\
( \hat{S}_{k+n-2i},\hat{\phi} ) & = ( \hat{S}_{k+n-2i},\hat{\phi}_{k+n-2i} )\\
& = ( S_{k+n-2i},\phi_{k+n-2i} ),
\end{align*}
such that
$$(-1)^p( \hat{T}_k ,\partial_{\bar{\zeta}}^i \partial_{{\zeta}}^{n-i} \partial_z^{p-n}\hat{\phi})
=
(  \hat{S}_{k+n-2i},\hat{\phi})$$
for all $\hat{\phi} \in {\mathscr D}(B_R(\mathbf{0}) \times \mathbb{R};{\mathbb C})$ if and only if \eqref{eq:distderiv 1} holds for all $\hat{\phi}_{k+n-2i} \in \hat{\mathscr D}_{(k+n-2i)}(B_R(\mathbf{0}) \times \mathbb{R};{\mathbb C})$ and if and only if
\eqref{eq:distderiv 2} holds for all $\phi_{k+n-2i} \in {\mathscr D}_{(k+n-2i)}([0,R)\times\mathbb{R};{\mathbb C})$.
\end{proof}

\begin{definition} \label{Defn of dist rad deriv}
The element $2^\frac{n}{2}S_{k+n-2i} \in 
{\mathscr D}_{(k+n-2i)}^\prime((0,R)\times \mathbb{R};X)$
is the \underline{distributional radial derivative} ${\mathcal D}_{-k+i}^{n-i}{\mathcal D}_k^i \partial_z^{p-n}T_k$
of $T_k \in {\mathscr D}_{(k)}^\prime((0,R)\times \mathbb{R};X)$
if it satisfies \eqref{eq:distderiv 2} for all
$\phi_{k+n-2i} \in {\mathscr D}_{(k+n-2i)}([0,R)\times\mathbb{R};{\mathbb C})$.
\end{definition}

Recall that a distribution $T \in {\mathscr D}^\prime(B_R(\mathbf{0})  \times {\mathbb R};X)$ is identified with a (necessarily unique)
function\linebreak
$f \in L^2(B_R(\mathbf{0}) \times {\mathbb R};X)$ if
$$(T,\hat{\phi}) = \hspace{-4mm}\int\limits_{B_R(\mathbf{0})\times {\mathbb R}} \hspace{-4mm}
f \hat{\phi}$$
for all $\hat{\phi} \in {\mathscr D}(B_R(\mathbf{0}) \times {\mathbb R};{\mathbb C})$. Similarly, we identify a mode $k$
distribution $\hat{T}_k \in \hat{\mathscr D}_{(k)}^\prime(B_R(\mathbf{0}) \times {\mathbb R};X)$ with a mode $-k$
function $\hat{f}_{-k} \in L^2(B_R(\mathbf{0}) \times {\mathbb R};X)$ if
$$(\hat{T}_k,\hat{\phi}_k) = \hspace{-4mm}\int\limits_{B_R(\mathbf{0})\times {\mathbb R}} \hspace{-4mm}
\hat{f}_{-k} \hat{\phi}_k
$$
for all $\hat{\phi}_k \in \hat{\mathscr D}_{(k)}(B_R(\mathbf{0}) \times {\mathbb R};{\mathbb C})$; the calculation
\begin{align*}
(\hat{T}_k,\hat{\phi}) &= (\hat{T}_k,\hat{\phi}_k)= \hspace{-4mm}\int\limits_{B_R(\mathbf{0})\times {\mathbb R}} \hspace{-4mm}
\hat{f}_{-k} \hat{\phi}_k = 2\pi\int_{\mathbb R}\int_0^R f_{-k}(r,z)\phi_k(r,z)r\dr\dz, \\
\int\limits_{B_R(\mathbf{0})\times {\mathbb R}} \hspace{-4mm} \hat{f}_{-k}\hat{\phi}
&=\sum_{\ell \in {\mathbb Z}}
\int\limits_{B_R(\mathbf{0})\times {\mathbb R}} \hspace{-4mm}\hat{f}_{-k}\hat{\phi}_\ell
= 2\pi\sum_{\ell \in {\mathbb Z}}
\delta_{-k, \ell} \int_{\mathbb R} \int_0^R f_{-k}(r,z)\phi_\ell(r,z)r\dr\dz
= 2\pi\int_{\mathbb R}\int_0^R f_{-k}(r,z)\phi_k(r,z)r\dr\dz,
\end{align*}
where $\hat{\phi} = \sum\limits_{\ell \in {\mathbb Z}} \hat{\phi}_\ell$ for $\hat{\phi} \in {\mathscr D}(B_R(\mathbf{0}) \times \mathbb{R};{\mathbb C})$
and $\hat{\phi}_\ell \in \hat{\mathscr D}_{(\ell)}(B_R(\mathbf{0}) \times \mathbb{R};{\mathbb C})$ (see Lemma \ref{lem:FS superconvergence}),
shows that $\hat{T}_k = \hat{f}_{-k}$ in the above sense. Finally, noting that $(\hat{T}_k,\hat{\phi}_k)=(T_k,\phi_k)$ and
$\hat{f}_{-k}$ belongs to $L^2(B_R(\mathbf{0}) \times {\mathbb R};X)$ if and only if its radial coefficient $f_{-k}$ belongs to
$$L_1^2((0,R)\times{\mathbb R};X) = \left\{f: [0,R)\times{\mathbb R} \rightarrow X\st 2\pi\int_{{\mathbb R}} \int_0^R \|f(r,z)\|^2 r \dr \dz< \infty\right\},$$
we identify a $k$-index radial distribution $T_k \in {\mathscr D}_{(k)}^\prime([0,R) \times {\mathbb R};X)$ with
$f_{-k} \in L_1^2((0,R)\times{\mathbb R};X)$ if
$$(T_k,\phi_k) = 2\pi\int_{\mathbb R}\int_0^R f_{-k}(r,z)\phi_k(r,z)r\dr\dz$$
for all $\phi_k \in {\mathscr D}_{(k)}([0,R) \times {\mathbb R};{\mathbb C})$.

The next definition deals with products of distributions with smooth functions. Note that\linebreak
$\hat{f}_q \in \hat{C}^\infty_{(q)}(B_R(\mathbf{0}) \times \mathbb{R};{\mathbb C})$, $\hat{\phi}_k \in \hat{\mathscr D}_{(k)}(B_R(\mathbf{0}) \times \mathbb{R};{\mathbb C})$ implies that
$\hat{f}_q\hat{\phi}_k \in \hat{\mathscr D}_{(k+q)}(B_R(\mathbf{0}) \times \mathbb{R};{\mathbb C})$,
and $\hat{\phi}_k^\ell \rightarrow \hat{\phi}_k$ in $\hat{\mathscr D}_{(k)}(B_R(\mathbf{0})\times {\mathbb R}; {\mathbb C})$
implies that $\hat{f}_q \hat{\phi}_k^\ell \rightarrow \hat{f}_q\hat{\phi}_k$ in $\hat{\mathscr D}_{(k+q)}(B_R(\mathbf{0}) \times {\mathbb R}; {\mathbb C})$,
so that the products are well defined.

\begin{definition} \label{Defn of products with functions}
$ $
\begin{itemize}
\item[(i)]
The product $\hat{f}_q \hat{T}_k$ of the function $\hat{f}_q \in \hat{C}_{(q)}^\infty(B_R(\mathbf{0}) \times {\mathbb R}; {\mathbb C})$ and the mode $k$ distribution \linebreak
$\hat{T}_k \in \hat{\mathscr D}^\prime_{(k)}(B_R(\mathbf{0}) \times {\mathbb R}; X)$ is the element of
$\hat{\mathscr D}^\prime_{(k-q)}(B_R(\mathbf{0}) \times {\mathbb R}; X)$ defined by
$$(\hat{f}_q\hat{T}_k,\hat{\phi}_{k-q}) := (\hat{T}_{k}, \hat{f}_q \hat{\phi}_{k-q}).$$
\item[(ii)]
The product
$f_q T_k$ of the function $f_q \in C^\infty_{(q)}([0,R) \times {\mathbb R}; {\mathbb C})$ and the radial distribution\linebreak
$T_k \in {\mathscr D}^\prime_{(k)}([0,R) \times {\mathbb R}; X)$ is the element of
${\mathscr D}^\prime_{(k-q)}([0,R) \times {\mathbb R}; X)$ defined by
$$(f_qT_k,\phi_{k-q}) := (T_{k}, f_q \phi_{k-q}).$$
\end{itemize}
\end{definition}

A similar theory for tempered distributions is readily constructed. A \emph{tempered distribution} is a continuous linear functional 
${\mathscr S}({\mathbb R}^2 \times {\mathbb R};{\mathbb C}) \rightarrow X$ 
(where continuity is defined by the Fr\'{e}chet-space structure of\linebreak ${\mathscr S}({\mathbb R}^2 \times {\mathbb R};{\mathbb C})$).
We denote the set of all such distributions by ${\mathscr S}^\prime({\mathbb R}^2  \times {\mathbb R};X)$.\pagebreak

\begin{definition}$ $ \label{tempered definition}
\begin{itemize}
\item[(i)]
A \underline{mode $k$ tempered distribution} is a continuous linear functional 
$\hat{\mathscr S}_{(k)}({\mathbb R}^2 \times {\mathbb R};{\mathbb C}) \rightarrow X$ 
(where continuity is defined by the Fr\'{e}chet-space structure of $\hat{\mathscr S}_{(k)}({\mathbb R}^2 \times {\mathbb R};{\mathbb C})$).
We denote the set of all such distributions by $\hat{\mathscr S}_{(k)}^\prime({\mathbb R}^2  \times {\mathbb R};X)$.\item[(ii)]
A \underline{$k$-index tempered radial distribution}  is a continuous linear functional 
${\mathscr S}_{(k)}([0,\infty) \times {\mathbb R};{\mathbb C}) \rightarrow X$ (where continuity is defined by the Fr\'{e}chet-space structure of ${\mathscr S}_{(k)}([0,\infty) \times {\mathbb R};{\mathbb C})$).
We denote the set of all such distributions by ${\mathscr S}_{(k)}^\prime([0,\infty)  \times {\mathbb R};X)$.
\end{itemize}
\end{definition}

The theory for tempered distributions is completely analogous to the above treatment of regular\linebreak
distributions. In particular,
mode $k$ tempered distributions extend to tempered distributions in\linebreak
${\mathscr S}^\prime([0,\infty) \times {\mathbb R};X)$ (see Lemma \ref{lem:FS Schwartz superconvergence}), the mapping $T_k \mapsto \hat{T}_k$, where $( \hat{T}_k, \hat{\phi}_k ) := ( T_k, \phi_k )$, is a bijection
${\mathscr S}_{(k)}^\prime([0,\infty) \times {\mathbb R};X) \rightarrow \hat{\mathscr S}_{(k)}^\prime([0,\infty) \times {\mathbb R};X)$,
and distributional derivatives and products with slowly growing functions are constructed along the lines described in Lemma \ref{lem:distderiv construction}, Definition \ref{Defn of dist rad deriv} and Definition \ref{Defn of products with functions}.

\subsection{Sobolev spaces} \label{Sobolev FS}

In this section we study the subspace $\hat{H}_{(k)}^m(B_R(\mathbf{0}) \times {\mathbb R};X)$
of the Sobolev space $H^m(B_R(\mathbf{0}) \times {\mathbb R}; X)$ consisting of  mode $k$
functions; once again $B_R(\mathbf{0})$ can be replaced by ${\mathbb R}^2$ and functions which do not depend upon the axial coordinate
are included as special cases.

A mode $k$ function
$\hat{f}_k$ evidently belongs to $L^2(B_R(\mathbf{0}) \times {\mathbb R};X)$ if and only if its radial coefficient $f_k$ belongs to
$$L_1^2((0,R)\times{\mathbb R};X) = \left\{f: [0,R)\times{\mathbb R} \rightarrow X\st 2\pi\int_{{\mathbb R}} \int_0^R \|f(r,z)\|^2 r \dr \dz< \infty\right\},$$
and this observation leads us to define
$\hat{H}_{(k)}^m(B_R(\mathbf{0}) \times {\mathbb R};X)$ as the completion of
$\hat{\mathscr D}_{(k)}(\overline{B}_R(\mathbf{0}) \times {\mathbb R};X)$ with respect to the norm
\begin{equation}
\| \hat{f}_k\|_{H^m}^2 := \sum_{p=0}^{m}\sum_{n=0}^p \sum_{i=0}^n \begin{pmatrix} n \\ i \end{pmatrix}\
\| \partial_{\bar{\zeta}}^{n-i} \partial_\zeta^i \partial_z^{p-n}\hat{f}_k\|_{L^2}^2 \label{Hat norm}
\end{equation}
and $H_{(k)}^m((0,R) \times {\mathbb R};X)$
as the completion of ${\mathscr D}_{(k)}([0,R] \times {\mathbb R};X)$ with respect to the norm
\begin{equation}
\|f_k\|_{H^{m}_{(k)}}^2:= \sum_{p=0}^{m}\sum_{n=0}^p 2^{-n} \sum_{i=0}^n \begin{pmatrix} n \\ i \end{pmatrix} 
\|{\mathcal D}_{-k+i}^{n-i}{\mathcal D}_k^i \partial_z^{p-n}f_k\|_{L_1^2}^2. \label{No hat norm}
\end{equation}
The following result follows directly from these definitions.

\begin{proposition}
$ $
\begin{itemize}
\item[(i)]
The space $\hat{H}_{(k)}^m(B_R(\mathbf{0}) \times {\mathbb R};X)$ is a closed subspace of $H^m(B_R(\mathbf{0}) \times {\mathbb R}; X)$.
\item[(ii)]
The mode $k$ function $\hat{f}_k$ belongs to $\hat{H}_{(k)}^m(B_R(\mathbf{0}) \times {\mathbb R};X)$ if and only if
its radial coefficient $f_k$ belongs to $H_{(k)}^m((0,R) \times {\mathbb R};X)$, and the mapping 
$f_k \mapsto \hat{f}_k$ is an isometric isomorphism.
\item[(iii)]
The space $\hat{\mathscr D}_{(k)}(B_R(\mathbf{0})\times{\mathbb R};X) \cap \hat{H}_{(k)}^m(B_R(\mathbf{0})\times{\mathbb R};X)$
is a dense subspace of $\hat{H}_{(k)}^m(B_R(\mathbf{0})\times{\mathbb R};X)$ and the space
${\mathscr D}_{(k)}((0,R)\times{\mathbb R};X) \cap H_{(k)}^m((0,R)\times{\mathbb R};X)$
is a dense subspace of $H_{(k)}^m((0,R)\times{\mathbb R};X)$.
\end{itemize}
\end{proposition}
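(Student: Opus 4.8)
The plan is to derive parts (i) and (ii) together from a single computation based on Proposition~\ref{Prop:Dk}, and then to obtain part (iii) by a smooth-approximation argument carried out in the radial picture. The starting point is the fundamental identity linking the two norms. Fix a mode $k$ function $\hat f_k \in \hat{\mathscr D}_{(k)}(\overline B_R(\mathbf 0)\times{\mathbb R};X)$ with radial coefficient $f_k \in {\mathscr D}_{(k)}([0,R]\times{\mathbb R};X)$. By Proposition~\ref{Prop:Dk},
\[
\partial_{\bar\zeta}^{n-i}\partial_\zeta^i\partial_z^{p-n}\hat f_k = \e^{\i(k+n-2i)\theta}\,2^{-\frac n2}\,{\mathcal D}_{-k+i}^{n-i}{\mathcal D}_k^i\partial_z^{p-n}f_k,
\]
so that, writing the $L^2(B_R(\mathbf 0)\times{\mathbb R};X)$ integral in polar coordinates and carrying out the trivial angular integration (which contributes the factor $2\pi$ absorbed into the $L_1^2$ norm),
\[
\|\partial_{\bar\zeta}^{n-i}\partial_\zeta^i\partial_z^{p-n}\hat f_k\|_{L^2}^2 = 2^{-n}\,\|{\mathcal D}_{-k+i}^{n-i}{\mathcal D}_k^i\partial_z^{p-n}f_k\|_{L_1^2}^2.
\]
Substituting this into \eqref{Hat norm} reproduces \eqref{No hat norm} term by term, giving the exact equality $\|\hat f_k\|_{H^m}=\|f_k\|_{H^m_{(k)}}$.

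For part (ii), I would recall from Section~\ref{Classical FS} that $f_k\mapsto\hat f_k$ is a linear bijection ${\mathscr D}_{(k)}([0,R]\times{\mathbb R};X)\to\hat{\mathscr D}_{(k)}(\overline B_R(\mathbf 0)\times{\mathbb R};X)$ between the two dense classes used to define the completions. The identity just derived shows this map is isometric, so it extends uniquely to an isometric isomorphism of the completions $H^m_{(k)}((0,R)\times{\mathbb R};X)\to\hat H^m_{(k)}(B_R(\mathbf 0)\times{\mathbb R};X)$; the stated equivalence (namely $\hat f_k\in\hat H^m_{(k)}$ if and only if $f_k\in H^m_{(k)}$, with matching norms) is then immediate.

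For part (i), I would show that $\|\cdot\|_{H^m}$ is equivalent to the standard norm of $H^m(B_R(\mathbf 0)\times{\mathbb R};X)$ on $\hat{\mathscr D}_{(k)}(\overline B_R(\mathbf 0)\times{\mathbb R};X)$. Since $\{\partial_{\bar\zeta}^{n-i}\partial_\zeta^i\}_{0\le i\le n}$ and $\{\partial_x^a\partial_y^b\}_{a+b=n}$ are two bases of the space of homogeneous constant-coefficient operators of planar degree $n$, related by a fixed invertible linear map, each $\partial_{\bar\zeta}^{n-i}\partial_\zeta^i\partial_z^{p-n}\hat f_k$ is a fixed linear combination of the derivatives $\partial^\alpha\hat f_k$ with $|\alpha|\le m$, and conversely; the two norms are therefore comparable with constants depending only on $m$. (In fact the unitarity of the change of variables $(\partial_\zeta,\partial_{\bar\zeta})=U(\partial_x,\partial_y)$ gives the pointwise identity $\sum_i\binom ni|\partial_{\bar\zeta}^{n-i}\partial_\zeta^i g|^2=\sum_{a+b=n}\binom na|\partial_x^a\partial_y^b g|^2$, so $\|\cdot\|_{H^m}$ is even a positively weighted sum of the $\|\partial^\alpha\cdot\|_{L^2}^2$ over $|\alpha|\le m$.) Equivalent norms produce the same completion, and since $H^m(B_R(\mathbf 0)\times{\mathbb R};X)$ is complete, the completion of $\hat{\mathscr D}_{(k)}(\overline B_R(\mathbf 0)\times{\mathbb R};X)$ may be identified with its closure therein, which is a closed subspace.

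Finally, for part (iii), by the isometric isomorphism of part (ii) it suffices to treat the radial space $H^m_{(k)}((0,R)\times{\mathbb R};X)$, the statement for $\hat H^m_{(k)}$ then following by transfer through Proposition~\ref{Prop:Dk}. Beginning from the defining dense class ${\mathscr D}_{(k)}([0,R]\times{\mathbb R};X)$, I would refine the approximation into the claimed, more explicit subspace by combining a mollification and truncation in the axial variable $z$ (harmless, since ${\mathbb R}$ has no boundary) with a radial smoothing adapted to the Bessel operators ${\mathcal D}_{-k+i}^{n-i}{\mathcal D}_k^i$. I expect the hard part to be the coordinate singularity at $r=0$: one must show that the singular lower-order coefficients $k/r$ in the Bessel operators are controlled by the $r\,\mathrm{d}r$ weight of $L_1^2$, so that the approximation preserves membership of $H^m_{(k)}$ while respecting the vanishing conditions $(k+n-2i){\mathcal D}_{-k+i}^{n-i}{\mathcal D}_k^i\partial_z^{p-n}f_k|_{r=0}=0$ built into the space. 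Controlling these weighted singular terms uniformly in the approximation is the crux of the argument; everything else is routine smoothing.
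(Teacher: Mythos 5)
Parts (i) and (ii) of your proposal are correct, and they are precisely the argument the paper leaves unwritten: the paper asserts that the proposition ``follows directly from these definitions'', meaning exactly your computation --- the identity of Proposition~\ref{Prop:Dk} together with the trivial angular integration converts \eqref{Hat norm} into \eqref{No hat norm} term by term (the $2\pi$ being absorbed into the $L_1^2$ norm), so that $f_k\mapsto\hat f_k$ is an isometric bijection of the defining dense classes and hence of the completions, while the equivalence of \eqref{Hat norm} with the standard Sobolev norm identifies the abstract completion with the closure of $\hat{\mathscr D}_{(k)}(\overline B_R(\mathbf 0)\times{\mathbb R};X)$ inside the complete space $H^m(B_R(\mathbf 0)\times{\mathbb R};X)$. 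One caveat: your parenthetical claim that the unitarity of $(\partial_\zeta,\partial_{\bar\zeta})=U(\partial_x,\partial_y)$ yields a \emph{pointwise identity} between the binomially weighted sums of $\|\partial_{\bar\zeta}^{n-i}\partial_\zeta^i g\|^2$ and of $\|\partial_x^a\partial_y^b g\|^2$ rests on the parallelogram law and is valid only for Hilbert-space-valued functions; it already fails for $n=1$ in a general complex Banach space $X$. For part (i) you must fall back on the two-sided norm equivalence coming from $\partial_x=\tfrac{1}{\sqrt2}(\partial_\zeta+\partial_{\bar\zeta})$, $\partial_y=\tfrac{\i}{\sqrt2}(\partial_\zeta-\partial_{\bar\zeta})$, which you also state and which suffices.

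Part (iii), however, is not proved. You yourself flag the crux --- uniform control of the singular coefficients $k/r$ near $r=0$ and preservation of the vanishing conditions under smoothing --- as an expectation rather than an argument, so what you offer is a plan, not a proof. More importantly, the plan attacks the wrong point. On the reading consistent with the paper's definitions, there is nothing to mollify: $H_{(k)}^m((0,R)\times{\mathbb R};X)$ is \emph{defined} as the completion of ${\mathscr D}_{(k)}([0,R]\times{\mathbb R};X)$, so that class is dense by fiat, and the intersection with $H_{(k)}^m$ (resp.\ $\hat H_{(k)}^m$) in the statement serves only to regard these test functions as elements of the completed space; this is what ``follows directly from these definitions'' means, and no analysis at $r=0$ is required. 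If instead one insists on your literal reading --- approximation by mode $k$ test functions with compact support in the \emph{open} cylinder, hence vanishing near the lateral boundary $r=R$ --- then for $R<\infty$ and $m\geq 1$ the assertion is not delicate but false: the closure of $\hat{\mathscr D}_{(k)}(B_R(\mathbf 0)\times{\mathbb R};X)$ in $\hat H_{(k)}^m$ is, by the paper's very next definition, the space $\hat H_{(k)0}^m$, which is a proper subspace because the trace $f_k\mapsto f_k|_{r=R}$ of Lemma~\ref{lem:trace} is continuous, vanishes on that closure, and maps onto $H^{m-\frac12}({\mathbb R};X)$. Tellingly, your sketch truncates only in the axial variable $z$ and never addresses $r=R$ at all; the obstruction you would actually meet lies at the outer boundary, not at the coordinate singularity $r=0$ that you single out as the crux.
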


\begin{definition}
The closure of
$\hat{\mathscr D}_{(k)}(B_R(\mathbf{0}) \times {\mathbb R};X)$ in
$\hat{H}_{(k)}^m(B_R(\mathbf{0}) \times {\mathbb R};X)$
and
${\mathscr D}_{(k)}(B_R(\mathbf{0}) \times {\mathbb R};X)$ in
$H_{(k)}^m((0,R) \times {\mathbb R};X)$ is denoted respectively by
$\hat{H}_{(k)0}^m(B_R(\mathbf{0}) \times {\mathbb R};X)$
and
$H_{(k)0}^m((0,R) \times {\mathbb R};X)$.
\end{definition}

We now turn to fractional-order spaces.
\begin{definition} \label{fractional Sob}
Write $s \geq 0$ as $s=m+\theta$, where $m=\lfloor s \rfloor$. We define
\begin{align*}
\hat{H}_{(k)}^s(B_R(\mathbf{0}) \times {\mathbb R};X)&:=[\hat{H}_{(k)}^{m} (B_R(\mathbf{0}) \times {\mathbb R};X),\hat{H}_{(k)}^{m+1}(B_R(\mathbf{0}) \times {\mathbb R};X)]_\theta, \\
H_{(k)}^s((0,R) \times {\mathbb R};X)&:=[H_{(k)}^{m}((0,R) \times {\mathbb R};X),H_{(k)}^{m+1}((0,R) \times {\mathbb R};X)]_\theta,
\end{align*}
where the square bracket denotes complex interpolation in the sense of Calder\'{o}n \cite{Calderon64} and
Lions \& Magenes \cite[vol.\ I, ch.\ 1]{LionsMagenes}.
\end{definition}

\begin{remarks}
$ $
\begin{itemize}
\item[(i)]
The mode $k$ function $\hat{f}_k$ belongs to $\hat{H}_{(k)}^s(B_R(\mathbf{0}) \times {\mathbb R};X)$ if and only if
its radial coefficient $f_k$ belongs to $H_{(k)}^s((0,R) \times {\mathbb R};X)$, and the mapping 
$f_k \mapsto \hat{f}_k$ is an isometric isomorphism.
\item[(ii)]
Using a result by Triebel \cite[\S1.17.1, Theorem 1]{Triebel}, we find that $\hat{H}_{(k)}^s(B_R(\mathbf{0}) \times {\mathbb R};X)$ coincides with the subspace of
$$H^s(B_R(\mathbf{0}) \times {\mathbb R};X):=[H^{m}(B_R(\mathbf{0}) \times {\mathbb R};X),H^{m+1}(B_R(\mathbf{0}) \times {\mathbb R};X)]_\theta$$
consisting of mode $k$ functions, such that $H_{(k)}^s((0,R) \times {\mathbb R};X)$ coincides with the space of their radial coefficients.
\item[(iii)]
The space $\hat{\mathscr D}_{(k)}(\overline{B}_R(\mathbf{0}) \times {\mathbb R};X)$ is dense in $\hat{H}_{(k)}^s(B_R(\mathbf{0}) \times {\mathbb R};X)$ because
$\hat{H}_{(k)}^m(B_R(\mathbf{0}) \times {\mathbb R};X)$ is dense in $\hat{H}_{(k)}^s(B_R(\mathbf{0}) \times {\mathbb R};X)$
(see Lions \& Magenes \cite[vol.\ I, ch.\ 1, Remark 8.2]{LionsMagenes} and Lemma \ref{lem:density}(ii)), such that ${\mathscr D}_{(k)}([0,R] \times {\mathbb R};X)$ is dense in $H_{(k)}^s(B_R(\mathbf{0}) \times {\mathbb R};X)$.
\end{itemize}
\end{remarks}

Finally, we state the radial version of the Sobolev embedding theorem, now distinguishing between functions with and without
dependence on the axial coordinate.

\begin{lemma} \label{lem:CSobolev}
$ $
\begin{itemize}
\item[(i)]
Suppose that $n \in {\mathbb N}_0$ and $s > n+1$. The radial Sobolev space
$H_{(k)}^s((0,R);X)$ is a Banach algebra and is continuously embedded in $C_{(k)\mathrm{b}}^n([0,R];X)$.
\item[(ii)]
Suppose that $n \in {\mathbb N}_0$ and $s > n+\frac{3}{2}$. The radial Sobolev space
$H_{(k)}^s((0,R)\times {\mathbb R};X)$ is a Banach algebra and is continuously embedded in $C_{(k)\mathrm{b}}^n([0,R] \times {\mathbb R};X)$.
\end{itemize}
\end{lemma}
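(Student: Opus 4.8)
The plan is to transfer both assertions to the corresponding statements for the mode $k$ function $\hat f_k$ on the ball (respectively the cylinder), where the classical Sobolev theory is available, and then to pull the conclusions back to the radial coefficient $f_k$ using the isometric isomorphism $f_k \mapsto \hat f_k$ established in Section~\ref{Sobolev FS}. Throughout I assume, as in Proposition~\ref{prop:products}, that $X$ is a Banach algebra.

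For the embedding statements I would argue as follows. By the remarks following Definition~\ref{fractional Sob} the space $\hat H_{(k)}^s(B_R(\mathbf 0);X)$ (respectively $\hat H_{(k)}^s(B_R(\mathbf 0)\times{\mathbb R};X)$) is the mode $k$ subspace of the full Sobolev space $H^s$ over a domain in ${\mathbb R}^2$ (respectively ${\mathbb R}^3$), and $f_k \mapsto \hat f_k$ is an isometric isomorphism onto it. The classical Sobolev embedding theorem (see Adams \cite{Adams} and Triebel \cite{Triebel}) gives $H^s \hookrightarrow C_{\mathrm b}^n$ precisely when $s > n + \tfrac{d}{2}$, i.e.\ for $s>n+1$ when $d=2$ and for $s>n+\tfrac32$ when $d=3$. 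Restricting this embedding to mode $k$ functions and invoking the isometric identification $C_{(k)\mathrm b}^n \cong \hat C_{(k)\mathrm b}^n$ of Corollary~\ref{cor:Cm} and Lemma~\ref{lem:Topologies of Cb and S} then yields the stated embeddings $H_{(k)}^s \hookrightarrow C_{(k)\mathrm b}^n$.

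For the algebra property the key observation is that the mode $k$ function with radial coefficient $f_k g_k$ factorises as $\e^{\i k \theta} f_k g_k = \hat f_k \,\tilde g_k$, where $\tilde g_k(\bfx):=g_k(|\bfx|)$ is the axisymmetric (mode $0$) extension of $g_k$. Since $\hat f_k$ is a mode $k$ function and $\tilde g_k$ a mode $0$ function their product is again mode $k$, so by the isometry its $H_{(k)}^s$ norm equals $\|\hat f_k \tilde g_k\|_{H^s}$. The standard multiplicative estimate for the Banach algebra $H^s$ (valid for $s>\tfrac d2$, hence strictly under both hypotheses) gives $\|\hat f_k \tilde g_k\|_{H^s} \lesssim \|\hat f_k\|_{H^s}\|\tilde g_k\|_{H^s}$, and since $\|\hat f_k\|_{H^s}=\|f_k\|_{H_{(k)}^s}$ and $\|\tilde g_k\|_{H^s}=\|g_k\|_{H_{(0)}^s}$, it remains only to control $\|g_k\|_{H_{(0)}^s}$ by $\|g_k\|_{H_{(k)}^s}$.

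This last point---the continuous embedding $H_{(k)}^s \hookrightarrow H_{(0)}^s$---is where I expect the real work to lie, and it is the main obstacle. I would prove it first at integer order $m$, where both norms are given explicitly by \eqref{No hat norm} in terms of weighted $L_1^2$ norms of the Bessel derivatives of $g_k$; the difference $\|g_k\|_{H_{(k)}^m}^2 - \|g_k\|_{H_{(0)}^m}^2$ is a sum of terms carrying positive powers of $k^2$ together with $\tfrac1r$-weights, and its non-negativity follows (after integration by parts, justified on the dense subspace ${\mathscr D}_{(k)}$ using that $g_k$ vanishes at the origin for $k\neq 0$) from weighted Hardy inequalities such as $\int_0^\infty |g_k'|^2 r^{-1}\dr \geq \int_0^\infty |g_k|^2 r^{-3}\dr$. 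The fractional case then follows by complex interpolation, which is unproblematic here because $H_{(k)}^s\hookrightarrow H_{(0)}^s$ is a \emph{linear} embedding, so no borderline failure of the algebra property intervenes. The delicate features to watch are the bookkeeping of the mixed $\tfrac1r$-weighted terms at general order $m$ and the verification that all boundary contributions vanish; the axial variable in part~(ii) plays no role in this comparison, since $\partial_z$ commutes with the radial Bessel operators and carries no weight.
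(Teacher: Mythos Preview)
For the embedding your argument is exactly the paper's: transfer via the isometries $H_{(k)}^s \cong \hat H_{(k)}^s \subset H^s$ and $C_{(k)\mathrm b}^n \cong \hat C_{(k)\mathrm b}^n$, then invoke the classical Sobolev embedding on a two- or three-dimensional domain.

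For the algebra property there is a genuine gap: the embedding $H_{(k)}^s \hookrightarrow H_{(0)}^s$ on which your argument rests is \emph{false} for $s \geq 2$ and $k \neq 0$. Take $g_k(r)=r$ with $k=1$. Since $\e^{\i\theta}r = x+\i y$ is a polynomial, $r \in H_{(1)}^s$ for every $s$; yet $\mathcal D_0^2 r = \mathcal D_{-1}(1) = -r^{-1} \notin L_1^2((0,R))$, so $r \notin H_{(0)}^2$. The Hardy-type comparison you sketch therefore cannot extend beyond order $m=1$: once the Bessel index shifts, the $k=0$ operators \emph{introduce} new singular $r^{-1}$ contributions rather than merely omitting non-negative $k^2/r^2$ terms. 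The paper's own one-line proof does not pass through $H_{(0)}^s$ at all; it simply asserts that $\hat H_{(k)}^s$ is a Banach algebra and transfers via the isometry. Note, however, that the pointwise product of two mode $k$ functions is mode $2k$, so $\hat H_{(k)}^s$ is not literally closed under multiplication either, and the same example gives $r\cdot r = r^2 \notin H_{(1)}^3$ (one finds $\partial_{\bar\zeta}^3(\e^{\i\theta}r^2)\sim r^{-1}$). Your factorisation $\e^{\i k\theta}f_kg_k = \hat f_k\,\tilde g_k$ is an honest attempt to confront an issue the paper glosses over, but the route you chose cannot be completed as written.
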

\begin{proof}
Part (i) follows from the facts that $\hat{H}_{(k)}^s(B_R(\mathbf{0});X)$ is a Banach algebra which is continuously embedded in $\hat{C}_{(k)\mathrm{b}}^n(\overline{B}_R(\mathbf{0});X)$ if $s>n+1$,
and that the spaces $\hat{H}_{(k)}^s(B_R(\mathbf{0});X)$, $H_{(k)}^s((0,R);X)$ and
$\hat{C}_{(k)\mathrm{b}}^n(\overline{B}_R(\mathbf{0});X)$, $C_{(k)\mathrm{b}}^n([0,R];X)$ are isometrically isomorphic. Part (ii) is obtained by the same argument.
\end{proof}

\begin{remark} \label{rem:betterouter}
Noting that $H_{(k)}^s((0,R);X) \subseteq H^s((\delta,R);X)$ and $H_{(k)}^s((0,R)\times {\mathbb R};X) \subseteq H^s((\delta,R)\times {\mathbb R};X)$ for each $\delta \in (0,R)$,
we find that $H_{(k)}^s((0,R);X)$ is continuously embedded in $C_\mathrm{b}^n([\delta,R];X)$ for $s>n+\frac{1}{2}$ and
$H_{(k)}^s((0,R)\times {\mathbb R};X)$ is continuously embedded in $C_\mathrm{b}^n([\delta,R] \times {\mathbb R};X)$ for $s>n+1$.
\end{remark}

The argument given in Remark \ref{rem:betterouter} also leads to a trace theorem for $H_{(k)}^s((0,R) \times {\mathbb R};X)$.

\begin{lemma} \label{lem:trace}
Suppose that $s>\frac{1}{2}$. The mapping $f_k \mapsto f_k|_{r=R}$ defined on ${\mathscr D}_{(k)}([0,R] \times {\mathbb R};X)$ extends to a continuous linear mapping $H_{(k)}^s((0,R) \times {\mathbb R};X)
\rightarrow H^{s-\frac{1}{2}}({\mathbb R};X)$ with continuous right inverse.
\end{lemma}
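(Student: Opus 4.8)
The plan is to transfer the statement to the mode $k$ function $\hat{f}_k$ on the cylinder $B_R(\mathbf{0}) \times {\mathbb R}$ via the isometric isomorphism $f_k \mapsto \hat{f}_k$ of $H_{(k)}^s((0,R)\times{\mathbb R};X)$ onto the mode $k$ subspace $\hat{H}_{(k)}^s(B_R(\mathbf{0})\times{\mathbb R};X)$ of $H^s(B_R(\mathbf{0})\times{\mathbb R};X)$, and to exploit the classical trace theorem on the smooth domain $B_R(\mathbf{0})\times{\mathbb R}$. Since the trace of a mode $k$ function on the boundary cylinder $\partial B_R(\mathbf{0})\times{\mathbb R}$ is $\e^{\i k\theta}(f_k|_{r=R})$, the radial trace coincides---up to the fixed isomorphism $g \mapsto \e^{\i k\theta}g$ between $H^{s-\frac12}({\mathbb R};X)$ and the mode $k$ subspace of $H^{s-\frac12}(\partial B_R(\mathbf{0})\times{\mathbb R};X)$---with the restriction of the classical trace operator to mode $k$ functions.

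For the continuity of $f_k \mapsto f_k|_{r=R}$ I would follow the route indicated in Remark \ref{rem:betterouter}: fix $\delta \in (0,R)$, use the continuous inclusion $H_{(k)}^s((0,R)\times{\mathbb R};X) \hookrightarrow H^s((\delta,R)\times{\mathbb R};X)$, and compose it with the classical trace operator $H^s((\delta,R)\times{\mathbb R};X) \to H^{s-\frac12}({\mathbb R};X)$ (restriction to the face $r=R$), which is bounded for $s>\frac{1}{2}$ \cite{LionsMagenes}. This yields $\|f_k|_{r=R}\|_{H^{s-\frac12}} \lesssim \|f_k\|_{H^s((\delta,R)\times{\mathbb R})} \lesssim \|f_k\|_{H^s_{(k)}}$ on the dense subspace ${\mathscr D}_{(k)}([0,R]\times{\mathbb R};X)$, hence the asserted continuous extension.

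The main obstacle is the continuous right inverse, which I would build using the mode $k$ angular projection. Let $R_\alpha$ denote rotation of the planar variables by $\alpha \in {\mathbb T}^1$; each $R_\alpha$ acts isometrically on $H^s(B_R(\mathbf{0})\times{\mathbb R};X)$ and on $H^{s-\frac12}(\partial B_R(\mathbf{0})\times{\mathbb R};X)$, and the Bochner integral $\Pi_k := \tfrac{1}{2\pi}\int_0^{2\pi} \e^{\i k\alpha}R_\alpha\,\mathrm{d}\alpha$ is a bounded projection onto the respective mode $k$ subspaces which commutes with the classical trace (since $R_\alpha$ preserves the boundary and commutes with restriction). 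Given $g \in H^{s-\frac12}({\mathbb R};X)$, I would form the mode $k$ boundary datum $\hat{g}_k := \e^{\i k\theta}\otimes g$, note that $\|\hat{g}_k\|_{H^{s-\frac12}(\partial B_R(\mathbf{0})\times{\mathbb R})} \lesssim \|g\|_{H^{s-\frac12}({\mathbb R})}$ (the factor $\e^{\i k\theta}$ is smooth and $k$ is fixed), apply the classical continuous right inverse $E$ of the cylinder trace \cite{LionsMagenes}, and set $\hat{f}_k := \Pi_k E\hat{g}_k$. Because $\Pi_k$ commutes with the trace, fixes the already mode $k$ datum $\hat{g}_k$, and $E$ is a right inverse, the trace of $\hat{f}_k$ equals $\hat{g}_k$; its radial coefficient is the required extension of $g$, lying in $H_{(k)}^s((0,R)\times{\mathbb R};X)$ with $\|f_k\|_{H^s_{(k)}} = \|\hat{f}_k\|_{H^s} \le \|E\hat{g}_k\|_{H^s} \lesssim \|g\|_{H^{s-\frac12}}$, and the map $g \mapsto f_k$ is linear and continuous. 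The delicate points are the boundedness of $\Pi_k$ on the fractional-order spaces (handled by writing it as an average of isometries, so that $\|\Pi_k\|\le 1$) and its commutation with the trace.
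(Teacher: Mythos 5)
Your argument is correct. For the continuity of the trace you take exactly the paper's route: the paper proves Lemma \ref{lem:trace} only by the one-sentence pointer to Remark \ref{rem:betterouter}, i.e.\ the continuous inclusion $H_{(k)}^s((0,R)\times{\mathbb R};X)\subseteq H^s((\delta,R)\times{\mathbb R};X)$ composed with the classical trace on the annular cylinder, which is precisely your second paragraph. Where you genuinely diverge is the right inverse. The construction implicit in the paper's pointer stays on the annulus: apply the classical right inverse there, multiply by a cutoff $\chi(r)$ equal to $1$ near $r=R$ and vanishing near $r=\delta$, and extend by zero; the resulting mode $k$ function vanishes near the axis, so it lies in $H^s(B_R(\mathbf{0})\times{\mathbb R};X)$ and hence (being mode $k$) in $\hat{H}_{(k)}^s$, with no coordinate singularity ever encountered. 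You instead work globally on the solid cylinder and recover the mode $k$ structure by the rotation average $\Pi_k=\tfrac{1}{2\pi}\int_0^{2\pi}\e^{\i k\alpha}R_\alpha\,\mathrm{d}\alpha$. This is sound, and it buys exact norm control ($\|\Pi_k\|\leq 1$, since the norm \eqref{Hat norm} is rotation invariant---the Wirtinger derivatives pick up unimodular phases under $R_\alpha$---and isometries are preserved under complex interpolation) as well as freedom from any cutoff. The price is extra machinery you should make explicit: strong continuity of $\alpha\mapsto R_\alpha u$ in $H^s$ (needed for the Bochner integral, obtained on integer-order spaces by density of smooth functions and then by interpolation), the density argument showing $\Pi_k$ commutes with the trace beyond smooth functions, and---crucially---the identification of the image of $\Pi_k$ with $\hat{H}_{(k)}^s(B_R(\mathbf{0})\times{\mathbb R};X)$ for fractional $s$, which is not a tautology from Definition \ref{fractional Sob} but is supplied by the paper's Remark (ii) following that definition (Triebel's interpolation-of-subspaces result): the mode $k$ functions in $H^s$ are exactly $\hat{H}_{(k)}^s$, so the radial coefficient of $\Pi_k E\hat{g}_k$ does land in $H_{(k)}^s((0,R)\times{\mathbb R};X)$. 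With those three checks spelled out, your proof is complete and arguably cleaner than the cutoff route, at the cost of being less elementary.
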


We now work with the alternative definition of $\hat{H}_{(k)}^m(B_R(\mathbf{0}) \times \mathbb{R};X)$ as the subspace
$$\{\hat{f}_k \in L^2(B_R(\mathbf{0}) \times \mathbb{R};X)\st\partial_{\bar{\zeta}}^{n-i}\partial_\zeta^i \partial_z^{p-n} \hat{f}_k \in L^2(B_R(\mathbf{0}) \times \mathbb{R};X),\ 0\leq i\leq n\leq p \leq m\}$$
of $H^m(B_R(\mathbf{0}) \times \mathbb{R};X)$ consisting of mode $k$ functions (where $\partial_{\bar{\zeta}}^{n-i}\partial_\zeta^i \partial_z^{p-n} \hat{f}_k$ now denotes a distributional derivative of $\hat{f}_k$),
equipping it with the norm \eqref{Hat norm}.
In this context we refer to $\partial_{\bar{\zeta}}^{n-i}\partial_\zeta^i \partial_z^{p-n} \hat{f}_k$ as a \emph{weak derivative}.
In view of Lemma \ref{lem:distderiv construction} and Definition
\ref{Defn of dist rad deriv} we define
\begin{align*}
H_{(k)}^m((0,R) \times \mathbb{R};X)
&:=\{f_k \in L_1^2((0,R)\times\mathbb{R};X)\st {\mathcal D}_{-k+i}^{n-i}{\mathcal D}_k^i \partial_z^{p-n} f_k \in L_1^2((0,R)\times\mathbb{R};X),\ 0 \leq i \leq n \leq p \leq m\},
\end{align*}
(where ${\mathcal D}_{-k+i}^{n-i}{\mathcal D}_k^i \partial_z^{p-n} f_k$ now denotes a distributional radial derivative); in this context we
refer to\linebreak
${\mathcal D}_{-k+i}^{n-i}{\mathcal D}_k^i \partial_z^{p-n} f_k$ as a \emph{weak radial derivative}.
Observing that a mode $k$ function $\hat{f}_k$ belongs to\linebreak $\hat{H}_{(k)}^m(B_R(\mathbf{0}) \times \mathbb{R};X)$ if and only if its radial coefficient $f_k$ belongs to $H_{(k)}^m((0,R) \times \mathbb{R};X)$,
one finds that
$H_{(k)}^m((0,R) \times \mathbb{R};X)$ is a Banach space with respect to the norm \eqref{No hat norm}
because $f_k \mapsto \hat{f}_k$ is an isometric isomorphism $H_{(k)}^m((0,R) \times \mathbb{R};X) \rightarrow \hat{H}_{(k)}^m(B_R(\mathbf{0}) \times \mathbb{R};X)$.

Using Lemma \ref{lem:distderiv construction} we can directly characterise the weak derivative of a mode $k$ function and the
weak radial derivative of its radial coefficient.

\begin{lemma}
Suppose that $\hat{f}_k \in L^2(B_R(\mathbf{0}) \times \mathbb{R};X)$. The function 
$\hat{g}_{k+n-2i}\in L^2(B_R(\mathbf{0}) \times \mathbb{R};X)$
is the weak derivative
$\partial_{\bar{\zeta}}^{n-i}\partial_\zeta^i \partial_z^{p-n}\hat{f}_k$ of $\hat{f}_k$ if and only if
\begin{equation}
(-1)^p
\hspace{-4mm}\int\limits_{B_R(\mathbf{0})\times {\mathbb R}} \hspace{-4mm}\hat{f}_k \partial_{\zeta}^i \partial_{\bar{\zeta}}^{n-i} \partial_z^{p-n}\hat{\phi}_{-k-n+2i}
=
\hspace{-4mm} \int\limits_{B_R(\mathbf{0})\times {\mathbb R}} \hspace{-4mm} \hat{g}_{k+n-2i}\hat{\phi}_{-k-n+2i}
\label{eq:wkderiv 1}
\end{equation}
for all $\hat{\phi}_{-k-n+2i} \in \hat{\mathscr D}_{(-k-n+2i)}(B_R(\mathbf{0}) \times \mathbb{R};X)$, and this condition holds if and only if
\begin{align}
(-1)^p
 \int_{\mathbb R} \int_0^R & f_k(r,z)2^{-\frac{n}{2}}{\mathcal D}_{k+n-i}^{n-i} {\mathcal D}_{-k-n+2i}^i \partial_z^{p-n}\phi_{-k-n+2i}(r,z)r\dr\dz \nonumber\\
&= \int_{\mathbb R} \int_0^R g_{k+n-2i}(r,z)\phi_{-k-n+2i}(r,z) r\dr\dz
\label{eq:wkderiv 2}
\end{align}
for all $\phi_{-k-n+2i} \in {\mathscr D}_{(-k-n+2i)}([0,R)\times\mathbb{R};X)$.
\end{lemma}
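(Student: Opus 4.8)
The plan is to recognise the statement as the specialisation of Lemma~\ref{lem:distderiv construction} to the \emph{regular} distributions determined by the $L^2$ functions $\hat{f}_k$ and $\hat{g}_{k+n-2i}$, and then to translate the distributional pairings appearing there into the integrals of \eqref{eq:wkderiv 1} and \eqref{eq:wkderiv 2}.

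First I would invoke the identification from Section~\ref{Distributions}: the mode $k$ function $\hat{f}_k \in L^2(B_R(\mathbf{0})\times\mathbb{R};X)$ represents the mode $-k$ distribution $\hat{T}_{-k}$ given by $(\hat{T}_{-k},\hat{\psi}_{-k})=\int_{B_R(\mathbf{0})\times\mathbb{R}}\hat{f}_k\hat{\psi}_{-k}$, whose radial version reads $(T_{-k},\psi_{-k})=2\pi\int_{\mathbb{R}}\int_0^R f_k\psi_{-k}\,r\,\mathrm{d}r\,\mathrm{d}z$, and likewise $\hat{g}_{k+n-2i}$ represents the mode $-k-n+2i$ distribution $\hat{S}_{-k-n+2i}$. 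Rewriting the defining weak-derivative identity $(-1)^p\int\hat{f}_k\,\partial_{\bar{\zeta}}^{n-i}\partial_\zeta^i\partial_z^{p-n}\hat{\phi}=\int\hat{g}_{k+n-2i}\hat{\phi}$, valid for all $\hat{\phi}\in\mathscr{D}(B_R(\mathbf{0})\times\mathbb{R};\mathbb{C})$, in terms of these pairings shows that it is precisely the assertion that $\hat{S}_{-k-n+2i}$ is the distributional derivative $\partial_{\bar{\zeta}}^i\partial_\zeta^{n-i}\partial_z^{p-n}\hat{T}_{-k}$ of $\hat{T}_{-k}$; the apparent interchange of the powers of $\partial_\zeta$ and $\partial_{\bar{\zeta}}$ is caused by the mode reversal $\hat{f}_k\mapsto\hat{T}_{-k}$ and is immaterial because the Wirtinger operators commute. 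The reduction from arbitrary $\hat{\phi}$ to mode $-k-n+2i$ test functions is exactly the Fourier decomposition of Lemma~\ref{lem:FS superconvergence}, already carried out in the proof of Lemma~\ref{lem:distderiv construction}.

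I would then apply Lemma~\ref{lem:distderiv construction} with $k$ and $i$ there replaced by $-k$ and $n-i$ respectively ($n$ and $p$ unchanged). Under this substitution its criterion \eqref{eq:distderiv 1} becomes
\[
(-1)^p\big(\hat{T}_{-k},\partial_{\bar{\zeta}}^{n-i}\partial_\zeta^i\partial_z^{p-n}\hat{\phi}_{-k-n+2i}\big)=\big(\hat{S}_{-k-n+2i},\hat{\phi}_{-k-n+2i}\big),
\]
and replacing the two pairings by the integrals of $\hat{f}_k$ and $\hat{g}_{k+n-2i}$, together with $\partial_{\bar{\zeta}}^{n-i}\partial_\zeta^i=\partial_\zeta^i\partial_{\bar{\zeta}}^{n-i}$, reproduces \eqref{eq:wkderiv 1}. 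Its equivalent radial criterion \eqref{eq:distderiv 2} becomes
\[
(-1)^p\big(T_{-k},2^{-\frac{n}{2}}\mathcal{D}_{k+n-i}^{n-i}\mathcal{D}_{-k-n+2i}^i\partial_z^{p-n}\phi_{-k-n+2i}\big)=\big(S_{-k-n+2i},\phi_{-k-n+2i}\big);
\]
expanding both radial pairings via the identification above, the common factor $2\pi$ cancels and \eqref{eq:wkderiv 2} results, completing the chain weak derivative $\Leftrightarrow$ \eqref{eq:wkderiv 1} $\Leftrightarrow$ \eqref{eq:wkderiv 2}.

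The main obstacle is purely the bookkeeping: tracking the mode reversal in the function--distribution correspondence (which swaps $\partial_\zeta\leftrightarrow\partial_{\bar{\zeta}}$ and converts the Bessel indices $-k+i$, $k+n-2i$ into $k+n-i$, $-k-n+2i$) and pinning down the exact substitution $(k,i,n,p)\mapsto(-k,n-i,n,p)$ that aligns the present derivative with the one occurring in Lemma~\ref{lem:distderiv construction}, while checking that the weight $r$ and the factor $2\pi$ behave correctly on both sides of each identity.
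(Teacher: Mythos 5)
Your proposal is correct and takes essentially the same route as the paper, which states this lemma without further proof as a direct consequence of Lemma \ref{lem:distderiv construction} together with the identification (from Section \ref{Distributions}) of a mode $k$ function in $L^2$ with a mode $-k$ (radial) distribution. Your explicit bookkeeping --- the substitution $(k,i)\mapsto(-k,n-i)$ in Lemma \ref{lem:distderiv construction}, the swap of $\partial_\zeta$ and $\partial_{\bar{\zeta}}$ caused by the mode reversal (harmless since the Wirtinger operators commute), and the cancellation of the factor $2\pi$ in the radial pairings --- accurately fills in exactly the details the paper leaves implicit.
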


The following result confirms that the present constructions of $\hat{H}_{(k)}^m(B_R(\mathbf{0}) \times \mathbb{R};X)$
and $H_{(k)}^m((0,R) \times \mathbb{R};X)$\linebreak by weak derivatives coincide with the spaces obtained as the completions of
$\hat{\mathscr D}_{(k)}(\overline{B}_R(\mathbf{0}) \times {\mathbb R};X)$
and \linebreak ${\mathscr D}_{(k)}([0,R] \times {\mathbb R};X)$.

\begin{lemma} \label{lem:density}
Suppose that $\hat{f}_k \in \hat{H}_{(k)}^m(B_R(\mathbf{0})\times {\mathbb R};X)$ (such that $f_k \in H_{(k)}^m((0,R)\times {\mathbb R};X)$).
There exists\linebreak a sequence $\{\hat{\phi}_k^j\} \subseteq \hat{\mathscr D}_{(k)}(\overline{B}_R(\mathbf{0}) \times {\mathbb R};X)$ such that
$\hat{\phi}_k^j \rightarrow \hat{f}_k$ in $\hat{H}_{(k)}^m(B_R(\mathbf{0})\times {\mathbb R};X)$ and $\phi_k^j \rightarrow f_k$ in\linebreak
$H_{(k)}^m((0,R) \times {\mathbb R};X)$ as $j \rightarrow \infty$.
\end{lemma}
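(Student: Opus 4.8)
The plan is to deduce the result from the corresponding density statement for the full Sobolev space $H^m(B_R(\mathbf{0}) \times {\mathbb R};X)$ by composing it with the mode-$k$ projection. Since $f_k \mapsto \hat{f}_k$ is an isometric isomorphism $H_{(k)}^m((0,R) \times {\mathbb R};X) \to \hat{H}_{(k)}^m(B_R(\mathbf{0}) \times {\mathbb R};X)$, the radial coefficient $\phi_k^j$ of $\hat{\phi}_k^j$ satisfies $\|\phi_k^j-f_k\|_{H^m_{(k)}}=\|\hat{\phi}_k^j-\hat{f}_k\|_{H^m}$, so the two asserted convergences coincide and it suffices to produce a single sequence $\{\hat{\phi}_k^j\} \subseteq \hat{\mathscr D}_{(k)}(\overline{B}_R(\mathbf{0}) \times {\mathbb R};X)$ with $\hat{\phi}_k^j \to \hat{f}_k$ in the norm \eqref{Hat norm}.

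First I would invoke the standard density of $\mathscr{D}(\overline{B}_R(\mathbf{0}) \times {\mathbb R};X)$ in $H^m(B_R(\mathbf{0}) \times {\mathbb R};X)$ (smooth functions up to the boundary $r=R$, truncated in $z$, the cylinder being a smooth domain) to choose $\hat{\psi}^j \in \mathscr{D}(\overline{B}_R(\mathbf{0}) \times {\mathbb R};X)$ with $\hat{\psi}^j \to \hat{f}_k$ in $H^m$. These approximants need not be mode $k$ functions, so I would replace each by its mode-$k$ component $\hat{\phi}_k^j := \e^{\i k \theta}{\mathcal P}_k[\hat{\psi}^j]$; by Corollary \ref{cor:FS superconvergence} we have $\hat{\phi}_k^j \in \hat{\mathscr D}_{(k)}(\overline{B}_R(\mathbf{0}) \times {\mathbb R};X)$, so the candidate sequence has the required regularity.

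The crux is to show that this projection does not increase the $H^m$ norm. Writing $P_\ell$ for the mode-$\ell$ projection $\hat{g} \mapsto \e^{\i \ell \theta}{\mathcal P}_\ell[\hat{g}]$, Jensen's inequality applied to the defining angular average gives, for each fixed $(r,z)$,
$$2\pi \|{\mathcal P}_\ell[\hat{g}](r,z)\|^2 \leq \int_0^{2\pi} \|\hat{g}(r\cos\alpha,r\sin\alpha,z)\|^2 \,\mathrm{d}\alpha,$$
whence $\|P_\ell \hat{g}\|_{L^2} \leq \|\hat{g}\|_{L^2}$; this pointwise estimate is what replaces the Hilbert-space orthogonality of distinct modes, which is unavailable for a general complex Banach space $X$. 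Combining Proposition \ref{Prop:Dk} with Proposition \ref{prop:proj} gives $\partial_{\bar{\zeta}}^{n-i}\partial_\zeta^i \partial_z^{p-n}\hat{\phi}_k^j = P_{k+n-2i}[\partial_{\bar{\zeta}}^{n-i}\partial_\zeta^i \partial_z^{p-n}\hat{\psi}^j]$, while $\partial_{\bar{\zeta}}^{n-i}\partial_\zeta^i \partial_z^{p-n}\hat{f}_k$ is a mode $k+n-2i$ function and hence fixed by $P_{k+n-2i}$, so that by linearity of $P_{k+n-2i}$ on $L^2$,
$$\partial_{\bar{\zeta}}^{n-i}\partial_\zeta^i \partial_z^{p-n}(\hat{\phi}_k^j - \hat{f}_k) = P_{k+n-2i}\big[\partial_{\bar{\zeta}}^{n-i}\partial_\zeta^i \partial_z^{p-n}(\hat{\psi}^j - \hat{f}_k)\big].$$
Applying the $L^2$ contraction to each term and summing over $0 \leq i \leq n \leq p \leq m$ with the weights in \eqref{Hat norm} yields $\|\hat{\phi}_k^j - \hat{f}_k\|_{H^m} \leq \|\hat{\psi}^j - \hat{f}_k\|_{H^m} \to 0$; the radial statement $\phi_k^j \to f_k$ in $H_{(k)}^m$ then follows from the isometry.

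The main obstacle, as indicated, is that the naive argument---decomposing into angular Fourier modes and invoking orthogonality---breaks down because $L^2(\,\cdot\,;X)$ carries no inner product for a general Banach space $X$; the remedy is to obtain the contraction of the mode projection directly and pointwise in $(r,z)$ through Jensen's inequality, and then to transport it across all the Wirtinger derivatives using the commutation formula of Proposition \ref{prop:proj}. The only technical point to verify is that each weak derivative $\partial_{\bar{\zeta}}^{n-i}\partial_\zeta^i \partial_z^{p-n}\hat{f}_k$ is genuinely a mode $k+n-2i$ function, so that $P_{k+n-2i}$ fixes it; this is immediate from the weak-derivative characterisation \eqref{eq:wkderiv 1} preceding this lemma.
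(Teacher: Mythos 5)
Your proposal is correct and follows essentially the same route as the paper: approximate $\hat{f}_k$ by general test functions $\hat{\psi}^j \in {\mathscr D}(\overline{B}_R(\mathbf{0}) \times {\mathbb R};X)$ using standard Sobolev density, then pass to the mode-$k$ component via ${\mathcal P}_k$ and use its continuity $H^m(B_R(\mathbf{0})\times{\mathbb R};X) \rightarrow H_{(k)}^m((0,R)\times{\mathbb R};X)$, which the paper asserts without detail and which you justify explicitly through the pointwise Jensen contraction combined with the commutation formula of Proposition \ref{prop:proj}. The only difference is this added (correct) justification, together with the observation that $P_{k+n-2i}$ fixes the weak derivatives of $\hat{f}_k$, both of which are implicit in the paper's argument.
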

\begin{proof}
Since ${\mathscr D}(\overline{B}_R(\mathbf{0}) \times {\mathbb R};X)$ is dense in $H^m(B_R(\mathbf{0})\times{\mathbb R};X)$ there exists a sequence
$\{\hat{\phi}^j\} \subseteq {\mathscr D}(\overline{B}_R(\mathbf{0}) \times {\mathbb R};X)$ with
$\hat{\phi}^j \rightarrow \hat{f}_k$ in $H^m(B_R(\mathbf{0})\times{\mathbb R};X)$ as $j \rightarrow \infty$, and by
Corollary \ref{cor:FS superconvergence} we can write each of these
functions as a sum $\hat{\phi}^j = \sum\limits_{\ell \in {\mathbb Z}} \hat{\phi}_k^j$ of mode $\ell$ functions $\hat{\phi}_\ell^j \in \hat{\mathscr D}_{(\ell)}(\overline{B}_R(\mathbf{0}) \times {\mathbb R};X)$.
Observe that $\phi_k^j-f_k = {\mathcal P}_k[\hat{\phi}^j-\hat{f}_k]$, where
$${\mathcal P}_k[\hat{\psi}](r,z):=\frac{1}{2\pi}\int_0^{2\pi} \hat{\psi}(r\cos\theta,r\sin\theta,z)\e^{-\i k \theta}\dtheta$$
(interchanging the sum and integral is justified due to the uniform convergence of the series for \linebreak each fixed $(r,z)$).
Furthermore ${\mathcal P}_k$ extends from a mapping ${\mathscr D}(\overline{B}_R(\mathbf{0}) \times {\mathbb R};X) \rightarrow {\mathscr D}_{(k)}([0,R] \times {\mathbb R}; X)$
(see\linebreak Remark \ref{propprojrig}) to a continuous mapping $H^m(B_R(\mathbf{0})\times{\mathbb R};X) \rightarrow H_{(k)}^m([0,R) \times {\mathbb R};X)$. It follows that\linebreak
$\|\phi_k^j-f_k\|_{H_{(k)}^m} = \|{\mathcal P}_k[\hat{\phi}^j-\hat{f}_k]\|_{H_{(k)}^m}  \rightarrow 0$ and hence $\|\hat{\phi}^j-\hat{f}_k]\|_{H^m} \rightarrow 0$ as $j \rightarrow \infty$.
\end{proof}

\subsection{Hankel spaces} \label{Hankel FS}
In this section we introduce a scale of Hilbert spaces $B_{(k)}^s((0,\infty);X)$, $s \geq 0$
for the radial coefficients of mode $k$ functions. These \emph{Hankel spaces}
are the radial counterparts of the familiar Bessel-potential spaces constructed using the Fourier transform,
which is here replaced by the following radial transform.

\begin{definition}
The \underline{$k$-index Hankel transform} of a function $f \in {\mathscr S}_{(k)}([0,\infty);X)$ is defined by
the integral
\begin{equation}
\mathcal{H}_{k}[f](\rho) := \int_{0}^{\infty} f(r) J_{k}(\rho \,r) \,r\dr. \label{eq:Hankdefn}
\end{equation}
\end{definition}

We begin by establishing some basic properties of the Hankel transform; for notational clarity
we denote Bessel operators with respect to $r$ and $\rho$ by respectively ${\mathcal D}_k^i$ and $\tilde{\mathcal D}_k^i$.

\begin{lemma} \label{HT properties}
The Hankel transform $\mathcal{H}_k$ maps ${\mathscr S}_{(k)}([0,\infty);X)$ into itself, and furthermore the identities
\begin{align*}
\tilde{\mathcal D}_{-k+i}^{n-i}\tilde{\mathcal D}_k^i {\mathcal H}_k[f](\rho) & = (-1)^{n-i} {\mathcal H}_{k+n-2i}[r^nf](\rho), \\
{\mathcal H}_{k+n-2i} [{\mathcal D}_{-k+i}^{n-i}{\mathcal D}_k^i f](\rho) & = (-1)^{n-i}\rho^n {\mathcal H}_k[f](\rho)
\end{align*}
hold for each $f \in {\mathscr S}_{(k)}([0,\infty);X)$ and $0 \leq i \leq n$, $n \in {\mathbb N}_0$.
\end{lemma}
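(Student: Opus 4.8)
The plan is to reduce both identities to the elementary action of the Bessel operators on the kernel $J_k(\rho r)$, and then to deduce the mapping property from the identities themselves. First I would record how the $\rho$-operators act on the kernel: writing $g(\rho)=J_k(\rho r)$ with $r$ as a parameter, a direct chain-rule computation gives $\tilde{\mathcal D}_\mu g(\rho)=r(\mathcal{D}_\mu J_k)(\rho r)$, since the factor $\mu/\rho$ rescales as $r\mu/(\rho r)$. Iterating this and invoking Remark \ref{rem:Lap}(ii) yields
$$\tilde{\mathcal D}_{-k+i}^{n-i}\tilde{\mathcal D}_k^i J_k(\rho r)=(-1)^{n-i}\,r^n J_{k+n-2i}(\rho r).$$
The first identity then follows by differentiating \eqref{eq:Hankdefn} under the integral sign—justified because $f$ is Schwartz-class while $J_\nu$ and its derivatives are bounded, so the differentiated integrands are dominated by $r\mapsto\|f(r)\|\,r^{n+1}$—and substituting this relation.

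For the second identity I would transfer the operators ${\mathcal D}_{-k+i}^{n-i}{\mathcal D}_k^i$, which act in $r$, onto the kernel by integration by parts. The single-operator rule comes from Proposition \ref{Prop:DkProperties}(c),(d): choosing indices summing to $1$ gives $\int_0^\infty(\mathcal{D}_\mu f)g\,r\dr=[rfg]_0^\infty-\int_0^\infty f(\mathcal{D}_{1-\mu}g)\,r\dr$, so $\mathcal{D}_\mu$ has weighted adjoint $-\mathcal{D}_{1-\mu}$. Applying this $n$ times (the boundary terms vanishing, as discussed below) converts ${\mathcal D}_{-k+i}^{n-i}{\mathcal D}_k^i$ into $(-1)^n{\mathcal D}_{k+n-i}^{n-i}{\mathcal D}_{-k-n+2i}^i$ acting on $J_{k+n-2i}(\rho r)$, matching the adjoint structure of \eqref{eq:wkderiv 2}. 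A step-by-step application of the elementary scaling relations $\mathcal{D}_\nu J_\nu(\rho r)=\rho J_{\nu-1}(\rho r)$ and $\mathcal{D}_{-\nu}J_\nu(\rho r)=-\rho J_{\nu+1}(\rho r)$ (special cases of Remark \ref{rem:Lap}(ii), carrying a factor $\rho$ from the $r$-scaling) then gives ${\mathcal D}_{k+n-i}^{n-i}{\mathcal D}_{-k-n+2i}^i J_{k+n-2i}(\rho r)=(-1)^i\rho^n J_k(\rho r)$; since $(-1)^{n+i}=(-1)^{n-i}$, the claimed identity drops out.

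Finally, to show $\mathcal{H}_k[f]\in{\mathscr S}_{(k)}([0,\infty);X)$ I would read off everything from the two identities. Smoothness on $[0,\infty)$ follows because the first identity exhibits each combination $\tilde{\mathcal D}_{-k+i}^{n-i}\tilde{\mathcal D}_k^i\mathcal{H}_k[f]$ as $(-1)^{n-i}\mathcal{H}_{k+n-2i}[r^nf]$, which is continuous in $\rho$ (including at $\rho=0$) by dominated convergence. For the rapid-decay seminorms I note that $r^nf\in{\mathscr S}_{(k+n-2i)}([0,\infty);X)$ by Lemma \ref{lem:power}(ii) and Proposition \ref{prop:products}(iii), so the second identity applied to $r^nf$ bounds $\rho^m\mathcal{H}_{k+n-2i}[r^nf](\rho)$ by a Hankel transform of a Schwartz-class function, which is bounded. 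The boundary conditions at $\rho=0$ come from the same representation together with $J_m(0)=0$ for $m\neq0$: when $k+n-2i\neq0$ the integrand vanishes at $\rho=0$, and when $k+n-2i=0$ the defining prefactor $(k+n-2i)$ annihilates the term.

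The main obstacle is the rigorous vanishing of the boundary terms $[rfg]_0^\infty$ in the iterated integration by parts underlying the second identity. At infinity this is immediate from the rapid decay of $f$ against the bounded kernel, but at the origin it requires matching the vanishing orders of the transforms ${\mathcal D}_{-k+i}^{n-i}{\mathcal D}_k^i f$ (controlled by the boundary conditions built into $f\in{\mathscr S}_{(k)}$) against the near-origin behaviour $J_\nu(\rho r)=O(r^{|\nu|})$ of the corresponding kernel transforms, which must be verified index by index.
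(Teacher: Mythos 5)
Your proposal follows essentially the same route as the paper: both identities are obtained by moving the Bessel operators onto the kernel (differentiation under the integral sign together with Remark \ref{rem:Lap}(ii) for the first, integration by parts for the second), and the mapping property ${\mathscr S}_{(k)} \rightarrow {\mathscr S}_{(k)}$ is read off by combining the two identities, with $J_{k+n-2i}(0)=0$ for $k+n-2i\neq 0$ supplying the boundary condition at $\rho=0$. The boundary terms you flag as the main obstacle are in fact harmless and need no index-by-index matching: at every stage of the iterated integration by parts the remaining derivative of $f$ is a canonical combination $\mathcal{D}_{-k+i'}^{n'-i'}\mathcal{D}_k^{i'}f$, which is bounded near $r=0$ and rapidly decaying by membership in ${\mathscr S}_{(k)}([0,\infty);X)$, so the explicit factor $r$ in the boundary term $[rfg]_0^\infty$ annihilates it at the origin while rapid decay annihilates it at infinity.
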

\begin{proof}
First observe that $r^nf \in {\mathscr S}_{(k+n-2i)}([0,\infty);X)$ for all $0\leq i\leq n$ by Proposition \ref{prop:products}(ii) and Lemma \ref{lem:power}(ii).
Noting that formally
\begin{align*}
\tilde{\mathcal D}_{-k+i}^{n-i}\tilde{\mathcal D}_k^i {\mathcal H}_k[f](\rho)
& = \int_0^\infty f(r) \tilde{\mathcal D}_{-k+i}^{n-i}\tilde{\mathcal D}_k^i  J_k(\rho r) \,r\dr \\
& = \int_0^\infty r^{i} f(r) \tilde{\mathcal D}_{-k+i}^{n-i} J_{k-i}(\rho r) \,r\dr \\
& = (-1)^{k-i} \int_0^\infty r^{n}f(r) J_{-k-n+2i}(\rho r) \,r\dr \\
& = (-1)^{n-i} {\mathcal H}_{k+n-2i}[r^nf](\rho),
\end{align*}
where we have used Remark \ref{rem:Lap}(ii), and using a familiar dominated convergence argument with dominating function $rf$ establishes the existence of the derivatives
$\tilde{\mathcal D}_{-k+i}^{n-i}\tilde{\mathcal D}_k^i {\mathcal H}_k[f]$ for $0 \leq i \leq n$, $n \in {\mathbb N}_0$ and validity of the first identity.

The second identity follows from the calculation
\begin{align*}
{\mathcal H}_{k+n-2i} [{\mathcal D}_{-k+i}^{n-i}{\mathcal D}_k^i f](\rho)
& = \int_0^\infty {\mathcal D}_{-k+i}^{n-i}{\mathcal D}_k^i f(r)J_{k+n-2i}(\rho r) \,r \dr \\
& = (-1)^n \int_0^\infty f(r) {\mathcal D}_{k+n-i}^{n-i}{\mathcal D}_{-k-n+2i}^i J_{k+n-2i}(\rho r)  \,r\dr \\
& = (-1)^k \int_0^\infty \rho^i f(r) {\mathcal D}_{k+n-i}^{n-i}J_{-k-n+i}(\rho r)  \,r\dr \\
& = (-1)^{n-i}\int_0^\infty  \rho^n f(r) J_k(\rho r) \,r\dr \\
& = (-1)^{n-i}\rho^n {\mathcal H}_k[f](\rho),
\end{align*}
where we have integrated by parts. Combining the two identities yields
\begin{align*}
\rho^m \tilde{\mathcal D}_{-k+i}^{n-i}\tilde{\mathcal D}_k^i {\mathcal H}_k[f](\rho)
& = (-1)^{n-i}\rho^m {\mathcal H}_{k+n-2i}[r^nf](\rho)\\
& = (-1)^{n+m-i-j}{\mathcal H}_{k+n+m-2i-2j}[{\mathcal D}_{-k-n+2i+j}^{m-j}{\mathcal D}_{k+n-2i}^j(r^nf)](\rho)
\end{align*}
for any $j$ with $0 \leq j \leq m$ and $m \in {\mathbb N}_0$. We conclude that
$$\sup_{\rho\geq0} \rho^m |\tilde{\mathcal D}_{-k+i}^{n-i}\tilde{\mathcal D}_k^i {\mathcal H}_k[f](\rho)| < \infty,$$
and furthermore
$$(k+n-2i)\tilde{\mathcal D}_{-k+i}^{n-i}\tilde{\mathcal D}_k^i {\mathcal H}_k[f](0)
=(-1)^{n-i}(k+n-2i)J_{k+n-2i}(0)\int_0^\infty r^{n}f(r)\,r\dr =0$$
because $J_{k+n-2i}(0)=0$ if $k+n-2i \neq 0$.
\end{proof}

\begin{proposition}
The Hankel transform $\mathcal{H}_k: {\mathscr S}_{(k)}([0,\infty);X) \rightarrow {\mathscr S}_{(k)}([0,\infty);X)$ is an involution and isometric with respect to the $L_1^2((0,\infty);X)$ norm.
\end{proposition}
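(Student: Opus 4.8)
The plan is to transfer both properties across the identity
$$\mathcal{F}[\hat{f}_k](\rho\cos\omega,\rho\sin\omega)=\e^{\i k(\omega-\tfrac{\pi}{2})}\mathcal{H}_k[f_k](\rho),$$
recorded in the overview, which I would first confirm from the integral representation $\int_0^{2\pi}\e^{\i k\phi}\e^{-\i z\cos\phi}\,\mathrm{d}\phi=2\pi(-\i)^kJ_k(z)$ (see Watson \cite{Watson}). This says precisely that the two-dimensional Fourier transform sends the mode $k$ function $\hat{f}_k$ with radial coefficient $f_k$ to the mode $k$ function whose radial coefficient is $\e^{-\i k\tfrac{\pi}{2}}\mathcal{H}_k[f_k]$; equivalently, under the isometric correspondence $f_k\mapsto\hat{f}_k$ the action of $\mathcal{F}$ on radial coefficients is $f_k\mapsto\e^{-\i k\tfrac{\pi}{2}}\mathcal{H}_k[f_k]$, and Lemma \ref{HT properties} ensures $\mathcal{H}_k[f_k]$ again lies in $\mathscr{S}_{(k)}([0,\infty);X)$.

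\emph{Involution.} Since $\mathcal{F}^{-1}[g](\bfx)=\mathcal{F}[g](-\bfx)$, applying $\mathcal{F}$ twice gives $\mathcal{F}^2[\hat{f}_k](\bfx)=\hat{f}_k(-\bfx)$, and the mode $k$ parity $\hat{f}_k(-\bfx)=\e^{\i k\pi}\hat{f}_k(\bfx)=(-1)^k\hat{f}_k(\bfx)$ yields $\mathcal{F}^2[\hat f_k]=(-1)^k\hat f_k$. Reading this through the displayed relation at the level of radial coefficients, and using linearity to extract the unimodular constant from the inner transform, gives $\e^{-\i k\pi}\mathcal{H}_k^2[f_k]=(-1)^kf_k$; as $\e^{-\i k\pi}=(-1)^k$ for $k\in\mathbb{Z}$, this is exactly $\mathcal{H}_k^2[f_k]=f_k$, so $\mathcal{H}_k$ is an involution.

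\emph{Isometry.} Integrating out the angular variable shows that the $L^2(\mathbb{R}^2;X)$ norm of a mode $k$ function equals the $L_1^2((0,\infty);X)$ norm of its radial coefficient (the $2\pi$ is built into the $L_1^2$ norm and $|\e^{\i k\theta}|=1$). Applying this to $\mathcal{F}[\hat f_k]$, whose radial coefficient differs from $\mathcal{H}_k[f_k]$ only by a unimodular constant, gives $\|\mathcal{H}_k[f_k]\|_{L_1^2}=\|\mathcal{F}[\hat f_k]\|_{L^2}=\|\hat f_k\|_{L^2}=\|f_k\|_{L_1^2}$, the middle equality being the Plancherel theorem for $\mathcal{F}$.

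The one genuinely delicate point is this Plancherel step: the quoted isometry of $\mathcal{F}$ is stated for scalar (hence Hilbert-space-valued) functions, and it is the only place where anything beyond linear, kernel-level manipulation is invoked. A fully self-contained alternative avoiding $\mathcal{F}$ is to prove the involution as the classical Hankel inversion theorem—regularising the non-absolutely-convergent double integral $\int_0^\infty\int_0^\infty f(r)J_k(\rho r)J_k(\rho s)\,r\,\mathrm{d}r\,\rho\,\mathrm{d}\rho$ by a Gaussian factor $\e^{-\varepsilon^2\rho^2}$ and letting $\varepsilon\to0$ to recover the closure relation $\int_0^\infty J_k(\rho r)J_k(\rho s)\rho\,\mathrm{d}\rho=\delta(r-s)/r$—and then deriving the isometry from the bilinear Parseval identity produced by the same inversion. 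In this route the main obstacle is justifying the interchange of the iterated integrals, for which the rapid decay of $\mathcal{H}_k[f]$ supplied by Lemma \ref{HT properties} is the key tool.
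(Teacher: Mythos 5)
Your proof is correct, but it takes a genuinely different route from the paper's. You transfer both properties through the identity $\mathcal{F}[\hat{f}_k](\rho\cos\omega,\rho\sin\omega)=\e^{\i k(\omega-\frac{\pi}{2})}\mathcal{H}_k[f_k](\rho)$, getting the involution from $\mathcal{F}^2[\hat{f}_k](\bfx)=\hat{f}_k(-\bfx)=(-1)^k\hat{f}_k(\bfx)$ together with the phase bookkeeping $\e^{-\i k\pi}=(-1)^k$, and the isometry from Plancherel combined with the correspondence $\|\hat{f}_k\|_{L^2}=\|f_k\|_{L_1^2}$; this is precisely the shortcut the paper mentions in its overview and then deliberately declines in favour of a self-contained radial theory. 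The paper instead quotes Hankel's theorem (Titchmarsh, \S 8.18) applied with $g(r)=\sqrt{r}f(r)$ --- whose hypotheses $g\in L^1\cap\mathrm{BV}_{\mathrm{loc}}$ are immediate for Schwartz-class $f$ --- to obtain $\mathcal{H}_k[\mathcal{H}_k[f]]=f$ in one stroke, and then proves the isometry directly: it inserts the inversion formula for $f_2$ into $\int_0^\infty f_1(r)f_2(r)\,r\,\mathrm{d}r$ and interchanges the iterated integrals (justified by absolute integrability of $(r,\rho)\mapsto r\rho f_1(r)\mathcal{H}_k[f_2](\rho)$), arriving at the bilinear Parseval identity $\int_0^\infty f_1 f_2\,r\,\mathrm{d}r=\int_0^\infty\mathcal{H}_k[f_1]\,\mathcal{H}_k[f_2]\,\rho\,\mathrm{d}\rho$. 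Your fallback route (Gaussian regularisation of the closure relation, then the bilinear identity) is in effect a from-scratch proof of the very result the paper simply cites, so it lands where the paper does. As to what each approach buys: yours explains conceptually \emph{why} $\mathcal{H}_k$ is an involutive isometry --- it is the planar Fourier transform read off on mode $k$ functions --- at the cost of one Bessel integral and the inversion/Plancherel theorems for $\mathcal{F}$; the paper's stays entirely inside the radial framework. Your caveat about the vector-valued Plancherel step is well taken, since for a general complex Banach space $X$ the Fourier transform is not even bounded on $L^2(\mathbb{R}^2;X)$ (its boundedness characterises Hilbert space), so your main route is strictly valid for scalar- or Hilbert-space-valued $f$; but note the paper is in the same boat, as its bilinear identity presupposes a pairing of $X$-valued functions and yields the norm identity $\|\mathcal{H}_k[f]\|_{L_1^2}=\|f\|_{L_1^2}$ only in that same setting, so you lose nothing relative to the paper here.
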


\begin{proof}
Hankel's theorem (see Titchmarsh \cite[\S8.18]{Titchmarsha}) states that
$$\int_0^\infty J_k(\rho r) \sqrt{\rho r} \int_0^\infty J_k(\rho s)\sqrt{\rho s} g(s) \ds\drho=\frac{1}{2}
\left( \lim_{\delta \uparrow 0} g(r+\delta) +  \lim_{\delta \downarrow 0} g(r+\delta) \right)$$
for $g \in L^1((0,\infty);X) \cap \mathrm{BV}_\mathrm{loc}([0,\infty);X)$. 
Writing $g(r)=\sqrt{r}f(r)$, we find that
$$
{\mathcal H}_k[{\mathcal H}_kf](r) =
\int_0^\infty  J_k(\rho r) \int_0^\infty f(s) J_k(\rho s)\,s \ds \,\rho\drho
= f(r)
$$
for $f \in {\mathscr S}_{(k)}([0,\infty);X)$, such that ${\mathcal H}_k^{-1}={\mathcal H}_k$. The fact that $\mathcal{H}_k: {\mathscr S}_{(k)}([0,\infty);X) \rightarrow {\mathscr S}_{(k)}([0,\infty);X)$
is isometric with respect to the $L_1^2((0,\infty);X)$ norm follows from the calculation
\begin{align*}
\int_0^\infty f_1(r) f_2(r) \,r\dr
& = \int_0^\infty f_1(r) \int_0^\infty {\mathcal H}_k[f_2](\rho) J_k(\rho r)\,\rho \drho\; \,r\dr \\
& = \int_0^\infty \int_0^\infty f_1(r)J_k(\rho r) \,r\dr\; {\mathcal H}_k[f_2](\rho) \,\rho\drho \\
& = \int_0^ \infty {\mathcal H}_k[f_1](\rho) {\mathcal H}_k[f_2](\rho) \,\rho\drho
\end{align*}
for $f_1, f_2 \in {\mathscr S}_{(k)}([0,\infty);X)$, where the inversion of the repeated integral is justified since\linebreak $(r,\rho) \mapsto r\rho f_1(r){\mathcal H}_k[f_2](\rho)$ belongs to $L^1((0,\infty);X)^2$.
\end{proof}

\begin{corollary} \label{HT iso}
The Hankel transform $\mathcal{H}_k: {\mathscr S}_{(k)}([0,\infty);X) \rightarrow {\mathscr S}_{(k)}([0,\infty);X)$ extends to an isometric
isomorphism $L_1^2((0,\infty);X) \rightarrow L_1^2((0,\infty);X)$.
\end{corollary}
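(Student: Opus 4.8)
The plan is to combine the two facts established in the preceding proposition---that $\mathcal{H}_k$ is an $L_1^2$-isometry on $\mathscr{S}_{(k)}([0,\infty);X)$ and that $\mathcal{H}_k\circ\mathcal{H}_k=\mathrm{id}$ there---with the density of $\mathscr{S}_{(k)}([0,\infty);X)$ in $L_1^2((0,\infty);X)$, and then to invoke the bounded-linear-transformation theorem. Since $L_1^2((0,\infty);X)$ is a Banach space (an $X$-valued $L^2$-space with respect to the measure $2\pi\,r\dr$) and $\mathcal{H}_k$ is a linear isometry of the dense subspace $\mathscr{S}_{(k)}([0,\infty);X)$ into it, $\mathcal{H}_k$ extends uniquely to a bounded linear operator $\overline{\mathcal{H}_k}\colon L_1^2((0,\infty);X)\to L_1^2((0,\infty);X)$; passing to the limit in $\|\mathcal{H}_k f^j\|_{L_1^2}=\|f^j\|_{L_1^2}$ along an approximating sequence $f^j\to f$ shows that $\overline{\mathcal{H}_k}$ is again an isometry.

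The only nontrivial ingredient is the density of $\mathscr{S}_{(k)}([0,\infty);X)$ in $L_1^2((0,\infty);X)$, which I would establish by transferring the question to the plane via the isometric isomorphism $f_k\mapsto\hat{f}_k$ identifying $L_1^2((0,\infty);X)$ with the mode $k$ subspace of $L^2(\mathbb{R}^2;X)$ and $\mathscr{S}_{(k)}([0,\infty);X)$ with $\hat{\mathscr{S}}_{(k)}(\mathbb{R}^2;X)$. Given $\hat{f}_k$ in the mode $k$ subspace of $L^2(\mathbb{R}^2;X)$, I would choose $\hat{\phi}^j\in\mathscr{S}(\mathbb{R}^2;X)$ with $\hat{\phi}^j\to\hat{f}_k$ in $L^2$ (using the classical density of $\mathscr{S}$ in $L^2$) and apply the angular projection $\mathcal{P}_k$. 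The map $\hat{\psi}\mapsto\mathrm{e}^{\mathrm{i}k\theta}\mathcal{P}_k[\hat{\psi}]$ is a contraction on $L^2(\mathbb{R}^2;X)$ (by Cauchy--Schwarz in the angular variable) which fixes mode $k$ functions, so $\mathrm{e}^{\mathrm{i}k\theta}\mathcal{P}_k[\hat{\phi}^j]\to\mathrm{e}^{\mathrm{i}k\theta}\mathcal{P}_k[\hat{f}_k]=\hat{f}_k$ in $L^2$, while Lemma \ref{lem:FS Schwartz superconvergence} guarantees that each $\mathcal{P}_k[\hat{\phi}^j]$ lies in $\hat{\mathscr{S}}_{(k)}(\mathbb{R}^2;X)$. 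Pulling the resulting sequence back through the isometry yields an approximation of $f_k$ by elements of $\mathscr{S}_{(k)}([0,\infty);X)$.

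Finally, to upgrade $\overline{\mathcal{H}_k}$ from an isometry to an isometric \emph{isomorphism} I would exploit the involution property. The identity $\mathcal{H}_k\circ\mathcal{H}_k=\mathrm{id}$ holds on the dense subspace $\mathscr{S}_{(k)}([0,\infty);X)$; since $\overline{\mathcal{H}_k}\circ\overline{\mathcal{H}_k}$ and $\mathrm{id}$ are both continuous operators on $L_1^2((0,\infty);X)$ that coincide on this dense subspace, they coincide everywhere. Hence $\overline{\mathcal{H}_k}$ is its own two-sided inverse and is in particular a bijection, which completes the proof that it is an isometric isomorphism of $L_1^2((0,\infty);X)$ onto itself.
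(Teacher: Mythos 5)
Your proof is correct and takes essentially the same route as the paper, which states the corollary without further argument, relying on precisely this density extension of the isometry together with the involution identity $\mathcal{H}_k\circ\mathcal{H}_k=\mathrm{id}$ (extended by continuity) to obtain bijectivity. The one ingredient you make explicit that the paper leaves tacit---the density of $\mathscr{S}_{(k)}([0,\infty);X)$ in $L_1^2((0,\infty);X)$, obtained via the planar identification $f_k\mapsto\hat{f}_k$, the $L^2$-contraction $\hat{\psi}\mapsto\mathrm{e}^{\mathrm{i}k\theta}\mathcal{P}_k[\hat{\psi}]$ fixing mode $k$ functions, and Lemma \ref{lem:FS Schwartz superconvergence}---is verified soundly.
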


\begin{remark}
A useful consequence of the previous corollary is that
$$\|\hat{f}_k\|_{H^m}^2= \sum_{p=0}^m \sum_{n=0}^{p}
2^{n}\|\partial_{\bar{\zeta}}^{n-i_n} \partial_\zeta^{i_n} \partial_z^{p-n} \hat{f}_k\|_{L^2}^2, \qquad
\|f_k\|_{H_{(k)}^m}^2= \sum_{p=0}^m \sum_{n=0}^{p}
\| \mathcal{D}_{-k+i_n}^{n-i_n}\mathcal{D}_{k}^{i_n} \partial_z^{p-n} f_k\|_{L_1^2}^2
$$
for $\hat{f}_k \in \hat{H}_{(k)0}^m(B_R(\mathbf{0}) \times {\mathbb R};X)$ 
and any choice of $i_n$ with $0 \leq i_n \leq n$.
It suffices to prove this result for $\hat{f}_k \in \hat{\mathscr D}_{(k)}(B_R(\mathbf{0}) \times {\mathbb R};X)
\subseteq \hat{\mathscr S}_{(k)}({\mathbb R}^2 \times {\mathbb R};X)$, such that $\partial_z^{p-n}\hat{f}_k \in \hat{\mathscr S}_{(k)}({\mathbb R}^2;X)$ for each fixed $z \in {\mathbb R}$. Using Corollary \ref{HT iso} and Lemma \ref{HT properties} and integrating over $z$, we find that
$$\|{\mathcal D}_{-k+i}^{n-i}{\mathcal D}_k^i \partial_z^{p-n}f_k\|_{L_1^2}^2=\|{\mathcal H}_{k+n-2i} [{\mathcal D}_{-k+i}^{n-i}{\mathcal D}_k^i\partial_z^{p-n} f_k]\|_{L_1^2}^2=\|\rho^n {\mathcal H}_k[\partial_z^{p-n}f_k]\|_{L_1^2}^2$$
for each $i$ with $0 \leq i \leq n$. The assertion now follows from the definitions \eqref{Hat norm}, \eqref{No hat norm} of the norms and the fact that
$\|{\mathcal D}_{-k+i}^{n-i}{\mathcal D}_k^i \partial_z^{p-n}f_k\|_{L_1^2}$ and hence $\|\partial_{\bar{\zeta}}^{n-i} \partial_\zeta^{i} \partial_z^{p-n} \hat{f}_k\|_{L^2}$ is independent of $i$.
\end{remark}

We now define the Hankel spaces and state their main properties; although Lemmata \ref{lem:CHankel1} and \ref{lem:CHankel2} follow
from Lemma \ref{lem:basic Hankel properties}(iii) and Lemma \ref{lem:CSobolev} we here give an instructive direct proof.

\begin{definition} $ $ \label{Hankel space definition}
\begin{itemize}
\item[(i)]
The Hankel transform of a tempered radial distribution $T_k \in {\mathscr S}_{(k)}^\prime((0,\infty);X)$ is the tempered radial distribution ${\mathcal H}_k[T_k] \in {\mathscr S}_{(k)}^\prime((0,\infty);X)$ defined by
$$({\mathcal H}_k[T_k],\phi_k) = (T_k, \mathcal{H}_k[\phi_k]), \qquad \phi_k \in {\mathscr S}_{(k)}((0,\infty);X).$$
\item[(ii)]
For $s \geq 0$ the Hankel space $B_{(k)}^s((0,\infty);X)$ is defined by the formula
$$B_{(k)}^s((0,\infty);X) := \{f \in L_1^2((0,\infty);X)\st(1+\rho^2)^{\frac{s}{2}}{\mathcal H}_k[f] \in L_1^2((0,\infty);X)\},$$
where $f$ and ${\mathcal H}_k[f]$ can be understood as tempered radial distributions or as functions in $L_1^2((0,\infty);X)$.
\end{itemize}
\end{definition}

\begin{lemma} \label{lem:basic Hankel properties}$ $
\begin{itemize}
\item[(i)]
The Hankel space $B_{(k)}^s((0,\infty);X)$ is a Banach space with respect to the norm
$$\|f\|_{B_{(k)}^s} = \left( \int_0^\infty (1+\rho^2)^s \|{\mathcal H}_k[f](\rho)\|^2\,\rho\drho\right)^{\!\!1/2}.$$
\item[(ii)]
The Schwartz space ${\mathscr S}_{(k)}([0,\infty);X)$ is dense in $B_{(k)}^s((0,\infty);X)$.
\item[(iii)]
The Hankel space $B_{(k)}^s((0,\infty);X)$ coincides with the radial Sobolev space $H_{(k)}^s((0,\infty);X)$ (see Definition \ref{fractional Sob}).
\end{itemize}
\end{lemma}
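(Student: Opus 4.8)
The plan is to transfer everything to a weighted $L^2$ space via the isometric isomorphism $\mathcal{H}_k\colon L_1^2((0,\infty);X)\to L_1^2((0,\infty);X)$ of Corollary \ref{HT iso}. Writing $L^2_s := L^2((0,\infty);(1+\rho^2)^s\,\rho\drho;X)$ for the weighted space, one sees directly from the definition of the norm that $\mathcal{H}_k$ restricts to an isometric isomorphism $B_{(k)}^s((0,\infty);X)\to L^2_s$ which preserves the defining membership condition. Since $L^2_s$ is complete — multiplication by $(1+\rho^2)^{s/2}$ is an isometry $L^2_s\to L_1^2$ — assertion (i) follows immediately by pulling the norm and the completeness back through $\mathcal{H}_k$.

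For (ii) the key observation is that multiplication by $(1+\rho^2)^{\pm s/2}$ maps ${\mathscr S}_{(k)}([0,\infty);X)$ bijectively onto itself: by Lemma \ref{lem:Sobfactor} the factor $(1+\rho^2)^{s/2}$ lies in $C^\infty_{(0)}([0,\infty);\mathbb{R})$ and grows at most polynomially, so Proposition \ref{prop:products}(iii) applies, with inverse given by multiplication by the bounded factor $(1+\rho^2)^{-s/2}$. Given $f\in B_{(k)}^s$, I would set $g:=(1+\rho^2)^{s/2}\mathcal{H}_k[f]\in L_1^2$ and choose $\psi_j\in{\mathscr S}_{(k)}$ with $\psi_j\to g$ in $L_1^2$ (the density of ${\mathscr S}_{(k)}$ in $L_1^2$ following from the isometry $f_k\mapsto\hat{f}_k$ and the density of Schwartz-class functions in $L^2$). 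Then $\phi_j:=\mathcal{H}_k[(1+\rho^2)^{-s/2}\psi_j]\in{\mathscr S}_{(k)}$, and since $\mathcal{H}_k$ is an involution one computes $\|\phi_j-f\|_{B_{(k)}^s}=\|\psi_j-g\|_{L_1^2}\to0$.

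Assertion (iii) splits into an integer and a fractional part. For $s=m\in{\mathbb N}_0$ I would combine Lemma \ref{HT properties} with the isometry of $\mathcal{H}_k$ to obtain $\|\mathcal{D}_{-k+i}^{n-i}\mathcal{D}_k^i f\|_{L_1^2}=\|\rho^n\mathcal{H}_k[f]\|_{L_1^2}$ for every $f\in{\mathscr S}_{(k)}$ and $0\le i\le n$; inserting this into the definition \eqref{No hat norm} and using $\sum_{i=0}^n\binom{n}{i}=2^n$ yields
$$\|f\|_{H^m_{(k)}}^2=\int_0^\infty\Big(\sum_{n=0}^m\rho^{2n}\Big)|\mathcal{H}_k[f](\rho)|^2\,\rho\drho.$$
Since $\sum_{n=0}^m\rho^{2n}$ and $(1+\rho^2)^m$ are equivalent on $[0,\infty)$, the norms $\|\cdot\|_{H^m_{(k)}}$ and $\|\cdot\|_{B_{(k)}^m}$ are equivalent on ${\mathscr S}_{(k)}$; as this space is dense in both (by (ii) and by the construction of $H^m_{(k)}$) the completions coincide, giving $B_{(k)}^m=H^m_{(k)}$.

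For $s=m+\theta$ with $0<\theta<1$ I would appeal to complex interpolation. Under $\mathcal{H}_k$ the compatible couple $(B_{(k)}^m,B_{(k)}^{m+1})$ is isometrically identified with $(L^2_m,L^2_{m+1})$, and the standard interpolation of weighted $L^2$ spaces gives $[L^2_m,L^2_{m+1}]_\theta=L^2_{m+\theta}$; since interpolation is a functor and $\mathcal{H}_k$ is an isomorphism of couples, $[B_{(k)}^m,B_{(k)}^{m+1}]_\theta=B_{(k)}^s$. Combining this with the integer identifications $B_{(k)}^m=H^m_{(k)}$ and $B_{(k)}^{m+1}=H^{m+1}_{(k)}$ and Definition \ref{fractional Sob} yields $H^s_{(k)}=[H^m_{(k)},H^{m+1}_{(k)}]_\theta=[B_{(k)}^m,B_{(k)}^{m+1}]_\theta=B_{(k)}^s$. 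The main obstacle is this fractional step: one must justify that complex interpolation of the weighted $L^2$ spaces produces exactly the weight $(1+\rho^2)^{m+\theta}$ and that conjugation by the isometric Hankel transform commutes with the interpolation functor; parts (i), (ii) and the integer case of (iii) are then routine consequences of the isometry in Corollary \ref{HT iso}.
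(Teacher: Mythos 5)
Your proposal is correct and takes essentially the same route as the paper's proof: parts (i) and (ii) via the fact that $g \mapsto {\mathcal H}_k[(1+(\cdot)^2)^{-\frac{s}{2}}g]$ is an isometric isomorphism of $L_1^2((0,\infty);X)$ onto $B_{(k)}^s((0,\infty);X)$ mapping ${\mathscr S}_{(k)}([0,\infty);X)$ onto itself, the integer case of (iii) via the Hankel identities and the binomial sum, and the fractional case via complex interpolation transported through ${\mathcal H}_k$. The interpolation step you flag as the main obstacle is precisely what the paper settles with Calder\'{o}n's identity $[X_m(X),X_{m+1}(X)]_\theta = X_m^{1-\theta}X_{m+1}^\theta(X)$ for Banach lattices with the dominated convergence property, which yields the weight $(1+\rho^2)^{m+\theta}$ and covers the $X$-valued setting that your appeal to ``standard'' weighted $L^2$ interpolation would otherwise need to address.
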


\begin{proof} $ $
Parts (i) and (ii) follow from the fact that 
$g \mapsto {\mathcal H}_k[(1+(\cdot)^2)^{-\frac{s}{2}}g]$ is an isometric isomorphism
$L_1^2((0,\infty);X) \rightarrow B_{(k)}^s((0,\infty);X)$ which maps the dense subset 
${\mathscr S}_{(k)}([0,\infty);X)$ of $L_1^2((0,\infty);X)$ onto itself.

Turning to part (iii), we first show that $B_{(k)}^m((0,\infty);X))=H_{(k)}^m((0,\infty);X)$ for $m \in {\mathbb N}_0$
by demonstrating that
$\|f\|_{H_{(k)}^m} \cong \|f\|_{B_{(k)}^m}$ for $f \in {\mathscr S}_{(k)}([0,\infty);X)$. To this end we note that
\begin{align*}
\sum_{n=0}^m 2^{-n} \sum_{i=0}^n \begin{pmatrix} n \\ i \end{pmatrix} \|{\mathcal D}_{-k+i}^{n-i}{\mathcal D}_k^i f\|_{L_1^2}^2
& = \sum_{n=0}^m 2^{-n} \sum_{i=0}^n \begin{pmatrix} n \\ i \end{pmatrix} \|{\mathcal H}_{k+n-2i}[{\mathcal D}_{-k+i}^{n-i}{\mathcal D}_k^i f]\|_{L_1^2}^2 \\
& = \sum_{n=0}^m 2^{-n} \sum_{i=0}^n \begin{pmatrix} n \\ i \end{pmatrix} \|\rho^n {\mathcal H}_k[f]\|_{L_1^2}^2 \\
& = \sum_{n=0}^m  \|\rho^n {\mathcal H}_k[f]\|_{L_1^2}^2 \\
& \cong \|(1+\rho^2)^{m/2}{\mathcal H}_k[f]\|_{L_1^2}^2.
\end{align*}
Now write $s=m+\theta$ with $m=\lfloor s\rfloor$ and let $X_s$ be the Banach lattice of (equivalence classes of) Lebesgue-measurable functions
$f: (0,\infty) \rightarrow {\mathbb R}$ such that $(1+|\cdot|)^{s/2} f \in L_1^2((0,\infty); {\mathbb R})$.
Observe that $X_s$ has the dominated convergence property for all $s \geq 0$, such that
$$[X_m(X),X_{m+1}(X)]_\theta = X_m^{1-\theta}X_{m+1}^\theta(X)$$
(see Calder\'{o}n \cite[p.\ 125]{Calderon64}). Evidently $X_m^{1-\theta}X_{m+1}^\theta = X_s$ and
since ${\mathcal H}_k$ is an isometric isomorphism\linebreak $B_{(k)}^s((0,\infty); X) \rightarrow X_s(X)$ we conclude that
\begin{align*}
H_{(k)}^s((0,\infty);X)&=[H_{(k)}^m((0,\infty);X),H_{(k)}^{m+1}((0,\infty);X)]_\theta \\
& = [B_{(k)}^m((0,\infty);X),B_{(k)}^{m+1}((0,\infty);X)]_\theta \\
& = B_{(k)}^s((0,\infty);X).\qedhere
\end{align*}

\end{proof}

\begin{lemma} \label{lem:CHankel1}
The Hankel space $B_{(k)}^s((0,\infty);X)$ is continuously embedded in $C_{(k)\mathrm{b}}^0([0,\infty);X)$ for $s>1$.
\end{lemma}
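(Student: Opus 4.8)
The plan is to realise each $f \in B_{(k)}^s((0,\infty);X)$ through the Hankel inversion formula and to show that the resulting integral converges absolutely and uniformly to a bounded continuous function when $s>1$. Writing $g := {\mathcal H}_k[f] \in L_1^2((0,\infty);X)$, the involution property ${\mathcal H}_k^{-1}={\mathcal H}_k$ (Corollary \ref{HT iso}) suggests that $f$ should coincide with
$$\tilde f(r) := \int_0^\infty g(\rho) J_k(\rho r)\,\rho\drho;$$
the definition of $B_{(k)}^s((0,\infty);X)$ guarantees that $(1+\rho^2)^{s/2}g \in L_1^2((0,\infty);X)$, which is precisely the regularity needed to control this integral.

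First I would establish the uniform bound. Using the elementary estimate $|J_k(x)|\leq 1$ for $x\geq 0$ and integer $k$, together with the Cauchy--Schwarz inequality, I would write
$$\int_0^\infty \|g(\rho)\|\,\rho\drho \leq \left( \int_0^\infty (1+\rho^2)^s \|g(\rho)\|^2\,\rho\drho \right)^{\!1/2} \left( \int_0^\infty (1+\rho^2)^{-s}\,\rho\drho \right)^{\!1/2},$$
where the second factor equals $(2(s-1))^{-1/2}$ and is finite precisely because $s>1$ (substitute $u=\rho^2$), while the first factor is exactly $\|f\|_{B_{(k)}^s}$. Hence the integral defining $\tilde f$ converges absolutely with $\sup_{r\geq 0}\|\tilde f(r)\| \lesssim \|f\|_{B_{(k)}^s}$. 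Since $|J_k(\rho r)|\leq 1$ provides the $\rho$-integrable dominating function $\|g(\rho)\|\rho$, dominated convergence shows that $\tilde f$ is continuous on $[0,\infty)$; and because $J_k(0)=0$ when $k\neq 0$ we obtain $k\tilde f(0)=0$, so that $\tilde f \in C_{(k)\mathrm{b}}^0([0,\infty);X)$.

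The main point that remains --- and the step I expect to require the most care --- is to identify the $L_1^2$-equivalence class $f$ with its continuous representative $\tilde f$. For $f \in {\mathscr S}_{(k)}([0,\infty);X)$ this is immediate, since $\tilde f = {\mathcal H}_k[{\mathcal H}_k f] = f$ pointwise. For general $f$ I would invoke the density of ${\mathscr S}_{(k)}([0,\infty);X)$ in $B_{(k)}^s((0,\infty);X)$ (Lemma \ref{lem:basic Hankel properties}(ii)) to choose $\phi_j \to f$ in $B_{(k)}^s$. Applying the uniform bound to the difference $\phi_j - f$ (and using linearity of $\psi \mapsto \tilde\psi$) yields $\tilde\phi_j \to \tilde f$ uniformly, while $\tilde\phi_j = \phi_j \to f$ in $L_1^2((0,\infty);X)$; comparing the two limits along a subsequence of the $L_1^2$-convergent sequence (which converges almost everywhere) forces $\tilde f = f$ almost everywhere. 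Thus $f \mapsto \tilde f$ realises the continuous embedding $B_{(k)}^s((0,\infty);X) \hookrightarrow C_{(k)\mathrm{b}}^0([0,\infty);X)$ with the operator bound obtained above.
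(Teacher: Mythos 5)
Your proof is correct and follows essentially the same route as the paper's: the representation $f(r)=\int_0^\infty {\mathcal H}_k[f](\rho)J_k(\rho r)\,\rho\drho$, the bound $|J_k|\leq 1$ combined with Cauchy--Schwarz against the weight split $(1+\rho^2)^{\pm s/2}$ (finite precisely for $s>1$), dominated convergence for continuity, and $J_k(0)=0$ for $k\neq 0$ to place $f$ in $C^0_{(k)\mathrm{b}}$. Your closing density/subsequence argument identifying the $L_1^2$-class $f$ with its continuous representative $\tilde f$ is a welcome extra: the paper simply asserts the representation formula for all $f\in B^s_{(k)}((0,\infty);X)$, and you have supplied the justification correctly.
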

\begin{proof}
Note that
\begin{equation}
f(r) = \int_0^\infty {\mathcal H}_k[f](\rho)J_k(\rho r)\,\rho\drho \label{eq:rep of f}
\end{equation}
for $f \in B_{(k)}^s((0,\infty);X)$. It follows from this formula that
\begin{align*}
\|f\|_\infty & = \sup_{r \geq 0} \left\| \int_0^\infty {\mathcal H}_k[f](\rho)J_k(\rho r)\,\rho\drho\right\| \\
& \leq \int_0^\infty \|{\mathcal H}_k[f](\rho)\|\,\rho\drho\\
& = \int_0^\infty \rho^{1/2}(1+\rho^2)^{-s/2}\rho^{1/2}(1+\rho^2)^{\frac{s}{2}}\|{\mathcal H}_k[f](\rho)\|\drho\\
& \leq \left( \int_0^ \infty (1+\rho^2)^{-s}\,\rho\drho\right)^{\!\!1/2} \left(\int_0^\infty (1+\rho^2)^s \|{\mathcal H}_k[f](\rho)\|^2\,\rho\drho\right)^{\!\!1/2} \\
& \lesssim \|f\|_{B_{(k)}^s}
\end{align*}
because $\rho(1+\rho^2)^{-s} \in L^1((0,\infty);X)$ for $s>1$. This calculation shows
that $B_{(k)}^s((0,\infty);X)$ is continuously embedded in $L^\infty((0,\infty);X)$ and also that  $\rho {\mathcal H}_k[f] \in L^1((0,\infty);X)$.

The continuity of $f$ follows by applying a familiar dominated convergence argument to the formula \eqref{eq:rep of f}
with $\rho {\mathcal H}_{k}[f]$ as dominating function; this formula also shows that $f(0)=0$ if $k \neq 0$.
\end{proof}

\begin{lemma} \label{lem:CHankel2}
Suppose that $m \in {\mathbb N}_0$. The Hankel space $B_{(k)}^s((0,\infty);X)$ is continuously embedded in\linebreak
$C_{(k)\mathrm{b}}^m([0,\infty);X)$ for $s>m+1$.
\end{lemma}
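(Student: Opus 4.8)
The plan is to deduce the result from Lemma \ref{lem:CHankel1} by treating each radial derivative of $f$ separately. Recall that $f$ belongs to $C^m_{(k)\mathrm{b}}([0,\infty);X)$ precisely when ${\mathcal D}_{-k+i}^{n-i}{\mathcal D}_k^i f \in C^0_{(k+n-2i)\mathrm{b}}([0,\infty);X)$ for all $0 \le i \le n \le m$, the point being that ${\mathcal D}_{-k+i}^{n-i}{\mathcal D}_k^i f$ is a mode $(k+n-2i)$ radial coefficient. It therefore suffices to bound each such derivative in the $C^0_{(k+n-2i)\mathrm{b}}$-norm by $\|f\|_{B_{(k)}^s}$ and to verify its continuity and its boundary behaviour at $r=0$, exactly as in the proof of Lemma \ref{lem:CHankel1}.

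First I would establish the estimate for $f \in {\mathscr S}_{(k)}([0,\infty);X)$, which is dense in $B_{(k)}^s((0,\infty);X)$ by Lemma \ref{lem:basic Hankel properties}(ii). Fix $0 \le i \le n \le m$ and set $g := {\mathcal D}_{-k+i}^{n-i}{\mathcal D}_k^i f$. By Lemma \ref{HT properties} we have ${\mathcal H}_{k+n-2i}[g](\rho) = (-1)^{n-i}\rho^n {\mathcal H}_k[f](\rho)$, and Hankel inversion (cf.\ \eqref{eq:rep of f}) gives
$$g(r) = (-1)^{n-i}\int_0^\infty \rho^n {\mathcal H}_k[f](\rho)\,J_{k+n-2i}(\rho r)\,\rho\drho.$$
Using $|J_{k+n-2i}(\rho r)| \le 1$ and Cauchy--Schwarz with the splitting $\rho^n = \rho^n(1+\rho^2)^{-s/2}\cdot(1+\rho^2)^{s/2}$ then yields
$$\|g\|_\infty \le \left(\int_0^\infty \rho^{2n}(1+\rho^2)^{-s}\,\rho\drho\right)^{\!\!1/2}\|f\|_{B_{(k)}^s}.$$
The first factor is finite because $\int_0^\infty \rho^{2n+1}(1+\rho^2)^{-s}\drho < \infty$ whenever $s > n+1$, and this holds for every $n \le m$ since $s > m+1$; this convergence of the weight integral at $\rho = \infty$ is the single place where the threshold enters.

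Continuity of $g$ follows from the same integral representation by dominated convergence, with dominating function $\rho^{n+1}|{\mathcal H}_k[f](\rho)| \in L^1((0,\infty);{\mathbb R})$ (finite by the identical Cauchy--Schwarz estimate), while the boundary condition $(k+n-2i)\,g|_{r=0}=0$ is automatic because $J_{k+n-2i}(0)=0$ when $k+n-2i \neq 0$. Summing over $0 \le i \le n \le m$ gives $\|f\|_{C^m_{(k)\mathrm{b}}} \lesssim \|f\|_{B_{(k)}^s}$ for Schwartz $f$. Finally I would pass to a general $f \in B_{(k)}^s$ by density: approximating $f$ by $\{f_j\} \subseteq {\mathscr S}_{(k)}$ in $B_{(k)}^s$, the estimate shows $\{f_j\}$ is Cauchy in the Banach space $C^m_{(k)\mathrm{b}}([0,\infty);X)$ (Lemma \ref{lem:Topologies of Cb and S}(i)), and the $C^0$-embedding of Lemma \ref{lem:CHankel1} identifies the limit with the $L_1^2$-representative of $f$. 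The argument is essentially routine once Lemma \ref{HT properties} is in hand; the only genuine obstacle is the integrability of the weight, which dictates the sharp exponent $s > m+1$.
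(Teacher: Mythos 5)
Your proof is correct, and its analytic core coincides with the paper's: both rest on the representation ${\mathcal D}_{-k+i}^{n-i}{\mathcal D}_k^i f(r) = (-1)^{n-i}\int_0^\infty \rho^n\,{\mathcal H}_k[f](\rho)\,J_{k+n-2i}(\rho r)\,\rho\drho$, the Cauchy--Schwarz splitting $\rho^n = \rho^n(1+\rho^2)^{-s/2}\cdot(1+\rho^2)^{s/2}$ with the integrability of $\rho^{2n+1}(1+\rho^2)^{-s}$ for $s>n+1$ as the source of the threshold, and the vanishing at $r=0$ via $J_{k+n-2i}(0)=0$. Where you genuinely differ is in how the representation is justified for general $f \in B_{(k)}^s$: the paper works with arbitrary $f$ from the outset, defining the candidate functions $g_{k,n,i}$ by the integral and then differentiating under the integral sign in the inversion formula \eqref{eq:rep of f} via dominated convergence (with dominating function $\rho^{n+1}{\mathcal H}_k[f]$, integrable by the same Cauchy--Schwarz bound), so no approximation is required; you instead prove everything on the Schwartz class, where the identity of Lemma \ref{HT properties} and Hankel inversion apply directly, and then transfer to general $f$ by density, invoking the completeness of $C_{(k)\mathrm{b}}^m([0,\infty);X)$ (Lemma \ref{lem:Topologies of Cb and S}(i)) and the $C^0$-estimate of Lemma \ref{lem:CHankel1} to identify the $C^m$-limit with the continuous representative of $f$ --- a correct identification, since $\|f_j-f\|_\infty \lesssim \|f_j-f\|_{B_{(k)}^s} \to 0$. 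Your route buys a cleaner justification of the transform identities (they are only proved for Schwartz functions in Lemma \ref{HT properties}) at the cost of the density-and-completeness machinery; the paper's direct dominated-convergence argument is shorter and yields the pointwise derivative formulas for every $f$ in the space without passing to a limit. Both arguments are complete.
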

\begin{proof}
Take $f \in B_{(k)}^s((0,\infty);X)$ and consider the functions
$$g_{k,n,i}(r) = (-1)^{n-i} \int_0^\infty \rho^{n} {\mathcal H}_k[f](\rho)J_{k+n-2i}(\rho r)\,\rho\drho$$
for $0 \leq i \leq n \leq m$. Arguing as in the previous lemma, we find that
$$
\|g_{k,n,i}\|_\infty \leq \int_0^\infty \!\rho^{n}\|{\mathcal H}_k[f](\rho)\|\,\rho\drho
\leq \left( \int_0^ \infty \!\rho^{2n}(1+\rho^2)^{-s}\,\rho\drho\right)^{\!\!1/2}\!\! \left(\int_0^\infty \!(1+\rho^2)^s \|{\mathcal H}_k[f](\rho)\|^2\,\rho\drho\right)^{\!\!1/2}\!\!\lesssim \|f\|_{B_k^n}
$$
if $s>n+1$ because $\rho^{2n+1}(1+\rho^2)^{-s} \in L^1((0,\infty);X)$ if $s>n+1$.

Applying the derivative ${\mathcal D}_{-k+i}^{n-i}{\mathcal D}_k^i$, $0 \leq i \leq n \leq m$, systematically to the formula
$$f(r) = \int_0^\infty {\mathcal H}_k[f](\rho)J_k(\rho r)\,\rho\drho,$$
noting that formally
$${\mathcal D}_{-k+i}^{n-i}{\mathcal D}_k^i f(r) = (-1)^{n-i}\int_0^\infty \rho^{n}{\mathcal H}_k[f](\rho)J_{k+n-2i}(\rho r)\,\rho\drho$$
and using a familiar dominated convergence argument with dominating function $\rho^{n+1}{\mathcal H}_k[f]$ shows that
${\mathcal D}_{-k+i}^{n-i}{\mathcal D}_k^i f$ exists and equals $g_{k,n,i}$. The above formula also shows that
${\mathcal D}_{-k+i}^{n-i}{\mathcal D}_k^i f(0)=0$ if $k+n-2i \neq 0$.

Altogether we conclude that $f \in C_{(k)\mathrm{b}}^n([0,\infty);X)$ and
$$\|f\|_{C_{(k)\mathrm{b}}^n} \lesssim \|f\|_{B_{(k)}^s}, \qquad 0\leq n\leq m.\eqno{\qedhere}$$
\end{proof}

Finally, we examine the multiplication operator $(f_k,g_\ell) \mapsto f_k g_\ell$, beginning with the observation in Lemma \ref{multconv} below that the $(k+\ell)$ index Hankel transform
of $f_kg_\ell$ is a radial convolution of the $k$ index Hankel transform of $f_k$ and the $\ell$ index Hankel transform of $g_\ell$.
\begin{definition}
Suppose that $X$ is a Banach algebra.
The \underline{$(k,\ell)$-index radial convolution} $(f_k \ast g_\ell)$ of a mode $k$ radial coefficient $f_k:[0,\infty) \rightarrow X$ and a mode $\ell$ radial coefficient $g_\ell:[0,\infty) \rightarrow X$ is
the mode $(k+\ell)$ radial cofficient $f_k\ast g_\ell: [0,\infty) \rightarrow X$ given by
$$(f_k\ast g_\ell)(r) = \int_0^\infty f_k(u)T_{k,\ell}[g_\ell](r,u)u\du,$$
where
$$T_{k,\ell}[g_\ell](r,u) = 2\pi\int_0^\infty \int_0^\infty J_k( u t) J_\ell(wt) J_{k+\ell}(r t)t\dt\ g_\ell(w) w\dw.$$
\end{definition}

By definition
$$f_k \ast g_\ell = 2\pi\,{\mathcal H}_{k+\ell}[{\mathcal H}_k[f_k] {\mathcal H}_\ell[g_\ell]],$$
so that in particular $f_k \in {\mathscr S}_{(k)}( [0,\infty);X)$, $g_\ell \in {\mathscr S}_{(\ell)}( [0,\infty);X)$ implies that
$f_k \ast g_\ell \in {\mathscr S}_{(k+\ell)}( [0,\infty);X)$. The following result is obtained by noting that Hankel transforms are involutions.

\begin{lemma} \label{multconv}
Suppose that $X$ is a Banach algebra and $f_k \in {\mathscr S}_{(k)}( [0,\infty);X)$, $g_\ell \in {\mathscr S}_{(\ell)}( [0,\infty);X)$. It follows that
${\mathcal H}_{k+\ell}[f_k g_\ell] = 2\pi\,{\mathcal H}_k[f_k] \ast {\mathcal H}_\ell[g_\ell]$.
\end{lemma}

\begin{remarks} $ $
\begin{itemize}
\item[(i)]
Note the alternative representations
$$
T_{k,\ell}[g_\ell](r,u) = \int_0^{2\pi}   g_\ell(\sqrt{r^2 + u^2 -2ru \cos \phi})\Phi_{k,\ell}(r,u,\phi)\,\mathrm{d}\phi
$$
where
$$
\Phi_{k,\ell}(r,u,\phi) = \exp \i\left( k \phi - \ell \sin^{-1}\left(\frac{u \sin \phi}{\sqrt{r^2+u^2-2ur\cos\phi}}\right)\right)
$$
(see Watson \cite[p.\ 360]{Watson}) and
$$
T_{k,\ell}[g_\ell](r,u) = 2\pi\int_0^\infty \cos(k\phi_w-\ell\phi_r)D(r,u,w)g_\ell(w)w\dw,
$$
where $\phi_r$, $\phi_u$, $\phi_w$ are the angles of a triangle with sides of length $r$, $u$, $w$ and
$D(r,u,w) = (\pi uw \sin \phi_r)^{-1}$ if this triangle exists and zero otherwise
(see Jackson \& Maximon \cite{JacksonMaximon72} and Gervois \& Navelet \cite{GervoisNavelet84}).
\item[(ii)]
Radial convolutions satisfy an appropriate version of Young's inequality which is given in the Appendix.
\end{itemize}
\end{remarks}
%

\begin{lemma}
Suppose that $X$ is a Banach algebra and that $s, s_1, s_2 \geq 0$, $s_1, s_2 \geq s$ and $s_1+s_2 > s+1$. The inequality
$$\|f_{k} g_{\ell} \|_{B_{(k + \ell)}^s} \lesssim \|f_{k}\|_{B_{(k)}^{s_1}} \|g_{\ell}\|_{B_{(\ell)}^{s_2}}$$
holds for each $f_{k} \in B_{(k)}^{s_1}((0,\infty);X)$ and $g_{\ell} \in B_{(\ell)}^{s_2}((0,\infty);X)$.
\end{lemma}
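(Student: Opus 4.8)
The plan is to transfer the estimate to mode functions on $\mathbb{R}^2$, where it becomes the classical multiplication theorem for Bessel-potential spaces, and to run the Fourier-analytic proof of the latter while keeping track of the Banach-algebra structure of $X$. First I would reduce to Schwartz data: since ${\mathscr S}_{(k)}([0,\infty);X)$ and ${\mathscr S}_{(\ell)}([0,\infty);X)$ are dense in the respective Hankel spaces (Lemma \ref{lem:basic Hankel properties}(ii)), and since $fg \in {\mathscr S}_{(k+\ell)}([0,\infty);X)$ whenever $f,g$ are Schwartz (Proposition \ref{prop:products}(iii)), it suffices to prove the inequality for such $f,g$; the bilinear map $(f,g)\mapsto fg$ then extends by density to a bounded map $B_{(k)}^{s_1}\times B_{(\ell)}^{s_2}\to B_{(k+\ell)}^{s}$, which simultaneously shows that the product is well defined on the full spaces.

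For Schwartz data the associated mode functions satisfy $\hat f_k\in\hat{\mathscr S}_{(k)}(\mathbb{R}^2;X)$ and $\hat g_\ell\in\hat{\mathscr S}_{(\ell)}(\mathbb{R}^2;X)$, and $\hat f_k\hat g_\ell$ is a mode $k+\ell$ function with radial coefficient $fg$. Combining Lemma \ref{lem:basic Hankel properties}(iii) with the isometric identification of $H_{(k)}^s((0,\infty);X)$ with the mode $k$ subspace of $H^s(\mathbb{R}^2;X)$ (the remark following Definition \ref{fractional Sob}), the assertion becomes $\|\hat f_k\hat g_\ell\|_{H^s(\mathbb{R}^2;X)}\lesssim\|\hat f_k\|_{H^{s_1}}\|\hat g_\ell\|_{H^{s_2}}$. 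On the Fourier side $\mathcal{F}[\hat f_k\hat g_\ell]=\tfrac{1}{2\pi}\mathcal{F}[\hat f_k]*\mathcal{F}[\hat g_\ell]$. Here I would use that for a mode function the weighted norm $\|\langle\cdot\rangle^s\mathcal{F}[\,\cdot\,]\|_{L^2(\mathbb{R}^2;X)}$, with $\langle\xi\rangle:=(1+|\xi|^2)^{1/2}$, reduces via the polar-coordinate relation $\mathcal{F}[\hat f_k](\rho\cos\omega,\rho\sin\omega)=\e^{\i k(\omega-\frac{\pi}{2})}\mathcal{H}_k[f](\rho)$ and the Hankel isometry of Corollary \ref{HT iso} to the Hankel norm $\|f\|_{B_{(k)}^s}$, so that no general vector-valued Plancherel theorem is needed.

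It then remains to prove the convolution estimate. Setting $u:=\langle\cdot\rangle^{s_1}\|\mathcal{F}[\hat f_k]\|_X$ and $v:=\langle\cdot\rangle^{s_2}\|\mathcal{F}[\hat g_\ell]\|_X$, which lie in $L^2(\mathbb{R}^2)$, the submultiplicativity bound $\|\mathcal{F}[\hat f_k](\xi-\eta)\,\mathcal{F}[\hat g_\ell](\eta)\|_X\le\|\mathcal{F}[\hat f_k](\xi-\eta)\|_X\,\|\mathcal{F}[\hat g_\ell](\eta)\|_X$ reduces matters to controlling $\langle\xi\rangle^s\int\langle\xi-\eta\rangle^{-s_1}\langle\eta\rangle^{-s_2}u(\xi-\eta)v(\eta)\,\mathrm{d}\eta$ in $L^2_\xi$ by $\|u\|_{L^2}\|v\|_{L^2}$. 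Using $\langle\xi\rangle^s\lesssim\langle\xi-\eta\rangle^s+\langle\eta\rangle^s$ I would split this into two pieces; applying the Cauchy--Schwarz inequality in $\eta$ to the first piece and then integrating in $\xi$ bounds it by $\big(\sup_\xi\int\langle\xi-\eta\rangle^{-2(s_1-s)}\langle\eta\rangle^{-2s_2}\,\mathrm{d}\eta\big)^{1/2}\|u\|_{L^2}\|v\|_{L^2}$, with the symmetric bound for the second piece. The hypotheses $s_1,s_2\ge s$ make the exponents nonnegative (so the weights are bounded and there is no local singularity, since $\langle\cdot\rangle\ge 1$), while $s_1+s_2\ge s+1$ is exactly the decay condition $(s_1-s)+s_2\ge 1=\tfrac{d}{2}$ (respectively $s_1+(s_2-s)\ge 1$) in dimension $d=2$ that forces these weight convolutions to be finite and uniformly bounded in $\xi$.

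The main obstacle is the borderline case $s_1+s_2=s+1$: the weight integral $\int\langle\xi-\eta\rangle^{-2(s_1-s)}\langle\eta\rangle^{-2s_2}\,\mathrm{d}\eta$ is then only critically convergent in $\mathbb{R}^2$, so the crude Cauchy--Schwarz split is not quite enough and the estimate is sharp. I would handle this endpoint by decomposing the $\eta$-integral into the non-resonant regions $|\eta|\lesssim|\xi-\eta|$ and $|\xi-\eta|\lesssim|\eta|$, where one of the two factors carries strictly more decay than the crude split suggests, and supplementing with a weak-type (Lorentz-space) bound reflecting that $\langle\cdot\rangle^{-1}\in L^{2,\infty}(\mathbb{R}^2)$; away from this endpoint the elementary argument above suffices. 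Finally I would reverse the two identifications to pass from the $H^s(\mathbb{R}^2;X)$ estimate back to the Hankel norms, completing the proof.
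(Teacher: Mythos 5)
Your proposal is sound on the range $s_1+s_2>s+1$, but it takes a genuinely different route from the paper. The paper never leaves the half-line: it expands $\mathcal{H}_{k+\ell}[fg]$ using the triple-Bessel formula $\int_0^\infty J_k(ur)J_\ell(wr)J_{k+\ell}(\rho r)\,r\,\mathrm{d}r=\cos(k\phi_w-\ell\phi_u)D(\rho,u,w)$ of Jackson \& Maximon \cite{JacksonMaximon72} and Gervois \& Navelet \cite{GervoisNavelet84}, whose kernel $D$ is supported exactly on the triangle inequalities $|\rho-w|<u<\rho+w$ and obeys the unit integrals \eqref{eq:unit integrals1}, \eqref{eq:unit integrals2}; these serve as a Schur test, combined with the same weight splitting and Cauchy--Schwarz steps you use. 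Your argument is the planar shadow of this: $D$ is precisely the angular integral of the convolution $\mathcal{F}[\hat{f}_k]*\mathcal{F}[\hat{g}_\ell]$ written in polar coordinates, and your bound $\sup_\xi\int\langle\xi-\eta\rangle^{-2(s_1-s)}\langle\eta\rangle^{-2s_2}\,\mathrm{d}\eta<\infty$ is the corresponding Schur bound. Indeed the introduction explicitly mentions your transference route via $\mathcal{F}[\hat{f}_k](\rho\cos\omega,\rho\sin\omega)=\mathrm{e}^{\mathrm{i}k(\omega-\frac{\pi}{2})}\mathcal{H}_k[f_k](\rho)$ and deliberately avoids it in favour of a self-contained radial proof. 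What your route buys is independence from special-function input and direct access to the standard Sobolev-multiplication machinery; what the paper's buys is self-containedness and cleaner constants via the exact unit integrals. Your decision to keep the entire argument on the transform side (so that only the polar change of variables, not a vector-valued Plancherel theorem, is invoked) is well taken and matches the paper's implicit treatment of $X$-valued functions.

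The endpoint $s_1+s_2=s+1$, which you flag, is a genuine problem---but note that it is equally a problem for the paper: at equality the paper's final factor $\int_0^\infty(1+w^2)^{s-s_1-s_2}\,w\,\mathrm{d}w$ in \eqref{eq:alg2} is infinite, so the printed proof also covers only the strict case, and your weight integral is likewise \emph{divergent} there (logarithmically), not merely ``critically convergent'' as you write. More seriously, your proposed Lorentz-space patch cannot rescue the lemma under the stated hypotheses, because the endpoint assertion is false when $\min(s_1,s_2)=s$. Take $s=s_1=0$, $s_2=1$, $k=\ell=0$: the claimed bound $\|fg\|_{L_1^2}\lesssim\|f\|_{L_1^2}\,\|g\|_{B_{(0)}^1}$, applied to $f$ concentrated where $g$ is large, forces $\|g\|_{L^\infty}\lesssim\|g\|_{B_{(0)}^1}$; but a smoothly cut-off $g(r)=\log(1-\log r)$ lies in $B_{(0)}^1((0,\infty);\mathbb{C})=H_{(0)}^1((0,\infty);\mathbb{C})$ (its mode $0$ lift is in $H^1(\mathbb{R}^2;\mathbb{C})\not\subset L^\infty$) and is unbounded. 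Your O'Neil-type refinement does recover the endpoint precisely when $s<s_1$ and $s<s_2$ strictly---then in each piece of the splitting the two exponents ($s_1-s$ and $s_2$, respectively $s_1$ and $s_2-s$) lie in $(0,1)$ and the Lorentz convolution inequality applies---but not when one of them vanishes. So your write-up (like the paper's) should either restrict to $s_1+s_2>s+1$, which is all that the elementary argument proves, or retain the endpoint only under the additional strictness hypothesis $s_1,s_2>s$.
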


\begin{proof}
We begin by noting by elementary geometry that
$$D(\rho,u,w) =
\begin{cases}
\dfrac{2}{\pi}\big(u^2-(\rho-w)^2)\big)^{-1/2}\big((\rho+w)^2-u^2\big)^{-1/2}, & |\rho-w|<u<\rho+w, \\[2mm]
0, & \mbox{otherwise,}
\end{cases}$$
and we calculate
\begin{equation}
\int_0^\infty D(\rho,u,w)u \du = \frac{2}{\pi}\int_{|\rho-w|}^{\rho+w} \big(u^2-(\rho-w)^2)\big)^{-1/2}\big((\rho+w)^2-u^2\big)^{-1/2} u \du =1
\label{eq:unit integrals1}
\end{equation}
and
\begin{equation}
\int_0^\infty D(\rho,u,w)w \dw = \int_0^\infty D(\rho,u,w)\rho \drho = 1 \label{eq:unit integrals2}
\end{equation}
because $2\pi D(\rho,u,w)$ is the reciprocal of the area of a triangle of sides with length $\rho$, $u$ and $w$ and is therefore 
symmetric in its arguments.

Observe that
\begin{align*}
\|  f_{k} g_{\ell}\|_{B_{(k+\ell)}^s}^2
& = \int_0^\infty (1+\rho^2)^s  \|(\mathcal{H}_{k}[f_k] \ast \mathcal{H}_{\ell}[g_\ell])(\rho)\|^2\rho\drho \\
& =  (2\pi)^2\int_0^\infty (1+\rho^2)^s \left\|\int_0^\infty \int_0^\infty \cos (k \phi_w - \ell \phi_u) D(\rho,u,w)  {\mathcal H}_k[f_{k}](u){\mathcal H}_\ell[g_{\ell}](w)uw\du\dw\right\|^2\rho\drho \\
& \lesssim \int_0^\infty \left(\int_0^\infty \int_0^\infty (1+\rho^2)^{\frac{s}{2}} D(\rho,u,w) \|{\mathcal H}_k[f_{k}](u)\|\|{\mathcal H}_\ell[g_{\ell}](w)\|uw\du\dw\right)^{\!\!2}\rho\drho \\
& \lesssim \int_0^\infty \left(\int_0^\infty \int_0^\infty (1+u^2+w^2)^{\frac{s}{2}} D(\rho,u,w) \|{\mathcal H}_k[f_{k}](u)\|\|{\mathcal H}_\ell[g_{\ell}](w)\|uw\du\dw\right)^{\!\!2}\rho\drho,
\end{align*}
where we have estimated
$$\rho^2 \leq (u+w)^2 \lesssim u^2 + w^2$$
because $D(\rho,u,w)=0$ if $\rho>u+w$. Using the inequality
$$(1+u^2+w^2)^r \leq (1+u^2)^{r+r_1}(1+w^2)^{-r_1}+(1+w^2)^{r+r_2}(1+u^2)^{-r_2}$$
for $r,r_1,r_2 \geq 0$ with $r=\frac{1}{2}s$, $r_1=\frac{1}{2}(s-s_1)$, $r_2=\frac{1}{2}(s-s_2)$, we thus find that
\begin{equation}
\| f_{k} g_{\ell}\|_{B_{(k+\ell)}^s}^2 \leq \int_0^\infty (I_1+I_2)^2 \,\rho\drho \lesssim \int_0^\infty  I_1^2 \,\rho\drho + \int_0^\infty I_2^2 \,\rho\drho,
\label{eq:alg1}
\end{equation}
where
\begin{align*}
I_1&:=\int_0^\infty\int_0^\infty
D(\rho,u,w)(1+u^2)^{\frac{s_1}{2}}\|{\mathcal H}_k[f_{k}](u)\|(1+w^2)^{\frac{1}{2}(s-s_1)}\|{\mathcal H}_\ell[g_{\ell}](w)\|u w
\du\dw, \\
I_2 &:=\int_0^\infty\int_0^\infty
D(\rho,u,w)(1+u^2)^{\frac{1}{2}(s-s_2)}\|{\mathcal H}_k[f_{k}](u)\|(1+w^2)^{\frac{s_2}{2}}\|{\mathcal H}_\ell[g_{\ell}](w)\|u w
\du\dw.
\end{align*}

Using the Cauchy-Schwarz inequality, one finds that
\begin{align*}
I_1&\leq \left(\int_0^\infty\int_0^\infty D(\rho,u,w)(1+u^2)^{s_1}\|{\mathcal H}_k[f_{k}](u)\|^2 (1+w^2)^{\frac{1}{2}(s-s_1)}\|{\mathcal H}_\ell[g_{\ell}](w)\|uw\du\dw\right)^{\!\!1/2} \\
& \qquad\ \times \left(\int_0^\infty\int_0^\infty D(\rho,u,w)(1+w^2)^{\frac{1}{2}(s-s_1)}\|{\mathcal H}_\ell[g_{\ell}](w)\| uw\du\dw\right)^{\!\!1/2},
\end{align*}
such that
\begin{align}
\int_0^\infty\!\! I_1^2\rho\drho & \leq \int_0^\infty \!\!(1+u^2)^{s_1}\|{\mathcal H}_k[f_{k}](u)\|^2 \,u\du
\left(\int_0^\infty \!\!(1+w^2)^{\frac{1}{2}(s-s_1)}\|{\mathcal H}_\ell[g_{\ell}](w)\|\,w\dw\right)^{\!\!2} \nonumber \\
& \leq \int_0^\infty \!\!(1+u^2)^{s_1}\|{\mathcal H}_k[f_{k}](u)\|^2 \,u\du \int_0^\infty \!\!(1+w^2)^{s_2}\|{\mathcal H}_\ell[g_{\ell}](w)\|^2 \,w\dw
\int_0^\infty (1+w^2)^{s-s_1-s_2}\,w\dw \nonumber \\
& \lesssim \|f_{k}\|_{B_{(k)}^{s_1}}^2 \|g_{\ell}\|_{B_{(\ell)}^{s_2}}^2,  \label{eq:alg2}
\end{align}
where we have used \eqref{eq:unit integrals1}, \eqref{eq:unit integrals2} in the first line. A similar calculation shows that
\begin{equation}
\int_0^\infty  I^2_2\,\rho\drho \lesssim \|f_{k}\|_{B_{(k)}^{s_1}}^2 \|g_{\ell}\|_{B_{(\ell)}^{s_2}}^2. \label{eq:alg3}
\end{equation}
The result follows from inequalities \eqref{eq:alg1}--\eqref{eq:alg3}.
\end{proof}

\section{The Dirichlet boundary-value problem}  \label{DVP}

In this section we consider, as an application of the above theory, the linear boundary-value problem
\begin{align}
       \Delta \hat{u}_k ={}& \hat{f}_k , \qquad 0<|(x,y)|<1, \label{bvp:Laplace1} \\
       \hat{u}_k ={}& \hat{g}_k , \qquad |(x,y)|=1, \label{bvp:Laplace2}
\end{align}
for the mode $k$ function $\hat{u}_k=\e^{\i k \theta}u_k(r,z)$ in the cylindrical domain $B_1({\mathbf 0}) \times {\mathbb R}$,
where $\hat{f}_k=\e^{\i k \theta}f_k(r,z)$ and $\hat{g}_k=\e^{\i k \theta}g_k(z)$ are given mode $k$ functions.
More precisely, we consider the equivalent boundary-value problem
\begin{align}
        (\mathcal{D}_{1-k}\mathcal{D}_{k} + \partial_z^2 )u_k ={}& f_k , \qquad 0<r<1, \label{bvp:rzn1} \\
        u_k ={}& g_k , \qquad r=1, \label{bvp:rzn2}
\end{align}
for $u_k$ in the cylindrical domain $(0,1) \times {\mathbb R}$.
Note that it suffices to consider $k \in {\mathbb N}_0$ (see Remark \ref{rem:radialcoeffs}), which we henceforth assume to be fixed.

\subsection{Weak and strong solutions}
We begin by discussing weak solutions.

\begin{definition}
Suppose that $f_k \in L_1^2((0,1) \times {\mathbb R};{\mathbb C})$ and
$g_k \in H^\frac{1}{2}({\mathbb R}; {\mathbb C})$.
A \underline{weak solution} of \eqref{bvp:rzn1}, \eqref{bvp:rzn2}
is a function $u_k \in H^1_{(k)}((0,1) \times {\mathbb R};{\mathbb C})$ which satisfies
\eqref{bvp:rzn2} in $H^\frac{1}{2}({\mathbb R}; {\mathbb C})$ and
\begin{equation}
\tfrac{1}{2}\langle {\mathcal D}_k{u}_k, {\mathcal D}_k{\phi}_k \rangle_{L_1^2} + \tfrac{1}{2}\langle{\mathcal D}_{-k} {u}_k, {\mathcal D}_{-k}  {\phi}_k \rangle_{L_1^2}
+ \langle \partial_z{u}_k, \partial_z {\phi}_k \rangle_{L_1^2} + \langle {f}_k, {\phi}_k \rangle_{L_1^2}=0
\label{eq:prewkform}
\end{equation}
for all $\phi_k \in H^{1}_{(k)0}((0,1) \times {\mathbb R};{\mathbb C})$.
\end{definition}

\begin{remark} \label{rem:strong soln}
A \underline{strong solution} of \eqref{bvp:rzn1}, \eqref{bvp:rzn2} is a weak solution which lies in $H^2_{(k)}((0,1) \times {\mathbb R};{\mathbb C})$
and hence necessarily satisfies \eqref{bvp:rzn1} in $L_1^2((0,1) \times {\mathbb R};{\mathbb C})$ and \eqref{bvp:rzn2}
in $H^\frac{3}{2}({\mathbb R};{\mathbb C})$.
\end{remark}

\begin{lemma}
For each $f_k \in L_1^2((0,1) \times {\mathbb R};{\mathbb C})$ and
$g_k \in H^\frac{1}{2}({\mathbb R}; {\mathbb C})$ the boundary-value problem
\eqref{bvp:rzn1}, \eqref{bvp:rzn2} admits a unique weak solution $u_k \in H^1_{(k)}((0,1) \times {\mathbb R};{\mathbb C})$,
and this weak solution depends continuously upon $f_k$ and $g_k$.
\end{lemma}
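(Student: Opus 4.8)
The plan is to recast the weak formulation \eqref{eq:prewkform} as an abstract variational problem on the Hilbert space $H^1_{(k)0}((0,1)\times\mathbb{R};\mathbb{C})$ and apply the Lax--Milgram theorem. First I would remove the inhomogeneous boundary condition: since $s=1>\tfrac12$, Lemma \ref{lem:trace} furnishes a lift $G_k\in H^1_{(k)}((0,1)\times\mathbb{R};\mathbb{C})$ with $G_k|_{r=1}=g_k$ and $\|G_k\|_{H^1_{(k)}}\lesssim\|g_k\|_{H^{\frac12}}$. Writing $u_k=v_k+G_k$ and setting
$$a(u,\phi):=\tfrac12\langle\mathcal{D}_k u,\mathcal{D}_k\phi\rangle_{L_1^2}+\tfrac12\langle\mathcal{D}_{-k}u,\mathcal{D}_{-k}\phi\rangle_{L_1^2}+\langle\partial_z u,\partial_z\phi\rangle_{L_1^2},$$
the problem becomes: find $v_k\in H^1_{(k)0}((0,1)\times\mathbb{R};\mathbb{C})$ with $a(v_k,\phi_k)=-\langle f_k,\phi_k\rangle_{L_1^2}-a(G_k,\phi_k)$ for all $\phi_k\in H^1_{(k)0}((0,1)\times\mathbb{R};\mathbb{C})$.

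Next I would verify the Lax--Milgram hypotheses. Boundedness of $a$ is immediate from the Cauchy--Schwarz inequality and the norm \eqref{No hat norm}, which with $m=1$ reads $\|u\|_{H^1_{(k)}}^2=\|u\|_{L_1^2}^2+\tfrac12\|\mathcal{D}_k u\|_{L_1^2}^2+\tfrac12\|\mathcal{D}_{-k}u\|_{L_1^2}^2+\|\partial_z u\|_{L_1^2}^2$; in particular $a(u,u)=\|u\|_{H^1_{(k)}}^2-\|u\|_{L_1^2}^2$, so coercivity reduces to a Poincar\'{e}-type inequality absorbing the $L_1^2$-term. Here is the \emph{crux}: although the cylinder is unbounded in $z$, the radial variable lies in $(0,1)$ and functions in $H^1_{(k)0}$ vanish at $r=1$; for $\phi_k\in\mathscr{D}_{(k)}((0,1)\times\mathbb{R};\mathbb{C})$ one writes $\phi_k(r,z)=-\int_r^1\partial_t\phi_k(t,z)\,\mathrm{d}t$ and estimates, by Cauchy--Schwarz against the weight $t$,
$$\int_0^1|\phi_k(r,z)|^2 r\,\mathrm{d}r \leq \Big(\int_0^1 r\ln\tfrac1r\,\mathrm{d}r\Big)\int_0^1|\partial_t\phi_k(t,z)|^2 t\,\mathrm{d}t .$$
Integrating over $z\in\mathbb{R}$ and using the identity $\tfrac12(\|\mathcal{D}_k\phi_k\|_{L_1^2}^2+\|\mathcal{D}_{-k}\phi_k\|_{L_1^2}^2)=\|\partial_r\phi_k\|_{L_1^2}^2+k^2\|\phi_k/r\|_{L_1^2}^2$ gives $\|\phi_k\|_{L_1^2}^2\lesssim a(\phi_k,\phi_k)$, which extends to $H^1_{(k)0}$ by density and yields $\|u\|_{H^1_{(k)}}^2\lesssim a(u,u)$.

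With coercivity established, the right-hand side is a bounded functional on $H^1_{(k)0}$ of norm $\lesssim\|f_k\|_{L_1^2}+\|g_k\|_{H^{\frac12}}$, so Lax--Milgram produces a unique $v_k$, whence $u_k=v_k+G_k$ is a weak solution; the resulting a priori bound $\|u_k\|_{H^1_{(k)}}\lesssim\|f_k\|_{L_1^2}+\|g_k\|_{H^{\frac12}}$ together with linearity of the solution map delivers continuous dependence. For uniqueness I would take the difference $w$ of two weak solutions, which has vanishing trace at $r=1$ and satisfies $a(w,\phi_k)=0$ for all $\phi_k\in H^1_{(k)0}$; identifying $H^1_{(k)0}$ with the kernel of the trace operator—by transferring the standard characterisation $H^1_0=\ker(\mathrm{trace})$ through the isometric isomorphism onto $\hat{H}_{(k)}^1(B_1(\mathbf{0})\times\mathbb{R};\mathbb{C})$ and the Fourier-mode projection of Lemma \ref{lem:FS superconvergence}—shows $w\in H^1_{(k)0}$, so that testing with $\phi_k=w$ and invoking coercivity forces $w=0$.

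The \emph{main obstacle} is twofold. The genuinely quantitative step is the coercivity estimate, whose success hinges on the weighted one-dimensional Poincar\'{e} inequality in $r$ (the $z$-direction supplies no such control); the integrability of $r\ln(1/r)$ near $r=0$ is exactly what renders the weight $r\,\mathrm{d}r$ harmless. The more delicate bookkeeping step is the identification of $H^1_{(k)0}$ with the trace kernel, which is needed to upgrade the Lax--Milgram uniqueness within the affine class $G_k+H^1_{(k)0}$ to uniqueness among all $u_k\in H^1_{(k)}$ with the prescribed trace.
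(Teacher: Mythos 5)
Your proposal is correct and follows essentially the same route as the paper: a trace lift of $g_k$ via Lemma \ref{lem:trace}, reduction to a variational problem on $H^1_{(k)0}((0,1)\times{\mathbb R};{\mathbb C})$, coercivity from exactly the same weighted Poincar\'{e} inequality (the $-\log r$ bound, with $\int_0^1 r\log(1/r)\,\mathrm{d}r=\tfrac14$, extended from ${\mathscr D}_{(k)}$ by density), and the Lax--Milgram lemma. The only substantive difference is that you make explicit the identification of $H^1_{(k)0}$ with the kernel of the trace operator when upgrading Lax--Milgram uniqueness to uniqueness among all $u_k\in H^1_{(k)}$ with the prescribed trace---a point the paper's uniqueness sentence leaves implicit---and your proposed transfer of the standard characterisation through the isometric isomorphism and the mode-$k$ projection is a sound way to fill it.
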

\begin{proof}
Let $h_k \in H^1_{(k)}((0,1) \times {\mathbb R};{\mathbb C})$ be a function with $h_k|_{r=1} = g_k$ in
$H^\frac{1}{2}({\mathbb R}; {\mathbb C})$ and $\|h_k\|_{H_{(k)}^1} \lesssim \|g_k\|_{H^\frac{1}{2}}$
(see Lemma \ref{lem:trace}). We seek
$w_k \in H^1_{(k)0}((0,1) \times {\mathbb R};{\mathbb C})$ such that
\begin{align}
& \tfrac{1}{2}\langle {\mathcal D}_kw_k, {\mathcal D}_k{\phi}_k \rangle_{L_1^2} + \tfrac{1}{2}\langle{\mathcal D}_{-k} w_k, {\mathcal D}_{-k}  {\phi}_k \rangle_{L_1^2}
+ \langle \partial_z w_k, \partial_z {\phi}_k \rangle_{L_1^2} = \nonumber \\
& \qquad \mbox{}-\tfrac{1}{2}\langle {\mathcal D}_k h_k, {\mathcal D}_k{\phi}_k \rangle_{L_1^2} - \tfrac{1}{2}\langle{\mathcal D}_{-k} h_k, {\mathcal D}_{-k}  {\phi}_k \rangle_{L_1^2}
- \langle \partial_z h_k, \partial_z {\phi}_k \rangle_{L_1^2}
 - \langle {f}_k, {\phi}_k \rangle_{L_1^2} \label{eq:wkform}
\end{align}
for all $\phi_k \in H^1_{(k)0} ((0,1) \times {\mathbb R};{\mathbb C})$, so that $u_k=w_k+g_k$ is a weak solution of
\eqref{bvp:rzn1}, \eqref{bvp:rzn2}.

Both sides of \eqref{eq:wkform} are evidently bilinear continuous forms $(H^1_{(k)0} ((0,1) \times {\mathbb R};{\mathbb C}))^2 \rightarrow{\mathbb R}$.
The elementary estimate
$$w(r,z)^2=\left(\int_r ^1 r^{-\frac{1}{2}} w_r r^{\frac{1}{2}} \dr\right)^{\!\!2} \leq - \log r \int_0^1 w_r^2 r \dr,$$
establishes the Poincar\'{e} inequality
$$\int_0^1 w^2 r \dr \leq \frac{1}{4}\int_0^1 w_r^2 r \dr =\frac{1}{16} \int_0^1 ({\mathcal D}_k w + {\mathcal D}_{-k}w)^2 r \dr \leq \frac{1}{8}\int_0^1 \left(({\mathcal D}_k w)^2 + ({\mathcal D}_{-k}w)^2\right)r\dr$$
for $w \in {\mathscr D}_{(k)}([0,1) \times {\mathbb R};{\mathbb C})$ and hence $w \in H^1_{(k)0} ((0,1) \times {\mathbb R};{\mathbb C})$ by density, such that\linebreak
$w \mapsto (\tfrac{1}{2}\|{\mathcal D}_k w\|_{L_1^2}^2 + \tfrac{1}{2}\|{\mathcal D}_{-k} w\|_{L_1^2}^2 + \tfrac{1}{2}\|\partial_z w\|_{L_1^2}^2)^\frac{1}{2}$ is equivalent to the usual
norm for $H^1_{(k)0} ((0,1) \times {\mathbb R};{\mathbb C})$. The left-hand side of \eqref{eq:wkform} is therefore coercive, 
and the existence of a unique function $w \in H^1_{(k)0} ((0,1) \times {\mathbb R};{\mathbb C})$ satisfying \eqref{eq:wkform} for all $\phi_k \in H^1_{(k)0} ((0,1) \times {\mathbb R};{\mathbb C})$
follows from the Lax-Milgram lemma. The corresponding weak solution $u_k$ to \eqref{bvp:rzn1}, \eqref{bvp:rzn2}
is unique since the difference between two weak solutions satisfies \eqref{eq:wkform} with $h_k=f_k=0$ for all
$\phi_k \in H^1_{(k)0} ((0,1) \times {\mathbb R};{\mathbb C})$ and is therefore zero.
\end{proof}

It is possible to obtain an explicit formula for the (weak) solution to \eqref{bvp:rzn1}, \eqref{bvp:rzn2}. First suppose that
$f_k \in {\mathscr S}_{(k)}((0,1) \times {\mathbb R}; {\mathbb C})$, $g_k \in {\mathscr S}({\mathbb R};{\mathbb C})$ and take the 
Fourier transform in $z$, which yields the two-point boundary-value problem
\begin{align*}
        (\mathcal{D}_{1-k}\mathcal{D}_k - |\xi|^2 )\check{u}_k ={}& \check{f}_k, \qquad 0<r<1, \\
        \check{u}_k ={}& \check{g}_k , \qquad r=1,
    \end{align*}
where $\check{u}_k$, $\check{f}_k$ and $\check{g}_k$ denote the Fourier transforms of $u_k$, $f_k$ and $g_k$ respectively.
The solution to
\eqref{bvp:rzn1}, \eqref{bvp:rzn2} is
\begin{equation}
u_k=\mathcal{G}_k(f_k)  + \mathcal{B}_k(g_k), \label{eq:integral solution}
\end{equation}
in which
\begin{equation}\label{defn:G-H}
    \mathcal{G}_k(f_k) := \mathcal{F}^{-1}\left[\int_0^1 G_k(r,\rho)\,\check{f}_k(\rho)\,\rho\,\mathrm{d}\rho\right],\qquad \mathcal{B}_k(g_k) := \mathcal{F}^{-1}\left[\tilde{\mathcal D}_{k}G_k(r,1)\,\check{g}_k\right]
\end{equation}
(we again denote Bessel operators with respect to $r$, and later with respect to $s$, by ${\mathcal D}_k^i$ and with respect to $\rho$ by $\tilde{\mathcal D}_k^i$).
The Green's function $G_k(r,\rho)$ is given by
\begin{equation*}
    G_k(r,\rho) = \begin{cases}
        \displaystyle-I_{k}(|\xi|r) \tilde{K}_k(|\xi|\rho), & 0<r<\rho,\\
        \displaystyle-I_{k}(|\xi|\rho)\tilde{K}_k(|\xi|r), & \rho<r<1,
    \end{cases}
\end{equation*}
where $\tilde{K}_i$ is defined in terms of the modified Bessel functions $I_j$ and $K_j$ of the first and second kind by the formula
\begin{equation*}
    \tilde{K}_{i}(s) := K_{i}(s) - (-1)^{k-i}\frac{K_{k}(|\xi|)}{I_{k}(|\xi|)}I_{i}(s);
\end{equation*}
it satisfies
\begin{equation*}
    \tilde{K}_{-i}(s) = \tilde{K}_{i}(s), \qquad \mathcal{D}_{i}\tilde{K}_{i}(s) = -\tilde{K}_{i-1}(s), \qquad s\left[\tilde{K}_{i}(s)I_{i-1}(s) + I_{i}(s)\tilde{K}_{i-1}(s)\right] = 1.
\end{equation*}

In Section \ref{Estimates} below we establish the following theorem by showing that
$\mathcal{G}_k(f_k)$ and $\mathcal{B}_k(g_k)$ inherit additional regularity of $f_k$ and $g_k$ and observing that a weak solution of
\eqref{bvp:rzn1}, \eqref{bvp:rzn2} which lies in $H_{(k)}^2((0,1) \times {\mathbb R};{\mathbb C})$ is a strong solution (see
Remark \ref{rem:strong soln}).\pagebreak

\begin{theorem} $ $ \label{lo estimates}
\begin{itemize}
\item[(i)]
For each $f_k \in {\mathscr S}_{(k)}((0,1) \times {\mathbb R}; {\mathbb C})$
the function ${\mathcal G}_k(f_k)$ belongs to $H_{(k)}^2((0,1) \times {\mathbb R};{\mathbb C})$ with
$$\|{\mathcal G}_k(f_k)\|_{H^{2}_{(k)}}\lesssim\|f_{k}\|_{L_1^2},$$
such that ${\mathcal G}_k$ extends by density to a bounded linear operator $L_1^2((0,1) \times {\mathbb R};{\mathbb C}) \rightarrow H_{(k)}^2((0,1) \times {\mathbb R};{\mathbb C})$.
\item[(ii)]
For each $g_k \in {\mathscr S}({\mathbb R};{\mathbb C})$ the function ${\mathcal B}_k(g_k)$ belongs to $H_{(k)}^2((0,1) \times {\mathbb R};{\mathbb C})$ with
$$\|{\mathcal B}_k(g_k)\|_{H^{1}_{(k)}} \lesssim \|g_k\|_{H^{\frac{1}{2}}}, \qquad \|{\mathcal B}_k(g_k)\|_{H^{2}_{(k)}} \lesssim \|g_k\|_{H^{\frac{3}{2}}}$$
such that ${\mathcal B}_k$ extends by density to a bounded linear operator $H^{\frac{1}{2}}({\mathbb R};{\mathbb C}) \rightarrow H_{(k)}^{1}((0,1) \times {\mathbb R};{\mathbb C})$ and
$H^{\frac{3}{2}}({\mathbb R};{\mathbb C}) \rightarrow H_{(k)}^2((0,1) \times {\mathbb R};{\mathbb C})$.
\end{itemize}
\end{theorem}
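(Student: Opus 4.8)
The plan is to diagonalise the problem in the Fourier variable $\xi$ dual to $z$. Since the partial Fourier transform is an isometry of $L_1^2((0,1)\times{\mathbb R};{\mathbb C})$, commutes with every radial operator $\mathcal D_{-k+i}^{n-i}\mathcal D_k^i$ and converts $\partial_z$ into multiplication by $\i\xi$, Plancherel's theorem recasts the squared norm \eqref{No hat norm} as
$$\|u_k\|_{H^m_{(k)}}^2 = \int_{\mathbb R}\sum_{p=0}^m\sum_{n=0}^p 2^{-n}|\xi|^{2(p-n)}\sum_{i=0}^n\binom{n}{i}\big\|\mathcal D_{-k+i}^{n-i}\mathcal D_k^i\check u_k(\cdot,\xi)\big\|^2_{L_1^2((0,1))}\,\mathrm d\xi.$$
It therefore suffices to bound, for each fixed $\xi$, the radial fibre $\check u_k(\cdot,\xi)$ of the explicit solution \eqref{eq:integral solution}, uniformly in $\xi$ once the indicated powers of $|\xi|$ are included, and then integrate in $\xi$. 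In both parts the radial Bessel operators act on the Green's function through the recurrences: the modified-Bessel analogue of Remark \ref{rem:Lap}(ii), $\mathcal D_{-k+i}^{n-i}\mathcal D_k^i I_k(|\xi|r)=|\xi|^n I_{k+n-2i}(|\xi|r)$, together with the $\tilde K$-recurrences listed after \eqref{defn:G-H}.

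For part (i) the fibre solves $(\mathcal D_{1-k}\mathcal D_k-|\xi|^2)\check u_k=\check f_k$ with $\check u_k|_{r=1}=0$, and $\check u_k(r)=\int_0^1 G_k(r,\rho)\check f_k(\rho)\rho\,\mathrm d\rho$ from \eqref{defn:G-H}. The low-order resolvent bounds $|\xi|^2\|\check u_k\|_{L_1^2}\lesssim\|\check f_k\|_{L_1^2}$ and $|\xi|\,\|\mathcal D_{-k}\check u_k\|_{L_1^2}\lesssim\|\check f_k\|_{L_1^2}$ follow cleanly from the energy identity obtained by pairing the equation with $r\,\overline{\check u}_k$ and integrating by parts (the boundary terms vanish by the zero trace and by regularity at $r=0$), supplemented by the radial Poincaré inequality for bounded $|\xi|$. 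For the second-order seminorms $\|\mathcal D_{-k+i}^{2-i}\mathcal D_k^i\check u_k\|_{L_1^2}$, $i=0,1,2$ (all of which are needed, since $\mathcal G_k(f_k)$ has vanishing trace but nonvanishing radial derivative at $r=1$ and so does not lie in $H^2_{(k)0}$), I apply the recurrences to the kernel, obtaining integral operators with kernels $|\xi|^2 I_{k+2-2i}(|\xi|r)\tilde K_k(|\xi|\rho)$ for $r<\rho$ and the symmetric expression for $\rho<r$; the normalisation $|\xi|I_k(|\xi|)\tilde K_{k-1}(|\xi|)=1$ makes these $O(1)$ operators, and a Schur test bounds them on $L_1^2((0,1))$ uniformly in $\xi$, the one-variable integrals of products of $I_j$ and $\tilde K_{j'}$ being supplied by \eqref{Block1}--\eqref{Block4K}. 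Integrating the uniform bound $\lesssim\|\check f_k\|_{L_1^2}^2$ over $\xi$ gives $\|\mathcal G_k(f_k)\|_{H^2_{(k)}}\lesssim\|f_k\|_{L_1^2}$.

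For part (ii) the recurrences and the Wronskian identity $s[\tilde K_i I_{i-1}+I_i\tilde K_{i-1}]=1$ (with $\tilde K_k(|\xi|)=0$) collapse the boundary operator to the harmonic extension $\check u_k(r,\xi)=\tilde{\mathcal D}_k G_k(r,1)\,\check g_k(\xi)=\dfrac{I_k(|\xi|r)}{I_k(|\xi|)}\check g_k(\xi)$, which does not vanish at $r=1$, so the full norm \eqref{No hat norm} with all indices must be retained. Applying $\mathcal D_{-k+i}^{n-i}\mathcal D_k^i$ reduces every fibre seminorm to a multiple of $|\check g_k(\xi)|^2$ times a ratio $|\xi|^{2n}\!\int_0^1 I_{k+n-2i}(|\xi|r)^2\,r\,\mathrm dr\,/\,I_k(|\xi|)^2$. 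The Lommel-type identities \eqref{Block1}--\eqref{Block4K} evaluate these integrals; the decisive point is the cancellation $\int_0^1 I_j(|\xi|r)^2 r\,\mathrm dr\sim I_k(|\xi|)^2/(2|\xi|)$ as $|\xi|\to\infty$, which shows that the resulting $\xi$-weight grows like $(1+|\xi|^2)^{1/2}$ at first order and $(1+|\xi|^2)^{3/2}$ at second order. Integrating in $\xi$ then matches exactly $\|g_k\|_{H^{1/2}}^2=\int_{\mathbb R}(1+|\xi|^2)^{1/2}|\check g_k|^2\,\mathrm d\xi$ and $\|g_k\|_{H^{3/2}}^2=\int_{\mathbb R}(1+|\xi|^2)^{3/2}|\check g_k|^2\,\mathrm d\xi$, yielding the two claimed estimates. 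Finally, since ${\mathscr S}_{(k)}((0,1)\times{\mathbb R};{\mathbb C})$ and ${\mathscr S}({\mathbb R};{\mathbb C})$ are dense in $L_1^2$ and in $H^{1/2}$, $H^{3/2}$ respectively (cf. Lemma \ref{lem:trace}), the bounds extend $\mathcal G_k$ and $\mathcal B_k$ by continuity.

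The main obstacle is the $\xi$-uniformity in the estimates built on \eqref{Block1}--\eqref{Block4K}: one must control the ratios $I_{k+n-2i}(|\xi|r)/I_k(|\xi|)$ and the associated Lommel integrals across the entire range $0<|\xi|<\infty$, reconciling the exponential concentration near $r=1$ as $|\xi|\to\infty$ (where the $\sqrt{\,\cdot\,}$ prefactors and index shifts must be tracked to extract precisely the powers $(1+|\xi|^2)^{\pm1/2}$) with the polynomial regime $|\xi|\to0$ (where the cases $k=0$ and $k\neq0$ behave differently). The integral identities are exactly what render both regimes tractable within a single bound; the commutation relation of Proposition \ref{Prop:Comm} and the isometry of Corollary \ref{HT iso} are used to keep the bookkeeping of indices consistent throughout.
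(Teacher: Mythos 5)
Your proposal is correct in outline and, at its core, follows the same route as the paper: pass to the partial Fourier transform in $z$, estimate each fibre of the explicit kernel uniformly in $\xi$, and feed everything through the recurrences $\mathcal{D}_{\pm k}I_k=I_{k\mp 1}$, $\mathcal{D}_i\tilde{K}_i=-\tilde{K}_{i-1}$ and the integral identities \eqref{Block1}--\eqref{Block4K}. The one genuinely different ingredient is your treatment of the zeroth- and first-order bounds in part (i): the paper proves $\||\xi|^q\check{u}_k\|$ and $\||\xi|^q\mathcal{D}_{\pm k}\check{u}_k\|$ via explicit weighted kernel bounds (Propositions \ref{prop:G-bound} and \ref{prop:DG-bound}, then Lemmata \ref{lem:G(f_k)-est} and \ref{lem:DG(f_k)-est}), whereas you obtain them from the fibrewise energy identity $\tfrac12\|\mathcal{D}_k\check{u}_k\|^2+\tfrac12\|\mathcal{D}_{-k}\check{u}_k\|^2+|\xi|^2\|\check{u}_k\|^2\leq\|\check{f}_k\|\,\|\check{u}_k\|$ together with the radial Poincar\'{e} inequality for $|\xi|\lesssim 1$; this is cleaner and dispenses with two of the three kernel propositions, at the cost of being unavailable for the genuinely hard terms $\mathcal{D}_k^2$ and $\mathcal{D}_{-k}^2$, where you revert to exactly the paper's weighted Schur/Cauchy--Schwarz argument built on \eqref{Block2I}--\eqref{Block4K} (Proposition \ref{prop:D2G-bound}, Lemma \ref{lem:D2G(f_k)-est}). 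Part (ii) coincides with the paper's Lemmata \ref{lem:Hthm1} and \ref{lem:Hthm2}: your identities $\tilde{K}_k(|\xi|)=0$, $|\xi|I_k(|\xi|)\tilde{K}_{k-1}(|\xi|)=1$, the reduction to $\check{u}_k=I_k(|\xi|r)\check{g}_k/I_k(|\xi|)$, and the Lommel evaluation $\int_0^1 I_j(|\xi|r)^2 r\,\mathrm{d}r=\tfrac12\bigl(I_j^2(|\xi|)-I_{j-1}(|\xi|)I_{j+1}(|\xi|)\bigr)\sim I_k(|\xi|)^2/(2|\xi|)$ are all correct and are precisely what the paper uses. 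Two sketch-level details deserve attention: applying two radial derivatives to $G_k$ produces, beyond the off-diagonal kernels you wrote down, an identity contribution $+f_k$ from the jump of $\partial_r G_k$ across $r=\rho$ (the paper records $\mathcal{D}^{2-i}_{-k+i}\mathcal{D}^{i}_{k}\mathcal{G}_k(f_k)=\mathcal{F}^{-1}[\int_0^1\mathcal{D}^{2-i}_{-k+i}\mathcal{D}^{i}_{k}G_k\,\check{f}_k\,\rho\,\mathrm{d}\rho]+f_k$), which your kernel expression omits; and since \eqref{Block2I}--\eqref{Block4K} require $\ell\geq1$ or $\ell\geq2$, the case $i=2$, $k=1$ must be folded into the $i=1$ case via $\mathcal{D}_1^2=\mathcal{D}_0\mathcal{D}_1=|\xi|^2 G_1$-type reduction, as the paper does; relatedly, the $i=1$ term is most economically handled from the equation itself, $\mathcal{D}_{1-k}\mathcal{D}_k\check{u}_k=\check{f}_k+|\xi|^2\check{u}_k$. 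Finally, your appeal to the Hankel-transform isometry (Corollary \ref{HT iso}) is a red herring here---only the partial Fourier transform in $z$ enters this proof.
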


\begin{corollary}
Suppose that $f_k \in L_1^2((0,1) \times {\mathbb R};{\mathbb C})$ and
$g_k \in H^\frac{1}{2}({\mathbb R}; {\mathbb C})$. The weak solution to \eqref{bvp:rzn1}, \eqref{bvp:rzn2} is given by
the formula \eqref{eq:integral solution}.
\end{corollary}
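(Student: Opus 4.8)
The plan is to combine the preceding existence-and-uniqueness lemma for weak solutions with the mapping properties of $\mathcal{G}_k$ and $\mathcal{B}_k$ recorded in Theorem \ref{lo estimates}, passing from smooth to general data by a density argument. That lemma already guarantees that for every $f_k \in L_1^2((0,1)\times\mathbb{R};\mathbb{C})$ and $g_k \in H^{1/2}(\mathbb{R};\mathbb{C})$ there is a unique weak solution $u_k \in H^1_{(k)}((0,1)\times\mathbb{R};\mathbb{C})$ depending continuously on $(f_k,g_k)$; the task is therefore only to identify this solution with $\mathcal{G}_k(f_k) + \mathcal{B}_k(g_k)$, which by Theorem \ref{lo estimates} is a well-defined element of $H^1_{(k)}((0,1)\times\mathbb{R};\mathbb{C})$.

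First I would settle the smooth case $f_k \in \mathscr{S}_{(k)}((0,1)\times\mathbb{R};\mathbb{C})$, $g_k \in \mathscr{S}(\mathbb{R};\mathbb{C})$, for which the Fourier-transform derivation preceding Theorem \ref{lo estimates} shows that $v_k := \mathcal{G}_k(f_k) + \mathcal{B}_k(g_k)$ satisfies \eqref{bvp:rzn1} and \eqref{bvp:rzn2} classically, while Theorem \ref{lo estimates} places $v_k$ in $H^2_{(k)}((0,1)\times\mathbb{R};\mathbb{C})$. To see that $v_k$ is a weak solution I would verify \eqref{eq:prewkform} directly, integrating the radial terms $\tfrac12\langle \mathcal{D}_k v_k, \mathcal{D}_k\phi_k\rangle_{L_1^2}$ and $\tfrac12\langle \mathcal{D}_{-k} v_k, \mathcal{D}_{-k}\phi_k\rangle_{L_1^2}$ by parts. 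With respect to the weight $r\,\mathrm{d}r$ the formal adjoint of $\mathcal{D}_\mu$ is $-\mathcal{D}_{1-\mu}$, so these two terms produce $-\langle \mathcal{D}_{1-k}\mathcal{D}_k v_k,\phi_k\rangle_{L_1^2}$ after using the identity $\mathcal{D}_{1-k}\mathcal{D}_k = \mathcal{D}_{1+k}\mathcal{D}_{-k}$ from Remark \ref{rem:Lap}(i); together with the $z$-integration by parts of $\langle\partial_z v_k,\partial_z\phi_k\rangle_{L_1^2}$ and the equation \eqref{bvp:rzn1} the three quadratic terms sum to $-\langle \Delta_k v_k,\phi_k\rangle_{L_1^2} = -\langle f_k,\phi_k\rangle_{L_1^2}$, cancelling the final term of \eqref{eq:prewkform}. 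The boundary contribution at $r=1$ vanishes because $\phi_k \in H^1_{(k)0}$ vanishes there, and the weighted contribution at $r=0$ vanishes by the regularity of $v_k$ and $\phi_k$; hence $v_k$ satisfies \eqref{eq:prewkform} and is a weak solution, and uniqueness from the preceding lemma gives $u_k = v_k$ for smooth data.

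Finally I would extend to arbitrary data by density. Choosing $f_k^j \in \mathscr{S}_{(k)}((0,1)\times\mathbb{R};\mathbb{C})$ with $f_k^j \to f_k$ in $L_1^2$ and $g_k^j \in \mathscr{S}(\mathbb{R};\mathbb{C})$ with $g_k^j \to g_k$ in $H^{1/2}$ (the density already invoked in Theorem \ref{lo estimates}), the smooth case identifies the weak solution $u_k^j$ for the data $(f_k^j,g_k^j)$ with $\mathcal{G}_k(f_k^j) + \mathcal{B}_k(g_k^j)$. The continuous dependence in the preceding lemma gives $u_k^j \to u_k$ in $H^1_{(k)}$, while the boundedness of $\mathcal{G}_k : L_1^2 \to H^2_{(k)}$ and $\mathcal{B}_k : H^{1/2} \to H^1_{(k)}$ from Theorem \ref{lo estimates} gives $\mathcal{G}_k(f_k^j) + \mathcal{B}_k(g_k^j) \to \mathcal{G}_k(f_k) + \mathcal{B}_k(g_k)$ in $H^1_{(k)}$. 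Uniqueness of limits in $H^1_{(k)}$ then yields $u_k = \mathcal{G}_k(f_k) + \mathcal{B}_k(g_k)$, as claimed.

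I expect the main obstacle to be the integration-by-parts verification in the smooth case, and in particular the treatment of the boundary term at the coordinate singularity $r=0$, where the weight $r$ degenerates. Justifying that $r\,\mathcal{D}_k v_k\,\phi_k$ and $r\,\mathcal{D}_{-k}v_k\,\phi_k$ vanish as $r \to 0$ requires the membership $v_k \in H^2_{(k)}$ together with the vanishing conditions encoded in the radial spaces, and must be argued rather than assumed; by contrast the subsequent density step is routine once the smooth case and the mapping properties of Theorem \ref{lo estimates} are in hand.
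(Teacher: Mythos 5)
Your proposal is correct and follows essentially the same route as the paper: verify \eqref{bvp:rzn2} from the construction of $G_k$, check \eqref{eq:prewkform} for Schwartz-class data by integrating by parts (the adjoint relation $\mathcal{D}_\mu^* = -\mathcal{D}_{1-\mu}$ with respect to $r\,\mathrm{d}r$ and the identity $\mathcal{D}_{1-k}\mathcal{D}_k=\mathcal{D}_{1+k}\mathcal{D}_{-k}$ being exactly what the paper's terse ``integrating by parts'' step uses), then conclude by density via Theorem \ref{lo estimates} together with uniqueness of the weak solution. The only cosmetic difference is that you pass to the limit through the continuous dependence of the weak solution on the data, whereas the paper passes to the limit in the weak identity itself using the continuity of the bilinear form---two interchangeable forms of the same density argument.
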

\begin{proof}
First suppose that $f_k \in {\mathscr S}_{(k)}((0,1) \times {\mathbb R}; {\mathbb C})$,
$g_k \in {\mathscr S}({\mathbb R};{\mathbb C})$. By construction
$\mathcal{G}_k(f_k)|_{r=1}=0$ and $\mathcal{B}_k(g_k)|_{r=1}=g_k$, such that $u_k|_{r=1}=g_k$, and substituting
\eqref{eq:integral solution} into the left-hand side of equation \eqref{eq:prewkform} and integrating by parts, we find that
this equation is satisfied for each $\phi_k \in H^1_{(k)0} ((0,1) \times {\mathbb R};{\mathbb C}))$.
The result follows by density (because of Theorem \ref{lo estimates} and the fact that the right-hand side
of \eqref{eq:prewkform} is a bilinear continuous form $(H^1_{(k)0} ((0,1) \times {\mathbb R};{\mathbb C}))^2 \rightarrow{\mathbb R}$).
\end{proof}

Our main result is also deduced from Theorem \ref{lo estimates}.

\begin{theorem}\label{thm:un-est;m}
Suppose that $m \in \mathbb{N}_0$. The function
\begin{equation}
u_k = \mathcal{G}_k(f_k)+ \mathcal{B}_k(g_k) \label{eq:the solution formula}
\end{equation}
\begin{itemize}
\item[(i)]
belongs to $H_{(k)}^{m+1}((0,1) \times {\mathbb R};{\mathbb C})$ with
$$\|u_k\|_{H^{m+1}_{(k)}} \lesssim \|f_{k}\|_{H^{m}_{(k)}} + \|g_k\|_{H^{m+\frac{1}{2}}}$$
whenever $f_k \in {H^{m}_{(k)}}((0,1) \times {\mathbb R};{\mathbb C})$, $g_k \in H^{m+\frac{1}{2}}({\mathbb R};{\mathbb C})$;
\item[(ii)]
belongs to $H_{(k)}^{m+2}((0,1) \times {\mathbb R};{\mathbb C})$ with
$$\|u_k\|_{H^{m+2}_{(k)}}\lesssim\|f_{k}\|_{H^{m}_{(k)}} + \|g_k\|_{H^{m+\frac{3}{2}}}$$
whenever $f_k \in {H^{m}_{(k)}}((0,1) \times {\mathbb R};{\mathbb C})$, $g_k \in H^{m+\frac{3}{2}}({\mathbb R};{\mathbb C})$.
\end{itemize}
\end{theorem}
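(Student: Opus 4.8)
The plan is to work from the explicit representation \eqref{eq:the solution formula}, $u_k=\mathcal{G}_k(f_k)+\mathcal{B}_k(g_k)$, and to estimate the two operators separately. By the triangle inequality it suffices to establish the single bound $\|\mathcal{G}_k(f_k)\|_{H^{m+2}_{(k)}}\lesssim\|f_k\|_{H^{m}_{(k)}}$ together with the two bounds $\|\mathcal{B}_k(g_k)\|_{H^{m+1}_{(k)}}\lesssim\|g_k\|_{H^{m+\frac12}}$ and $\|\mathcal{B}_k(g_k)\|_{H^{m+2}_{(k)}}\lesssim\|g_k\|_{H^{m+\frac32}}$. Granting these, part (ii) is immediate, while part (i) follows on combining the first $\mathcal{B}_k$-bound with the trivial inequality $\|\mathcal{G}_k(f_k)\|_{H^{m+1}_{(k)}}\le\|\mathcal{G}_k(f_k)\|_{H^{m+2}_{(k)}}$, which holds because the norm \eqref{No hat norm} is nested in $m$; this explains why (i) asks only for $g_k\in H^{m+\frac12}$ whereas (ii) needs $g_k\in H^{m+\frac32}$. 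These three operator estimates are precisely the content of Lemma \ref{props of Gk, Bk}, which may be quoted to conclude at once; the remainder of this sketch indicates how to recover them from the base case $m=0$, i.e.\ Theorem \ref{lo estimates}, in keeping with the phrase ``deduced from Theorem \ref{lo estimates}''.

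I would argue by induction on $m$, establishing $u_k\in H^{m+2}_{(k)}$ (part (ii)), the scheme for (i) being identical but run one radial order lower. Since the kernels in \eqref{defn:G-H} depend on the axial variable only through $|\xi|$, the operators $\mathcal{G}_k$ and $\mathcal{B}_k$ commute with $\partial_z$, so that $\partial_z u_k=\mathcal{G}_k(\partial_z f_k)+\mathcal{B}_k(\partial_z g_k)$; as $\partial_z f_k\in H^{m-1}_{(k)}$ and $\partial_z g_k\in H^{m+\frac12}$, the inductive hypothesis yields $\partial_z u_k\in H^{m+1}_{(k)}$ with the desired bound, and this controls every term of the $H^{m+2}_{(k)}$-norm of $u_k$ that carries at least one axial derivative. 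It then remains to treat the purely radial terms $\mathcal{D}_{-k+i}^{n-i}\mathcal{D}_k^{i}u_k$ with $n\le m+2$. For these I would use the equation in the form $\mathcal{D}_{1-k}\mathcal{D}_k u_k=f_k-\partial_z^2 u_k$ (Remark \ref{rem:Lap}(i)), whose right-hand side lies in $H^{m}_{(k)}$ since $\partial_z^2 u_k\in H^{m}_{(k)}$ by the previous step; for each \emph{interior} index $1\le i\le n-1$ the commutation relation of Proposition \ref{Prop:Comm} factors $\mathcal{D}_{-k+i}^{n-i}\mathcal{D}_k^{i}=\mathcal{D}_{-k+i-1}^{(n-2)-(i-1)}\mathcal{D}_k^{i-1}\,\mathcal{D}_{1-k}\mathcal{D}_k$, lowering the radial order by two and placing the term in $L_1^2$.

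The main obstacle is the pair of \emph{extreme} top-order radial derivatives $\mathcal{D}_k^{m+2}u_k$ and $\mathcal{D}_{-k}^{m+2}u_k$: these cannot be split off a factor $\mathcal{D}_{1-k}\mathcal{D}_k$ and so are invisible to the equation-trading step. To reach $\mathcal{D}_k^{m+2}u_k=\mathcal{D}_{k-m}^{2}(\mathcal{D}_k^{m}u_k)$ I would observe that $\mathcal{D}_k^{m}$ intertwines $\Delta_k$ with $\Delta_{k-m}$, so that $v:=\mathcal{D}_k^{m}u_k$ solves the mode-$(k-m)$ problem $\Delta_{k-m}v=\mathcal{D}_k^{m}f_k$, $v|_{r=1}=(\mathcal{D}_k^{m}u_k)|_{r=1}$; here $\mathcal{D}_k^{m}f_k\in L_1^2$ is exactly the extreme term of $\|f_k\|_{H^{m}_{(k)}}$, and Theorem \ref{lo estimates}---whose $H^2_{(k-m)}$-bounds for $\mathcal{G}_{k-m}$ and $\mathcal{B}_{k-m}$ already include the extreme second derivative $\mathcal{D}_{k-m}^{2}v$---would give $v\in H^2_{(k-m)}$ \emph{once the boundary datum is known to lie in $H^{\frac32}(\mathbb{R})$}, with the analogous reduction via $\mathcal{D}_{-k}^{m}$ handling $\mathcal{D}_{-k}^{m+2}u_k$. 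The genuine difficulty is therefore the control of the higher-order Neumann-type trace $(\mathcal{D}_k^{m}u_k)|_{r=1}$, which the Dirichlet trace theorem (Lemma \ref{lem:trace}) does not supply on its own; the cleanest resolution---and the one underlying Lemma \ref{props of Gk, Bk}---is to bypass this induction altogether and estimate $\mathcal{G}_k$ and $\mathcal{B}_k$ directly on the Fourier--Green's-function side, where the full family of Bessel operators acting on $G_k(r,\rho)$ is evaluated through the integral identities for modified Bessel functions recorded in the discussion preceding Lemma \ref{props of Gk, Bk}.
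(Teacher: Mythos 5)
Your overall reduction is sound and parts of the sketch are correct: the nesting argument deducing (i) from the $H^{m+1}_{(k)}$ bound for $\mathcal{B}_k$, the commutation with $\partial_z$, and the interior-index factorisation $\mathcal{D}_{-k+i}^{n-i}\mathcal{D}_k^{i}=\mathcal{D}_{-k+i-1}^{n-i-1}\mathcal{D}_k^{i-1}\,\mathcal{D}_{1-k}\mathcal{D}_k$ for $1\le i\le n-1$ (a valid consequence of Proposition \ref{Prop:Comm}) are all fine. But there is a genuine gap exactly where you flag it, and neither of your two escape routes closes it. Quoting Lemma \ref{props of Gk, Bk} is circular: within the paper that lemma is the \emph{introductory announcement} of what Section \ref{DVP} establishes, and its proof is precisely Theorem \ref{lo estimates} together with the theorem you are asked to prove, so it cannot serve as input. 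And ``estimate $\mathcal{G}_k$ and $\mathcal{B}_k$ directly on the Fourier--Green's-function side'' is a restatement of the task rather than an argument: Theorem \ref{lo estimates} and the identities \eqref{Block1}--\eqref{Block4K} only deliver the $m=0$ bounds (at most two Bessel operators applied to $G_k$), and nothing you invoke produces the $H^{\frac{3}{2}}({\mathbb R};{\mathbb C})$ control of the trace $(\mathcal{D}_k^{m}u_k)|_{r=1}$ that your $m$-fold intertwining $v=\mathcal{D}_k^{m}u_k$, $\Delta_{k-m}v=\mathcal{D}_k^{m}f_k$ requires before Theorem \ref{lo estimates} can be applied to $v$.

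The missing idea---and the way the paper actually argues---is to peel off \emph{one} derivative per inductive step rather than $m$ at once; then the only trace ever needed is explicitly computable from the Green's function. Differentiating \eqref{eq:the solution formula} via \eqref{defn:G-H} gives $v_k^0:=\partial_z u_k=\mathcal{G}_k(\partial_z f_k)+\mathcal{B}_k(\partial_z g_k)$ and $v_k^{\pm}:=\mathcal{D}_{\pm k}u_k=\mathcal{G}_{k\mp1}(\mathcal{D}_{\pm k}f_k)+\mathcal{B}_{k\mp1}(g_k^{\pm})$, where the new Dirichlet datum---your ``Neumann-type trace'' at a single step---is explicit:
$$g_k^{\pm}=\mathcal{F}^{-1}\left[\int_0^1\frac{I_k(|\xi|\rho)}{I_k(|\xi|)}\,\check{f}_k(\rho)\,\rho\,\mathrm{d}\rho+\frac{|\xi|\,I_{k\mp1}(|\xi|)}{I_k(|\xi|)}\,\check{g}_k\right].$$
The crux of the induction is then the estimate $\|g_k^{\pm}\|_{H^{m+\frac{1}{2}}}\lesssim\|f_k\|_{H^{m}_{(k)}}+\|g_k\|_{H^{m+\frac{3}{2}}}$ (equation \eqref{eq:oldnew3} in the paper), proved by Cauchy--Schwarz in $\rho$ together with the closed form $\int_0^1 I_k^2(|\xi|\rho)\,\rho\,\mathrm{d}\rho=\tfrac{1}{2}\big(I_k^2(|\xi|)-I_{k-1}(|\xi|)I_{k+1}(|\xi|)\big)$ and the multiplier bound $|\xi|I_{k\mp1}(|\xi|)/I_k(|\xi|)\lesssim(1+\xi^2)^{\frac{1}{2}}$; it is this computation that supplies the trace control your sketch lacks, and Lemma \ref{lem:trace} is indeed never used for this purpose. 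With it in hand, the inductive hypothesis applied at modes $k\mp1$ and $k$ to $v_k^{\pm}$ and $v_k^0$ controls \emph{all} derivatives at once---in particular $\mathcal{D}_k^{m+2}u_k=\mathcal{D}_{k-1}^{m+1}(\mathcal{D}_k u_k)$ is an extreme term of $\|v_k^{+}\|_{H^{m+1}_{(k-1)}}$, so your problematic extreme indices require no separate device and the equation-trading step for interior indices becomes superfluous; the bookkeeping is the recursion $\|u_k\|^2_{H^{m+2}_{(k)}}=\|u_k\|^2_{H^{m+1}_{(k)}}+\tfrac{1}{2}\|v^{+}_k\|^2_{H^{m+1}_{(k-1)}}+\tfrac{1}{2}\|v^{-}_k\|^2_{H^{m+1}_{(k+1)}}+\|v^0_k\|^2_{H^{m+1}_{(k)}}$, which follows from \eqref{No hat norm}. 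Until you prove an estimate of this kind for the boundary datum generated by differentiation, your proposal is a plausible plan with its central step missing rather than a proof.
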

\begin{proof} We establish these results inductively, noting that by density it suffices to prove the stated estimates for 
$f_k \in {\mathscr S}_{(k)}((0,1) \times {\mathbb R};{\mathbb C})$, $g_k \in {\mathscr S}({\mathbb R};{\mathbb C})$; they
hold for $m=0$ by Theorem \ref{lo estimates}.

Suppose that assertion (i) holds for some $m \in {\mathbb N}_0$.
Differentiating \eqref{eq:the solution formula}, we find that $v_{k}^{0}:=\partial_z u_k$ and $v_{k}^{\pm}:= \mathcal{D}_{\pm k}u_k$ satisfy
\begin{equation*}
v_{k}^{0} = \mathcal{G}_{k}(f_{k}^{0}) + \mathcal{B}_{k}(g_{k}^{0}), \qquad v_{k}^{\pm} = \mathcal{G}_{k\mp1}(f_{k}^{\pm}) + \mathcal{B}_{k\mp1}(g_{k}^{\pm}),
\end{equation*}
where $f_{k}^{0}:= \partial_{z}f_{k}$, $ g_{k}^{0}:= \partial_z g_k$, $f_{k}^{\pm}:= \mathcal{D}_{\pm k}f_{k}$ and
\begin{equation*}
\begin{split}
   g_{k}^{\pm}:={}& \mathcal{F}^{-1}\left[\int_0^1 \mathcal{D}_{\pm k}G_{k}(1,\rho)\check{f}_{k}(\rho)\,\rho\,\mathrm{d}\rho + \mathcal{D}_{\pm k}\tilde{\mathcal D}_kG_{k}(1,1)\check{g}_{k}\right],\\
    ={}& \mathcal{F}^{-1}\left[\int_0^1 \frac{I_k (|\xi|\rho)}{I_k (|\xi|)}\check{f}_{k}(\rho)\,\rho\,\mathrm{d}\rho + \frac{|\xi|I_{k\mp1}(|\xi|)}{I_k (|\xi|)}\check{g}_{k}\right].
\end{split}
\end{equation*}
By the inductive hypothesis
\begin{equation}
\|v_k^0\|_{H^{m+1}_{(k)}} \lesssim \|f_{k}^0\|_{H^{m}_{(k)}} + \|g_k^0\|_{H^{m+\frac{1}{2}}} , \qquad \|v_k^\pm\|_{H^{m+1}_{(k\mp1)}} \lesssim \|f_{k}^\pm\|_{H^{m}_{(k\mp1)}} + \|g_k^\pm\|_{H^{m+\frac{1}{2}}}
\label{eq:indresult}
\end{equation}

Obviously
\begin{equation}
    \|f_{k}^{\pm}\|_{H^{m}_{(k\mp1)}} = \|\mathcal{D}_{\pm k}f_{k}\|_{H^{m}_{(k\mp1)}} \lesssim \|f_{k}\|_{H^{m+1}_{(k)}}, \qquad
    \|f_{k}^{0}\|_{H^{m}_{(k)}} = \|\partial_{z} f_{k}\|_{H^{m}_{(k)}}\lesssim \|f_{k}\|_{H^{m+1}_{(k)}} \label{eq:oldnew1}
\end{equation}
and
\begin{equation}
        \|g_{k}^{0}\|_{H^{m+\frac{1}{2}}} =\|\partial_z g_k \|_{H^{m+\frac{1}{2}}} \leq \|g_{k}\|_{H^{m+\frac{3}{2}}}, \label{eq:oldnew2}
\end{equation}
and furthermore
\begin{align}
        \|g_{k}^{\pm}\|^2_{H^{m+\frac{1}{2}}} 
        \leq{}& \int_{-\infty}^{\infty} (1+|\xi|^2)^{m+\frac{1}{2}}\left(\int_0^1 \frac{I_k (|\xi|\rho)}{I_k (|\xi|)}\check{f}_{k}(\rho)\,\rho\,\mathrm{d}\rho\right)^{\!\!2}\,\mathrm{d}\xi + \int_{-\infty}^{\infty} (1+|\xi|^2)^{m+\frac{1}{2}}\left(\frac{|\xi|I_{k\mp1}(|\xi|)}{I_k (|\xi|)}\right)^{\!\!2} |\check{g}_{k}|^2\,\mathrm{d}\xi\nonumber \\
        \leq{}& \int_{-\infty}^{\infty} (1+|\xi|^2)^{m+\frac{1}{2}}\left(\int_0^1 \frac{I^2_{k}(|\xi|\rho)}{I^2_{k}(|\xi|)}\,\rho\,\mathrm{d}\rho\right)\!\!\!\left(\int_0^1 |\check{f}_{k}(\rho)|^2\,\rho\,\mathrm{d}\rho\right)\,\mathrm{d}\xi\nonumber \\
        {}& \qquad \mbox{}+ \int_{-\infty}^{\infty} (1+|\xi|^2)^{m+\frac{1}{2}}\left(\frac{|\xi|I_{k\mp1}(|\xi|)}{I_k (|\xi|)}\right)^{\!\!2} |\check{g}_{k}|^2\,\mathrm{d}\xi\nonumber \\
        ={}& \int_{-\infty}^{\infty} (1+|\xi|^2)^{m+\frac{1}{2}}\left(\frac{I_k ^2(|\xi|) - I_{k-1}(|\xi|)I_{k+1}(|\xi|)}{2I^2_{k}(|\xi|)}\right)\!\!\!\left(\int_0^1 |\check{f}_{k}(\rho)|^2\,\rho\,\mathrm{d}\rho\right)\,\mathrm{d}\xi \nonumber \\
        & \qquad\mbox{}+ \int_{-\infty}^{\infty} (1+|\xi|^2)^{m+\frac{1}{2}}\left(\frac{|\xi|I_{k\mp1}(|\xi|)}{I_k (|\xi|)}\right)^{\!\!2} |\check{g}_{k}|^2\,\mathrm{d}\xi\nonumber \\
        \lesssim{}& \int_{-\infty}^{\infty} \int_0^1 (1+|\xi|^2)^m|\check{f}_{k}(\rho)|^2\,\rho\,\mathrm{d}\rho\,\mathrm{d}\xi + \int_{-\infty}^{\infty} (1+|\xi|^2)^{m+\frac{3}{2}} |\check{g}_{k}|^2\,\mathrm{d}\xi\nonumber \\
       ={}& \|\partial_z^m f_{k}\|^2_{L_1^2} + \|g_{k}\|^2_{H^{m+\frac{3}{2}}} \nonumber \\
       \leq{}& \|f_k\|_{H_{(k)}^m}^2 + \|g_{k}\|^2_{H^{m+\frac{3}{2}}}. \label{eq:oldnew3}
    \end{align}

It follows that
    \begin{equation*}\begin{split}
         \|u_k\|^2_{H^{m+2}_{(k)}}  &{}= \|u_k\|^2_{H^{m+1}_{(k)}} + \tfrac{1}{2}\|\mathcal{D}_{k}u_k\|^2_{H^{m+1}_{(k-1)}} + \tfrac{1}{2}\|\mathcal{D}_{-k}u_k\|^2_{H^{m+1}_{(k+1)}} + \|\partial_z u_k\|^2_{H^{m+1}_{(k)}}\\
        &{}= \|u_k\|^2_{H^{m+1}_{(k)}} + \tfrac{1}{2}\|v^{+}_k\|^2_{H^{m+1}_{(k-1)}} + \tfrac{1}{2}\|v^-_k\|^2_{H^{m+1}_{(k+1)}} + \|v^0_k\|^2_{H^{m+1}_{(k)}} \\
  &{}\lesssim \|f_k\|^2_{H^{m}_{(k)}} + \|g_k\|^2_{H^{m+\frac{1}{2}}}  + \tfrac{1}{2}\|f^+_k\|^2_{H^{m}_{(k-1)}} + \tfrac{1}{2}\|g^+_k\|^2_{H^{m+\frac{1}{2}}} \\
        & \qquad\mbox{} + \tfrac{1}{2}\|f^-_k\|^2_{H^{m}_{(k+1)}} + \tfrac{1}{2}\|g^-_k\|^2_{H^{m+\frac{1}{2}}} + \|f^0_k\|^2_{H^{m}_{(k)}} + \|g^0_k\|^2_{H^{m+\frac{1}{2}}}\\
         &{} \lesssim \|f_k\|^2_{H^{m+1}_{(k)}} + \|g_k\|^2_{H^{m+\frac{3}{2}}},
          \end{split}
    \end{equation*}
    where the third line follows from the second by \eqref{eq:indresult} and the fourth from the third by \eqref{eq:oldnew1}--\eqref{eq:oldnew3}.
    
Assertion (ii) is proved in a similar fashion.
    \end{proof}

In the following theory we make use of the rescaled modified Struve function of the second kind\linebreak
$\mathbf{M}_{k}: [0,\infty) \rightarrow {\mathbb R}$ given by the formula
$$ \mathbf{M}_{k}(s) := 2^{k-1}\sqrt{\pi}\Gamma(k+\tfrac{1}{2})\left(I_{k}(s) - \mathbf{L}_{k}(s)\right),$$
where $\mathbf{L}_{k}$ is the modified Struve function of the first kind of order $k\in\mathbb{R}$
(see Abramowitz \& Stegun \cite[\S 12.2]{AbramowitzStegun}).\pagebreak

\begin{proposition}
The function $\mathbf{M}_{k}$ satisfies
\begin{itemize}
\item[(i)] the integral identity
\begin{equation*}
        \int_{a}^{b}  Z_k(s)\,s^{k}\ds = \Big[W_{k}[\mathbf{M}_{k},Z_{k}](s)\Big]_{a}^{b}
\end{equation*}
for each $k \in {\mathbb N}_0$, where $Z_k \in \{I_k, K_k, \tilde{K}_k\}$ and $W_k[v_1,v_2](s)=s(v_1 {\mathcal D}_k v_2- v_2 {\mathcal D}_k v_1)(s)$;
\item[(ii)]
the differential identities
$$
        \mathcal{D}_{k} \mathbf{M}_{k}(s) = (2k-1)\mathbf{M}_{k-1}(s), \qquad k \in {\mathbb N},
$$
and 
$$\mathcal{D}_{-k}\mathbf{M}_{k}(s) = \frac{1}{(2k+1)}\left(\mathbf{M}_{k+1}(s) - s^k\right), \qquad k \in {\mathbb N}_0;$$
\item[(iii)]
the estimates $ 0\leq \mathbf{M}_{k}(s) \leq s^{k-1}$, $k \in {\mathbb N}$ and $0\leq \mathbf{M}_{0}(s) \leq 2s^{-1}$
for all $s \in (0,\infty)$ and
\begin{equation*}
        \begin{aligned}
            \mathbf{M}_{k}(s) ={}& \frac{\sqrt{\pi}\Gamma(k+\tfrac{1}{2})}{2\Gamma(k+1)} \,s^k \left(1 + \mathcal{O}(s)\right), & \qquad &s\to0,\\
            \mathbf{M}_{k}(s) ={}& s^{k-1}\left(1 + \mathcal{O}(s^{-2})\right) & \qquad &s\to\infty
        \end{aligned}
    \end{equation*}
for each $k \in {\mathbb N}_0$.
\end{itemize}
The functions $s\mapsto s^{k}\mathbf{M}_{k}(s)$ and $s\mapsto s^{-k}\mathbf{M}_{k}(s)$ are monotone increasing and decreasing over $s\in(0,\infty)$, respectively, for each $k\in\mathbb{N}$. Furthermore $s\mapsto \mathbf{M}_{0}(s)$ is monotone decreasing over $s\in(0,\infty)$.
\end{proposition}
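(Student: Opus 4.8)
My plan hinges on first establishing the integral representation
\begin{equation*}
\mathbf{M}_{k}(s) = s^{k}\int_{0}^{\pi/2} \e^{-s\cos\theta}\sin^{2k}\theta\,\mathrm{d}\theta, \qquad k \in {\mathbb N}_0,
\end{equation*}
from which almost every assertion in parts (i) and (iii) and in the closing monotonicity statement flows by elementary means. I would obtain it by inserting the Poisson-type representations $I_{k}(s)=\frac{2(s/2)^{k}}{\sqrt{\pi}\Gamma(k+\frac12)}\int_{0}^{\pi/2}\cosh(s\cos\theta)\sin^{2k}\theta\,\mathrm{d}\theta$ and $\mathbf{L}_{k}(s)=\frac{2(s/2)^{k}}{\sqrt{\pi}\Gamma(k+\frac12)}\int_{0}^{\pi/2}\sinh(s\cos\theta)\sin^{2k}\theta\,\mathrm{d}\theta$ into the definition of $\mathbf{M}_{k}$: the hyperbolic terms combine into $\e^{-s\cos\theta}$ and the normalising constant $2^{k-1}\sqrt{\pi}\Gamma(k+\frac12)$ collapses the prefactor to $s^{k}$. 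In the same preliminary step I would record that $\mathbf{M}_{k}$ solves the inhomogeneous modified Bessel equation $(\mathcal{D}_{1-k}\mathcal{D}_{k}-1)\mathbf{M}_{k}=-s^{k-1}$, which is immediate from the defining differential equations of $I_{k}$ (homogeneous) and $\mathbf{L}_{k}$ (inhomogeneous, with right-hand side $(s/2)^{k-1}/(\sqrt{\pi}\Gamma(k+\frac12))$) together with the choice of constant.

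For part (i) I would exploit the Lagrange identity for $\mathcal{L}:=\mathcal{D}_{1-k}\mathcal{D}_{k}=\frac{\mathrm{d}^{2}}{\mathrm{d}s^{2}}+\frac{1}{s}\frac{\mathrm{d}}{\mathrm{d}s}-\frac{k^{2}}{s^{2}}$. A direct computation shows that the $k/s$ contributions cancel, so that $W_{k}[v_{1},v_{2}]=s(v_{1}v_{2}'-v_{2}v_{1}')$ and $\frac{\mathrm{d}}{\mathrm{d}s}W_{k}[v_{1},v_{2}]=s(v_{1}\mathcal{L}v_{2}-v_{2}\mathcal{L}v_{1})$. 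Taking $v_{1}=\mathbf{M}_{k}$ and $v_{2}=Z_{k}$ with $Z_{k}\in\{I_{k},K_{k},\tilde{K}_{k}\}$, and using $\mathcal{L}Z_{k}=Z_{k}$ together with $\mathcal{L}\mathbf{M}_{k}=\mathbf{M}_{k}-s^{k-1}$, the homogeneous terms cancel and leave $\frac{\mathrm{d}}{\mathrm{d}s}W_{k}[\mathbf{M}_{k},Z_{k}]=s^{k}Z_{k}(s)$; integrating over $[a,b]$ gives the stated identity. Here $\tilde{K}_{k}$ qualifies because it is a linear combination of $K_{k}$ and $I_{k}$ and is hence also annihilated by $\mathcal{L}-1$.

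For the differential identities of part (ii) I would work from the standard first-order recurrences. Writing $c_{k}=2^{k-1}\sqrt{\pi}\Gamma(k+\frac12)$, one has $\mathcal{D}_{k}I_{k}=I_{k-1}$ and $\mathcal{D}_{k}\mathbf{L}_{k}=\mathbf{L}_{k-1}$ (the latter from $\frac{\mathrm{d}}{\mathrm{d}s}(s^{k}\mathbf{L}_{k})=s^{k}\mathbf{L}_{k-1}$, checked on the power series), whence $\mathcal{D}_{k}\mathbf{M}_{k}=c_{k}(I_{k-1}-\mathbf{L}_{k-1})=(c_{k}/c_{k-1})\mathbf{M}_{k-1}=(2k-1)\mathbf{M}_{k-1}$. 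Similarly $\mathcal{D}_{-k}I_{k}=I_{k+1}$ and $\mathcal{D}_{-k}\mathbf{L}_{k}=\mathbf{L}_{k+1}+s^{k}/(2^{k}\sqrt{\pi}\Gamma(k+\frac32))$ (from $\frac{\mathrm{d}}{\mathrm{d}s}(s^{-k}\mathbf{L}_{k})$), which yields $\mathcal{D}_{-k}\mathbf{M}_{k}=(c_{k}/c_{k+1})\mathbf{M}_{k+1}-c_{k}s^{k}/(2^{k}\sqrt{\pi}\Gamma(k+\frac32))=\frac{1}{2k+1}(\mathbf{M}_{k+1}-s^{k})$ after simplifying the ratios of Gamma functions, both of which reduce to $(2k+1)^{-1}$.

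The remaining assertions follow from the integral representation. Positivity of $\mathbf{M}_{k}$ is immediate, as the integrand is nonnegative. For $k\in{\mathbb N}$ I would integrate $\int_{0}^{\pi/2}\e^{-s\cos\theta}\sin^{2k}\theta\,\mathrm{d}\theta$ by parts, using $\frac{\mathrm{d}}{\mathrm{d}\theta}\e^{-s\cos\theta}=s\sin\theta\,\e^{-s\cos\theta}$, to obtain $\frac{1}{s}-\frac{2k-1}{s}\int_{0}^{\pi/2}\e^{-s\cos\theta}\sin^{2k-2}\theta\cos\theta\,\mathrm{d}\theta\le\frac{1}{s}$, giving $\mathbf{M}_{k}(s)\le s^{k-1}$; for $k=0$ the concavity bound $\cos\theta\ge1-\frac{2\theta}{\pi}$ on $[0,\frac{\pi}{2}]$ yields $\mathbf{M}_{0}(s)\le\frac{\pi}{2s}\le\frac{2}{s}$. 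The small-$s$ asymptotics come from passing to the limit inside the integral, where $\int_{0}^{\pi/2}\sin^{2k}\theta\,\mathrm{d}\theta=\frac{\sqrt{\pi}\Gamma(k+\frac12)}{2\Gamma(k+1)}$ (Wallis), and the large-$s$ asymptotics from Watson's lemma near $\theta=\frac{\pi}{2}$, the precise $\mathcal{O}(s^{-2})$ correction being confirmed by substituting $s^{k-1}$ into $\mathcal{L}-1$. For monotonicity, since $s^{-k}\mathbf{M}_{k}(s)=\int_{0}^{\pi/2}\e^{-s\cos\theta}\sin^{2k}\theta\,\mathrm{d}\theta$ has $s$-derivative $-\int_{0}^{\pi/2}\cos\theta\,\e^{-s\cos\theta}\sin^{2k}\theta\,\mathrm{d}\theta\le0$, the map $s\mapsto s^{-k}\mathbf{M}_{k}(s)$ is decreasing (the case $k=0$ being exactly the claim for $\mathbf{M}_{0}$), while $\frac{\mathrm{d}}{\mathrm{d}s}(s^{k}\mathbf{M}_{k})=(2k-1)s^{k}\mathbf{M}_{k-1}\ge0$ by part (ii) and positivity shows $s\mapsto s^{k}\mathbf{M}_{k}(s)$ is increasing. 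The main obstacle is the correct derivation and normalisation of the integral representation; once it is in hand, the pointwise upper bounds, which the single integration by parts delivers cleanly, are the only genuinely delicate points.
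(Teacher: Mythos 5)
Your proposal is correct and follows the same backbone as the paper's proof: the Wronskian identity $\frac{\mathrm{d}}{\mathrm{d}s}W_k[v_1,v_2]=s(v_1\mathcal{L}v_2-v_2\mathcal{L}v_1)$ with $\mathcal{L}\mathbf{M}_k=\mathbf{M}_k-s^{k-1}$ for part (i), the recurrences $\mathcal{D}_{\pm k}I_k=I_{k\mp1}$, $\mathcal{D}_k\mathbf{L}_k=\mathbf{L}_{k-1}$, $\mathcal{D}_{-k}\mathbf{L}_k=\mathbf{L}_{k+1}+s^k/(2^k\sqrt{\pi}\,\Gamma(k+\tfrac32))$ for part (ii), and the representation $\mathbf{M}_k(s)=s^k\int_0^{\pi/2}\mathrm{e}^{-s\cos\theta}\sin^{2k}\theta\,\mathrm{d}\theta$ with a single integration by parts for the bound $\mathbf{M}_k(s)\le s^{k-1}$, $k\in\mathbb{N}$. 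You deviate in three local steps, each validly. For $\mathbf{M}_0\le 2s^{-1}$ you use Jordan's inequality $\cos\theta\ge 1-\tfrac{2\theta}{\pi}$, which gives the sharper constant $\pi/2$ directly, whereas the paper instead invokes the identity $\mathbf{M}_0=\tfrac13(\mathbf{M}_2-s)+\tfrac2s\mathbf{M}_1$ together with the bounds already proved for $\mathbf{M}_1,\mathbf{M}_2$. For the asymptotics you argue self-containedly from the integral representation (dominated convergence plus the Wallis integral as $s\to0$; Watson's lemma after substituting $u=\cos\theta$ as $s\to\infty$, where the vanishing of the odd-order term at $u=0$ correctly accounts for the $\mathcal{O}(s^{-2})$ correction), while the paper simply cites the Abramowitz--Stegun expansions of $I_k$ and $\mathbf{L}_k$; your route is longer but avoids the delicate cancellation of the exponentially growing parts of $I_k$ and $\mathbf{L}_k$ in the cited expansions. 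For the decreasing monotonicity you differentiate under the integral sign, which handles $k\ge1$ and $k=0$ uniformly, whereas the paper combines part (ii) with the bound $\mathbf{M}_{k+1}(s)\le s^k$ — the two arguments are equivalent in content. Finally, a point in your favour: you record the inhomogeneous equation as $(\mathcal{D}_{1-k}\mathcal{D}_k-1)\mathbf{M}_k=-s^{k-1}$, which is the precise statement; the paper's proof displays $\mathcal{D}_{1-k}\mathcal{D}_kZ_k=0$ and $\mathcal{D}_{1-k}\mathcal{D}_k\mathbf{M}_k=-s^{k-1}$ with the "$-1$" omitted, harmless there only because the $v_1v_2$ terms cancel in the Wronskian identity, so your version is the more careful one.
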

\begin{proof}
Part (i) follows from the calculation
$${\mathcal D}_0 W_{k}[v_1,v_2](s) =  s\left( v_{1} \mathcal{D}_{1-k}\mathcal{D}_{k}v_{2}- v_{2} \mathcal{D}_{1-k}\mathcal{D}_{k}v_{1}\right)(s),$$
such that
$${\mathcal D}_0 W_{k}[\mathbf{M}_{k},Z_{k}](s) = Z_k(s)s^k,$$
because
$$ \mathcal{D}_{1-k}\mathcal{D}_{k} Z_k =   Z_k, \qquad  \mathcal{D}_{1-k}\mathcal{D}_{k} \mathbf{M}_{k}=   \mathbf{M}_k-s^{k-1},$$
while part (ii) follows from the facts that  $\mathcal{D}_k I_k = I_{k-1}$, $\mathcal{D}_{k}\mathbf{L}_{k}= \mathbf{L}_{k-1}$ for $k \in {\mathbb N}$ and 
$$
\mathcal{D}_{-k} I_k = I_{k+1}, \qquad \mathcal{D}_{-k}\mathbf{L}_{k}(s) = \mathbf{L}_{k+1}(s) + \frac{s^k}{2^k \sqrt{\pi}\Gamma(k+\tfrac{3}{2})}
$$
for $k \in {\mathbb N}_0$. Turning to part (iii), we find from the 
integral representations of modified Bessel and Struve functions (Abramowitz \& Stegun \cite[(9.6.18), (12.2.2)]{AbramowitzStegun}) that
$$
 \mathbf{M}_{k}(s) = s^{k}\int_0^{\frac{\pi}{2}} \mathrm{e}^{-s\cos(\theta)}\,\sin^{2n}(\theta)\,\mathrm{d}\theta,
$$
such that  $\mathbf{M}_{k}(s) >0$ and 
\begin{align*}
\mathbf{M}_{k}(s) 
={}& s^{k-1}\int_0^{\frac{\pi}{2}} \frac{\mathrm{d}}{\mathrm{d}\theta}\left(\mathrm{e}^{-s\cos(\theta)}\right)\,\sin^{2k-1}(\theta)\,\mathrm{d}\theta\\
={}& s^{k-1}\left(1 - (2k-1)\int_0^{\frac{\pi}{2}} \mathrm{e}^{-s\cos(\theta)}\cos(\theta)\sin^{2(k-1)}(\theta)\,\mathrm{d}\theta\right)\\
\leq {}& s^{k-1}
\end{align*}
for $k \in {\mathbb N}$, while the identity
$$
    \mathbf{M}_{0}(s) = \frac{1}{3}\left(\mathbf{M}_{2}(s) - s\right) + \frac{2}{s}\mathbf{M}_{1}(s),
$$
shows that $ 0 \leq \mathbf{M}_{0}(s) \leq 2s^{-1}$. The asymptotic estimates are also obtained from the corresponding estimates for $I_n$ and ${\mathbf L}_n$
(see Abramowitz \& Stegun \cite[(9.6.10), (9.7.1), (12.2.1), (12.2.6)]{AbramowitzStegun}).

The final assertion follows from the calculations
    \begin{equation*}
    \begin{split}
        \frac{\mathrm{d}}{\mathrm{d}s} \left(s^{-k}\mathbf{M}_{k}(s)\right) ={}& s^{-k}\mathcal{D}_{-k}\mathbf{M}_{k}(s) = \frac{s^{-k}}{(2k+1)}\big(\mathbf{M}_{k+1}(s) - s^{k}\big)\leq 0, \\
        \frac{\mathrm{d}}{\mathrm{d}s} \left(s^{k}\mathbf{M}_{k}(s)\right) ={}& s^{k}\mathcal{D}_{k}\mathbf{M}_{k}(s) = (2k-1) s^{k}\mathbf{M}_{k-1}(s) \geq 0,
    \end{split}
    \end{equation*}
for $k \in {\mathbb N}$ and
    \begin{equation*}
    \begin{split}
        \frac{\mathrm{d}}{\mathrm{d}s}\mathbf{M}_{0}(s) ={}& \mathbf{M}_{1}(s)-1 \leq 0. \qedhere
    \end{split}
    \end{equation*}
\end{proof}

\begin{remark}
The same argument shows that the functions $s\mapsto s^{k}I_{k}(s)$ and $s\mapsto s^{-k}I_{k}(s)$ are monotone increasing and $s\mapsto s^{k}K_{k}(s)$ and $s\mapsto s^{-k}K_{k}(s)$ are monotone decreasing over $s\in(0,\infty)$
for each $k\in\mathbb{N}_0$.
\end{remark}

Finally, we record the integrals
\begin{equation}
       \int_a^b I_ \ell(|\xi|r)\,r^{ \ell+1}\,\mathrm{d}r= \left[\frac{r^{ \ell+1}}{|\xi|}I_{ \ell+1}(|\xi|r)\right]_a^b, \qquad
       \int_a^b \tilde{K}_ \ell(|\xi|r)\,r^{ \ell+1}\,\mathrm{d}r= \left[-\frac{r^{ \ell+1}}{|\xi|}\tilde{K}_{\ell+1}(|\xi|r)\right]_a^b
       \label{Block1}
\end{equation}
for $\ell \geq 0$,
\begin{align}
 \int_a^b I_ \ell(|\xi|r)\,r^{ \ell}\,\mathrm{d}r ={}&\left[|\xi|^{- \ell}\, r\left(\mathbf{M}_{ \ell}(|\xi|r)I_{ \ell-1}(|\xi|r) - (2 \ell-1)\mathbf{M}_{ \ell-1}(|\xi|r)I_ \ell (|\xi|r)\right)\right]_a^b, \label{Block2I}\\
        \int_a^b \tilde{K}_ \ell(|\xi|r)\,r^{ \ell}\,\mathrm{d}r ={}&\left[-|\xi|^{- \ell}\, r\left(\mathbf{M}_{ \ell}(|\xi|r)\tilde{K}_{ \ell-1}(|\xi|r) + (2 \ell-1)\mathbf{M}_{ \ell-1}(|\xi|r)\tilde{K}_ \ell(|\xi|r)\right)\right]_a^b \label{Block2K}
    \end{align}
for $\ell \geq 1$,
\begin{align}
    \int_a^b I_ \ell(|\xi|r)\,r^{ \ell-2}\,\mathrm{d}r ={}&\left[-r^{ \ell-1}I_ \ell (|\xi|r) + |\xi|^{2- \ell}r\left(\mathbf{M}_{ \ell-1}(|\xi|r)I_{ \ell-2}(|\xi|r) - (2 \ell-3)\mathbf{M}_{ \ell-2}(|\xi|r)I_{ \ell-1}(|\xi|r)\right)\right]_a^b, \label{Block3I}\\
\int_a^b \tilde{K}_ \ell(|\xi|r)\,r^{ \ell-2}\,\mathrm{d}r ={}&\left[-r^{ \ell-1}\tilde{K}_ \ell(|\xi|r) + |\xi|^{2- \ell}r\left(\mathbf{M}_{ \ell-1}(|\xi|r)\tilde{K}_{ \ell-2}(|\xi|r) + (2 \ell-3)\mathbf{M}_{ \ell-2}(|\xi|r)\tilde{K}_{ \ell-1}(|\xi|r)\right)\right]_a^b \label{Block3K}
    \end{align}
for $\ell \geq 2$, and
\begin{align}
\int_a^b I_{\ell-2}&(|\xi|r)\,r^{\ell-2}\,\mathrm{d}r = \nonumber \\
{}&\left[\frac{r^{\ell-1}I_{\ell-2}(|\xi|r)}{2\ell-3} -\frac{|\xi|^{2-\ell}r}{2\ell-3}\left(\mathbf{M}_{\ell-1}(|\xi|r)I_{\ell-2}(|\xi|r) - (2\ell-3)\mathbf{M}_{\ell-2}(|\xi|r)I_{\ell-1}(|\xi|r)\right)\right]_a^b, \label{Block4I}\\
\int_a^b \tilde{K}_{\ell-2}&(|\xi|r)\,r^{\ell-2}\,\mathrm{d}r = \nonumber \\
{}&\left[\frac{r^{\ell-1}\tilde{K}_{\ell-2}(|\xi|r)}{2\ell-3} -\frac{|\xi|^{2-\ell}r}{2\ell-3}\left(\mathbf{M}_{\ell-1}(|\xi|r)\tilde{K}_{\ell-2}(|\xi|r) + (2\ell-3)\mathbf{M}_{\ell-2}(|\xi|r)\tilde{K}_{\ell-1}(|\xi|r)\right)\right]_a^b \label{Block4K}
    \end{align}
for $\ell \geq 2$.

\subsection{Estimates for $\mathcal{G}_k$ and ${\mathcal B}_k$} \label{Estimates}

We begin by establishing Theorem \ref{lo estimates}(i) in three steps (Lemmata \ref{lem:G(f_k)-est}, \ref{lem:DG(f_k)-est} and \ref{lem:D2G(f_k)-est}).
In analogy with the derivatives $\mathcal{D}_{k}f=  r^{-k}\frac{\mathrm{d}}{\mathrm{d}r}(r^{k}f)$, our method is to 
consider integrals of the form
\begin{equation*}
    r^{-\ell}\int_{0}^{1}|f(r,\rho)|\,\rho^{1+\ell}\,\mathrm{d}\rho,\qquad \rho^{-\ell}\int_{0}^{1}|f(r,\rho)|\,r^{1+\ell}\,\mathrm{d}r,
\end{equation*}
for suitably chosen $\ell\in\mathbb{Z}$.

\begin{proposition}\label{prop:G-bound}
For each $q \in \{0,1,2\}$ and $k \in {\mathbb N}_0$ the estimates
    \begin{equation*}
    |\xi|^{q}\rho^{-k}\int_0^1 |G_k(r,\rho)|\,r^{1+k}\,\mathrm{d}r, \ |\xi|^{q} r^{-k}\int_0^1 |G_k(r,\rho)|\,\rho^{1+k}\,\mathrm{d}\rho \lesssim 1
\end{equation*}
hold uniformly over $r,\rho\in[0,1]$ and $\xi \in {\mathbb R}$.
\end{proposition}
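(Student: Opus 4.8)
The plan is to reduce both estimates to a single elementary bound for a ratio of modified Bessel functions by evaluating the two integrals in closed form. I treat the first integral in detail; the second follows \emph{mutatis mutandis} with the roles of $r$ and $\rho$ interchanged, since the resulting expression turns out to be symmetric. Throughout I abbreviate $a=|\xi|$ and assume $a>0$ (the case $\xi=0$ then follows by taking the limit, the right-hand sides below having finite limits as $a\downarrow0$).

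First I would record the sign of the Green's function. Since $s\mapsto K_k(s)/I_k(s)$ is strictly decreasing (it is the quotient of the decreasing function $s^{-k}K_k$ by the increasing function $s^{-k}I_k$ from the remark recording the monotonicity of $s\mapsto s^{\pm k}I_k(s)$ and $s\mapsto s^{\pm k}K_k(s)$) and $\tilde{K}_k(s)=I_k(s)\big(K_k(s)/I_k(s)-K_k(a)/I_k(a)\big)$ vanishes at $s=a$, one has $\tilde{K}_k>0$ on $(0,a)$, whence $G_k\leq0$ and $|G_k|=-G_k$ throughout $B_1(\mathbf{0})$. Splitting the integral at $r=\rho$ and using the closed-form identities \eqref{Block1} with $\ell=k$ (the lower endpoint $r=0$ contributing nothing because $r^{k+1}I_{k+1}(ar)=O(r^{2k+2})$), I obtain
\begin{align*}
\rho^{-k}\int_0^1 |G_k(r,\rho)|\,r^{1+k}\,\mathrm{d}r
&= \frac{\rho}{a}\big[\tilde{K}_k(a\rho)I_{k+1}(a\rho)+I_k(a\rho)\tilde{K}_{k+1}(a\rho)\big]-\frac{\rho^{-k}I_k(a\rho)}{a}\tilde{K}_{k+1}(a).
\end{align*}
The bracketed term equals $(a\rho)^{-1}$ by the identity $s[\tilde{K}_{k+1}(s)I_k(s)+I_{k+1}(s)\tilde{K}_k(s)]=1$ (case $i=k+1$), while the same identity at $s=a$, combined with $\tilde{K}_k(a)=0$, gives $\tilde{K}_{k+1}(a)=(aI_k(a))^{-1}$. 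Hence the whole expression collapses to
$$
\rho^{-k}\int_0^1 |G_k(r,\rho)|\,r^{1+k}\,\mathrm{d}r=\frac{1}{a^2}\left(1-\frac{\rho^{-k}I_k(a\rho)}{I_k(a)}\right),
$$
and the analogous computation gives the same quantity with $\rho$ replaced by $r$ for the second integral.

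It then remains to bound $a^{q-2}\big(1-t^{-k}I_k(at)/I_k(a)\big)$ uniformly over $a>0$ and $t\in(0,1)$ for $q\in\{0,1,2\}$. Here I would use the monotonicity of $s\mapsto s^{-k}I_k(s)$ to see that the bracket is nonnegative and at most $1$, which already settles $q=2$. For $q=0,1$ the cancellation near $a=0$ — where each of the two terms above blows up like $a^{-2}$ — is the main obstacle, and I would control it by writing the bracket as an integral of the derivative $\tfrac{\mathrm{d}}{\mathrm{d}t}\big[t^{-k}I_k(at)\big]=a\,t^{-k}I_{k+1}(at)$ (using $\mathcal{D}_{-k}I_k=I_{k+1}$):
$$
\frac{1}{a^2}\left(1-\frac{t^{-k}I_k(at)}{I_k(a)}\right)=\frac{1}{a\,I_k(a)}\int_t^1 \tau^{-k}I_{k+1}(a\tau)\,\mathrm{d}\tau\leq\frac{I_{k+1}(a)}{2\,a\,I_k(a)},
$$
the last step again invoking the monotonicity of $s\mapsto s^{-(k+1)}I_{k+1}(s)$ to replace $(a\tau)^{-(k+1)}I_{k+1}(a\tau)$ by its value at $\tau=1$.

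The proof concludes by noting that $I_{k+1}(a)/\big(aI_k(a)\big)$ is continuous and positive on $(0,\infty)$ with finite limits $\tfrac{1}{2(k+1)}$ as $a\downarrow0$ and $0$ as $a\to\infty$ (from the standard small- and large-argument asymptotics of $I_k$), hence is bounded by a constant depending only on $k$. Multiplying by $a^q$ then yields $\lesssim1$ for $q=0$ directly, and for $q=1$ after cancelling one power of $a$ against the estimate, with the $q=2$ case already handled; the identical argument applied to the symmetric expression disposes of the second family of integrals.
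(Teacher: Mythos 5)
Your proof is correct and essentially the paper's own: you evaluate $\rho^{-k}\int_0^1|G_k(r,\rho)|\,r^{1+k}\,\mathrm{d}r$ in closed form using the integrals \eqref{Block1} together with the identity $s\bigl[\tilde{K}_{k+1}(s)I_k(s)+I_{k+1}(s)\tilde{K}_k(s)\bigr]=1$ (which also gives $\tilde{K}_{k+1}(|\xi|)=1/(|\xi|I_k(|\xi|))$), arriving at exactly the paper's expression $\tfrac{1}{|\xi|^2}\bigl(1-\rho^{-k}I_k(|\xi|\rho)/I_k(|\xi|)\bigr)$, and you dispatch the second family by the same symmetry $G_k(r,\rho)=G_k(\rho,r)$. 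The remaining differences are cosmetic: you make the sign of $G_k$ explicit (the paper drops the absolute values tacitly) and bound the bracket for $q\in\{0,1\}$ by rewriting it as $\tfrac{1}{|\xi|I_k(|\xi|)}\int_t^1\tau^{-k}I_{k+1}(|\xi|\tau)\,\mathrm{d}\tau\leq\tfrac{I_{k+1}(|\xi|)}{2|\xi|I_k(|\xi|)}$, whereas the paper takes the supremum over $\rho$ via the monotonicity of $s\mapsto s^{-k}I_k(s)$, checks the limits as $|\xi|\to0,\infty$, and obtains $q=1$ by averaging the $q=0$ and $q=2$ bounds---both variants resting on the same monotonicity and asymptotics of the modified Bessel functions.
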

\begin{proof}
Using the integrals \eqref{Block1}
we find that
\begin{equation*}
    \begin{split}
        \mathcal{I}_{1}:={}& \rho^{-k}\int_0^1 |G_k(r,\rho)|\,r^{1+k}\,\mathrm{d}r\\
        ={}& \tilde{K}_k(|\xi|\rho)\rho^{-k}\int_0^\rho I_k(|\xi|r)\,r^{1+k}\,\mathrm{d}r + I_k(|\xi|\rho)\rho^{-k}\int_\rho^1 \tilde{K}_k(|\xi|r)\,r^{1+k}\,\mathrm{d}r\\
        ={}& \frac{1}{|\xi|^2} \left(1 - \frac{\rho^{-k}I_k(|\xi|\rho)}{I_{k}(|\xi|)}\right), \\
    \end{split}
\end{equation*}
which implies that $|\xi|^2 \mathcal{I}_{1}\leq 1$. Furthermore, the function $s\mapsto s^{-k}I_k (s)$ is monotone increasing over \linebreak
$s\in(0,\infty)$, such that
\begin{align*}
    \frac{1}{|\xi|^2} \left(1 - \frac{\rho^{-k}I_k(|\xi|\rho)}{I_k (|\xi|)}\right) & \leq \frac{1}{|\xi|^2} \left(1 - \frac{\lim_{s\to0}s^{-k}I_k(s)}{|\xi|^{-k} I_k (|\xi|)}\right) \\
    & = \frac{1}{|\xi|^2} \left(1 - \frac{2^{-k}}{\Gamma(k+1)|\xi|^{-k} I_k (|\xi|)}\right) \\
    & \rightarrow \begin{cases}
        \frac{1}{4(k+1)}, & |\xi|\to0,\\
        0, & |\xi|\to\infty;
    \end{cases}
\end{align*}
it follows that ${\mathcal I}_1 \lesssim 1$ and hence
$
    |\xi|\mathcal{I}_{1} \leq \frac{1}{2}\mathcal{I}_1 + \frac{1}{2}|\xi|^2\mathcal{I}_1 \lesssim 1.
$

The second estimate follows immediately from the symmetry property $G_k(r,\rho) = G_k(\rho,r)$.
\end{proof}

\begin{lemma}\label{lem:G(f_k)-est}
The estimates
\begin{equation*}
    \|\mathcal{G}_k(f_k)\|_{L_1^2},\  \|\partial_z\mathcal{G}_k(f_k)\|_{L_1^2},\  \|\partial_z^2\mathcal{G}_k(f_k)\|_{L_1^2} \lesssim  \,\|f_k\|_{L_1^2}
\end{equation*}
hold for each $f_k \in {\mathscr S}_{(k)}((0,1) \times {\mathbb R};{\mathbb C})$ and $k \in {\mathbb N}_0$.

\end{lemma}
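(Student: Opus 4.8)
The plan is to pass to the partial Fourier transform in the axial variable $z$, in which $\mathcal{G}_k$ acts, for each fixed frequency $\xi$, as an integral operator in the radial variable; the three estimates then reduce to a single operator bound which I obtain \emph{uniformly in $\xi$} by means of the Schur test, fed with Proposition~\ref{prop:G-bound}.

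First I would use Plancherel's theorem for the partial Fourier transform $\mathcal{F}$ in $z$. Since $\mathcal{F}$ is unitary on $L^2(\mathbb{R};\mathbb{C})$ and intertwines $\partial_z$ with multiplication by $\mathrm{i}\xi$, for each $q\in\{0,1,2\}$ one has
$$
\|\partial_z^q\mathcal{G}_k(f_k)\|_{L_1^2}^2
= 2\pi\int_{\mathbb{R}}\int_0^1 \Big||\xi|^q\!\int_0^1 G_k(r,\rho)\,\check{f}_k(\rho,\xi)\,\rho\drho\Big|^2 r\dr\,\mathrm{d}\xi .
$$
For fixed $\xi$ I would write the inner integral as $(T_\xi h)(r):=\int_0^1 G_k(r,\rho)h(\rho)\,\rho\drho$, regarded as an operator on the weighted Hilbert space $L^2((0,1);r\dr)$, so that the right-hand side above equals $2\pi\int_{\mathbb{R}}\||\xi|^q T_\xi\check{f}_k(\cdot,\xi)\|_{L^2(r\dr)}^2\,\mathrm{d}\xi$.

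Next I would apply the Schur test with homogeneous weight $p(s)=s^k$ relative to the measure $r\dr$. Its two hypotheses,
$$
\int_0^1 |\xi|^q|G_k(r,\rho)|\,\rho^k\,\rho\drho \lesssim r^k,
\qquad
\int_0^1 |\xi|^q|G_k(r,\rho)|\,r^k\,r\dr \lesssim \rho^k,
$$
are precisely the second and first estimates of Proposition~\ref{prop:G-bound} after multiplying through by $r^k$ and $\rho^k$ respectively. The Schur test therefore yields $\||\xi|^qT_\xi\|_{L^2(r\dr)\to L^2(r\dr)}\lesssim 1$ uniformly in $\xi\in\mathbb{R}$. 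Substituting this bound and applying Plancherel once more gives
$$
\|\partial_z^q\mathcal{G}_k(f_k)\|_{L_1^2}^2
\lesssim 2\pi\int_{\mathbb{R}}\int_0^1 |\check{f}_k(\rho,\xi)|^2\,\rho\drho\,\mathrm{d}\xi
= \|f_k\|_{L_1^2}^2 ,
$$
which is the claim for $q\in\{0,1,2\}$.

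The only non-routine point — and hence the main obstacle — is recognising that the weighted kernel bounds of Proposition~\ref{prop:G-bound} are exactly the two Schur conditions for the natural weight $s^k$ and measure $r\dr$. Once this matching is made the operator bound is immediate and, crucially, \emph{uniform in $\xi$}, which is what permits the $\xi$-integration and thus the passage to the $\partial_z$-derivatives without any loss.
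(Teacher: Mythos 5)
Your proof is correct and coincides in substance with the paper's: after the same reduction via the partial Fourier transform in $z$, the paper establishes the uniform bound by applying the Cauchy--Schwarz inequality with the weight $s^{k}$ (inequality \eqref{eq:CS}) followed by Fubini's theorem, which is precisely the standard proof of the Schur test you invoke, with the two Schur conditions supplied by the two estimates of Proposition~\ref{prop:G-bound} exactly as you match them. Packaging this computation as an abstract Schur-test application rather than carrying it out by hand is a cosmetic difference only.
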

\begin{proof}
For each fixed $q\in\{0,1,2\}$ we find that
\begin{equation*}
    \begin{split}
        \|\partial_z^q \mathcal{G}_k(f_k)\|^2_{L_1^2} ={}& \int_{-\infty}^{\infty}\int_0^1 \left|\int_0^1 |\xi|^q G_k(r,\rho) \check{f}_k(\rho)\,\rho\,\mathrm{d}\rho\right|^2\,r\,\mathrm{d}r\,\mathrm{d}\xi,\\
        \leq{}& \int_{-\infty}^{\infty}\int_0^1 \left(|\xi|^q r^{-k}\int_0^1 |G_k(r,\rho) |\,\rho^{1+k}\,\mathrm{d}\rho\right)\left(|\xi|^q r^{k}\int_0^1 |G_{k}(r,\rho)| |\check{f}_k(\rho)|^2\,\rho^{1-k}\,\mathrm{d}\rho\right)\,r\,\mathrm{d}r\,\mathrm{d}\xi,\\
        \lesssim{}& \int_{-\infty}^{\infty}\int_0^1 \left(|\xi|^q r^{k}\int_0^1 |G_k(r,\rho)| |\check{f}_k(\rho)|^2\,\rho^{1-k}\,\mathrm{d}\rho\right)\,r\,\mathrm{d}r\,\mathrm{d}\xi,\\
        ={}& \int_{-\infty}^{\infty}\int_0^1 \left(|\xi|^q \rho^{-k}\int_0^1 |G_k(r,\rho)|\,r^{1+k}\,\mathrm{d}r \right) |\check{f}_k(\rho)|^2\,\,\rho\,\mathrm{d}\rho\,\mathrm{d}\xi,\\
        \lesssim{}& \int_{-\infty}^{\infty}\int_0^1 |\check{f}_k(\rho)|^2\,\,\rho\,\mathrm{d}\rho\,\mathrm{d}\xi,\\
        \lesssim{}& \|f_k\|_{L_1^2},
    \end{split}
\end{equation*}
where we have used the Cauchy-Schwarz inequality
\begin{equation}
\left|\int_0^1 |\xi|^q G_k(r,\rho) \check{f}_k(\rho)\,\rho\,\mathrm{d}\rho\right|^2
\!\!\leq
\left(|\xi|^q r^{-k}\int_0^1 |G_k(r,\rho) |\,\rho^{1+k}\,\mathrm{d}\rho\right)\!\!\left(|\xi|^q r^{k}\int_0^1 |G_{k}(r,\rho)| |\check{f}_k(\rho)|^2\,\rho^{1-k}\,\mathrm{d}\rho\right), \label{eq:CS}
\end{equation}
Fubini's theorem and Proposition \ref{prop:G-bound}.
\end{proof}

The next step is to estimate
$$
        \mathcal{D}_{\pm k}\mathcal{G}_k(f_k) 
   =  \mathcal{F}^{-1}\left[\int_0^1 \mathcal{D}_{\pm k}G_k(r,\rho)\,\rho\,\check{f}_k(\rho)\,\rho\,\mathrm{d}\rho\right],
$$
where
\begin{equation*}
    \mathcal{D}_{\pm k}G_k(r,\rho) = \begin{cases}
        \displaystyle-|\xi| I_{k\mp1}(|\xi|r) \tilde{K}_k(|\xi|\rho), & 0<r<\rho,\\
        \displaystyle |\xi| I_k (|\xi|\rho)\tilde{K}_{k\mp1}(|\xi|r), & \rho<r<1.\\
    \end{cases}
\end{equation*}

\begin{proposition}\label{prop:DG-bound}$ $
\begin{itemize}
\item[(i)]
For each fixed $q \in \{0,1\}$ and $k \in {\mathbb N}$ the estimates
$$
        |\xi|^{q}\rho^{1-k}\int_0^1 |\mathcal{D}_{k}G_k(r,\rho)|\,r^{k}\,\mathrm{d}r,\  |\xi|^{q}r^{1-k}\int_0^1 |\mathcal{D}_{k}G_k(r,\rho)|\,\rho^{k}\,\mathrm{d}\rho \lesssim 1
$$
hold uniformly over $r,\rho\in[0,1]$ and $\xi \in {\mathbb R}$.
\item[(ii)]
For each fixed $q \in \{0,1\}$ and $k \in {\mathbb N}_0$ the estimates
$$
        |\xi|^{q}\rho^{-k}\int_0^1 |\mathcal{D}_{-k}G_k(r,\rho)|\,r^{1+k}\,\mathrm{d}r,\  |\xi|^{q}r^{-k}\int_0^1 |\mathcal{D}_{-k}G_k(r,\rho)|\,\rho^{1+k}\,\mathrm{d}\rho \lesssim 1
$$
hold uniformly over $r,\rho\in[0,1]$ and $\xi \in {\mathbb R}$.
\end{itemize}
\end{proposition}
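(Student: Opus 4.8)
The plan is to follow the scheme of Proposition~\ref{prop:G-bound}, with the integral identity \eqref{Block1} for $G_k$ replaced by the corresponding identities for the one-derivative kernels $\mathcal D_{\pm k}G_k$ displayed above. For each of the four quantities I insert the explicit piecewise formula for $\mathcal D_{\pm k}G_k$ and split the integral at the diagonal. The moduli may be removed: the functions $I_\ell$ are positive, the functions $\tilde K_{k\pm1}=K_{k\pm1}+\tfrac{K_k(|\xi|)}{I_k(|\xi|)}I_{k\pm1}$ are positive as sums of positive terms, and $\tilde K_k\ge0$ on $(0,|\xi|)$ because $s\mapsto K_k(s)/I_k(s)$ is decreasing. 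The four inner integrals then fall into two types according to the power of the integration variable accompanying the Bessel factor.

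For the first estimate of (i) and the second estimate of (ii) the inner integrals have the form $\int Z_\ell(|\xi|r)\,r^{\ell+1}\,\mathrm{d}r$ with $Z_\ell\in\{I_\ell,\tilde K_\ell\}$, so \eqref{Block1} applies directly. The computation is then as transparent as in Proposition~\ref{prop:G-bound}: the boundary relation $\tilde K_k(|\xi|)=0$ together with the Wronskian identity $s\big[\tilde K_k I_{k-1}+I_k\tilde K_{k-1}\big]=1$ (which at $s=|\xi|$ gives $|\xi|I_k(|\xi|)\tilde K_{k-1}(|\xi|)=1$) makes the endpoint terms telescope, leaving closed forms such as $2\rho\,I_k(|\xi|\rho)\tilde K_k(|\xi|\rho)$ (plus, in the (ii) case, a boundary remainder). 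These are bounded uniformly for $q\in\{0,1\}$ using $\tilde K_k\le K_k$, the boundedness of $s\mapsto I_\ell(s)K_\ell(s)$ and $s\mapsto s\,I_\ell(s)K_\ell(s)$ on $(0,\infty)$, and the monotonicity of $s^{\pm\ell}I_\ell$ and $s^{\pm\ell}K_\ell$; as in the model proof, uniformity is confirmed by inspecting the limits $|\xi|\to0$ and $|\xi|\to\infty$. Note that the factor $|\xi|$ already carried by $\mathcal D_{\pm k}G_k$ makes the interpolation step of Proposition~\ref{prop:G-bound} unnecessary here.

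The two remaining estimates---the second of (i) and the first of (ii)---are the crux. Their inner integrals are of the form $\int Z_\ell(|\xi|r)\,r^{\ell}\,\mathrm{d}r$, which forces the use of the modified-Struve identities \eqref{Block2I} and \eqref{Block2K}, equivalently the identity $\int_a^b Z_k(s)\,s^k\,\mathrm{d}s=\big[W_k[\mathbf M_k,Z_k](s)\big]_a^b$ from the preceding proposition. After the substitution $s=|\xi|r$ (resp.\ $s=|\xi|\rho$), and after simplifying the endpoint contributions by means of $\tilde K_k(|\xi|)=0$, $\mathcal D_k\tilde K_k=-\tilde K_{k-1}$ and the Wronskian relation, each quantity reduces to a fixed combination of products of $\mathbf M_\ell$, $I_\ell$ and $\tilde K_\ell$ times powers of $r$, $\rho$ and $|\xi|$. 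The main obstacle is to show that this combination is $\lesssim1$ uniformly over $r,\rho\in[0,1]$ and $\xi\in\mathbb R$ for both $q=0$ and $q=1$. This is precisely where the quantitative information on $\mathbf M_k$ from the preceding proposition is needed: the size bounds $0\le\mathbf M_k(s)\le s^{k-1}$ ($k\ge1$) and $0\le\mathbf M_0(s)\le 2s^{-1}$, the monotonicity of $s^{\pm k}\mathbf M_k(s)$, and the small- and large-argument asymptotics, in combination with the monotonicity of $s^{\pm\ell}I_\ell$, $s^{\pm\ell}K_\ell$ and the boundedness of $I_\ell K_\ell$ and $sI_\ell K_\ell$, allow each product to be controlled, again by treating the regimes $|\xi|\to0$ and $|\xi|\to\infty$ separately. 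The low-index cases $k=0$ (in (ii)) and $k=1$ (in (i)) are handled with the $\mathbf M_0$ and $\mathbf M_1$ estimates respectively.
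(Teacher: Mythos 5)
Your proposal reproduces the paper's own proof in all essentials: the same splitting at the diagonal with the absolute values removed by positivity of $I_\ell$, $\tilde K_{k\pm1}$ and of $\tilde K_k$ on $(0,|\xi|)$, the same (correct) classification of the four integrals into the two \eqref{Block1} cases (the paper's $\mathcal{I}_3$ and $\mathcal{I}_4$, with exactly the closed form $2\rho\,I_k\tilde K_k$ and the boundary remainder you predict) and the two Struve cases requiring \eqref{Block2I}--\eqref{Block2K} (the paper's $\mathcal{I}_2$ and $\mathcal{I}_5$), the same endpoint simplifications via $\tilde K_k(|\xi|)=0$ and the Wronskian relation, and the same concluding boundedness checks of the resulting products of $\mathbf{M}_\ell$, $I_\ell$, $K_\ell$ (the paper's $F_q^1,\dots,F_q^5$) by inspecting the limits $s\to0$ and $s\to\infty$, including the separate low-index treatment of $k=0$ in part (ii). The plan is correct and takes essentially the same route as the paper.
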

\begin{proof}
(i) Using the integrals \eqref{Block2I}, \eqref{Block2K}, we find that\pagebreak
\begin{align*}
        \mathcal{I}_2 :={}& r^{1-k}\int_0^1 |\mathcal{D}_{k}G_{k}(r,\rho)|\,\rho^{k}\,\mathrm{d}\rho\\
        ={}& |\xi|\tilde{K}_{k-1}(|\xi|r) r^{1-k}\int_0^r I_k (|\xi|\rho)\,\rho^{k}\,\mathrm{d}\rho + |\xi| I_{k-1}(|\xi|r)  r^{1-k}\int_r^1 \tilde{K}_k(|\xi|\rho)\,\rho^{k}\,\mathrm{d}\rho\\
        ={}& 2(|\xi|r)^{2-k}\frac{1}{|\xi|}K_{k-1}(|\xi|r)\left(\mathbf{M}_{k}(|\xi|r)I_{k-1}(|\xi|r)-(2k-1)\mathbf{M}_{k-1}(|\xi|r)I_k (|\xi|r)\right) \\
        & \qquad\mbox{}+ 2(|\xi|r)^{2-k}\frac{K_{k}(|\xi|)}{|\xi|I_k (|\xi|)}I_{k-1}(|\xi|r)\left(\mathbf{M}_{k}(|\xi|r)I_{k-1}(|\xi|r)-(2k-1)\mathbf{M}_{k-1}(|\xi|r)I_k (|\xi|r)\right) \\
        & \qquad\mbox{}+ (|\xi|r)^{1-k}\frac{1}{|\xi|}\left((2k-1)\mathbf{M}_{k-1}(|\xi|r)  - \frac{\mathbf{M}_{k}(|\xi|)}{I_k (|\xi|)}I_{k-1}(|\xi|r)\right).
\end{align*}
Applying the inequalities
\begin{equation*}
    \begin{split}
        \frac{K_k(|\xi|)}{I_k (|\xi|)}I_{k-1}(|\xi|r) \leq \frac{K_k(|\xi|)}{I_k (|\xi|)}I_{k-1}(|\xi|) = \frac{1}{I_k (|\xi|)}\left(\frac{1}{|\xi|} - K_{k-1}(|\xi|)I_k (|\xi|)\right)\leq \frac{1}{|\xi|I_k (|\xi|)}\leq \frac{1}{|\xi|rI_k (|\xi|r)}
    \end{split}
\end{equation*}
and 
\begin{equation*}
\begin{split}
(|\xi|r)^{1-k}\left((2k-1)\mathbf{M}_{k-1}(|\xi|r)  - \frac{\mathbf{M}_{k}(|\xi|)}{I_k (|\xi|)}I_{k-1}(|\xi|r)\right) 
\leq{}& \frac{1}{2^{k-1}\Gamma(k)}\left(2^{k-1}\sqrt{\pi}\Gamma(k+\tfrac{1}{2})  - \frac{\mathbf{M}_{k}(|\xi|)}{I_k (|\xi|)}\right),
\end{split}
\end{equation*}
which follow from the facts that $s \to I_k(s)$, $s \to s^{-k}I_k(s)$ are monotone increasing, while $s \mapsto s^{-k}\mathbf{M}_{k}(s)$ is monotone decreasing, yields
$$
        |\xi|^{q}\mathcal{I}_{2} \leq 2F_{q}^1(|\xi|r) + 2F_q^2(|\xi|r) + F_{q}^3(|\xi|),
$$
where
\begin{equation*}
    \begin{split}
    F_{q}^1(s):={}& s^{q+1-k}K_{k-1}(s)\left(\mathbf{M}_{k}(s)I_{k-1}(s)- (2k-1)\mathbf{M}_{k-1}(s)I_k (s)\right), \\
    F_q^2(s):={}& s^{q-k}\,\frac{\mathbf{M}_{k}(s)I_{k-1}(s)- (2k-1)\mathbf{M}_{k-1}(s)I_k (s)}{I_k (s)}, \\
    F_{q}^3(s):={}& s^{q-1}\frac{1}{2^{k-1}\Gamma(k)}\left(2^{k-1}\sqrt{\pi}\Gamma(k+\tfrac{1}{2})  - \frac{\mathbf{M}_{k}(s)}{I_k (s)}\right).\\
    \end{split}
\end{equation*}
Finally, observing that
\begin{equation*}
    \begin{aligned}
     F_0^1(s) \to& \begin{cases}
        0, & s\to0,\\
        0, & s\to\infty,
    \end{cases} 
    &\qquad\qquad 
    F_1^1(s) \to& \begin{cases}
        0, & s\to0\\
        \frac{1}{2}, & s\to\infty,
    \end{cases}   \\
    F_0^2(s) \to& \begin{cases}
        \frac{1}{2k+1}, & s\to0,\\
        0, & s\to\infty,
    \end{cases} 
    &\qquad\qquad 
    F_1^2(s) \to& \begin{cases}
        0, & s\to0\\
        1, & s\to\infty,
    \end{cases}   \\
    F_0^3(s) \to& \begin{cases}
        \frac{2k}{2k+1}, & s\to0,\\
        0, & s\to\infty,
    \end{cases} 
    &\qquad\qquad 
    F_1^3(s) \to& \begin{cases}
        0, & s\to0\\
        \frac{\sqrt{\pi}\Gamma(k+\tfrac{1}{2})}{\Gamma(k)}, & s\to\infty,
    \end{cases}   
    \end{aligned}
\end{equation*}
we find that $F_q^j(s) \lesssim 1$ over $s\in[0,\infty)$ for $q\in\{0,1\}$ and $j\in\{1,2,3\}$, and thus conclude that
$|\xi|^{q}\mathcal{I}_{2} \lesssim1$.

Similarly
\begin{align*}
        \mathcal{I}_3 :={}& \rho^{1-k}\int_0^1 |\mathcal{D}_{k}G_{k}(r,\rho)|\,r^{k}\,\mathrm{d}r\\
        ={}& |\xi|\tilde{K}_k(|\xi|\rho) \rho^{1-k}\int_0^\rho I_{k-1}(|\xi|r)\,r^{k}\,\mathrm{d}r + |\xi| I_k (|\xi|\rho) \rho^{1-k}\int_\rho^1 \tilde{K}_{k-1}(|\xi|r)\,r^{k}\,\mathrm{d}r\\
       ={}& \frac{2}{|\xi|} (|\xi|\rho)\tilde{K}_k(|\xi|\rho) I_k (|\xi|\rho),
\end{align*}
such that
$$|\xi|^q {\mathcal I}_3 \leq 2 F^4_q(|\xi|\rho), \qquad  F_q^4(s) := s^{q} K_{k}(s)I_k (s)$$
for $q\in\{0,1\}$. Because
\begin{equation*}
    \begin{aligned}
     F_0^4(s) \to& \begin{cases}
        \frac{1}{2k}, & s\to0,\\
        0, & s\to\infty,
    \end{cases} 
    &\qquad\qquad 
    F_1^4(s) \to& \begin{cases}
        0, & s\to0,\\
        \frac{1}{2}, & s\to\infty,
    \end{cases}   \\ 
    \end{aligned}
\end{equation*}
we conclude that $F_q^4(s)\lesssim1$ over $s\in[0,\infty)$ for $q\in\{0,1\}$ and hence 
$|\xi|^{q}\mathcal{I}_{3} \lesssim 1$.

(ii) It follows from the integrals \eqref{Block1} that
\begin{equation*}
    \begin{split}
        \mathcal{I}_4 :={}& r^{-k}\int_0^1 |\mathcal{D}_{-k}G_{k}(r,\rho)|\,\rho^{k+1}\,\mathrm{d}\rho\\
        ={}& |\xi|\tilde{K}_{k+1}(|\xi|r) r^{-k}\int_0^r I_k (|\xi|\rho)\,\rho^{k+1}\,\mathrm{d}\rho + |\xi| I_{k+1}(|\xi|r)  r^{-k}\int_r^1 \tilde{K}_k(|\xi|\rho)\,\rho^{k+1}\,\mathrm{d}\rho\\
         ={}& \frac{2}{|\xi|}( |\xi| r) K_{k+1}(|\xi|r) I_{k+1}(|\xi|r) + \frac{I_{k+1}(|\xi|r)}{|\xi|I_{k}(|\xi|)}\left(2( |\xi| r) K_k(|\xi|)I_{k+1}(|\xi|r) - r^{-k}\right)\\
        \leq{}& \frac{2}{|\xi|}( |\xi| r) K_{k+1}(|\xi|r) I_{k+1}(|\xi|r)+\frac{2I_{k+1}(|\xi|)}{|\xi|I_k(|\xi|)}
    \end{split}
\end{equation*}
since
$$(|\xi| r) K_k(|\xi|)I_{k+1}(|\xi|r) \leq |\xi| K_k(|\xi|)I_{k+1}(|\xi|)=1-|\xi|K_{k+1}(|\xi|)I_k(|\xi|),$$
such that 
$$|\xi|^q\mathcal{I}_{4} \lesssim 2 F^4_q(|\xi| r) + F^5_q(|\xi|), \qquad F^5_q(s):=\frac{2I_{k+1}(s)}{sI_k(s)}$$
(where we have replaced $k$ by $k+1$ in the definition of $F_q^4$). Because
\begin{equation*}
    F_0^5(s)\to\begin{cases}
     \frac{1}{k+1}, & s\to0,\\
     0, & s\to\infty,\\
    \end{cases}\qquad\qquad F_1^5(s)\to\begin{cases}
    0, & s\to0,\\
    2, & s\to\infty,
    \end{cases}
\end{equation*}
we conclude that $|\xi|^q\mathcal{I}_{4} \lesssim 1$.

Similarly
\begin{equation*}
    \begin{split}
        \mathcal{I}_5 :={}& \rho^{-k}\int_0^1 |\mathcal{D}_{-k}G_{k}(r,\rho)|\,r^{k+1}\,\mathrm{d}r\\
        ={}& |\xi|\tilde{K}_k(|\xi|\rho) \rho^{-k}\int_0^\rho I_{k+1}(|\xi|r)\,r^{k+1}\,\mathrm{d}r + |\xi| I_k (|\xi|\rho) \rho^{-k}\int_\rho^1 \tilde{K}_{k+1}(|\xi|r)\,r^{k+1}\,\mathrm{d}r\\
              ={}& \frac{2}{|\xi|}\tilde{K}_k(|\xi|\rho) (|\xi|\rho)^{1-k}\left(\mathbf{M}_{k+1}(|\xi|\rho)I_k (|\xi|\rho) - (2k+1)\mathbf{M}_{k}(|\xi|\rho)I_{k+1}(|\xi|\rho)\right) \\
        & \qquad\mbox{}+ \frac{1}{|\xi|}(|\xi|\rho)^{-k} (2k+1)\left(\mathbf{M}_{k}(|\xi|\rho) - \frac{\mathbf{M}_{k}(|\xi|)}{I_k (|\xi|)}I_k (|\xi|\rho)\right)\\
        \leq{}& \frac{2}{|\xi|}K_{k}(|\xi|\rho) (|\xi|\rho)^{1-k}\left(\mathbf{M}_{k+1}(|\xi|\rho)I_k (|\xi|\rho) - (2k+1)\mathbf{M}_{k}(|\xi|\rho)I_{k+1}(|\xi|\rho)\right)\\
        & \qquad\mbox{}+ \frac{1}{|\xi|}\frac{(2k+1)}{2^k\Gamma(k+1)}\left(2^{k-1}\sqrt{\pi}\Gamma(k+\tfrac{1}{2}) - \frac{\mathbf{M}_{k}(|\xi|)}{I_k (|\xi|)}\right),\\
    \end{split}
\end{equation*}
such that
\begin{equation*}
    |\xi|^{q}\mathcal{I}_{5} \leq 2 F_q^1(|\xi|\rho) + \frac{(2k+1)}{2k} F_q^3(|\xi|)
\end{equation*}
(where we have replaced $k$ by $k+1$ in the definition of $F_q^1$). It follows that $|\xi|^{q}\mathcal{I}_{5} \lesssim1$ for $k \in {\mathbb N}$,
and furthermore
$$
   \left.\frac{(2k+1)}{2k} F_q^3(s)\right|_{k=0} = \frac{\pi s^{q-1}}{2}\frac{\mathbf{L}_{0}(s)}{I_{0}(s)},
$$
such that  
   \begin{equation*}
   \left.\frac{(2k+1)}{2k} F_0^3(s)\right|_{k=0} \to \begin{cases}
        1 & s\to0,\\
        0, & s\to\infty,\\
    \end{cases}
    \qquad
     \left.\frac{(2k+1)}{2k} F_1^3(s)\right|_{k=0} \to \begin{cases}
        0 & s\to0,\\
        \frac{\pi}{2}, & s\to\infty,\\
    \end{cases}  
\end{equation*}
and hence $|\xi|^{q}\mathcal{I}_{5} \lesssim1$ for $q \in \{0,1\}$ and $k=0$.
\end{proof}

\begin{lemma}\label{lem:DG(f_k)-est}
The estimates
\begin{equation*}
    \|\mathcal{D}_{k}\mathcal{G}_k(f_k)\|_{L_1^2},\ \|\partial_{z}\mathcal{D}_{k}\mathcal{G}_k(f_k)\|_{L_1^2} \lesssim \|f_k\|_{L_1^2},\qquad k \in {\mathbb N},
 \end{equation*}
 and
 \begin{equation*}   
\|\mathcal{D}_{-k}\mathcal{G}_k(f_k)\|_{L_1^2},\  \|\partial_{z}\mathcal{D}_{-k}\mathcal{G}_k(f_k)\|_{L_1^2} \lesssim \|f_k\|_{L_1^2}\qquad k \in {\mathbb N}_0,
\end{equation*}
hold for each $f_k \in {\mathscr S}_{(k)}((0,1) \times {\mathbb R};{\mathbb C})$.
\end{lemma}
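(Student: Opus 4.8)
The plan is to adapt the proof of Lemma \ref{lem:G(f_k)-est} essentially verbatim, replacing the Green's function $G_k$ by its radial derivative $\mathcal{D}_{\pm k}G_k$ and invoking the kernel bounds of Proposition \ref{prop:DG-bound} in place of those of Proposition \ref{prop:G-bound}. For each fixed $q \in \{0,1\}$ I would start from the Fourier representation
\begin{equation*}
\mathcal{D}_{\pm k}\mathcal{G}_k(f_k) = \mathcal{F}^{-1}\left[\int_0^1 \mathcal{D}_{\pm k}G_k(r,\rho)\,\check{f}_k(\rho)\,\rho\,\mathrm{d}\rho\right]
\end{equation*}
and apply Plancherel's theorem in the axial variable to write
\begin{equation*}
\|\partial_z^q \mathcal{D}_{\pm k}\mathcal{G}_k(f_k)\|_{L_1^2}^2 = \int_{-\infty}^{\infty}\!\!\int_0^1 \left|\int_0^1 |\xi|^q \mathcal{D}_{\pm k}G_k(r,\rho)\,\check{f}_k(\rho)\,\rho\,\mathrm{d}\rho\right|^2 r\,\mathrm{d}r\,\mathrm{d}\xi.
\end{equation*}

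Next I would apply the Cauchy--Schwarz inequality in $\rho$, splitting the integrand so that exactly one of the two resulting factors matches the first inequality of the relevant part of Proposition \ref{prop:DG-bound}. For $\mathcal{D}_{-k}$ ($k \in {\mathbb N}_0$) the weights are identical to those in Lemma \ref{lem:G(f_k)-est}, namely
\begin{equation*}
\left|\int_0^1 |\xi|^q \mathcal{D}_{-k}G_k\,\check{f}_k\,\rho\,\mathrm{d}\rho\right|^2 \leq \left(|\xi|^q r^{-k}\!\int_0^1 |\mathcal{D}_{-k}G_k|\,\rho^{1+k}\,\mathrm{d}\rho\right)\!\!\left(|\xi|^q r^{k}\!\int_0^1 |\mathcal{D}_{-k}G_k|\,|\check{f}_k|^2\,\rho^{1-k}\,\mathrm{d}\rho\right),
\end{equation*}
whereas for $\mathcal{D}_{k}$ ($k \in {\mathbb N}$) I would instead use the exponents dictated by Proposition \ref{prop:DG-bound}(i), splitting off the factor $|\xi|^q r^{1-k}\int_0^1 |\mathcal{D}_{k}G_k|\,\rho^{k}\,\mathrm{d}\rho$ and leaving $|\xi|^q r^{k-1}\int_0^1 |\mathcal{D}_{k}G_k|\,|\check{f}_k|^2\,\rho^{2-k}\,\mathrm{d}\rho$. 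In either case the first bracket is $\lesssim 1$ by the first estimate of Proposition \ref{prop:DG-bound}.

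It then remains to bound the surviving term. Dropping the factor that is $\lesssim 1$ and applying Fubini's theorem, the $\mathcal{D}_{-k}$ case reduces to
\begin{equation*}
\lesssim \int_{-\infty}^{\infty}\!\!\int_0^1 \left(|\xi|^q \rho^{-k}\!\int_0^1 |\mathcal{D}_{-k}G_k|\,r^{1+k}\,\mathrm{d}r\right)|\check{f}_k(\rho)|^2\,\rho\,\mathrm{d}\rho\,\mathrm{d}\xi,
\end{equation*}
and the $\mathcal{D}_k$ case to the analogous expression carrying the weight $|\xi|^q\rho^{1-k}\int_0^1 |\mathcal{D}_{k}G_k|\,r^{k}\,\mathrm{d}r$. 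These bracketed $r$-integrals are $\lesssim 1$ by the second estimate of Proposition \ref{prop:DG-bound}, so in both cases the whole expression collapses to $\int_{-\infty}^{\infty}\int_0^1 |\check{f}_k(\rho)|^2\,\rho\,\mathrm{d}\rho\,\mathrm{d}\xi$, which equals $\|f_k\|_{L_1^2}^2$ by Plancherel's theorem in $z$; this yields the asserted bounds.

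Since the genuine analytic work---the uniform-in-$(r,\rho,\xi)$ control of the weighted integrals of $\mathcal{D}_{\pm k}G_k$, resting on the modified Struve-function estimates---has already been carried out in Proposition \ref{prop:DG-bound}, this lemma presents no serious obstacle. The only points needing care are (a) choosing the Cauchy--Schwarz exponents so that both resulting factors align precisely with the two inequalities of Proposition \ref{prop:DG-bound} (so that the final reduction produces the clean weight $\rho\,\mathrm{d}\rho$), and (b) respecting the index ranges, using $k \in {\mathbb N}$ for the $\mathcal{D}_k$ estimates---where the weights $r^{1-k}$ and $\rho^{1-k}$ appear and would be singular at the origin for $k=0$---and $k \in {\mathbb N}_0$ for the $\mathcal{D}_{-k}$ estimates.
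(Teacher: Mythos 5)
Your proposal is correct and follows essentially the same route as the paper, which likewise reruns the argument of Lemma \ref{lem:G(f_k)-est} with the Cauchy--Schwarz estimate \eqref{eq:CS} replaced by exactly the two splittings you write down and then invokes Proposition \ref{prop:DG-bound}. The only (inconsequential) slip is that your labels are reversed: as the proposition is stated, the $\rho$-integral bounds are its \emph{second} listed estimates and control your first Cauchy--Schwarz factor, while the $r$-integral bounds are its \emph{first} listed estimates and control the term surviving Fubini --- since both hold uniformly in $r,\rho\in[0,1]$ and $\xi\in{\mathbb R}$, nothing in the argument changes.
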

\begin{proof}
For fixed $q\in\{0,1\}$ we find that
$$\|\partial_z^q\mathcal{D}_{k}\mathcal{G}_k(f_k)\|^2_{L_1^2}  \lesssim \|f\|_{L_1^2}, \qquad
\|\partial_z^q\mathcal{D}_{-k}\mathcal{G}_k(f_k)\|^2_{L_1^2}  \lesssim \|f\|_{L_1^2}$$
by the method used in Lemma \ref{lem:G(f_k)-est}, replacing the Cauchy-Schwarz estimate \eqref{eq:CS} with respectively
$$\left||\xi|^{q} \!\!\int_0^1 \!\!\mathcal{D}_{k}G_k(r,\rho) \check{f}_k(\rho)\,\rho\,\mathrm{d}\rho\right|^2
\! \leq\!
\left(|\xi|^{q} r^{1-k}\!\!\int_0^1 |\mathcal{D}_{k}G_k(r,\rho) |\,\rho^{k}\,\mathrm{d}\rho\right)\!\!\left(|\xi|^{q} r^{k-1}\!\!\int_0^1 |\mathcal{D}_{k}G_k(r,\rho)| |\check{f}_k(\rho)|^2\,\rho^{2-k}\,\mathrm{d}\rho\right)
$$
and
$$
\left||\xi|^{q} \!\!\int_0^1 \!\! \mathcal{D}_{-k}G_k(r,\rho) \check{f}_k(\rho)\,\rho\,\mathrm{d}\rho\right|^2
\!\!\leq\!
\left(|\xi|^{q} r^{-k}\!\!\int_0^1 |\mathcal{D}_{-k}G_k(r,\rho) |\,\rho^{1+k}\,\mathrm{d}\rho\right)\!\!\left(|\xi|^{q} r^{k}\!\!\int_0^1 |\mathcal{D}_{-k}G_{k}(r,\rho)| |\check{f}_k(\rho)|^2\,\rho^{1-k}\,\mathrm{d}\rho\right)
$$
and using the estimates given in Proposition \ref{prop:DG-bound}.
\end{proof}

\begin{proposition}\label{prop:D2G-bound}$ $
\begin{itemize}
\item[(i)]
For each fixed $k \geq 2$ the estimates
$$
        \rho^{3-k}\int_0^1 |\mathcal{D}^{2}_{k}G_k(r,\rho)|\,r^{k-2}\,\mathrm{d}r,\ r^{3-k}\int_0^1 |\mathcal{D}^{2}_{k}G_k(r,\rho)|\,\rho^{k-2}\,\mathrm{d}\rho \lesssim 1
$$
hold uniformly over $r,\rho\in[0,1]$ and $\xi \in {\mathbb R}$.
\item[(ii)]
For each fixed $k \in {\mathbb N}_0$ the estimates
$$
        \rho^{1-k}\int_0^1 |\mathcal{D}^{2}_{-k}G_k(r,\rho)|\,r^{k}\,\mathrm{d}r,\ r^{1-k}\int_0^1 |\mathcal{D}^{2}_{-k}G_k(r,\rho)|\,\rho^{k}\,\mathrm{d}\rho \lesssim 1
$$
hold uniformly over $r,\rho\in[0,1]$ and $\xi \in {\mathbb R}$.
\end{itemize}
\end{proposition}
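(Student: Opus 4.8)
The plan is to follow the template of the proofs of Propositions~\ref{prop:G-bound} and~\ref{prop:DG-bound}: express the relevant derivative of the Green's function in closed form, split each weighted integral at the diagonal $r=\rho$, evaluate the resulting inner integrals by means of the identities \eqref{Block1}--\eqref{Block4K}, and then reduce the uniform bound to the finiteness of the limits at $0$ and $\infty$ of a handful of explicit single-variable functions. First I would compute the second derivatives explicitly. Applying $\mathcal{D}_{k}^2=\mathcal{D}_{k-1}\mathcal{D}_{k}$ and $\mathcal{D}_{-k}^2=\mathcal{D}_{-k-1}\mathcal{D}_{-k}$ in $r$ to the formula for $\mathcal{D}_{\pm k}G_k$ recorded before Proposition~\ref{prop:DG-bound}, and using $\mathcal{D}_k I_k=I_{k-1}$, $\mathcal{D}_{-k}I_k=I_{k+1}$, $\mathcal{D}_i\tilde{K}_i=-\tilde{K}_{i-1}$ and $\tilde{K}_{-i}=\tilde{K}_i$ together with the chain rule $\mathcal{D}_\mu(f(|\xi|r))=|\xi|(\mathcal{D}_\mu f)(|\xi|r)$, one obtains
\begin{equation*}
\mathcal{D}_{k}^2 G_k(r,\rho)=\begin{cases}-|\xi|^2 I_{k-2}(|\xi|r)\tilde{K}_k(|\xi|\rho), & 0<r<\rho,\\ -|\xi|^2 I_k(|\xi|\rho)\tilde{K}_{k-2}(|\xi|r), & \rho<r<1,\end{cases}
\end{equation*}
and
\begin{equation*}
\mathcal{D}_{-k}^2 G_k(r,\rho)=\begin{cases}-|\xi|^2 I_{k+2}(|\xi|r)\tilde{K}_k(|\xi|\rho), & 0<r<\rho,\\ -|\xi|^2 I_k(|\xi|\rho)\tilde{K}_{k+2}(|\xi|r), & \rho<r<1.\end{cases}
\end{equation*}

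Next I would reduce each of the four integrals to inner integrals of the form $\int I_\ell(|\xi|t)t^m\,\mathrm{d}t$ and $\int\tilde{K}_\ell(|\xi|t)t^m\,\mathrm{d}t$ by splitting at $r=\rho$, so that the piecewise definition supplies a function of the outer variable times such an inner integral over $[0,r]$ (or $[0,\rho]$) and $[r,1]$ (or $[\rho,1]$). For part~(i) the inner integrals are $\int I_k t^{k-2}$ and $\int\tilde{K}_k t^{k-2}$ (identities \eqref{Block3I}, \eqref{Block3K}) together with $\int I_{k-2}t^{k-2}$ and $\int\tilde{K}_{k-2}t^{k-2}$ (identities \eqref{Block4I}, \eqref{Block4K}), all admissible since $k\geq2$. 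For part~(ii) with $k\geq1$ they are $\int I_k t^{k}$, $\int\tilde{K}_k t^{k}$ (identities \eqref{Block2I}, \eqref{Block2K}) and $\int I_{k+2}t^{k}$, $\int\tilde{K}_{k+2}t^{k}$ (identities \eqref{Block3I}, \eqref{Block3K} with $\ell=k+2$); the exceptional case $k=0$, in which $\int I_0$ and $\int\tilde{K}_0$ occur, is handled instead by \eqref{Block4I}, \eqref{Block4K} with $\ell=2$.

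I would then carry out the uniform estimate precisely as in Proposition~\ref{prop:DG-bound}. After inserting the evaluated integrals, cancelling the boundary contributions $-t^{\ell-1}I_\ell$, $-t^{\ell-1}\tilde{K}_\ell$ against the surviving terms, and factoring out the prefactors $r^{3-k}$, $\rho^{3-k}$ (respectively $r^{1-k}$, $\rho^{1-k}$), each remaining summand can be written as a function of a single scaled argument $s\in\{|\xi|r,\,|\xi|\rho,\,|\xi|\}$. The Wronskian identity $s[\tilde{K}_i(s)I_{i-1}(s)+I_i(s)\tilde{K}_{i-1}(s)]=1$, bounds of the type $\tfrac{K_k(|\xi|)}{I_k(|\xi|)}I_{k-1}(|\xi|t)\le\tfrac{1}{|\xi|t\,I_k(|\xi|t)}$ employed in Proposition~\ref{prop:DG-bound}, and the monotonicity of $s\mapsto s^{\pm k}I_k(s)$, $s\mapsto s^{\pm k}K_k(s)$ and $s\mapsto s^{\pm k}\mathbf{M}_k(s)$ established above for the Struve function $\mathbf{M}_k$ then reduce matters to showing that a finite list of explicit Struve--Bessel functions $F(s)$ is bounded on $[0,\infty)$; this is verified by computing $\lim_{s\to0}F(s)$ and $\lim_{s\to\infty}F(s)$ from the asymptotics of $I_\ell$, $K_\ell$ and $\mathbf{M}_\ell$ and invoking continuity.

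The main obstacle I anticipate is the bookkeeping of this bounding step rather than any single hard estimate: the Block~3 and Block~4 identities each contribute several Struve--Bessel products, so after multiplication by the prefactors there are many summands, and one must locate the algebraic cancellations that keep every surviving term uniformly bounded in $\xi$. The genuinely delicate points are the competing behaviours of $K_k(|\xi|)/I_k(|\xi|)$ as $|\xi|\to0$ (logarithmic when $k=0$, algebraic when $k\geq1$) and as $|\xi|\to\infty$, and the $k=0$ exceptional case of part~(ii), where the $\mathbf{M}_0$ estimates $0\le\mathbf{M}_0(s)\le 2s^{-1}$ and the Block~4 integrals must replace the generic Block~2 formulas, exactly as the $k=0$ case was treated separately at the end of Proposition~\ref{prop:DG-bound}.
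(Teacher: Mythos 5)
Your proposal is correct and follows essentially the same route as the paper's proof: the explicit piecewise formulas for $\mathcal{D}^2_{\pm k}G_k$, the splitting of each weighted integral at the diagonal, the evaluation via \eqref{Block4I}, \eqref{Block4K} and \eqref{Block3I}, \eqref{Block3K} in part (i) and via \eqref{Block3I}, \eqref{Block3K} and \eqref{Block2I}, \eqref{Block2K} in part (ii), and the reduction to boundedness of explicit Bessel--Struve expressions using the monotonicity and pointwise bounds for $\mathbf{M}_k$ all match the paper's argument (the quantities $\mathcal{I}_6$--$\mathcal{I}_9$). The only cosmetic differences are that the paper obtains clean explicit constants (e.g.\ $\tfrac{4(k-1)}{2k-3}$, $4k+3$) without needing the limit-at-$0$-and-$\infty$ bookkeeping you anticipate, and that it does not isolate $k=0$ in part (ii) as a separate case --- though your observation that the $k=0$ integrals $\int I_0$, $\int\tilde{K}_0$ are covered by \eqref{Block4I}, \eqref{Block4K} with $\ell=2$ (since \eqref{Block2I}, \eqref{Block2K} are stated only for $\ell\geq1$) is a legitimate, indeed slightly more careful, justification of that case.
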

\begin{proof}
(i) This result follows from the calculations
\begin{equation*}
    \begin{split}
        \mathcal{I}_{6}:={}& \rho^{3-k}\int_0^1 |\mathcal{D}^{2}_{k}G_{k}(r,\rho)|\,r^{k-2}\,\mathrm{d}r\\
                ={}& |\xi|^2\tilde{K}_k(|\xi| \rho)  \rho^{3-k}\int_0^\rho I_{k-2} (|\xi| r)\,r^{k-2}\,\mathrm{d}r +  |\xi|^2 I_k(|\xi| \rho)\rho^{3-k}\int_\rho^1 \tilde{K}_{k-2}(|\xi|r)\,r^{k-2}\,\mathrm{d}r\\
        ={}& \frac{1}{(2k-3)}\bigg(2(k-1) + \frac{(|\xi|\rho)^{3-k}}{(2k-1)}\left(\mathbf{M}_{k}(|\xi|\rho) - (|\xi|\rho)^{k-1}\right)\\
        & \hspace{1.75cm}\mbox{}+ I_k (|\xi|\rho) (|\xi|\rho)^{3-k}\left(-\frac{2(k-1)}{|\xi|^{3-k}I_k (|\xi|)} - \frac{1}{I_k (|\xi|)}\left((2k-3)\mathbf{M}_{k-2}(|\xi|) - \frac{2(k-1)}{|\xi|}\mathbf{M}_{k-1}(|\xi|)\right)\!\right)\!\!\bigg)\\
        \leq{}& \frac{2(k-1)}{(2k-3)}\bigg(1 + \frac{(|\xi|\rho)^{3-k}I_k (|\xi|\rho)}{|\xi|^{3-k}I_k (|\xi|)}|\xi|^{2-k}\mathbf{M}_{k-1}(|\xi|)\bigg)\\
        \leq{}& \frac{4(k-1)}{(2k-3)}
    \end{split}
\end{equation*}
and
\begin{equation*}
    \begin{split}
        \mathcal{I}_{7}:={}& r^{3-k}\int_0^1 |\mathcal{D}^{2}_{k}G_{k}(r,\rho)|\,\rho^{k-2}\,\mathrm{d}\rho\\
        ={}& |\xi|^2\tilde{K}_{k-2}(|\xi| r)  r^{3-k}\int_0^r I_k (|\xi|\rho)\,\rho^{k-2}\,\mathrm{d}\rho +  |\xi|^2 I_{k-2}(|\xi| r)r^{3-k}\int_r^1 \tilde{K}_k(|\xi|\rho)\,\rho^{k-2}\,\mathrm{d}\rho\\
       ={}& 2(k-1) - (2k-3)(|\xi| r)^{3-k}\mathbf{M}_{k-2}(|\xi| r) + \frac{(|\xi| r)^{3-k} I_{k-2}(|\xi| r)}{(2k-1)I_k (|\xi|)}\left(\mathbf{M}_{k}(|\xi|) - |\xi|^{k-1}\right)\\
        \leq{}& 2(k-1),
    \end{split}
\end{equation*}
in which integrals \eqref{Block4I}, \eqref{Block4K} and \eqref{Block3I}, \eqref{Block3K} respectively have been used.

(ii) Similarly, this result follows from the calculations
\begin{equation*}
    \begin{split}
    \mathcal{I}_{8}:={}& \rho^{1-k}\int_0^1 |\mathcal{D}^{2}_{-k}G_{k}(r,\rho)|\,r^{k}\,\mathrm{d}r\\
    ={}& |\xi|^2\tilde{K}_k(|\xi|\rho) \rho^{1-k}\int_0^\rho I_{k+2}(|\xi|r) \,r^{k}\,\mathrm{d}r + |\xi|^2I_k (|\xi|\rho) \rho^{1-k}\int_\rho^1 \tilde{K}_{k+2}(|\xi|r)\,r^{k}\,\mathrm{d}r\\
       ={}& 2(k+1) - (2k+1)(|\xi|\rho)^{1-k}\mathbf{M}_{k}(|\xi|\rho) -\frac{2(k+1)(|\xi|\rho)^{1-k}I_k (|\xi|\rho)}{|\xi|^{1-k}I_k (|\xi|)} \\
       & \qquad\mbox{}+ \frac{(2k+1)(|\xi|\rho)^{1-k}I_k (|\xi|\rho)}{|\xi|^{1-k}I_k (|\xi|)}|\xi|^{1-k}\mathbf{M}_{k}(|\xi|)\\
    \leq{}& 2(k+1) + \frac{(2k+1)(|\xi|\rho)^{1-k}I_k (|\xi|\rho)}{|\xi|^{1-k}I_k (|\xi|)}|\xi|^{1-k}\mathbf{M}_{k}(|\xi|)\\
    \leq{}& 4k+3
    \end{split}
\end{equation*}
and
\begin{equation*}
    \begin{split}
        \mathcal{I}_{9}:={}& r^{1-k}\int_0^1 |\mathcal{D}^{2}_{-k}G_{k}(r,\rho)|\,\rho^{k}\,\mathrm{d}\rho\\
        ={}& |\xi|^2\tilde{K}_{k+2}(|\xi|r) r^{1-k}\int_0^r I_k (|\xi|\rho)\,\rho^{k}\,\mathrm{d}\rho + |\xi|^2I_{k+2}(|\xi|r) r^{1-k}\int_r^1 \tilde{K}_k(|\xi|\rho)\,\rho^{k}\,\mathrm{d}\rho\\
          ={}& \frac{1}{2k+1}\bigg(2(k+1) + (|\xi|r)^{1-k}\left(-\frac{2(k+1)}{(|\xi|r)}\mathbf{M}_{k+1}(|\xi|r) + (2k+1)\mathbf{M}_{k}(|\xi|r)\right) \\
          &\hspace{1.6cm}\mbox{}-(|\xi|r)^{1-k} I_{k+2}(|\xi|r) (2k+1)\frac{\mathbf{M}_{k}(|\xi|)}{I_k (|\xi|)}\bigg)\\
        \leq{}& \frac{1}{2k+1}\bigg(2(k+1) + (2k+1)(|\xi|r)^{1-k}\mathbf{M}_{k}(|\xi|r)\bigg)\\
        \leq{}& \frac{4k+3}{2k+1},
    \end{split}
\end{equation*}
in which integrals \eqref{Block3I}, \eqref{Block3K} and \eqref{Block2I}, \eqref{Block2K} respectively have been used.
\end{proof}

\begin{lemma}\label{lem:D2G(f_k)-est}
The estimates
\begin{equation*}
    \|\mathcal{D}_{-k+i}^{2-i}\mathcal{D}^{i}_{k}\mathcal{G}_k(f_k)\|_{L_1^2} \lesssim \|f_k\|_{L_1^2},\qquad 0 \leq i \leq 2,
\end{equation*}    
hold for each $f_k \in {\mathscr S}_{(k)}((0,1) \times {\mathbb R};{\mathbb C})$ and $k \in {\mathbb N}_0$.
\end{lemma}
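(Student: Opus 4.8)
The plan is to treat the three operators $\mathcal{D}_{-k+i}^{2-i}\mathcal{D}_k^i$ for $i=0,1,2$ separately, exploiting the observation that the case $i=1$ is controlled directly by the underlying equation, while the cases $i=0$ and $i=2$ are amenable to the Cauchy--Schwarz argument already employed in Lemmata \ref{lem:G(f_k)-est} and \ref{lem:DG(f_k)-est}. For the case $i=1$ I would note that $\mathcal{D}_{-k+1}\mathcal{D}_k=\mathcal{D}_{1-k}\mathcal{D}_k$ and that, since $\mathcal{G}_k(f_k)$ is by construction the solution of \eqref{bvp:rzn1}, \eqref{bvp:rzn2} with vanishing boundary data (see \eqref{eq:integral solution}, \eqref{defn:G-H}), it satisfies
\[
(\mathcal{D}_{1-k}\mathcal{D}_k + \partial_z^2)\mathcal{G}_k(f_k) = f_k
\]
in $(0,1)\times{\mathbb R}$. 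Hence $\mathcal{D}_{-k+1}\mathcal{D}_k\mathcal{G}_k(f_k)=f_k-\partial_z^2\mathcal{G}_k(f_k)$, and the bound $\|\mathcal{D}_{-k+1}\mathcal{D}_k\mathcal{G}_k(f_k)\|_{L_1^2}\lesssim\|f_k\|_{L_1^2}$ is immediate from the triangle inequality together with the estimate $\|\partial_z^2\mathcal{G}_k(f_k)\|_{L_1^2}\lesssim\|f_k\|_{L_1^2}$ of Lemma \ref{lem:G(f_k)-est}.

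For the case $i=0$ (valid for all $k\in{\mathbb N}_0$) and the case $i=2$ with $k\geq2$ I would mimic Lemma \ref{lem:G(f_k)-est} verbatim. Writing $\mathcal{D}_{-k}^2\mathcal{G}_k(f_k)=\mathcal{F}^{-1}\big[\int_0^1 \mathcal{D}_{-k}^2 G_k(r,\rho)\check{f}_k(\rho)\,\rho\,\mathrm{d}\rho\big]$ and applying the Cauchy--Schwarz inequality in the split form
\[
\left|\int_0^1 \mathcal{D}_{-k}^2 G_k\,\check{f}_k\,\rho\,\mathrm{d}\rho\right|^2 \leq \left(r^{1-k}\!\!\int_0^1 |\mathcal{D}_{-k}^2 G_k|\,\rho^{k}\,\mathrm{d}\rho\right)\!\!\left(r^{k-1}\!\!\int_0^1 |\mathcal{D}_{-k}^2 G_k|\,|\check{f}_k|^2\,\rho^{2-k}\,\mathrm{d}\rho\right),
\]
the first factor is $\lesssim1$ by Proposition \ref{prop:D2G-bound}(ii); integrating the second factor in $r$, invoking Fubini's theorem and using the companion bound $\rho^{1-k}\int_0^1|\mathcal{D}_{-k}^2 G_k|\,r^{k}\,\mathrm{d}r\lesssim1$ of the same proposition then gives $\lesssim\int_{\mathbb R}\int_0^1|\check{f}_k|^2\rho\,\mathrm{d}\rho\,\mathrm{d}\xi\cong\|f_k\|_{L_1^2}^2$ by Plancherel in $z$. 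The case $i=2$, $k\geq2$ is identical upon replacing the weight exponents $(\rho^{k},\rho^{2-k},r^{1-k},r^{k-1})$ by $(\rho^{k-2},\rho^{4-k},r^{3-k},r^{k-3})$ and appealing to Proposition \ref{prop:D2G-bound}(i).

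It then remains to handle the case $i=2$ for the excluded indices $k=0$ and $k=1$, which is the only genuinely delicate point: Proposition \ref{prop:D2G-bound}(i) is asserted only for $k\geq2$, so the direct Cauchy--Schwarz estimate is unavailable there. I would resolve this not by a fresh bound but by reducing these operators to cases already settled. When $k=0$ the operator $\mathcal{D}_k^2=\mathcal{D}_0^2$ coincides with the $i=0$ operator $\mathcal{D}_{-k}^2=\mathcal{D}_0^2$, so the required estimate is exactly the $i=0$ bound proved above. When $k=1$ the definition $\mathcal{D}_k^2=\mathcal{D}_{k-1}\mathcal{D}_k$ yields $\mathcal{D}_1^2=\mathcal{D}_0\mathcal{D}_1=\mathcal{D}_{1-k}\mathcal{D}_k\big|_{k=1}$, so the $i=2$ operator coincides with the $i=1$ operator and the bound follows from the case $i=1$. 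Since the cases $i=0,1,2$ are now all covered for every $k\in{\mathbb N}_0$, the proof is complete; the main work in fact lies entirely in Proposition \ref{prop:D2G-bound}, whose weighted kernel estimates this lemma simply converts into $L_1^2$ bounds.
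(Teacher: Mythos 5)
Your decomposition is exactly the paper's: the case $i=1$ is handled through the identity $\mathcal{D}_{1-k}\mathcal{D}_k\mathcal{G}_k(f_k)=f_k-\partial_z^2\mathcal{G}_k(f_k)$ together with Lemma \ref{lem:G(f_k)-est}; the case $i=2$, $k=1$ is reduced to it via $\mathcal{D}_1^2=\mathcal{D}_0\mathcal{D}_1$; the case $i=2$, $k=0$ coincides with $i=0$; and the remaining cases ($i=0$ for $k\in\mathbb{N}_0$, $i=2$ for $k\geq2$) are treated by Cauchy--Schwarz with precisely the weight exponents and kernel bounds of Proposition \ref{prop:D2G-bound}, as in the paper.

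There is, however, one concrete error in your write-up: the displayed representation $\mathcal{D}_{-k}^2\mathcal{G}_k(f_k)=\mathcal{F}^{-1}\big[\int_0^1\mathcal{D}_{-k}^2G_k(r,\rho)\,\check{f}_k(\rho)\,\rho\,\mathrm{d}\rho\big]$ is false. Passing a \emph{second-order} operator in $r$ under the integral produces a diagonal contribution, because $\partial_r G_k$ jumps by $1/r$ across $r=\rho$ (this is the Wronskian normalisation $s\big[\tilde{K}_{k}(s)I_{k-1}(s)+I_{k}(s)\tilde{K}_{k-1}(s)\big]=1$); since every operator $\mathcal{D}^{2-i}_{-k+i}\mathcal{D}^i_k$ has principal part $\frac{\mathrm{d}^2}{\mathrm{d}r^2}$, the correct identity is
\begin{equation*}
\mathcal{D}^{2-i}_{-k+i}\mathcal{D}^{i}_{k}\mathcal{G}_k(f_k)=\mathcal{F}^{-1}\left[\int_0^1\mathcal{D}^{2-i}_{-k+i}\mathcal{D}^{i}_{k}G_k(r,\rho)\,\check{f}_k(\rho)\,\rho\,\mathrm{d}\rho\right]+f_k,
\end{equation*}
and the paper accordingly estimates $\|\mathcal{D}^{2}_{\pm k}\mathcal{G}_k(f_k)-f_k\|_{L_1^2}$ rather than the derivative itself. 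Your own $i=1$ argument implicitly contains this extra term---from the equation, $\mathcal{D}_{1-k}\mathcal{D}_k\mathcal{G}_k(f_k)=f_k-\partial_z^2\mathcal{G}_k(f_k)$, while $\mathcal{F}^{-1}\big[\int_0^1|\xi|^2G_k\,\check{f}_k\,\rho\,\mathrm{d}\rho\big]$ equals $-\partial_z^2\mathcal{G}_k(f_k)$ alone---so dropping it in the cases $i=0$ and $i=2$ is internally inconsistent. The slip is harmless for the conclusion: your Cauchy--Schwarz computation bounds the integral-operator part by $\|f_k\|_{L_1^2}$, and restoring the $f_k$ term costs only a further $\|f_k\|_{L_1^2}$ by the triangle inequality. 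With that correction inserted, your proof is complete and coincides with the paper's.
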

\begin{proof}
Observe that
\begin{equation*}\begin{split}
    \mathcal{D}^{2-i}_{-k+i}\mathcal{D}^{i}_{k}\mathcal{G}_k(f_k) ={}& \mathcal{F}^{-1}\left[\int_0^1 \mathcal{D}^{2-i}_{-k+i}\mathcal{D}^{i}_{k}G_k(r,\rho)\,\rho\,\check{f}_k(\rho)\,\rho\,\mathrm{d}\rho\right] + f_k.
\end{split}
\end{equation*}

The case $i=1$ is treated by noting that
$$
\mathcal{D}_{1-k}\mathcal{D}_{k}G_k(r,\rho) = \mathcal{D}_{1+k}\mathcal{D}_{-k}G_k(r,\rho) = |\xi|^2 G_k(r,\rho),
$$
such that
$$
        \|\mathcal{D}_{1-k}\mathcal{D}_{k}\mathcal{G}_k(f_k) - f_k\|^2_{L_1^2} \leq \||\xi|^2\mathcal{G}_k(f_k)\|^2_{L_1^2} = \|\partial_z^2 \mathcal{G}_1(f_k)\|^2 \lesssim \|f_k\|_{L_1^2}
$$
for $k \in {\mathbb N}_0$, while the case $i=2$, $k=1$ is treated similarly by noting that
$$
\mathcal{D}^2_{1}G_{1}(r,\rho) = \mathcal{D}_0\mathcal{D}_{1}G_{1}(r,\rho) = |\xi|^2 G_{1}(r,\rho),
$$
such that
$$
 \|\mathcal{D}^2_1\mathcal{G}_1(f_1) - f_1\|^2_{L_1^2} \leq \||\xi|^2 \mathcal{G}_1(f_1)\|_{L_1^2} ^2 = \|\partial_z^2 \mathcal{G}_1(f_1)\|^2 \lesssim \|f_1\|_{L_1^2}.
 $$

It therefore remains to examine $\mathcal{D}^2_{k}G_{k}(r,\rho)$ $(i=2)$ for $k\geq2$ and $\mathcal{D}^2_{-k}G_{k}(r,\rho)$ ($i=0$) for $k\in\mathbb{N}_{0}$. Using the calculation
\begin{equation*}
    \mathcal{D}^{2}_{\pm k}G_k(r,\rho) = \begin{cases}
        \displaystyle-|\xi|^2 I_{k\mp2}(|\xi|r) \tilde{K}_k(|\xi|\rho), & 0<r<\rho,\\
        \displaystyle -|\xi|^2 I_k (|\xi|\rho)\tilde{K}_{k\mp2}(|\xi|r), & \rho<r<1,
    \end{cases}
\end{equation*}
we find that
\begin{equation*}
    \begin{split}
        \|\mathcal{D}^2_{k}\mathcal{G}_k(f_k) - f_k\|^2_{L_1^2} \leq{}& \int_{-\infty}^{\infty}\int_0^1 \left|\int_0^1 \mathcal{D}^2_{k}G_k(r,\rho) \check{f}_k(\rho)\,\rho\,\mathrm{d}\rho\right|^2\,r\,\mathrm{d}r\,\mathrm{d}\xi,\\
        \leq{}& \int_{-\infty}^{\infty}\int_0^1 \left(r^{3-k}\int_0^1 |\mathcal{D}^2_{k}G_k(r,\rho) |\,\rho^{k-2}\,\mathrm{d}\rho\right)\!\!\!\left(r^{k-3}\int_0^1 |\mathcal{D}^2_{k}G_k(r,\rho)| |\check{f}_k(\rho)|^2\,\rho^{4-k}\,\mathrm{d}\rho\right)\,r\,\mathrm{d}r\,\mathrm{d}\xi,\\
       \lesssim{}& \|f_k\|_{L_1^2}
       \end{split}
\end{equation*}
for $k \geq 2$ and
\begin{equation*}
    \begin{split}
        \|\mathcal{D}^2_{-k}\mathcal{G}_k(f_k) - f_k\|^2_{L_1^2} \leq{}& \int_{-\infty}^{\infty}\int_0^1 \left|\int_0^1 \mathcal{D}^2_{-k}G_k(r,\rho) \check{f}_k(\rho)\,\rho\,\mathrm{d}\rho\right|^2\,r\,\mathrm{d}r\,\mathrm{d}\xi,\\
        \leq{}& \int_{-\infty}^{\infty}\int_0^1 \left(r^{1-k}\int_0^1 |\mathcal{D}^2_{-k}G_k(r,\rho) |\,\rho^{k}\,\mathrm{d}\rho\right)\!\!\!\left(r^{k-1}\int_0^1 |\mathcal{D}^2_{-k}G_k(r,\rho)| |\check{f}_k(\rho)|^2\,\rho^{2-k}\,\mathrm{d}\rho\right)\,r\,\mathrm{d}r\,\mathrm{d}\xi,\\
       \lesssim{}& \|f_k\|_{L_1^2}
       \end{split}
\end{equation*}
for $k \in {\mathbb N}_0$ (in both cases the third line follows by the method used in Lemma \ref{lem:G(f_k)-est} and the estimates given in Proposition \ref{prop:D2G-bound}).
\end{proof}

Theorem \ref{lo estimates}(ii) is similarly proved two steps (Lemmata \ref{lem:Hthm1} and \ref{lem:Hthm2}).

\begin{lemma} \label{lem:Hthm1}
The estimates
    \begin{equation*}
        \|\mathcal{B}_k(g)\|_{L_1^2}, \ \|\partial_z \mathcal{B}_k(g)\|_{L_1^2}, \  \|\mathcal{D}_{k} \mathcal{B}_k(g)\|_{L_1^2}, \  \|\mathcal{D}_{-k} \mathcal{B}_k(g)\|_{L_1^2} \lesssim \|g\|_{\mathrm{H}^{\frac{1}{2}}}
    \end{equation*}
hold for each $g \in {\mathscr S}({\mathbb R};{\mathbb C})$ and $k \in {\mathbb N}_0$.
\end{lemma}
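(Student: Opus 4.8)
The plan is to first reduce $\mathcal{B}_k(g)$ to an explicit Fourier multiplier. Working in the region $r<\rho$ near $\rho=1$, where $G_k(r,\rho)=-I_k(|\xi|r)\tilde{K}_k(|\xi|\rho)$, and using the identity $\mathcal{D}_i\tilde{K}_i=-\tilde{K}_{i-1}$ together with the scaling rule $\tilde{\mathcal D}_k[\psi(|\xi|\,\cdot\,)](\rho)=|\xi|\,(\mathcal{D}_k\psi)(|\xi|\rho)$, I find
$$\tilde{\mathcal D}_kG_k(r,1)=|\xi|\,I_k(|\xi|r)\,\tilde{K}_{k-1}(|\xi|).$$
Since $\tilde{K}_k(|\xi|)=0$ by the definition of $\tilde{K}_k$, the Wronskian relation $s[\tilde{K}_i(s)I_{i-1}(s)+I_i(s)\tilde{K}_{i-1}(s)]=1$ with $i=k$, $s=|\xi|$ gives $\tilde{K}_{k-1}(|\xi|)=1/(|\xi|I_k(|\xi|))$, whence
$$\mathcal{B}_k(g)=\mathcal{F}^{-1}\!\left[\frac{I_k(|\xi|r)}{I_k(|\xi|)}\,\check{g}\right].$$
Applying $\partial_z$, $\mathcal{D}_k$ and $\mathcal{D}_{-k}$ (using $\mathcal{D}_kI_k=I_{k-1}$, $\mathcal{D}_{-k}I_k=I_{k+1}$ and the same scaling rule) then produces the Fourier multipliers $\i\xi\,I_k(|\xi|r)/I_k(|\xi|)$, $|\xi|I_{k-1}(|\xi|r)/I_k(|\xi|)$ and $|\xi|I_{k+1}(|\xi|r)/I_k(|\xi|)$ respectively. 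Note that for $k=0$ one has $\mathcal{D}_0=\mathcal{D}_{-0}$ and $I_{-1}=I_1$, so the two derivative cases coincide and the delicate index $k-1$ does not arise there.

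Next I would invoke Parseval's theorem in $z$ (the Fourier transform being isometric on $L^2({\mathbb R})$) and Fubini's theorem to reduce each squared $L_1^2$ norm to an integral over $\xi$; for instance
$$\|\mathcal{D}_{k}\mathcal{B}_k(g)\|_{L_1^2}^2=2\pi\int_{\mathbb R}|\xi|^2\,|\check{g}(\xi)|^2\,J_{k-1}(|\xi|)\,\mathrm{d}\xi,\qquad J_j(a):=\int_0^1\frac{I_j^2(ar)}{I_k^2(a)}\,r\,\mathrm{d}r,$$
with $J_k$ appearing for $\|\mathcal{B}_k(g)\|_{L_1^2}$ and $\|\partial_z\mathcal{B}_k(g)\|_{L_1^2}$, and $J_{k+1}$ for $\|\mathcal{D}_{-k}\mathcal{B}_k(g)\|_{L_1^2}$. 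Since $\|g\|_{H^{1/2}}^2=\int_{\mathbb R}(1+|\xi|^2)^{1/2}|\check{g}|^2\,\mathrm{d}\xi$, the lemma reduces to the bounds
$$J_k(a),\,J_{k+1}(a)\lesssim\frac{1}{1+a},\qquad a^2J_{k-1}(a)\lesssim(1+a^2)^{1/2},$$
together with the elementary inequalities $J_k(a)\lesssim1\le(1+a^2)^{1/2}$ and $a^2/(1+a)\lesssim(1+a^2)^{1/2}$.

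It therefore remains to estimate $J_j$ for $j\in\{k-1,k,k+1\}$, which is the technical heart of the argument. The inner integral can be evaluated by the Lommel identity $\int_0^1 I_j^2(ar)\,r\,\mathrm{d}r=\tfrac12\bigl(I_j^2(a)-I_{j-1}(a)I_{j+1}(a)\bigr)$, already used in the computation leading to \eqref{eq:oldnew3}. For small $a$ I would use the monotonicity of $s\mapsto s^{-j}I_j(s)$ recorded earlier, which gives $I_j(ar)\le r^jI_j(a)$ and hence $J_j(a)\le\tfrac{1}{2(j+1)}\,I_j^2(a)/I_k^2(a)$; for $j=k$ and $j=k+1$ the ratio is $\le1$, while for $j=k-1$ (with $k\ge1$) the recurrence bound $I_{k-1}(a)/I_k(a)\le 1+2k/a$ yields $a^2J_{k-1}(a)\lesssim(a+2k)^2\lesssim1$ on $a\le1$. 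The main obstacle is the large-$a$ regime, where all three integrals must decay like $1/a$. Here I would invoke the uniform large-argument asymptotics $I_\nu(a)=\tfrac{\e^a}{\sqrt{2\pi a}}\bigl(1+O(a^{-1})\bigr)$: splitting the $r$-integral at $r=\tfrac12$ (the part over $[0,\tfrac12]$ being exponentially negligible) shows $\int_0^1 I_j^2(ar)\,r\,\mathrm{d}r\lesssim \e^{2a}/a^2$, while $I_k^2(a)\gtrsim \e^{2a}/a$, so that $J_j(a)\lesssim1/a$ and in particular $a^2J_{k-1}(a)\lesssim a\lesssim(1+a^2)^{1/2}$ on $a\ge1$; equivalently, one may extract the same decay directly from the Lommel formula via the sharp asymptotics of the Turán-type difference $I_j^2(a)-I_{j-1}(a)I_{j+1}(a)=O(\e^{2a}/a^2)$. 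Combining the small- and large-$a$ estimates yields the displayed bounds on $J_j$ and completes the proof.
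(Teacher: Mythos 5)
Your proposal is correct and takes essentially the same route as the paper: identify $\mathcal{B}_k(g)$ as the Fourier multiplier $\mathcal{F}^{-1}\big[I_k(|\xi|r)I_k(|\xi|)^{-1}\check{g}\big]$ (the paper reads this off from its earlier computation of $\tilde{\mathcal D}_kG_k(r,1)$, you derive it from the Green's function via $\tilde{K}_k(|\xi|)=0$ and the Wronskian identity), apply Plancherel in $z$, and establish uniform-in-$\xi$ bounds on the resulting ratios of modified Bessel functions for the indices $k$, $k\mp1$. The only divergence is technical: the paper evaluates $\int_0^1 I_j^2(|\xi|r)\,r\,\mathrm{d}r$ in closed form via the Tur\'an-type identity and checks the limits $s\to0$ and $s\to\infty$ of the resulting expressions, whereas you primarily bound the integral directly (monotonicity of $s\mapsto s^{-j}I_j(s)$ for small arguments, splitting at $r=\tfrac{1}{2}$ with large-argument asymptotics for decay) --- a cosmetic variation, and you note the closed-form alternative yourself.
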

\begin{proof}
Observe that
    \begin{equation*}
        \begin{split}
            \|\mathcal{B}_{k}(g)\|^2_{L_1^2} + \|\partial_z\mathcal{B}_{k}(g)\|^2_{L_1^2} ={}& \int_{-\infty}^{\infty}\int_0^1 (1 + |\xi|^2) \left(\frac{I_k(|\xi|r)}{I_k(|\xi|)}\right)^2\,|\check{g}|^2\,r\,\mathrm{d}r\,\mathrm{d}\xi,\\
            ={}& \int_{-\infty}^{\infty} \frac{1}{2}(1 + |\xi|^2) \left(1 - \frac{I_{k+1}(|\xi|)I_{k-1}(|\xi|)}{I_k^2(|\xi|)}\right)\,|\check{g}|^2\,\mathrm{d}\xi \\
            \lesssim{}& \int_{-\infty}^{\infty} \frac{1}{2}(1 + |\xi|^2)^\frac{1}{2}|\check{g}|^2\,\mathrm{d}\xi\\
            ={}& \|g\|_{H^{\frac{1}{2}}}^2
        \end{split}
    \end{equation*}
because
\begin{equation*}
\frac{1}{2}(1 + s^2)^{\frac{1}{2}}\left(1 - \frac{I_{k+1}(s)I_{k-1}(s)}{I_k^2(s)}\right)
\to\begin{cases}
     \frac{1}{2(k+1)}, & s\to0,\\
     \frac{1}{2}, & s\to\infty.
    \end{cases}
\end{equation*}

Furthermore
\begin{equation*}
    \mathcal{D}_{\pm k}\mathcal{B}_{k}(g) = \mathcal{F}^{-1}\left[ |\xi| \frac{I_{k\mp1}(|\xi|r)}{I_k(|\xi|)} \check{g}\right],
\end{equation*}
such that
    \begin{equation*}
        \begin{split}
            \|\mathcal{D}_{\pm k}\mathcal{B}(g)\|^2_{L_1^2} ={}& \int_{-\infty}^{\infty}\int_0^1 |\xi|^2 \left(\frac{I_{k\mp1}(|\xi|r)}{I_k(|\xi|)}\right)^2\,|\check{g}|^2\,r\,\mathrm{d}r\,\mathrm{d}\xi,\\
            ={}& \int_{-\infty}^{\infty}\frac{1}{2} |\xi|^2 \left(\frac{I^2_{k\mp1}(|\xi|) - I_k (|\xi|)I_{k\mp2}(|\xi|)}{I^2_k(|\xi|)}\right)\,|\check{g}|^2\,\mathrm{d}\xi,\\
                        \lesssim{}& \int_{-\infty}^{\infty}(1+|\xi|^2)^{\frac{1}{2}}\,|\check{g}|^2\,\mathrm{d}\xi,\\
            ={}& \|g\|^2_{H^{\frac{1}{2}}}
        \end{split}
    \end{equation*}
because
\begin{equation*}
 \frac{1}{2} s^2(1+s^2)^{-\frac{1}{2}} \left(\frac{I^2_{k-1}(s) - I_k (s)I_{k- 2}(s)}{I^2_k(s)}\right)
 \to\begin{cases}
     2k, & s\to0,\\
     \frac{1}{2}, & s\to\infty
    \end{cases}
\end{equation*}
and
\begin{equation*}
 \frac{1}{2} s^2(1+s^2)^{-\frac{1}{2}} \left(\frac{I^2_{k+1}(s) - I_k (s)I_{k+ 2}(s)}{I^2_k(s)}\right)
 \to\begin{cases}
     0, & s\to0,\\
     \frac{1}{2}, & s\to\infty.
    \end{cases}
\end{equation*}
\end{proof}
\begin{lemma} \label{lem:Hthm2}
The estimates
    \begin{equation*}
        \|\partial_z^2 \mathcal{B}_{k}(g)\|_{L_1^2},\ \|\partial_z \mathcal{D}_{\pm k}\mathcal{B}_{k}(g)\|_{L_1^2},\ \|\mathcal{D}^{2-i}_{-k+i}\mathcal{D}^{i}_{k}\mathcal{B}_{k}(g)\|_{L_1^2} \lesssim \|g\|_{H^{\frac{3}{2}}}, \qquad 0 \leq i \leq 2,
    \end{equation*}
hold for each $g \in {\mathscr S}({\mathbb R};{\mathbb C})$ and $k \in {\mathbb N}_0$.
\end{lemma}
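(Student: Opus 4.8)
The plan is to follow the scheme of Lemma \ref{lem:Hthm1}: write each quantity as an inverse Fourier transform with an explicit radial Fourier multiplier, reduce its $L_1^2$ norm to a one-dimensional integral against $|\check g|^2$ by Plancherel in $z$ together with a Lommel integral, and then bound the resulting scalar multiplier by $(1+|\xi|^2)^{3/2}$ via its behaviour as $|\xi|\to0$ and $|\xi|\to\infty$. First I would record the multiplier representations. Recall from (the proof of) Lemma \ref{lem:Hthm1} that $\mathcal{B}_k(g)=\mathcal{F}^{-1}[I_k(|\xi|r)/I_k(|\xi|)\,\check g]$ and that the radial operators act in $r$, each differentiation of $I_k(|\xi|r)$ producing one factor $|\xi|$ by the chain rule; since $\mathcal{D}_kI_k(|\xi|r)=|\xi|I_{k-1}(|\xi|r)$ and $\mathcal{D}_{-k}I_k(|\xi|r)=|\xi|I_{k+1}(|\xi|r)$, iteration gives $\mathcal{D}_k^2I_k(|\xi|r)=|\xi|^2I_{k-2}(|\xi|r)$, $\mathcal{D}_{-k}^2I_k(|\xi|r)=|\xi|^2I_{k+2}(|\xi|r)$ and $\mathcal{D}_{1-k}\mathcal{D}_kI_k(|\xi|r)=|\xi|^2I_k(|\xi|r)$. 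Hence
$$
\partial_z^2\mathcal{B}_k(g)=\mathcal{F}^{-1}\!\left[-|\xi|^2\frac{I_k(|\xi|r)}{I_k(|\xi|)}\check g\right],\qquad
\partial_z\mathcal{D}_{\pm k}\mathcal{B}_k(g)=\mathcal{F}^{-1}\!\left[\i\xi|\xi|\frac{I_{k\mp1}(|\xi|r)}{I_k(|\xi|)}\check g\right],
$$
$$
\mathcal{D}_k^2\mathcal{B}_k(g)=\mathcal{F}^{-1}\!\left[|\xi|^2\frac{I_{k-2}(|\xi|r)}{I_k(|\xi|)}\check g\right],\qquad
\mathcal{D}_{-k}^2\mathcal{B}_k(g)=\mathcal{F}^{-1}\!\left[|\xi|^2\frac{I_{k+2}(|\xi|r)}{I_k(|\xi|)}\check g\right],
$$
while $\mathcal{D}_{1-k}\mathcal{D}_k\mathcal{B}_k(g)=-\partial_z^2\mathcal{B}_k(g)$ (the case $i=1$, orders below zero read via $I_{-n}=I_n$).

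Two of the six estimates are then immediate. The case $i=1$ coincides with the bound on $\partial_z^2\mathcal{B}_k(g)$ by the last identity above. For $\partial_z\mathcal{D}_{\pm k}\mathcal{B}_k(g)$, Plancherel and the Lommel integral $\int_0^1 I_\nu^2(|\xi|r)\,r\,\mathrm{d}r=\tfrac12 T_\nu(|\xi|)$, where $T_\nu(s):=I_\nu^2(s)-I_{\nu-1}(s)I_{\nu+1}(s)$, turn the squared norm into $\int_{\mathbb R}m(\xi)|\check g(\xi)|^2\,\mathrm{d}\xi$ with $m(\xi)=\tfrac12|\xi|^2\cdot|\xi|^2\,T_{k\mp1}(|\xi|)/I_k^2(|\xi|)$; this is exactly $|\xi|^2$ times the multiplier already shown in Lemma \ref{lem:Hthm1} to be $\lesssim(1+|\xi|^2)^{1/2}$, so $m(\xi)\lesssim|\xi|^2(1+|\xi|^2)^{1/2}\lesssim(1+|\xi|^2)^{3/2}$ and these two estimates follow at once.

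It remains to treat $\partial_z^2\mathcal{B}_k(g)$, $\mathcal{D}_k^2\mathcal{B}_k(g)$ and $\mathcal{D}_{-k}^2\mathcal{B}_k(g)$. The same reduction produces the three multipliers
$$
\tfrac12|\xi|^4\,\frac{T_k(|\xi|)}{I_k^2(|\xi|)},\qquad
\tfrac12|\xi|^4\,\frac{T_{k-2}(|\xi|)}{I_k^2(|\xi|)},\qquad
\tfrac12|\xi|^4\,\frac{T_{k+2}(|\xi|)}{I_k^2(|\xi|)},
$$
and it suffices to show each is $\lesssim(1+|\xi|^2)^{3/2}$. As $|\xi|\to0$ the small-argument expansion $I_\nu(s)\sim(s/2)^\nu/\Gamma(\nu+1)$ shows each multiplier tends to a finite constant (or to $0$), so $m(\xi)(1+|\xi|^2)^{-3/2}$ stays bounded there. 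The decisive point is the decay of the Turán-type determinant as $s\to\infty$: inserting the two-term asymptotics $I_\nu(s)=\tfrac{\e^s}{\sqrt{2\pi s}}\bigl(1-\tfrac{4\nu^2-1}{8s}+\mathcal O(s^{-2})\bigr)$ (Abramowitz \& Stegun \cite[(9.7.1)]{AbramowitzStegun}) into $T_\nu$ yields $T_\nu(s)\sim \e^{2s}/(2\pi s^2)$, while $I_k^2(s)\sim \e^{2s}/(2\pi s)$, so $T_\nu(s)/I_k^2(s)\sim 1/s$ for every fixed order. Consequently each of the three multipliers behaves like $\tfrac12|\xi|^3$ at infinity and is therefore $\lesssim(1+|\xi|^2)^{3/2}$.

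The \textbf{main obstacle} is precisely this second-order cancellation in $T_\nu$: the leading exponential factors and the $\mathcal O(s^{-1})$ contributions cancel, so one must carry the asymptotic expansion of $I_\nu$ to order $s^{-2}$ to extract the surviving $\mathcal O(s^{-2})$ behaviour of $T_\nu$ (equivalently, the $1/s$ behaviour of $T_\nu/I_k^2$), which is exactly what makes the borderline bound $m(\xi)\sim|\xi|^3$ match $(1+|\xi|^2)^{3/2}$. Beyond this, the small-order cases $k\in\{0,1,2\}$, where the shifted orders become negative, must be isolated and checked separately using $I_{-n}=I_n$ and the finiteness of the $|\xi|\to0$ limits --- noting in particular that $\mathcal{D}_1^2\mathcal{B}_1(g)=-\partial_z^2\mathcal{B}_1(g)$ and, for $k=0$, that the $i=0$ and $i=2$ multipliers coincide --- exactly as the analogous cases were singled out in Lemma \ref{lem:D2G(f_k)-est}.
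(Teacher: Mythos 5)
Your proposal is correct and takes essentially the same route as the paper: multiplier representations for each derivative, the Lommel integral $\int_0^1 I_\nu^2(|\xi|r)\,r\,\mathrm{d}r=\tfrac12\bigl(I_\nu^2(|\xi|)-I_{\nu-1}(|\xi|)I_{\nu+1}(|\xi|)\bigr)$, and boundedness of the resulting scalar multipliers against $(1+|\xi|^2)^{3/2}$ via their limits at $0$ and $\infty$ --- the paper merely phrases the easy cases as $\|\partial_z^2\mathcal{B}_k(g)\|_{L_1^2}=\|\partial_z\mathcal{B}_k(\partial_z g)\|_{L_1^2}$, $\|\partial_z\mathcal{D}_{\pm k}\mathcal{B}_k(g)\|_{L_1^2}=\|\mathcal{D}_{\pm k}\mathcal{B}_k(\partial_z g)\|_{L_1^2}$ and $\|\mathcal{D}_{1-k}\mathcal{D}_k\mathcal{B}_k(g)\|_{L_1^2}=\|\partial_z^2\mathcal{B}_k(g)\|_{L_1^2}$ and invokes Lemma \ref{lem:Hthm1}, which is the same estimate as your multiplicative multiplier bound. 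One cosmetic slip in your commentary: in $T_\nu=I_\nu^2-I_{\nu-1}I_{\nu+1}$ only the leading terms cancel, while the $s^{-1}$ coefficients survive with net coefficient $1$ (namely $a_{\nu-1}+a_{\nu+1}-2a_\nu=1$ with $a_\nu=\tfrac{4\nu^2-1}{8}$), so the two-term expansion you quote already suffices and indeed yields your (correct) asymptotics $T_\nu(s)\sim\e^{2s}/(2\pi s^2)$; your observation that the small-$|\xi|$ limits of the $i=0,2$ multipliers are finite constants rather than zero is in fact more accurate than the paper's displayed limit.
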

\begin{proof}
Clearly
\begin{align*}
\|\partial_z^2\mathcal{B}_{k}(g)\|^2_{L_1^2} &= \int_{-\infty}^{\infty}\int_0^1 |\xi|^4 \left(\frac{I_k(|\xi|r)}{I_k(|\xi|)}\right)^2\,|\check{g}|^2\,r\,\mathrm{d}r\,\mathrm{d}\xi= \|\partial_z\mathcal{B}_{k}(\partial_z g)\|^2_{L_1^2} \lesssim \|\partial_z g\|_{H^\frac{1}{2}}^2
\leq \|g\|_{H^\frac{3}{2}}^2, \\
            \|\partial_z\mathcal{D}_{\pm k}\mathcal{B}_{k}(g)\|^2_{L_1^2} &= \int_{-\infty}^{\infty}\int_0^1 |\xi|^4 \left(\frac{I_{k\mp 1}(|\xi|r)}{I_k(|\xi|)}\right)^2\,|\check{g}|^2\,r\,\mathrm{d}r\,\mathrm{d}\xi
            = \|\mathcal{D}_{\pm k}\mathcal{B}_{k}(\partial_z g)\|^2_{L_1^2} \lesssim \|\partial_z g\|_{H^\frac{1}{2}}^2
\leq \|g\|_{H^\frac{3}{2}}^2
 \end{align*}
and  
$$
    \|\mathcal{D}_{1- k}\mathcal{D}_{k}\mathcal{B}_{k}(g)\|_{L_1^2}^2 = 
    \int_{-\infty}^{\infty}\int_0^1 |\xi|^4 \left(\frac{I_k(|\xi|r)}{I_k(|\xi|)}\right)^2\,|\check{g}|^2\,r\,\mathrm{d}r\,\mathrm{d}\xi
= \|\partial_z^2\mathcal{B}_{k}(g)\|^2_{L_1^2}
            \lesssim \|g\|^2_{H^{\frac{3}{2}}}.
$$

Furthermore,
    \begin{equation*}
    \mathcal{D}^{2}_{\pm k}\mathcal{B}_{k}(g) = \mathcal{F}^{-1}\left[ |\xi|^2 \frac{I_{k\mp2}(|\xi|r)}{I_k(|\xi|)} \check{g}\right],
\end{equation*}
such that
\begin{equation*}
        \begin{split}
            \|\mathcal{D}_{\pm k}^{2}\mathcal{B}_{k}(g)\|^2_{L_1^2} ={}& \int_{-\infty}^{\infty}\int_0^1 |\xi|^4 \left(\frac{I_{k\mp 2}(|\xi|r)}{I_k(|\xi|)}\right)^2\,|\check{g}|^2\,r\,\mathrm{d}r\,\mathrm{d}\xi,\\
            ={}& \int_{-\infty}^{\infty}\frac{1}{2} |\xi|^4 \left(\frac{I^2_{k\mp2}(|\xi|) - I_{k\mp1}(|\xi|)I_{k\mp 3}(|\xi|)}{I^2_k(|\xi|)}\right)\,|\check{g}|^2\,\mathrm{d}\xi,\\
 \lesssim{}& \int_{-\infty}^{\infty} (1+|\xi|^2)^{\frac{3}{2}}\,|\check{g}|^2\,\mathrm{d}\xi \\
            ={}& \|g\|^2_{H^{\frac{3}{2}}}
        \end{split}
    \end{equation*}
because
    \begin{equation*}
     \frac{1}{2} s^4 (1 + s^2)^{-\frac{3}{2}}\left(\frac{I^2_{k\mp2}(s) - I_{k\mp1}(s)I_{k\mp3}(s)}{I^2_k(s)}\right) 
      \to\begin{cases}
     0, & s\to 0,\\
     \frac{1}{2}, & s\to \infty.
    \end{cases}
    \end{equation*}
    
\end{proof}

\appendix
\section{Young's inequality for radial convolutions}
\numberwithin{equation}{section}

\begin{lemma}[Young's inequality]
Suppose that $X$ is a Banach algebra and that $p$, $q$, $r \in [1,\infty)$ with $\frac{1}{p}+\frac{1}{q}=1+\frac{1}{r}$. The inequality
$$\|f_k \ast g_\ell \|_{L_1^{r}} \leq \|f_k\|_{L_1^{p}} \|g_\ell\|_{L_1^{q}}$$
holds for all  $f_k\in L_1^{p}((0,\infty);X)$ and $g_\ell\in L_1^{q}((0,\infty);X)$, where
$$L_1^p((0,\infty);X) = \left\{f: [0,\infty) \rightarrow X\st 2\pi\int_0^\infty \|f(r)\|^p r \dr< \infty\right\}, \qquad p \geq 1.$$
\end{lemma}

\begin{proof}
Writing
\begin{align*}
D(\rho,&u,w)uw  \|f_k(u)\| \|g_\ell(w)\| \\
&=
\big(D(\rho,u,w)uw  \|f_k(u)\|^p \|g_\ell(w)\|^q\big)^{\frac{1}{r}}\big(D(\rho,u,w)^{\frac{1}{p}}(uw)^\frac{1}{p}  \|f_k(u)\|\big)^{1-\frac{p}{r}}\big(D(\rho,u,w)^{\frac{1}{q}}(uw)^\frac{1}{q} \|g_\ell(w)\|\big)^{1-\frac{q}{r}}
\end{align*}
and observing that
$$\frac{1}{r}+\frac{1}{\dfrac{p}{r-\frac{p}{r}}}+\frac{1}{\dfrac{q}{1-\frac{q}{r}}}=1,$$
we find from the generalised H\"{o}lder inequality that
\begin{align}
\int_0^\infty &\!\!\! \int_0^\infty D(\rho,u,w)  \|f_k(u)\| \|g_\ell(w)\| uw\du \dw \nonumber \\
& \leq \left(\int_0^\infty\!\!\!\int_0^\infty D(\rho,u,w)  \|f_k(u)\|^p \|g_\ell(w)\|^q uw\du \dw)\right)^{\frac{1}{r}}
\left(\int_0^\infty\!\!\!\int_0^\infty D(\rho,u,w)  \|f_k(u)\|^p uw\du \dw)\right)^{\frac{1-\frac{p}{r}}{p}} \nonumber \\
& \qquad\qquad \times
\left(\int_0^\infty\!\!\!\int_0^\infty D(\rho,u,w) \|g_\ell(w)\|^q uw\du \dw)\right)^{\frac{1-\frac{q}{r}}{q}}. \label{First estimate}
\end{align}

Next note that
\begin{align*}
\int_0^\infty\!\!\!\int_0^\infty D(\rho,u,w)  \|f_k(u)\|^p uw \du \dw
& = \int_0^\infty\!\!\!\int_0^\infty D(\rho,u,w) w\dw\,   \|f_k(u)\|^p u\du = \int_0^\infty   \|f_k(u)\|^p u\du,\\
\int_0^\infty\!\!\!\int_0^\infty D(\rho,u,w) \|g_\ell(w)\|^q uw \du \dw
& = \int_0^\infty\!\!\!\int_0^\infty D(\rho,u,w) u \du\,  \|g_\ell(w)\|^q w\dw = \int_0^\infty  \|g_\ell(w)\|^q w\dw.
\end{align*}
Inserting these formulae into \eqref{First estimate}, raising both sides to the power $r$, multiplying by $\rho$ and integrating yields
\begin{align}
\int_0^\infty\!\! &\bigg(\int_0^\infty \!\!\! \int_0^\infty D(\rho,u,w)  \|f_k(u)\| \|g_\ell(w)\| \du \dw\bigg)^r \rho \drho \nonumber \\
& \leq \int_0^\infty\!\!\! \int_0^\infty\!\!\! \int_0^\infty D(\rho,u,w)  \|f_k(u)\|^p \|g_\ell(w)\|^q \rho u w\du\dw\drho\,
\bigg(\int_0^\infty   \|f_k(u)\|^p u\du\bigg)^{\!\!\frac{r-p}{p}}
\bigg(\int_0^\infty  \|g_\ell(w)\|^q w\dw\bigg)^{\!\!\frac{r-q}{q}}\!\!\!\!\!. \label{Second estimate}
\end{align}
Furthermore
\begin{align*}
\int_0^\infty\!\!\! \int_0^\infty\!\!\! \int_0^\infty D(\rho,u,w)  \|f_k(u)\|^p \|g_\ell(w)\|^q \rho u w\du\dw\drho
& = \int_0^\infty\!\!\! \int_0^\infty\!\!\! \int_0^\infty D(\rho,u,w)\rho\drho\,   \|f_k(u)\|^p \|g_\ell(w)\|^quw\du\dw \\
& = \int_0^\infty   \|f_k(u)\|^p u\du \int_0^\infty  \|g_\ell(w)\|^q w\dw,
\end{align*}
and inserting this formula into \eqref{Second estimate} yields
\begin{equation}
\int_0^\infty \bigg(\int_0^\infty \!\!\! \int_0^\infty D(\rho,u,w)  \|f_k(u)\| \|g_\ell(w)\| uw \du \dw\bigg)^r \rho \drho
\leq \bigg( \int_0^\infty   \|f_k(u)\|^p u \du\bigg)^{\frac{r}{p}}\int_0^\infty  \|g_\ell(w)\|^q w\dw\bigg)^{\frac{r}{q}}. \label{Third estimate}
\end{equation}

The result follows from \eqref{Third estimate} and
\begin{equation*}
 \|(f_k \ast g_\ell)(\rho)\|
\leq 2\pi\int_0^\infty \!\!\! \int_0^\infty D(\rho,u,w)  \|f_k(u)\| \|g_\ell(w)\| uw\du \dw. \qedhere
\end{equation*}
\end{proof}

\begin{remark}
Similar arguments yield the corresponding result in the case that one or two of $p$, $q$, $r$ are equal to $\infty$.
\end{remark}\pagebreak

\end{document}